\newtheorem{thm}{Theorem}[section]
\newtheorem{lemma}[thm]{Lemma}
\newtheorem{cor}[thm]{Corollary}
\newtheorem{prop}[thm]{Proposition}
\newtheoremstyle{rem}{10pt}{10pt}{\rmfamily}{}{\bfseries}{.}{.5em}{}
\theoremstyle{rem}
\newtheorem{rem}[thm]{Remark}
\title{\bf Global Cauchy problem for the NLKG  in super-critical  spaces}
\author{ Baoxiang Wang}
\date{}
\begin{document}

\maketitle
\begin{abstract}
By introducing a class of new function spaces $B^{\sigma,s}_{p,q}$ as the resolution spaces,  we study the Cauchy problem for the nonlinear Klein-Gordon equation (NLKG) in all spatial dimensions $d \geqslant 1$,
$$
\partial^2_t u  + u-  \Delta u  + u^{1+\alpha}  =0, \ \ (u, \partial_t u)|_{t=0} = (u_0,u_1).
$$
We consider the initial data $(u_0,u_1)$ in super-critical function spaces $E^{\sigma,s} \times E^{\sigma-1,s}$ for which their norms are defined by
$$
\|f\|_{E^{\sigma,s}} = \|\langle\xi\rangle^\sigma 2^{s|\xi|}\widehat{f}(\xi)\|_{L^2}, \ s<0, \ \sigma \in \mathbb{R}.
$$
Any Sobolev space  $H^{\kappa}$ can be embedded into $ E^{\sigma,s}$,  i.e., $H^\kappa \subset E^{\sigma,s}$ for any $ \kappa,\sigma \in \mathbb{R}$ and $s<0$.  We show the global existence and uniqueness of the solutions of NLKG if the initial data belong  to some $E^{\sigma,s} \times E^{ \sigma-1,s}$ ($s<0, \ \sigma \geqslant \max (d/2-2/\alpha, \, 1/2), \ \alpha\in \mathbb{N}, \ \alpha \geqslant 4/d$) and their Fourier transforms are supported in the first octant,  the smallness conditions on the initial data in $E^{\sigma,s} \times E^{\sigma-1,s}$ are not required for the global solutions. Similar results hold for the sinh-Gordon equation $\partial^2_t u   -  \Delta u + \sinh u=0$ if the spatial dimensions $d \geqslant 2$. \\

{2020 MSC}: 35L71, 42B35, 42B37.
		
	\end{abstract}

\section{Introduction}
We  consider the Cauchy problem for the nonlinear Klein-Gordon (NLKG) and sinh-Gordon equations
\begin{align}\label{NLKG}
\left\{
\begin{array}{l}
\partial^2_t u  + u-  \Delta u  + f(u)  =0,  \\
  u(x,0) = u_0(x), \ u_t (x,0) = u_1(x),
\end{array}
\right.
\end{align}
where $u(x,t)$ is a complex valued function of $(x,t)\in \mathbb{R}^{d+1} $ and
$ f(u) = \pm u^{1+\alpha}, \, \alpha \in \mathbb{N}$ or  $f(u)= \sinh u -u, \, \sinh u =(e^u - e^{-u})/2 $.

NLKG is a basic model in nonlinear relativistic equation of mathematical physics \cite{Sc1951,Wh1974}. The (complex) sinh-Gordon equation  comes from geometric analysis \cite{Ch1981},  integrable quantum field theory, kink
dynamics, fluid dynamics, and in many other scientific applications (see \cite{AbKaNeSe1973,KuSeCh2018,Wa2012} and references therein).  A large amount of work has been devoted to the study the global well-posedness and scattering for the defocusing NLKG in the energy subcritical and critical case $f(u)= |u|^\alpha u$ with $\alpha \leqslant 4/(d-2)$ for $d\geqslant 3$, $ \alpha <\infty$ for $d=1,2$. Ginibre and Velo \cite{GiVe1985,GiVe1989} showed the global well-posedness of finite energy solutions in the energy subcritical case. The existence of the scattering operators in the energy subcritical cases were studied by Strauss \cite{St1981}, Tsutsumi \cite{Ts1983} and Pecher \cite{Pe1984,Pe1985} for small data; by Brenner \cite{Br1984,Br1985} in dimensions  $d \geqslant 3$ and by Nakanishi \cite{Na1999a} in dimensions 1 and 2 for large data, respectively.  In 3D energy critical case $\alpha=4$, the classical global solutions were obtained by Struwe for the radial data \cite{Str1989} and by Grillakis \cite{Gr1990} for the general data. In the energy critical case $\alpha=4/(d-2)$ ($d \geqslant 3$), Shatah and Struwe \cite{ShStr1994} obtained the global
well-posedness  in the energy space. In \cite{Na1999}, Nakanishi showed the existence of the scattering operators in $H^1\times L^2$ for the energy critical case. In the focusing case $f(u)=-|u|^\alpha u$, the scattering threshold was studied in \cite{IbMaNa2011,IbMaNa2014} by Ibrahim,  Masmoudi  and Nakanishi for the energy  subcritical and critical nonlinearity. In the $L^2$ critical case $\alpha=2$ in 2D, the scattering threshold was studied in Killip,  Stovall and Visan \cite{KiStVi2012}.  Hayashi and Naumkin \cite{HaNa2016} considered the existence of the wave operator with small data for $\alpha=1$ and $d=2$.  However, for the energy supercritical case $\alpha >4/(d-2)$, the global well posedness and scattering for NLKG are only known for small Cauchy data in critical and subcritical Sobolev spaces $H^\sigma \times H^{\sigma -1}$ (cf. \cite{Wa1998,Wa1999,WaHud07}). If the nonlinearity has exponential growth,  Nakamura and Ozawa \cite{NaOz2001} obtained the global well posedness and scattering in $H^{\sigma} \times H^{\sigma-1}$ ($\sigma \geqslant d/2$) for small data, where their results contain sinh-Gordon equation  as special case. Struwe \cite{Str2013} obtained the global existence for the smooth solutions in 2D with the exponential nonlinearity $f(u) = e^{u^2} u-u$ for any smooth initial data.

The main purpose of this paper is to study NLKG \eqref{NLKG} in certain supercritical function spaces $E^{\sigma,s} \times E^{\sigma-1,s}$ ($s<0$), which include any Sobolev spaces $H^\kappa \times H^{\kappa -1}$ $(\kappa \in \mathbb{R})$ as subspaces and we will obtain its global existence and uniqueness of the solutions when the power of the nonlinearity $\alpha \geqslant 4/d$ and the Fourier transforms of the initial data are supported in the first octant. In particular, the energy supercritical case $\alpha > 4/(d-2)$ is contained in our results and the smallness conditions for the initial data in $E^{\sigma,s} \times E^{\sigma-1,s}$ are not required. Similar results hold for sinh-Gordon equation and $f(u) = e^{u^2} u-u$ for $d\geq 2$.  In order to get these results, we will apply a class of new function spaces $B^{\sigma,s}_{p,q}$ and $F^{\sigma,s}_{p,q}$ ($\sigma, \, s\in \mathbb{R}, \, 1\leqslant p,q \leqslant \infty$)  to handle the nonlinearity (see Section \ref{Functspaces}).

\subsection{Function spaces $E^{\sigma,s}$}

Let  $ \mathscr{S}$ be the Schwartz space  and $ \mathscr{S}'$ be its dual space. We write $|x|=|x_1|+...+|x_d|$ for $x=(x_1,...,x_d)\in \mathbb{R}^d$ and
$$
p_\lambda(f) = \sup_{x\in \mathbb{R}^d}e^{\lambda |x|}|f(x)|, \quad q_{\lambda}(f)= \sup_{\xi\in \mathbb{R}^d} e^{\lambda|\xi|}|\widehat{f}(\xi)|,
$$
\begin{equation*}
		\mathscr{S}_1:=\{f\in \mathscr{S}: p_\lambda(f)+q_\lambda(f)<\infty, ~\forall~\lambda>0 \}.
\end{equation*}	
	$ \mathscr{S}_1$ equipped with the system of semi-norms $\{p_\lambda+q_\lambda\}_{\lambda>0}$ is a complete locally convex linear topological space, which is said to be the Gelfand-Shilov space, cf. \cite{GeSh1968}. We denote by $ \mathscr{S}_1'$ the dual space of $ \mathscr{S}_1$.  One easily sees that
$$ \mathscr{S}_1 \subset  \mathscr{S}, \ \  \mathscr{S}' \subset  \mathscr{S}'_1.$$
$\mathscr{S}_1$ contains the translations and modulations of Gaussian $e^{-\mathrm{i}m x} e^{- |x-n|_2^2/2}$ and their linear combinations\footnote{We denote $|x|_2 = (x^2_1+...+x^2_d)^{1/2}$ for $x=(x_1,...,x_d) \in \mathbb{R}^d$}, which is dense in all Sobolev spaces $H^\sigma$ (cf. \cite{GeSh1968}).
The Fourier transforms on $ \mathscr{S}_1'$ can be defined by duality (cf. \cite{FeGrLiWa2021}).  Namely, for any $f\in  \mathscr{S}'_1$, its Fourier transform $\mathscr{F} f = \widehat{f}$ satisfies
$$
\langle \mathscr{F} f, \,  \varphi \rangle = \langle f, \,  \mathscr{F} \varphi \rangle, \ \ \forall \ \varphi  \in \mathscr{S}_1.
$$
Let us recall the function spaces $E^{\sigma,s}$ introduced in \cite{ChWaWa2022}. For $s, \sigma \in \mathbb{R}$, we denote
		\begin{equation*}
			E^{\sigma,s} :=\{f\in \mathscr{S}_1': 	\langle\xi \rangle^{\sigma}  2^{s|\xi|}\widehat{f}(\xi)\in L^2(\mathbb{R}^d)\}
		\end{equation*}
		 for which the norm is defined by
$\|f\|_{E^{\sigma,s}} = 	\|\langle\xi \rangle^{\sigma} 2^{s|\xi|}\widehat{f}(\xi)\|_{L^2}.$
$E^{\sigma,s}$ is a generalization of Sobolev spaces $H^\sigma:=(I-\Delta)^{-\sigma/2} L^2(\mathbb{R}^d)$ for which the norm is defined by
\begin{align} \label{spaceHs}
  \|f\|_{H^\sigma} := \|\langle \xi\rangle^\sigma \widehat{f} \|_{L^2}.
\end{align}
One easily sees that $H^\sigma = E^{\sigma,0}$. In the case $s>0$, $E^s:= E^{0,s}$ was introduced by Bj\"orck \cite{Bj1966} and it is a smooth function space. The spaces like $E^{\sigma,s}$ ($s>0$) have important application in studying the Gevrey regularity for the solutions of evolution equations (see e.g. Foias and Temam \cite{FoTe1989},  Bourdaud, Reissig and Sickel \cite{BoReSi2003}). However, in the case $s<0$,  $E^{\sigma,s}$ is much rougher than any $H^\kappa$  and we have the following inclusion (cf. \cite{ChWaWa2022}),
\begin{align} \label{embedding}
  H^{\kappa} \subset E^{\sigma,s}, \ \ \ \forall \ \kappa,\, \sigma \in \mathbb{R}, \, s<0.
\end{align}
By \eqref{embedding}, we see that $\cup_{\kappa \in \mathbb{R}} H^\kappa$ is a subset of $E^{\sigma,s}$ if $s<0$.

\subsection{Critical and super-critical spaces}

First, let us observe that if $u$ is a solution of NLKG \eqref{NLKG} with $f(u)= \pm u^{\alpha +1}$,
then $u_\lambda (x,t): = \lambda^{2/\alpha} u(\lambda x, \lambda t) $  solves
\begin{align}\label{NLKG2}
\left\{
\begin{array}{l}
\partial^2_t v  + \lambda^2 v-  \Delta v  \pm  v^{\alpha +1}  =0,  \\
  v(x,0) = u_{0,\lambda} (x) , \ v_t (x,0) = u_{1,\lambda} (x) ,
\end{array}
\right.
\end{align}
where $u_{0,\lambda} (x) = \lambda^{2/\alpha} u_0(\lambda x), \  u_{1,\lambda} (x) = \lambda^{2/\alpha +1} u_1(\lambda x).$ From the scaling argument, we see that $(u_\lambda, \partial_t u_\lambda)|_{t=0} $ is   invariant in homogeneous Sobolev spaces $\dot H^{\sigma_c} \times  \dot H^{\sigma_c-1}$ $(\dot H^\sigma := (-\Delta)^{-\sigma/2} L^2)$, where
\begin{align}
\sigma_c : = d/2-2/\alpha.  \label{criticalindex}
\end{align}
For any $\lambda >0$, $ H^{\sigma_c} \times  H^{\sigma_c-1}$ is said to be a {\it critical Sobolev space} for NLKG \eqref{NLKG2}.      $H^\sigma \times H^{\sigma-1}$ with $\sigma>\sigma_c$ ($\sigma<\sigma_c$) is said to be the {\it subcritical (supercritical) Sobolev space}. Any Banach function space $X$ satisfying $H^\sigma \times H^{\sigma-1} \subset X$ for some $\sigma<\sigma_c$   is said to be a {\it supercritical space} for NLKG.
The inclusion \eqref{embedding} implies that any supercritical Sobolev space $H^\kappa \times H^{\kappa-1}$ ($\kappa<\sigma_c$) is a subspace of $E^{ {\sigma},s} \times E^{ {\sigma}-1,s}$ if $s<0, \,  {\sigma} \in \mathbb{R}$. So, $E^{ {\sigma},s} \times E^{ {\sigma}-1,s}$ with $s<0, \, {\sigma} \in \mathbb{R}$ is a supercritical space.

\subsection{Besov and Triebel spaces} \label{BesovTriebel}

Let $\psi $ be a smooth radial bump function satisfying $\psi(\xi) =1$ for $|\xi| \leqslant 1$ and ${\rm supp} \, \psi = \{\xi: |\xi| \leqslant 5/4\}$.  Denote $\varphi(\xi) = \psi(\xi) -\psi (2 \xi )$ and $\varphi_j (\xi) = \varphi (2^{-j} \xi)$. One sees that
$$
\psi(\xi) +\sum_{j\geqslant 1}  \varphi_j (\xi) =1, \ \ \forall \   \xi  \in \mathbb{R}^d
$$
and
$$
 \sum_{j\in \mathbb{Z}}  \varphi_j (\xi) =1, \ \ \forall \   \xi  \in \mathbb{R}^d \setminus \{0\}.
$$
We write
$$
\triangle_0 = \mathscr{F}^{-1} \psi  \mathscr{F}, \ \  \ \triangle_j = \mathscr{F}^{-1} \varphi_j   \mathscr{F}, \ \ j\in \mathbb{N}
$$
and
$$
 \dot{\triangle}_j = \mathscr{F}^{-1} \varphi_j   \mathscr{F}, \ \ j\in \mathbb{Z}.
$$
Notice that $\dot{\triangle}_j =  {\triangle}_j$ for $j\geqslant 1$.  $ {\triangle}_j $ with $  j\in \mathbb{N} $   ($ \dot{\triangle}_j $ with $ j\in \mathbb{Z}$) are said to be the (homogeneous) dyadic decomposition operators. Let $\sigma \in \mathbb{R}, \, 1\leqslant p,q\leqslant \infty$. The norms for a tempered distribution $f\in \mathscr{S}'$ on Besov and Triebel-Lizorkin spaces can be defined in the following way (with a modification for $q=\infty$):
\begin{align} \label{Besovspace}
& \|f\|_{ B^{\sigma}_{p,q}  } =  \left(\sum_{j\in \mathbb{Z}_+} 2^{\sigma jq}   \| \triangle_j u  \|^q_{ L^p (\mathbb{R}^d)} \right)^{1/q},\\
& \|f\|_{ F^{\sigma}_{p,q}  } =  \left\|\Big(\sum_{j\in \mathbb{Z}_+} 2^{\sigma jq}    | \triangle_j u |^q \Big)^{1/q}\right\|_{ L^p (\mathbb{R}^d)}.  \label{Triebelspace}
\end{align}

\subsection{Main results}

For our purpose, we introduce the following resolution spaces (see Section \ref{Functspaces} for details)
\begin{align} \label{resolutionspace}
& \|u\|_{\widetilde{L} ^\gamma (\mathbb{R}; B^{\sigma,s}_{p,q} ) } =  \left(\sum^\infty_{j=0} 2^{\sigma jq}   \|2^{s|\nabla|}\triangle_j u  \|^q_{L^\gamma_t L^p_x (\mathbb{R} \times\mathbb{R}^d)} \right)^{1/q}.
\end{align}
Such a kind of spaces without the exponential derivatives $2^{s|\nabla|}$, i.e., $\widetilde{L} ^\gamma (\mathbb{R}; B^{\sigma,0}_{p,q} )$  were first introduced by Chemin-Lerner \cite{ChLe95}. Denote $ \|f\|_{H^{\sigma, s}_p} = \|\mathscr{F}^{-1} \langle \xi\rangle^\sigma 2^{s|\xi|} \mathscr{F} f\|_{L^p} $ and
\begin{align*}
\mathbb{R}^d_I & =\{\xi \in \mathbb{R}^d : \ \xi_j \geqslant 0, \ j=1,...,d \}.
\end{align*}
We have the following results.
	\begin{thm}\label{mainresult}
	Let $d\geqslant 1$, $f(u)=u^{1+\alpha}$, $ 4/d \leqslant \alpha<\infty$, $\alpha \in \mathbb{N}$,  $s < 0$ and
$\sigma\geqslant \max(\sigma_c,\, 1/2)$.
Suppose that $(u_0 , u_1) \in E^{\sigma,s} \times E^{\sigma-1,s}$ with
$ \mathrm{supp}~\widehat{u}_0, \, \mathrm{supp}~\widehat{u}_1 \subset \mathbb{R}^d_I\setminus \{0\}$ for $d \leqslant 6$, and $ \mathrm{supp}~\widehat{u}_0, \, \mathrm{supp}~\widehat{u}_1 \subset \mathbb{R}^d_I$ for $d \geqslant 7$.
Then there exists $s_0 \leqslant s$ such that NLKG \eqref{NLKG} has a unique mild solution $ u \in C(\mathbb{R}; E^{\sigma, s_0} ) \cap X^{\sigma,s_0}_p$, where the norm on $X^{\sigma,s}_p$ is defined by
\begin{align} \label{1workspaceX}
\|u\|_{X^{\sigma,s}_p} =
\left\{
\begin{array}{ll}
 \|u\|_{\widetilde{L}^{\infty} (\mathbb{R}, B^{\sigma,s}_{2, 2} )}
  +   \|u\|_{\widetilde{L}^{p} (\mathbb{R}, B^{\sigma-1/2,s}_{p, 2} )},  & \sigma >1/2; \\
 \|u\|_{{L}^{\infty} (\mathbb{R}, E^{1/2,s} )}
  +   \|u\|_{ {L}^{p} (\mathbb{R}, H^{0,s}_{p} )},  & \sigma =1/2,
 \end{array}
\right.
\end{align}	
where
$$
p=
\left\{
\begin{array}{ll}
2(d+1)/(d-1), & \mbox{for} \ \alpha \geqslant 4/(d-1); \\
2(d+1+\theta)/(d-1+\theta) , &  \mbox{for} \ \alpha =  4/(d-1+\theta), \, \theta \in (0,1].
\end{array}
\right.
$$
\end{thm}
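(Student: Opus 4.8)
The plan is to remove the exponential weight by a Fourier-multiplier conjugation, turning the problem in the super-critical space $E^{\sigma,s}$ into the classical small-data global theory of NLKG in the Sobolev space $H^{\sigma}$. Two facts drive this: on the first octant $\mathbb{R}^d_I$ the quantity $|\xi|=\xi_1+\cdots+\xi_d$ is \emph{additive}, so the operator $2^{t|\nabla|}:=\mathscr{F}^{-1}2^{t|\xi|}\mathscr{F}$ is \emph{multiplicative} on distributions with Fourier support in $\mathbb{R}^d_I$; and, for a fixed datum in $E^{\sigma,s}$, its $E^{\sigma,s_0}$-norm tends to $0$ as $s_0\to-\infty$.

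\emph{Step 1 (conjugation).} Since $|\xi|$ is linear, hence additive, on $\mathbb{R}^d_I$, writing the Fourier transform of a product as an iterated convolution gives, for any $g_1,\dots,g_m\in\mathscr{S}_1'$ with $\mathrm{supp}\,\widehat{g_i}\subset\mathbb{R}^d_I$ and any $t\in\mathbb{R}$,
$$
2^{t|\nabla|}(g_1\cdots g_m)=(2^{t|\nabla|}g_1)\cdots(2^{t|\nabla|}g_m).
$$
As a Fourier multiplier, $2^{s_0|\nabla|}$ commutes with $\cos(t\langle\nabla\rangle)$, $\sin(t\langle\nabla\rangle)/\langle\nabla\rangle$, with the Duhamel $t$-integral, and with every $\triangle_j$. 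Hence, if $u$ is an $\mathbb{R}^d_I$-supported mild solution of NLKG with data $(u_0,u_1)$, then $U:=2^{s_0|\nabla|}u$ is an $\mathbb{R}^d_I$-supported mild solution of NLKG with data $(2^{s_0|\nabla|}u_0,2^{s_0|\nabla|}u_1)$ and the \emph{same} nonlinearity $U^{1+\alpha}$; moreover $\|u\|_{E^{\sigma,s_0}}=\|U\|_{H^{\sigma}}$, $\|u_1\|_{E^{\sigma-1,s_0}}=\|2^{s_0|\nabla|}u_1\|_{H^{\sigma-1}}$ and $\|u\|_{X^{\sigma,s_0}_p}=\|U\|_{X^{\sigma,0}_p}$, where $X^{\sigma,0}_p$ is the weightless version of \eqref{1workspaceX} (an unweighted Chemin--Lerner--Strichartz space). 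The first-octant constraint is consistent with the flow, since the linear propagators do not move Fourier supports and the convolution nonlinearity sends $\mathbb{R}^d_I$-supported functions to $\mathbb{R}^d_I$-supported ones.

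\emph{Step 2 (smallness for free).} Fix $(u_0,u_1)\in E^{\sigma,s}\times E^{\sigma-1,s}$ of arbitrary size. For $s_0\le s$, the integrand of $\|u_0\|_{E^{\sigma,s_0}}^2=\int\langle\xi\rangle^{2\sigma}2^{2s_0|\xi|}|\widehat{u_0}(\xi)|^2\,d\xi$ is dominated by the integrable function $\langle\xi\rangle^{2\sigma}2^{2s|\xi|}|\widehat{u_0}|^2$ and tends to $0$ for a.e.\ $\xi$, so by dominated convergence $\|u_0\|_{E^{\sigma,s_0}}\to 0$ as $s_0\to-\infty$, and likewise for $u_1$. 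Thus we may fix $s_0\le s$ so that the transformed data $(2^{s_0|\nabla|}u_0,2^{s_0|\nabla|}u_1)$ has $H^{\sigma}\times H^{\sigma-1}$-norm below any chosen threshold $\delta$.

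\emph{Steps 3--4 (small-data $H^\sigma$ theory and conclusion).} By Steps 1--2 it remains to solve NLKG globally for small data $(U_0,U_1)\in H^{\sigma}\times H^{\sigma-1}$ with $\mathrm{supp}\,\widehat{U_0},\mathrm{supp}\,\widehat{U_1}\subset\mathbb{R}^d_I$ (resp.\ $\subset\mathbb{R}^d_I\setminus\{0\}$ for $d\le6$), in $Y:=C(\mathbb{R};H^{\sigma})\cap X^{\sigma,0}_p$. This is a standard Strichartz-based contraction on the Duhamel map in $Y$: the energy identity bounds the first (Sobolev-type) component of the $X^{\sigma,0}_p$-norm in \eqref{1workspaceX}, the Klein--Gordon Strichartz estimates bound the second (space--time $L^p$-type) component with the stated $p$ (here $\sigma\ge 1/2$ is used, so $\sigma-1/2\ge 0$ and $p$ is admissible), and the multilinear estimate $\|U^{1+\alpha}\|_{(\text{dual Strichartz})}\lesssim\|U\|_Y^{1+\alpha}$ closes the iteration once $\delta$ is small; induction on the Picard iterates keeps $\mathrm{supp}\,\widehat U(t)\subset\mathbb{R}^d_I$ for all $t$. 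Setting $u:=2^{-s_0|\nabla|}U$, Step 1 shows $u$ is an $\mathbb{R}^d_I$-supported mild solution of NLKG with data $(u_0,u_1)$ in $C(\mathbb{R};E^{\sigma,s_0})\cap X^{\sigma,s_0}_p$, and uniqueness in that class (within $\mathbb{R}^d_I$-supported distributions) is inherited from uniqueness of $U$ in $Y$ via injectivity of $2^{s_0|\nabla|}$.

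\emph{Main obstacle.} The conjugation, the dominated-convergence smallness trick, and the linear Strichartz theory are soft; the real work is the multilinear estimate $\|U^{1+\alpha}\|_{(\text{dual Strichartz})}\lesssim\|U\|_Y^{1+\alpha}$ across the full range $\sigma\ge\max(\sigma_c,1/2)$, $\alpha\ge 4/d$, $\alpha\in\mathbb{N}$: this is where $\alpha\in\mathbb{N}$ (so $U^{1+\alpha}$ is a polynomial, and so the nonlinearity is an honest convolution power), $\alpha\ge 4/d$ (equivalently $\sigma_c\ge 0$), and $\sigma\ge\sigma_c$ are consumed to balance the Lebesgue and time exponents in a fractional-Leibniz/Hölder estimate, whose low-frequency part—handled with homogeneous norms when $d$ is small—is what forces $\xi=0$ to be excluded from the data for $d\le 6$; and it is the borderline cases $\sigma=1/2$ and $\alpha=4/(d-1+\theta)$ that make it necessary to use the modified resolution space $L^\infty_tE^{1/2,s}\cap L^p_tH^{0,s}_p$ instead of the cleaner Chemin--Lerner space.
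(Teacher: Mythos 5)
Your conjugation-plus-dominated-convergence argument is a valid proof of the literal statement of Theorem~\ref{mainresult}, but it is not the paper's route: in fact the paper explicitly identifies this exact strategy and declines it. In the introduction ("Ideas in the proof of main results") and again in Section~\ref{lookingup} the author observes, just as you do, that $\lim_{b\to-\infty}\|\varphi\|_{E^{\sigma,b}}=0$, that $2^{s_0|\nabla|}$ is multiplicative on $\mathbb{R}^d_I$-supported distributions (Lemma~\ref{RegTransf}), and that one can therefore shrink $s_0$ until the conjugated data are small in $H^\sigma\times H^{\sigma-1}$ and invoke the small-data theory of \cite{Wa1998,Wa1999,NaOz2001}. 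The paper deliberately takes the longer road — rescale $u\mapsto\lambda^{2/\alpha}u(\lambda\cdot,\lambda\cdot)$ to arrive at \eqref{NLKG2}, establish decay and Strichartz estimates for the semigroup $e^{\mathrm{i}t\sqrt{\lambda^2-\Delta}}$ with $\lambda$ as a variable parameter (Sections~\ref{sectDecay}--\ref{GlobalNLKG}), and close the iteration in the frequency-split spaces $X_1\cap X_2$ — because, as Lemma~\ref{unboundedness} shows, your $s_0$ depends on the datum $(u_0,u_1)$ itself and is unbounded over $E^{\sigma,s}$, whereas the scaling argument yields $s_0=s_0(\|u_0\|_{E^{\sigma,s}},\|u_1\|_{E^{\sigma-1,s}})$ for $d\geqslant 5$ (and with an extra $\varepsilon_0$-dependence for $d\leqslant4$), which is what buys the continuity of the solution map and the scattering statement of Remark~\ref{Rmk1.3}(i). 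One consequence worth flagging: your closing remark misattributes the role of the support restriction $\widehat{u}_0,\widehat{u}_1\subset\mathbb{R}^d_I\setminus\{0\}$ for $d\leqslant 6$. In your route nothing forces this condition — the small-data $H^\sigma$ theory has no such requirement — so you would in fact prove a slightly stronger existence statement; in the paper's route the away-from-origin hypothesis is needed not for the multilinear estimate but for the scaling estimates (part (ii) of Lemma~\ref{Scaling1} and Corollary~\ref{Scaling4}), where the factor $2^{s\lambda\varepsilon_0/3}$ produced by the frequency gap compensates the non-negative power of $\lambda$ that otherwise prevents $\|u_{i,\lambda}\|_{E^{\sigma-i,s}}\to0$ in low dimensions.
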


	\begin{thm}\label{mainresult2}
	Let $d\geqslant 2$, $f(u)=\sinh u -u$, $s < 0,   \sigma \geqslant d/2 $, $(u_0 , u_1) \in E^{\sigma,s} \times E^{\sigma-1,s}$ with $\mathrm{supp}~\widehat{u}_0, \ \mathrm{supp}~\widehat{u}_1 \subset  \mathbb{R}^d_I \setminus \{0\} $ for $d\leqslant 4$, and $\mathrm{supp}~\widehat{u}_0, \ \mathrm{supp}~\widehat{u}_1 \subset  \mathbb{R}^d_I $ for $d\geqslant 5$. Then there exists $s_0 \leqslant s$ such that NLKG \eqref{NLKG} has a unique mild solution $u \in C(\mathbb{R}; E^{\sigma, s_0} ) \cap Y^{\sigma,s_0}_r$, where the norm on $Y^{\sigma,s}_r$ is defined by
\begin{align}
\|u\|_{Y^{\sigma,s}_r} =
 \|u\|_{\widetilde{L}^{\infty} (\mathbb{R}, B^{\sigma,s}_{2, 2} )}
  +   \|u\|_{\widetilde{L}^{r} (\mathbb{R}, B^{\sigma-1/2,s}_{r, 2} )}, \ \ \label{1workspaceX}
\end{align}	
where $r=2 + 4/(d-1)$ for $d \geqslant 3$; $r=2 + 4/d$ for $d =2$.
\end{thm}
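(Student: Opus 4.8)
\medskip
\noindent\textbf{Proof strategy for Theorem \ref{mainresult2}.}
The plan is to reduce the problem, by a frequency-side conjugation, to the small-data global theory for the $\sinh$-Gordon equation in the ordinary Sobolev spaces $H^\sigma\times H^{\sigma-1}$. The mechanism is that on the first octant $\mathbb{R}^d_I$ the norm $|\xi|=\xi_1+\dots+\xi_d$ is a \emph{linear} functional. Put $T_\mu:=\mathscr{F}^{-1}2^{\mu|\xi|}\mathscr{F}$ for $\mu\in\mathbb{R}$. Then \emph{(i)} $T_\mu\colon E^{\sigma,\mu}\to H^\sigma$ is an isometric isomorphism for every $\sigma\in\mathbb{R}$, with inverse $T_{-\mu}$, and for $\mu<0$ it maps $H^\sigma$ into $\mathscr{S}_1'$ (the Fourier transform $2^{-\mu|\xi|}\widehat{v}$ grows at most exponentially); \emph{(ii)} $T_\mu$ is a Fourier multiplier, hence commutes with $\cos(t\langle\nabla\rangle)$, $\langle\nabla\rangle^{-1}\sin(t\langle\nabla\rangle)$ and with every Littlewood--Paley piece, $T_\mu\triangle_j u=\triangle_j(T_\mu u)$ as functions; consequently the weight $2^{\mu|\nabla|}$ is invisible inside any space--time Lebesgue norm of a single Littlewood--Paley block, so $\|u\|_{\widetilde L^\gamma(\mathbb{R};B^{\sigma,\mu}_{p,q})}=\|T_\mu u\|_{\widetilde L^\gamma(\mathbb{R};B^{\sigma,0}_{p,q})}$ and in particular $\|u\|_{Y^{\sigma,\mu}_r}=\|T_\mu u\|_{Y^{\sigma,0}_r}$; \emph{(iii)} if $\widehat{f},\widehat{g}$ are supported in $\mathbb{R}^d_I$ then so is $\widehat{fg}=\widehat f*\widehat g$, and on the support of the convolution $|\xi|=|\xi-\eta|+|\eta|$, whence $T_\mu(fg)=(T_\mu f)(T_\mu g)$; iterating, $T_\mu(v^{2k+1})=(T_\mu v)^{2k+1}$ and, termwise, $T_\mu(\sinh v-v)=\sinh(T_\mu v)-T_\mu v$. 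Only \emph{(iii)} uses the octant support, and it is the crux of the reduction.

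First fix $s_0\leqslant s$. Since $\widehat{u}_0,\widehat{u}_1$ are locally square-integrable they carry no mass at $\xi=0$, so as $\mu\to-\infty$ the integrand $\langle\xi\rangle^{2\sigma}2^{2\mu|\xi|}|\widehat{u}_j(\xi)|^2\to 0$ for a.e.\ $\xi$, dominated by the $s$-integrable envelope $\langle\xi\rangle^{2\sigma}2^{2s|\xi|}|\widehat{u}_j(\xi)|^2$; by dominated convergence $\|u_0\|_{E^{\sigma,\mu}}+\|u_1\|_{E^{\sigma-1,\mu}}\to 0$. Fix $s_0$ so small that this quantity lies below the small-data threshold $\varepsilon_0$ of the Sobolev step below, and put $v_0:=T_{s_0}u_0\in H^\sigma$, $v_1:=T_{s_0}u_1\in H^{\sigma-1}$; by \emph{(i)} these are octant-supported with $\|v_0\|_{H^\sigma}+\|v_1\|_{H^{\sigma-1}}\leqslant\varepsilon_0$.

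\emph{Small-data global theory in Sobolev spaces.} For such $(v_0,v_1)\in H^\sigma\times H^{\sigma-1}$, $\sigma\geqslant d/2$, $d\geqslant 2$, one solves the Duhamel equation $v(t)=\mathcal K'(t)v_0+\mathcal K(t)v_1-\int_0^t\mathcal K(t-\tau)(\sinh v-v)(\tau)\,d\tau$, with $\mathcal K(t)=\langle\nabla\rangle^{-1}\sin(t\langle\nabla\rangle)$ and $\mathcal K'(t)=\cos(t\langle\nabla\rangle)$, by contraction on a small ball of $C(\mathbb{R};H^\sigma)\cap Y^{\sigma,0}_r$. The linear terms and the Duhamel operator are controlled by the global-in-time Klein--Gordon Strichartz estimates at the admissible exponent $r$ of the statement, in the $\widetilde L^\infty(\mathbb{R};B^{\sigma,0}_{2,2})$ and $\widetilde L^r(\mathbb{R};B^{\sigma-1/2,0}_{r,2})$ norms (the $1/2$-derivative loss is reflected in the second index); the nonlinearity is estimated term by term — each $v^{2k+1}$ contributes $\lesssim C^{2k+1}\|v\|_{Y^{\sigma,0}_r}^{2k+1}$ to the Duhamel bound, by the fractional Leibniz / Moser estimates in these Besov spaces together with the $L^\infty$-type control afforded by $\sigma\geqslant d/2$ and the Strichartz integrability — and then summed, $\sum_{k\geqslant1}C^{2k+1}\rho^{2k+1}/(2k+1)!$ being convergent and $o(\rho)$ for small $\rho$; the individual per-term bounds are instances of the estimates underlying Theorem \ref{mainresult} for the monomial $v^{2k+1}$ (i.e.\ $\alpha=2k\geqslant 4/d$), now summed over $k$. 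This is a Strichartz-refined form of the Nakamura--Ozawa small-data theory \cite{NaOz2001}, and Fourier support in $\mathbb{R}^d_I$ is propagated automatically since convolutions and frequency projections preserve the cone. Finally set $u:=T_{-s_0}v\in\mathscr{S}_1'$: by \emph{(i)}--\emph{(ii)}, $u\in C(\mathbb{R};E^{\sigma,s_0})\cap Y^{\sigma,s_0}_r$ with $\|u\|_{Y^{\sigma,s_0}_r}=\|v\|_{Y^{\sigma,0}_r}$, $u$ is supported in $\mathbb{R}^d_I$, and $(u,\partial_t u)|_{t=0}=(u_0,u_1)$; applying $T_{-s_0}$ to the Duhamel identity and using \emph{(iii)} — so $T_{-s_0}(\sinh v-v)=\sinh u-u$, the series converging in $E^{\sigma,s_0}$ because $\sum\|u^{2k+1}\|_{E^{\sigma,s_0}}/(2k+1)!=\sum\|v^{2k+1}\|_{H^\sigma}/(2k+1)!<\infty$ — shows $u$ is a mild solution of \eqref{NLKG}. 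Uniqueness transports the same way: any mild solution in the stated class is octant-supported, so $T_{s_0}$ sends it to a solution of the Sobolev Duhamel equation in the contraction ball, which is unique.

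The main obstacle is the Sobolev small-data step: proving the \emph{global-in-time} Klein--Gordon Strichartz estimates in the Chemin--Lerner--Besov norms and closing the nonlinear estimate for the exponential nonlinearity — above all, summing the Taylor series of $\sinh$ uniformly in $t\in\mathbb{R}$, and dealing with the borderline regularity $\sigma=d/2$ at which $H^{d/2}$ fails to be a multiplicative algebra. This is where the restriction $d\geqslant 2$ and, for $d\leqslant 4$, the removal of the frequency origin enter: they supply the little extra room needed to absorb the cubic (lowest-order) term of $\sinh v-v$ in low dimensions. The bookkeeping around $T_{\pm s_0}$ (giving $\sinh$ of an $\mathscr{S}_1'$ distribution a meaning, interchanging $T_{-s_0}$ with the infinite sum) is then routine once the octant multiplicativity \emph{(iii)} is in hand.
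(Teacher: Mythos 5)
Your proposal is correct, but it follows a genuinely different route from the paper's own proof. You first shrink $s_0$ until the data become small in $E^{\sigma,s_0}\times E^{\sigma-1,s_0}$ (possible by dominated convergence since $\widehat{u}_j\in L^2_{\mathrm{loc}}$ carries no mass at $\xi=0$), then conjugate by $T_{s_0}=\mathscr{F}^{-1}2^{s_0|\xi|}\mathscr{F}$ to reduce to a small-data contraction in $H^\sigma\times H^{\sigma-1}$, where the octant identity $T_\mu(fg)=(T_\mu f)(T_\mu g)$ — which is precisely Lemma \ref{RegTransf} — lets you transfer the whole Duhamel formula, $\sinh$ series and all, back to $E^{\sigma,s_0}$. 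The paper instead keeps $s$ fixed and applies the dilation $u_\lambda(x,t)=u(\lambda x,\lambda t)$; this turns the linear operator into $e^{\mathrm{i}t\sqrt{\lambda^2-\Delta}}$, and requires the $\lambda$-dependent dispersive and Strichartz estimates of Section \ref{sectDecay}, the scaling lemmas \ref{Scaling1}--\ref{Scaling4} for $B^{\sigma,s}_{p,q}$ and $F^{\sigma,s}_{p,q}$, and the low/high frequency split $\mathbb{Z}_\lambda,\mathbb{Z}^c_\lambda$ in the resolution spaces.

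The paper is aware of your route and discusses it explicitly in Section \ref{lookingup}: it states that the techniques of \cite{Wa1998,Wa1999,NaOz2001} apply directly after the shrink-$s$ step, but then shows in Lemma \ref{unboundedness} that the resulting $s_0=s_0(u_0,u_1)$ depends on the specific data, not merely on their $E^{\sigma,s}$-norms, and is unbounded below even over normalized families. The scaling argument is chosen precisely to obtain $s_0=s_0(\|u_0\|_{E^{\sigma,s}},\|u_1\|_{E^{\sigma-1,s}},\varepsilon_0)$ — uniform over bounded sets with uniform lower frequency cutoff — which is what underlies the continuity of the solution map and the scattering statement in Remark \ref{Rmk1.3}(i). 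So the trade-off is: your approach is appreciably simpler (no $\lambda$-decay estimates, no scaling lemmas, no $\mathbb{Z}_\lambda/\mathbb{Z}^c_\lambda$ bookkeeping), and it does prove Theorem \ref{mainresult2} as stated, but it yields only the raw existence of some $s_0\leqslant s$, whereas the paper's route buys the quantitative control over $s_0$ and the stability properties.

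Two small cautions. First, the per-term nonlinear bound is not $C^{2k+1}$ but of the order $(C\sqrt{2k})^{2k+C}$ (Lemmas \ref{NonlinearestH1}, \ref{2dNonlinearestH1}); the factorial in $\sinh$ still dominates, so your summation argument survives, but the constant needs to be tracked as in the paper's $a_m$. Second, for $d=2$ the monomial $v^3$ with $\alpha=2<4/(d-1)$ falls outside Lemma \ref{NonlinearestH1}; you must invoke the $d=2$ estimates of Section \ref{GlobalNLKGB} (Lemmas \ref{2dNonlinearestH1} and \ref{2dcorNonlinearestL1}) there, which use $\sigma_\alpha=1-1/\alpha$ rather than $\sigma_c$, consistent with $\sigma\geqslant d/2=1$.
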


\begin{rem} \label{Rmk1.3}
Theorem \ref{mainresult} needs several remarks.
\begin{itemize}

\item[(i)] In Theorem \ref{mainresult}, $s_0 \leqslant s$ comes from the scaling argument to the solution,  more precisely,  $s_0 = s_0(\|u_0\|_{E^{\sigma,s}}, \|u_1\|_{E^{\sigma-1,s}})$ for $d \geqslant 5$;  $s_0 = s_0(\|u_0\|_{E^{\sigma,s}}, \|u_1\|_{E^{\sigma-1,s}}, \, \varepsilon_0)$ for $d \leqslant 4$, where $\varepsilon_0 $ satisfies ${\rm supp} \, \widehat{u}_0, {\rm supp}\, \widehat{u}_0 \subset \mathbb{R}^d_I \setminus B(0, \varepsilon_0)$. It follows that the solution map is continuous in $E^{\sigma,s_0} \times E^{\sigma-1,s_0}$. Moreover, the solution is scattering in $E^{\sigma,s_0} \times E^{\sigma-1,s_0}$.

\item[(ii)] Using analogous way as in \cite{ChLuWa2023, ChWaWa2022}, we can give some initial data supported in $\mathbb{R}^d_I \cup (-\mathbb{R}^d_I)$ so that the solution maps in Theorems  \ref{mainresult} and \ref{mainresult2} are not smooth.

\item[(iii)]  Theorems \ref{mainresult} and \ref{mainresult2} contain rather rough initial data as examples. For instance, we consider spatial dimension $d=1$.  For any $k\in \mathbb{Z}_+$, $A\in \mathbb{C}$, let $\delta(x)$ be the Dirac measure and
$$
F(x) = \lim_{\varepsilon \downarrow 0} \frac{1}{x+  \mathrm{i} \varepsilon}  = \left( -\pi \mathrm{ i} \delta (x) + p.v. \frac{1}{x}\right)
$$
be the Sokhotski-Plemelj distribution (cf. \cite{So1873,Pl1908}). $F(x)$ plays a crucial role in the Sokhotski-Plemelj formula and it has a widely application in quantum field theory,  mechanics, robot-safety device system and numerical simulation (cf. \cite{Ke2022,MoLi1998,Pa1976,SaNa2014, Va2000}). Let $F^{(k)}$ be the $k$-order generalized derivative of $F$. Since $-\mathrm{i} \widehat{F}$ is the Heaviside function, one sees that
 $u_0 =e^{\mathrm{i} \epsilon x}   F^{(k)}(x)$ ($\forall \, k\in \mathbb{N}, \, \epsilon>0$) satisfies the conditions of Theorem \ref{mainresult}. Moreover,  Theorem \ref{mainresult} contains a class of initial values $u_0$ like
$$
u_0  = e^{\mathrm{i}\epsilon x} \sum^\infty_{k=0} \frac{(-{\rm i}\, \lambda)^k}{k!}   F^{(k)}(x), \ \  0< \lambda < |s|,
$$
 which is out of the control of any Sobolev spaces $H^\kappa$, $\kappa \in \mathbb{R}$.

 \item[(iv)] The results of Theorem \ref{mainresult2} also hold for the nonlinearity $f(u)= e^{u^2} u-u$ and $f(u) = \sin u-u$, the later nonlinearity is related to the well-known sine-Gordon equation $u_{tt} -\Delta u + \sin u =0$, cf. \cite{BrHaSt1978,Lu1977,Scot1969,Ru1970}.

\item[(v)] Theorem \ref{mainresult2} cannot cover the sinh-Gordon equation in one spatial dimension. However, if one takes $f(u) = \sinh u - u - u^3/6$, the results of Theorem \ref{mainresult2} also hold for $d=1$. Using the techniques as in \cite{ChLuWa2023}, it seems that we can also handle the sinh-Gordon in 1D and  NLKG in the case $\alpha <4/d$ to get the global existence and uniqueness for a class of very weak solutions.

\end{itemize}
\end{rem}

\subsection{Ideas in the proof of main results}

Now we indicate the crucial ideas how to obtain the solutions of NLKG in  $E^{\sigma,s}$.
Recall that one cannot solve nonlinear Klein-Gordon equation in supercritical Sobolev spaces $H^\sigma \times H^{\sigma-1}$, since the nonlinear estimate is problematic and the ill-posedness occurs in supercritical Sobolev spaces $H^\sigma \times H^{\sigma-1} $ with $\sigma <\sigma_c$.   However, $E^{\sigma,s}$ type spaces have some good algebraic structures when the frequency is localized in the first octant, so that the nonlinear estimates become available in $E^{\sigma,s}$ type spaces in the case $s<0$.

Let $\varphi \in E^{\sigma,s}$.  Observing that $\lim_{b\to -\infty} \|\varphi\|_{E^{\sigma,b}} =0$, we immediately get some $s_0 < s$ such that $\|\varphi\|_{E^{\sigma,s_0}} \leqslant \delta \ll 1$. So, we can solve NLKG and sinh-Gordon by the small data arguments as in \cite{Wa1998} and in Nakamura and Ozawa \cite{NaOz2001}. However, $s_0 = s_0(\varphi)$ will depends on both $\delta$ and $\varphi $ and it is unbounded for all $\varphi \in  E^{\sigma,s}$  (see Section \ref{lookingup}). In order to get $s_0= s_0(\|\varphi\|_{E^{\sigma,s}} )$, we use the scaling argument and
NLKG \eqref{NLKG} becomes Eq.~\eqref{NLKG2} after scalings. Observing the scaling solutions $u_\lambda (x,t)= \lambda^{2/\alpha} u(\lambda x, \lambda t)$ of Eq.~\eqref{NLKG2} in supercritical spaces $ H^\sigma$ (say, $\sigma > 1$), we have
\begin{align*}
\|(u_\lambda, \partial_t u_\lambda)|_{t=0}\|_{ H^\sigma \times H^{\sigma-1}}  \lesssim   \lambda^{\sigma -d/2+2/\alpha} \|(u_0, u_1)\|_{H^\sigma \times H^{\sigma-1}} \to 0, \ \ \lambda \to \infty.
\end{align*}
It follows that the scaling NLKG can have very small initial data in supercritical Sobolev spaces. The above observation is also adapted to the supercritical space $E^{\sigma,s} \times E^{\sigma-1,s}$ ($s<0 $). As $\lambda \to \infty$, $(u_\lambda, \partial_t u_\lambda)|_{t=0}$ will vanish in $E^{\sigma,s} \times E^{ \sigma-1,s}$  if $ (\widehat{u}_0, \widehat{u}_1) $ is supported in the first octant away from the origin. So, we can try to solve Eq.~\eqref{NLKG2} by assuming that initial data in $E^{\sigma,s} \times E^{\sigma-1, s}$ is sufficiently small.

Since Eq.~\eqref{NLKG2} contains a parameter $\lambda^2$, we have to consider the decaying estimates for the semi-group $U_\lambda(t):= e^{\mathrm{i} t \sqrt{\lambda^2 - \Delta}}$ by treating $\lambda $ as a variable parameter. By considering its frequency-localized version, we find that the decaying estimates of $U_\lambda(t)$ depend on $\lambda$, which become worse (better) in the lower (higher) frequency than those of $U_1(t)=e^{\mathrm{i} t \sqrt{I - \Delta}}$.

After obtaining the time decay of $U_\lambda(t)$, the Strichartz estimates of $U_\lambda(t)$ can be obtained by  $TT^*$ arguments. Since $U_\lambda(t)$  has different decaying estimates for the lower and higher frequency, it follows that the the Strichartz estimates of $U_\lambda(t)$ are also different for lower and higher frequencies.   On the other hand, in order to handle the supercritical data in $E^{\sigma,s}$, a new type of function spaces $B^{\sigma,s}_{p,q}$ and $\widetilde{L}^\gamma (\mathbb{R}, B^{\sigma,s}_{p,q})$ are involved in the Strichartz estimates, see Sections  \ref{Functspaces} and \ref{sectDecay}.

 One needs to further make the nonlinear mapping estimates in  $\widetilde{L}^\gamma (\mathbb{R}, B^{\sigma,s}_{p,q})$. Since the Strichartz estimates in the lower and higher frequency are different, we need to handle the nonlinearity in lower and higher frequency spaces in a separate way.

  Finally, the main results can be shown by contraction mapping arguments.

\subsection{Notations and organization}

 Throughout this paper, we denote by $L^p_x$ the Lebesgue space on $x\in \mathbb{R}^d$ and write
$ \|f\|_p :=\|f\|_{L^p(\mathbb{R}^d)}. $
For any function $g$ of $(x,t) \in \mathbb{R}^d \times \mathbb{R}$, we denote
$$
\|g\|_{L^\gamma_t L^p_x} = \|\|g\|_{L^p(\mathbb{R}^d) }\|_{L^\gamma_t} = \left(\int_{\mathbb{R}} \|g(\cdot,\, t)\|^\gamma_p dt \right)^{1/\gamma}.
$$
Let us write $\langle \nabla\rangle^s = \mathscr{F}^{-1} \langle \xi\rangle^s \mathscr{F}$, $2^{s|\nabla|}  = \mathscr{F}^{-1} 2^{s|\xi|} \mathscr{F}$, where we
write $\langle \xi\rangle=(1+\xi^2_1+...+\xi^2_d)^{1/2}$ and $|\xi|_\infty = \max_{1\leqslant i\leqslant d} |\xi_i|$  for $\xi=(\xi_1,...,\xi_d) \in \mathbb{R}^d$.  We will use the following notations. $C\gtrsim  1 $ and $ c\lesssim  1$ will denote constants which can be different at different places, we will use $A\lesssim B$ to denote   $A\leqslant CB$; $A\sim B$ means that $A\lesssim B$ and $B\lesssim A$, $A\vee B = \max(A,B), \, A\wedge B =\min (A,B)$. $a+ = a+ \varepsilon$ for $0< \varepsilon \ll 1$.   We denote by  $\mathscr{F}^{-1}f$ the inverse Fourier transform of $f$.  For any $1\leqslant p \leqslant \infty$,   $l^p$  stands for the sequence Lebesgue space.

The paper is organized as follows. In Section \ref{Functspaces} we introduce the notions of function spaces $B^{\sigma,s}_{p,q}$, $F^{\sigma,s}_{p,q}$, $\widetilde{L}^\gamma (\mathbb{R}, B^{\sigma,s}_{p,q})$ and  $\widetilde{L}^\gamma (\mathbb{R}, F^{\sigma,s}_{p,q})$, then consider their scaling properties, inclusion relations with Besov and Triebel spaces $B^{\sigma}_{p,q}$ and $F^{\sigma}_{p,q}$, respectively.   Section  \ref{sectDecay} is devoted to consider the time-decaying and the Strichartz estimates for the semi-group $ e^{\mathrm{i} t \sqrt{\lambda^2 - \Delta}}$. By establishing some nonlinear estimates, we prove our main results in Section \ref{GlobalNLKG}. Finally, in Section \ref{lookingup} we discuss the cases that the scaling argument is invalid for the lower power nonlineaities.

\section{Function spaces $B^{\sigma,s}_{p,q}$ and $F^{\sigma,s}_{p,q}$} \label{Functspaces}

\subsection{Function spaces $B^{\sigma,s}_{p,q}$}

Let $s, \sigma\in \mathbb{R}$, $1\leqslant p,q\leqslant \infty$. We introduce the following function spaces
\begin{align} \label{Fspace1}
B^{ \sigma,s}_{p,q} & = \left\{f \in \mathscr{S}'_1:  \|f\|_{  B^{\sigma, s}_{p,q} }=  \left(\sum^\infty_{j=0} 2^{\sigma jq}   \|2^{s|\nabla|}\triangle_j f  \|^q_{p} \right)^{1/q} <\infty   \right\}; \\
\widetilde{L}^{\gamma} (\mathbb{R},\,  B^{ \sigma,s}_{p,q}) & = \left\{f \in \mathscr{S}'(\mathbb{R},\, \mathscr{S}'_1):  \|f\|_{\widetilde{L}^{\gamma} (\mathbb{R},\,  B^{\sigma, s}_{p,q}) }=  \left(\sum^\infty_{j=0} 2^{\sigma jq}   \|2^{s|\nabla|}\triangle_j f  \|^q_{L^\gamma_t L^p_x} \right)^{1/q} <\infty   \right\}.   \label{gammaFspace1}
\end{align}
If $s=0$, then $ B^{ \sigma,0}_{p,q} = B^{ \sigma}_{p,q}$ is the Besov space, cf. \cite{BeLo1976,Tr1983,WaHuHaGu2011}. In the case $s>0$, the function spaces $B^{ \sigma,s}_{2,2}$ and $\widetilde{L}^{\gamma} (\mathbb{R},\,  B^{\sigma, s}_{p,q})$ for $s=c(t)$ related the Gevrey class are hidden in some literatures (see \cite{FoTe1989,BaBiTa2012} for instance). However, if $s<0$, $B^{ \sigma,s}_{p,q}$ and $\widetilde{L}^{\gamma} (\mathbb{R},\,  B^{\sigma, s}_{p,q})$ become  rather rough and it seems that such a kind of spaces are new ones and they are not included by any known function spaces.

We will use the multiplier on $L^p$.  We denote by $M_p$ the multiplier space on $L^p$, i.e.,
$$
\|m\|_{M_p} = \sup_{f\in \mathscr{S}, \,\|f\|_p=1} \|\mathscr{F}^{-1} m \mathscr{F} f\|_p.
$$
The following Bernstein's multiplier estimate is well-known. For $L>d/2$, $L\in \mathbb{N}$, $\theta= d/2L$ (cf. \cite{BeLo1976, WaHuHaGu2011}),
\begin{align} \label{multiplier}
\|m\|_{M_p} \lesssim \|\mathscr{F}^{-1} m\|_1 \lesssim \|m\|^{1-\theta}_2 \|m\|^{\theta}_{\dot H^L}.
\end{align}
First, we have
\begin{lemma} \label{Embeddingequal}
Let $s , \sigma  \in \mathbb{R}$.   Then we have $B^{ \sigma,s}_{2,2} =E^{\sigma,s}$ with equivalent norms.
\end{lemma}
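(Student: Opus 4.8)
The plan is to unwind both norms to the Fourier side and compare them via the almost-orthogonality of the Littlewood--Paley pieces. By definition,
$\|f\|_{B^{\sigma,s}_{2,2}}^2 = \sum_{j=0}^\infty 2^{2\sigma j}\|2^{s|\nabla|}\triangle_j f\|_2^2$, and by Plancherel each summand equals $\int_{\mathbb{R}^d} 2^{2\sigma j} 2^{2s|\xi|}|\varphi_j(\xi)|^2 |\widehat f(\xi)|^2\,d\xi$ (with $\varphi_0=\psi$). On the support of $\varphi_j$ one has $|\xi|\sim 2^j$, hence $2^{2\sigma j}\sim \langle\xi\rangle^{2\sigma}$ there (uniformly in $j$, using $j\geqslant 0$ so that $\langle\xi\rangle\sim 2^j$ on the annulus, and $\langle\xi\rangle\sim 1\sim 2^0$ on the support of $\psi$). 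Therefore each summand is comparable to $\int \langle\xi\rangle^{2\sigma}2^{2s|\xi|}|\varphi_j(\xi)|^2|\widehat f(\xi)|^2\,d\xi$, with constants independent of $j$.

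Next I would sum in $j$ and use $\sum_{j\geqslant 0}|\varphi_j(\xi)|^2 \sim 1$ for all $\xi\in\mathbb{R}^d$: the lower bound holds because $\sum_{j\geqslant 0}\varphi_j\equiv 1$ and only finitely many (at most two) terms are nonzero at any point, so $1 = (\sum_j\varphi_j(\xi))^2 \leqslant 2\sum_j |\varphi_j(\xi)|^2$; the upper bound holds because $0\leqslant\varphi_j\leqslant 1$ and at most two terms are nonzero, giving $\sum_j|\varphi_j(\xi)|^2\leqslant 2$. Interchanging sum and integral (everything is nonnegative, so Tonelli applies) yields
$$\|f\|_{B^{\sigma,s}_{2,2}}^2 \sim \int_{\mathbb{R}^d}\langle\xi\rangle^{2\sigma}2^{2s|\xi|}|\widehat f(\xi)|^2\,d\xi = \|f\|_{E^{\sigma,s}}^2,$$
which is the claimed norm equivalence. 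The same chain of equalities and estimates shows that finiteness of one norm is equivalent to finiteness of the other, so the two spaces coincide as sets; one should also note that membership in $\mathscr{S}'_1$ is imposed in the definition of both spaces, so there is no discrepancy at the level of the ambient distribution space.

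The only mild subtlety — and the step I would be most careful about — is the legitimacy of writing $\|2^{s|\nabla|}\triangle_j f\|_2$ via Plancherel as an integral against $2^{2s|\xi|}|\varphi_j(\xi)\widehat f(\xi)|^2$ when $s<0$: the factor $2^{s|\xi|}$ is bounded and the cutoff $\varphi_j$ is compactly supported and smooth, so $2^{s|\xi|}\varphi_j(\xi)\widehat f(\xi)$ is a perfectly good tempered distribution times a Schwartz-class multiplier, and the identity is just Parseval once one knows $\widehat f$ is (locally) an $L^2$ function on the annulus — which is exactly what either finite norm guarantees. Thus the argument is essentially a bookkeeping computation with Littlewood--Paley projections; there is no real obstacle, and the restriction on $s$ or $\sigma$ plays no role here (the statement holds for all $s,\sigma\in\mathbb{R}$).
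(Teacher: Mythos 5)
Your proposal is correct and is essentially the same argument the paper has in mind: the paper's one-line proof invokes exactly Plancherel, the partition of unity $\psi+\sum_{j\geqslant 1}\varphi_j=1$, and the annular support of $\varphi_j$ (so that $\langle\xi\rangle\sim 2^j$ there), and you have simply written out the computation in full. No gap; the remark about $\mathscr{S}'_1$-membership and the legitimacy of Parseval is sensible bookkeeping but not a genuine subtlety.
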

\noindent {\bf Proof.} By Plancherel's identity,  $\psi+ \sum_{j\in \mathbb{N}} \varphi_j=1 $ and the supported set property of $\psi, \, \varphi_j$, we immediately have the result, as desired. $\hfill\Box$

\begin{lemma} \label{Embedding1}
Let $s_i, \sigma_i \in \mathbb{R}$, $1\leqslant p_i ,q_i \leqslant \infty$, $i=1,2$.   Then we have
\begin{align} \label{embedding1}
  B^{ \sigma_1,s_1}_{p_1,q_1} \subset  B^{ \sigma_2,s_2}_{p_2,q_2}, \ \ \ \mbox{ if } s_1> s_2, \ p_1 \leqslant p_2.
\end{align}
In particular,
\begin{align}
&  B^{ \sigma_1}_{p_1,q_1} \subset  B^{ \sigma_2,s}_{p_2,q_2}, \ \ \ \mbox{ if } s<0, \ p_1 \leqslant p_2, \label{embedding2} \\
& H^\kappa \subset E^{\sigma,s}, \  \ \ \  \mbox{ if } s<0, \ \kappa,\sigma \in \mathbb{R}. \label{embedding2a}
\end{align}
\end{lemma}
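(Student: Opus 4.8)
\noindent{\bf Proof proposal.} It suffices to establish \eqref{embedding1}: then \eqref{embedding2} is its special case $s_1=0>s_2=s$, and \eqref{embedding2a} follows by taking $p_1=p_2=q_1=q_2=2$, $\sigma_1=\kappa$, $s_1=0$, $\sigma_2=\sigma$, $s_2=s$ and using Lemma \ref{Embeddingequal} together with $B^{\kappa,0}_{2,2}=H^{\kappa}$. So fix $s_1>s_2$, $p_1\leqslant p_2$ and set $b:=(s_1-s_2)\ln 2>0$, so that $2^{(s_2-s_1)|\nabla|}=e^{-b|\nabla|}=\prod_{i=1}^{d}e^{-b|\partial_i|}$ is an iterated convolution with one-dimensional Poisson kernels (each nonnegative, of $L^1$-mass $1$), hence bounded on every $L^{p}$ with norm $\leqslant 1$. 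Since $2^{s_2|\nabla|}\triangle_j f=e^{-b|\nabla|}\bigl(2^{s_1|\nabla|}\triangle_j f\bigr)$, the heart of the matter is the single-block estimate
$$
2^{\sigma_2 j}\,\|2^{s_2|\nabla|}\triangle_j f\|_{p_2}\ \leqslant\ D_j\,2^{\sigma_1 j}\,\|2^{s_1|\nabla|}\triangle_j f\|_{p_1},\qquad D_j\lesssim 2^{Cj}e^{-c2^{j}},
$$
for suitable $C,c>0$. Granting this, \eqref{embedding1} follows by taking $\ell^{q_2}$-norms in $j$: if $q_1\leqslant q_2$ one uses $\ell^{q_1}\hookrightarrow\ell^{q_2}$ and $\sup_jD_j<\infty$, and if $q_1>q_2$ one uses H\"older's inequality with exponent $\rho$, $1/\rho=1/q_2-1/q_1$, together with $(D_j)_j\in\ell^{\rho}$.

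For the block estimate, write $g_j:=2^{s_1|\nabla|}\triangle_j f$, whose Fourier transform is supported in $\{|\xi|_2\sim 2^j\}$ for $j\geqslant 1$ (the finitely many small $j$, where $e^{-b|\xi|}\leqslant 1$ on the support, are trivial). Because $e^{-b|\nabla|}g_j$ is again frequency localized at scale $2^j$, Bernstein's inequality reduces us to proving $\|e^{-b|\nabla|}g_j\|_{p_1}\lesssim 2^{j/2}e^{-c2^j}\|g_j\|_{p_1}$. The point is that, although $e^{-b|\xi|}\leqslant e^{-c2^j}$ pointwise on $\mathrm{supp}\,\widehat{g_j}$, the symbol $e^{-b|\xi|}$ (here $|\xi|$ is the $\ell^1$-length) is merely Lipschitz across the hyperplanes $\{\xi_i=0\}$, which meet $\mathrm{supp}\,\varphi_j$, so a direct higher-order multiplier estimate is unavailable. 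We therefore split according to the coordinate of largest modulus: choose a smooth partition of unity $1=\sum_{i=1}^{d}\chi_i(\xi)$ on $\mathbb{R}^d\setminus\{0\}$ with each $\chi_i$ being $0$-homogeneous and supported in $\{|\xi_i|>|\xi|_\infty/2\}$ (these sets cover $\mathbb{R}^d\setminus\{0\}$), and write $g_j=\sum_i g_j^{(i)}$ with $g_j^{(i)}:=\mathscr{F}^{-1}[\chi_i\,\widehat{g_j}]$. Then $\widehat{g_j^{(i)}}$ is supported in $\{|\xi|_2\sim 2^j\}\cap\{|\xi_i|\sim 2^j\}$, and $\|g_j^{(i)}\|_{p_1}\lesssim\|g_j\|_{p_1}$ uniformly in $i,j$, since $\chi_i$ times a fixed bump on the unit shell is an $L^{p_1}$-multiplier by \eqref{multiplier} and the multiplier norm is dilation invariant.

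Fixing $i$ and factoring $e^{-b|\nabla|}=e^{-b|\partial_i|}\prod_{k\neq i}e^{-b|\partial_k|}$, the factor $\prod_{k\neq i}e^{-b|\partial_k|}$ has $L^{p_1}\to L^{p_1}$ norm $\leqslant 1$, while $e^{-b|\partial_i|}$ acts on $g_j^{(i)}$ as convolution in $x_i$ with $k_{i,j}:=\mathscr{F}_{\xi_i}^{-1}[e^{-b|\xi_i|}\phi_{i,j}(\xi_i)]$, where $\phi_{i,j}$ is a one-dimensional smooth cut-off equal to $1$ on the $\xi_i$-support of $\widehat{g_j^{(i)}}$ and supported in $\{|\xi_i|\sim 2^j\}$ away from the origin. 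Hence $\|e^{-b|\nabla|}g_j^{(i)}\|_{p_1}\leqslant\|k_{i,j}\|_{L^1(\mathbb{R})}\|g_j^{(i)}\|_{p_1}$, and since $e^{-b|\xi_i|}\phi_{i,j}$ is now a \emph{smooth} symbol the one-dimensional instance of \eqref{multiplier} (with $L=1$, $\theta=1/2$) gives
$$
\|k_{i,j}\|_{L^1(\mathbb{R})}\lesssim \|e^{-b|\cdot|}\phi_{i,j}\|_{L^2(\mathbb{R})}^{1/2}\,\|e^{-b|\cdot|}\phi_{i,j}\|_{\dot H^1(\mathbb{R})}^{1/2}\lesssim 2^{j/2}e^{-c2^j},
$$
using $e^{-b|\xi_i|}\leqslant e^{-c2^j}$, $|\partial_{\xi_i}(e^{-b|\xi_i|}\phi_{i,j})|\lesssim e^{-c2^j}$ on $\mathrm{supp}\,\phi_{i,j}$ (which has measure $\lesssim 2^j$). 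Summing over $i$ and combining with the Bernstein step yields the block estimate with $D_j\lesssim 2^{[(\sigma_2-\sigma_1)\vee 0+d/2+1/2]j}e^{-c2^j}$, which is bounded and lies in every $\ell^{\rho}$; this proves \eqref{embedding1}.

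The main obstacle is the case $p_1=p_2$: there Young's and Bernstein's inequalities give no slack, so the entire exponential gain must be extracted from $e^{-b|\nabla|}$ localized to frequency $2^j$, and the $\ell^1$-geometry of $|\xi|$ — which makes $e^{-b|\xi|}$ only Lipschitz on the coordinate hyperplanes contained in the dyadic shells — blocks the naive Sobolev-multiplier bound. The coordinate-by-largest-modulus decomposition resolves this by passing to a one-dimensional symbol $e^{-b|\xi_i|}$ that is simultaneously smooth (being supported away from $\xi_i=0$) and exponentially small (being supported at $|\xi_i|\sim 2^j$), which is exactly what makes \eqref{multiplier} applicable with an admissible gain.
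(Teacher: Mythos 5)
Your proof is correct, and it repairs a step in the paper's own argument. Both proofs begin with the same Bernstein reduction $\|2^{s_2|\nabla|}\triangle_j f\|_{p_2}\lesssim 2^{jd(1/p_1-1/p_2)}\|2^{s_2|\nabla|}\triangle_j f\|_{p_1}$ and end with essentially the same summation over $j$ (the paper passes through $B^{\sigma_1,s_1}_{p_1,\infty}\subset B^{\sigma_2,s_2}_{p_2,1}$ and then $\ell^1\subset\ell^q\subset\ell^\infty$; your $\ell^{q_1}\hookrightarrow\ell^{q_2}$ plus H\"older version is equivalent, since the doubly-exponential decay of $D_j$ dominates any polynomial factor). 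The substantive divergence lies in how the single-block gain $\|\mathscr{F}^{-1}[\varphi_j\,2^{(s_2-s_1)|\xi|}]\|_{L^1}\lesssim e^{-c2^j}$ is obtained.

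The paper applies the Bernstein multiplier bound \eqref{multiplier} directly to the rescaled symbol $m(\xi)=\varphi(\xi)\,2^{(s_2-s_1)2^j|\xi|}$ (the exponents $2^{2j}$ and $(s_2-s_2)$ in the paper's display are typographical slips for $2^j$ and $(s_2-s_1)$). You have correctly identified the obstruction: because $|\xi|=|\xi_1|+\cdots+|\xi_d|$ is the $\ell^1$-length, $m$ is merely Lipschitz across the coordinate hyperplanes $\{\xi_i=0\}$, which pass through every dyadic annulus for $d\geqslant 2$; a tensor-product computation shows $m\in H^s(\mathbb{R}^d)$ only for $s<3/2$, so the $\dot H^L$ seminorm in \eqref{multiplier} is infinite as soon as $L\geqslant 2$, i.e.\ already for $d\geqslant 2$ under the stated constraint $L\in\mathbb{N}$, and for all $d\geqslant 3$ even if fractional $L$ were permitted. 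Thus the paper's step is not justified as written. Your coordinate-by-largest-modulus decomposition — factoring $e^{-b|\nabla|}=e^{-b|\partial_i|}\prod_{k\neq i}e^{-b|\partial_k|}$ and pushing the exponential gain onto the single coordinate where $|\xi_i|\sim 2^j$, so that the one-dimensional symbol $e^{-b|\xi_i|}\phi_{i,j}$ is smooth and supported away from the origin — is exactly the right remedy: the remaining Poisson factors have $L^p$ operator norm one, and the one-dimensional form of \eqref{multiplier} legitimately yields $\|k_{i,j}\|_{L^1(\mathbb{R})}\lesssim 2^{j/2}e^{-c2^j}$, giving $D_j\lesssim 2^{Cj}e^{-c2^j}$ and closing the summation. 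Your route reaches the same conclusion in every dimension and makes rigorous a step that the paper's proof leaves open for $d\geqslant 2$.
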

\noindent {\bf Proof.} Let us recall that (cf. \cite{BeLo1976,Tr1983,WaHuHaGu2011})
$$
\|\triangle_j f\|_{p_2} \lesssim 2^{jd(1/p_1-1/p_2)}  \|\triangle_j f\|_{p_1}.
$$
It follows that
$$
\|2^{s_2|\nabla|}\triangle_j f\|_{p_2} \lesssim 2^{jd(1/p_1-1/p_2)}  \| 2^{s_2|\nabla|} \triangle_j f\|_{p_1}.
$$
Using the almost orthogonality of $\varphi_j= \varphi(2^{-j} \, \cdot)$, one has that for $j\geqslant 1$,
\begin{align*}
  \| 2^{s_2|\nabla|} \triangle_j f\|_{p_1} & \lesssim  \sum^1_{\ell=-1} \|\mathscr{F}^{-1} \varphi_{j+\ell} 2^{(s_2-s_2)|\xi|}\|_1  \| 2^{s_1|\nabla|} \triangle_j f\|_{p_1}.
\end{align*}
Using the multiplier estimate \eqref{multiplier}, we have for some $c>0$,
\begin{align*}
    \|\mathscr{F}^{-1} \varphi_{j} 2^{(s_2-s_2)|\xi|}\|_1
    =  \|\mathscr{F}^{-1} \varphi  2^{(s_2-s_2)|2^{2j}\xi|}\|_1    \lesssim 2^{c(s_2-s_1) 2^{2j}}.
\end{align*}
So, we have
\begin{align*}
  \| 2^{s_2|\nabla|} \triangle_j f\|_{p_1}
   & \lesssim 2^{c(s_2-s_1) 2^{2j}}  \| 2^{s_1|\nabla|} \triangle_j f\|_{p_1}.
\end{align*}
for $j\geqslant 1$. It is easy to see that the above inequality also holds for $j=0$.  It follws that
\begin{align*}
\|f\|_{B^{ \sigma_2,s_2}_{p_2,1}} &  =\sum^\infty_{j=0} 2^{\sigma_2 j}  \| 2^{s_2|\nabla|} \triangle_j f\|_{p_2} \\
     & \lesssim \sum^\infty_{j=0} 2^{(\sigma_2-\sigma_1 +d(1/p_1-1/p_2)) j}  2^{c(s_2-s_1) 2^{2j}} 2^{\sigma_1j} \| 2^{s_1|\nabla|} \triangle_j f\|_{p_1} \\
     & \lesssim  \|f\|_{B^{ \sigma_1,s_1}_{p_1,\infty}} \sum^\infty_{j=0} 2^{(\sigma_2-\sigma_1 +d(1/p_1-1/p_2)) j}  2^{c(s_2-s_1) 2^{2j}}.
\end{align*}
Noticing that $s_2-s_1 <0$, one sees that for any $\sigma >0$
$$
\sup_{x\geqslant 0} x^\sigma  \  2^{c(s_2-s_1) x^2} < \infty.
$$
This implies that
$$
\sum^\infty_{j=0} 2^{(\sigma_2-\sigma_1 +d(1/p_1-1/p_2)) j}  2^{c(s_2-s_1) 2^{2j}}
$$
is a convergent series. Hence, we have $ B^{ \sigma_1,s_1}_{p_1,\infty} \subset  B^{ \sigma_2,s_2}_{p_2,1}$. Applying the embedding $l^1 \subset l^q \subset l^\infty$ for any $q\in [1,\infty]$, we have the result, as desired. \eqref{embedding2a} is a direct consequence of \eqref{embedding2} by taking into account $H^\kappa =B^\kappa_{2,2}$ and $E^{\sigma,s} =B^{\sigma,s}_{2,2}$.  $\hfill\Box$\\

By \eqref{embedding2}, we see that  $B^{ \sigma_1}_{p,q} \subset B^{ \sigma_2,s}_{p,q}$  for any $s<0$ and it is independent of $\sigma_1,\, \sigma_2 \in \mathbb{R}$, which means that $\cup_{\sigma \in \mathbb{R}} B^{ \sigma}_{p,q}$ is a subset of $B^{ \sigma_2,s}_{p,q}$ for any $\sigma_2 >0$.

\begin{lemma} \label{Isomorphism1}
Let $s, s', \sigma  \in \mathbb{R}$, $1\leqslant p ,q  \leqslant \infty$.   Then
\begin{align} \label{isomorph1}
2^{(s'-s)|\nabla|}: \,   B^{ \sigma, s'}_{p,q} \to  B^{ \sigma,s}_{p,q}
\end{align}
is an isometric mapping. In particular,
 \begin{align} \label{isomorph2}
2^{s|\nabla|}: \,   B^{ \sigma, s}_{p,q} \to  B^{ \sigma}_{p,q}
\end{align}
is an isometric mapping.
\end{lemma}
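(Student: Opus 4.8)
The plan is to reduce the statement to two elementary facts: the operators $2^{(s'-s)|\nabla|}$, $2^{s|\nabla|}$ and the Littlewood--Paley projectors $\triangle_j$ are all Fourier multipliers, hence mutually commuting; and their symbols multiply pointwise, $2^{s|\xi|}\,2^{(s'-s)|\xi|}=2^{s'|\xi|}$. First I would note that $2^{(s'-s)|\nabla|}=\mathscr{F}^{-1}2^{(s'-s)|\xi|}\mathscr{F}$ is well defined on $\mathscr{S}'_1$: for $f\in\mathscr{S}'_1$ one has $\widehat f\in\mathscr{S}'_1$, and multiplication by $2^{(s'-s)|\xi|}$ maps $\mathscr{S}'_1$ into itself within the Gelfand--Shilov calculus (the same calculus already used to define $2^{s|\nabla|}$ and the spaces $E^{\sigma,s}$), so $2^{(s'-s)|\nabla|}f\in\mathscr{S}'_1$. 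Moreover $2^{(s-s')|\nabla|}$ is a two-sided inverse of $2^{(s'-s)|\nabla|}$ on $\mathscr{S}'_1$, so the map is in particular a bijection of $\mathscr{S}'_1$ onto itself.

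For the isometry, fix $f\in B^{\sigma,s'}_{p,q}$ and put $g=2^{(s'-s)|\nabla|}f$. Since $\triangle_j$ has the compactly supported smooth symbol $\varphi_j$ (resp.\ $\psi$ when $j=0$), the distribution $\triangle_j g$ has Fourier transform $\varphi_j(\xi)\,2^{(s'-s)|\xi|}\,\widehat f(\xi)$, which is compactly supported, and hence
\[
2^{s|\nabla|}\triangle_j g=\mathscr{F}^{-1}\big[\,2^{s|\xi|}\varphi_j(\xi)\,2^{(s'-s)|\xi|}\,\widehat f(\xi)\,\big]=\mathscr{F}^{-1}\big[\,\varphi_j(\xi)\,2^{s'|\xi|}\,\widehat f(\xi)\,\big]=2^{s'|\nabla|}\triangle_j f .
\]
Thus $\|2^{s|\nabla|}\triangle_j g\|_{p}=\|2^{s'|\nabla|}\triangle_j f\|_{p}$ for every $j\geqslant 0$; multiplying by $2^{\sigma jq}$, summing over $j$ (with the usual supremum modification when $q=\infty$) and taking the $q$-th root gives $\|g\|_{B^{\sigma,s}_{p,q}}=\|f\|_{B^{\sigma,s'}_{p,q}}$, which is \eqref{isomorph1}. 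Surjectivity is immediate: any $g\in B^{\sigma,s}_{p,q}$ equals $2^{(s'-s)|\nabla|}h$ with $h=2^{(s-s')|\nabla|}g$, and $\|h\|_{B^{\sigma,s'}_{p,q}}=\|g\|_{B^{\sigma,s}_{p,q}}$ by the same identity; hence $2^{(s'-s)|\nabla|}$ is an isometric isomorphism. Taking $s'=0$ and recalling $B^{\sigma,0}_{p,q}=B^{\sigma}_{p,q}$ gives \eqref{isomorph2}.

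The only point that is not purely formal is the very first one — making sense of $2^{(s'-s)|\nabla|}$ as a continuous operator on $\mathscr{S}'_1$ when $s'-s>0$, i.e.\ when the multiplier $2^{(s'-s)|\xi|}$ grows exponentially — and this is exactly the situation the Gelfand--Shilov space $\mathscr{S}_1$ is designed to accommodate; once it is in place, the block-wise identity above involves only multiplication of compactly supported distributions by the (locally bounded) symbols $2^{t|\xi|}$, with no further analytic obstacle. I expect this bookkeeping with $\mathscr{S}'_1$ to be the only part requiring any care, the rest being a one-line multiplier computation on each dyadic piece.
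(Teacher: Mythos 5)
Your proposal is correct and takes essentially the same route as the paper: the paper's one-line proof simply records the identity $\|2^{(s'-s)|\nabla|}f\|_{B^{\sigma,s}_{p,q}}=\|f\|_{B^{\sigma,s'}_{p,q}}$, which is exactly the dyadic-block multiplier computation you carried out in detail. The extra care you took with the well-definedness of $2^{(s'-s)|\nabla|}$ on $\mathscr{S}'_1$ and the surjectivity via the inverse $2^{(s-s')|\nabla|}$ is a reasonable elaboration of what the paper leaves implicit.
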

\noindent {\bf Proof.} Observing that
$$
\|2^{(s'-s)|\nabla|} f\|_{ B^{ \sigma,s}_{p,q}} = \|f\|_{B^{ \sigma, s'}_{p,q}},
$$
we have the result, as desired. $\hfill\Box$\\

Now we consider the scaling property on $B^{\sigma,s}_{p,q}$. Recall that Lebesgue spaces have a very simple scaling property $\|f(\lambda\, \cdot)\|_p = \lambda^{-d/p} \|f\|_p$. If $s<0$, we will show that the scaling on $B^{\sigma,s}_{p,q}$  is very close to $L^p$. More precisely, the scaling on $B^{\sigma,s}_{p,q}$ in the lower frequency is almost the same as in $L^p$ up to a logarithmic lose, while its scaling in the higher frequency enjoys an exponential decaying behavior as $\lambda \to \infty$. Indeed, we have the following

\begin{lemma} \label{Scaling1}
		Let $s< 0$, $\sigma \in \mathbb{R}$, $1 \leqslant p,q \leqslant \infty$. We have the following results.

\begin{itemize}
\item[\rm (i)]
For any $\epsilon >0$, there exists $\lambda_0 =\lambda_0(\epsilon, s,\sigma, q) >1$  such that for any $\lambda > \lambda_0$ and $f\in B^{\sigma,s}_{p,q}$,
\begin{align*}
& 	\|f(\lambda\, \cdot) \|_{B^{\sigma,s}_{p,q}} \lesssim   \lambda^{-d/p + \epsilon}   \|f\|_{B^{\sigma,s}_{p,q} }
\end{align*}
and the above inequality holds for $\epsilon=0$ if $\sigma <0$.

\item[\rm (ii)] Assume that ${\rm supp }\,\widehat{f} \subset \{\xi: |\xi|\geqslant \varepsilon_0\}$ for some $\varepsilon_0>0$. Then for $\lambda \gg 1$,
\begin{align*}
& 	\|f(\lambda\, \cdot) \|_{B^{\sigma,s}_{p,q}} \lesssim \lambda^{\sigma-d/p } 2^{s \lambda \varepsilon_0/3} \|f\|_{B^{\sigma,s}_{p,q} }, \ \ \sigma > 0; \\
& 			\|f(\lambda\, \cdot) \|_{B^{ \sigma,s}_{p,q} } \lesssim \lambda^{-d/p} 2^{s \lambda \varepsilon_0/3} \|f\|_{B^{ \sigma,s}_{p,q} }, \ \ \sigma < 0 ; \\
& 	\|f(\lambda\, \cdot) \|_{B^{\sigma,s}_{p,q}} \lesssim \lambda^{-d/p }(\ln \lambda)^{1/q} 2^{s \lambda \varepsilon_0/3} \|f \|_{B^{\sigma,s}_{p,q} }, \ \ \sigma = 0.
\end{align*}

\item[\rm (iii)] We have for $\lambda \gg 1$,
\begin{align*}
& \|f(\lambda^{-1}\, \cdot)\|_{B^{\sigma, \lambda s}_{p,q}  } \lesssim \lambda^{ d/p - \sigma}  \|\varphi \|_{B^{\sigma,  s}_{p,q}  }, \ \ \sigma < 0; \\
& \|f(\lambda^{-1}\, \cdot)\|_{B^{\sigma, \lambda s}_{p,q}  } \lesssim \lambda^{ d/p  }  \|f\|_{B^{\sigma,  s}_{p,q}  }, \ \ \sigma > 0; \\
& \|f(\lambda^{-1}\, \cdot)\|_{B^{\sigma, \lambda s}_{p,q}  } \lesssim \lambda^{ d/p  } (\log_2\lambda)^{1/q'} \|f \|_{B^{\sigma,  s}_{p,q}  }, \ \ \sigma = 0.
\end{align*}
\end{itemize}
\end{lemma}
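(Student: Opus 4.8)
\medskip
\noindent\textbf{Proof plan.} All three estimates run on the same engine, so the plan is to set it up once. By Lemma~\ref{Isomorphism1} it suffices to work with $g:=2^{s|\nabla|}f\in B^{\sigma}_{p,q}$, and a direct Fourier computation gives, for $\lambda\geqslant 2$,
$$
\mathscr{F}\big[2^{s|\nabla|}\triangle_j(f(\lambda\,\cdot))\big](\xi)=\varphi_j(\xi)\,2^{s(\lambda-1)|\xi|/\lambda}\,\lambda^{-d}\,\widehat g(\xi/\lambda).
$$
Thus the $j$-th block of $f(\lambda\,\cdot)$ is carried by the frequencies of $g$ of size $\sim 2^j/\lambda$ (so for $2^j\lesssim\lambda$ only the low block $\triangle_0 g$ enters), and it comes with an extra symbol $2^{s(\lambda-1)|\xi|/\lambda}$ supported on $\{|\xi|\sim 2^j\}$. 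The only analytic input is the bound for that symbol: rescaling $\xi=2^j\eta$ and applying the Bernstein multiplier estimate~\eqref{multiplier} to $\varphi(\eta)\,2^{s(\lambda-1)2^j|\eta|/\lambda}$ on the unit annulus, one gets that its $M_p$-norm $\omega_j$ satisfies
$$
\omega_j\lesssim(1+|s|2^j)^{d/2}\,2^{s2^j/4},
$$
which is $\lesssim 1$ for every $j$ and super-exponentially small in $2^j$. Combined with $\|\triangle_j(g(\lambda\,\cdot))\|_p\lesssim\lambda^{-d/p}\sum_{2^k\sim 2^j/\lambda}\|\triangle_k g\|_p$ and $\|\triangle_k g\|_p=\|2^{s|\nabla|}\triangle_k f\|_p$, this yields the basic block estimate $\|2^{s|\nabla|}\triangle_j(f(\lambda\,\cdot))\|_p\lesssim\omega_j\,\lambda^{-d/p}\sum_{2^k\sim 2^j/\lambda}\|2^{s|\nabla|}\triangle_k f\|_p$.

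For (i) I would let $\mu$ be the integer with $2^\mu\leqslant\lambda<2^{\mu+1}$ and split $\sum_{j\geqslant 0}=\sum_{2^j<\lambda}+\sum_{2^j\geqslant\lambda}$. In the low range only $\triangle_0 g$ appears and $\sum_{j}2^{\sigma jq}\omega_j^q<\infty$ converges absolutely (for any $\sigma$, since $\omega_j$ decays super-exponentially), so this part is $\lesssim\lambda^{-d/p}\|\triangle_0 g\|_p\lesssim\lambda^{-d/p}\|f\|_{B^{\sigma,s}_{p,q}}$. In the high range, reindexing $j=k+\mu$, the governing coefficient $2^{\sigma\mu q}\omega_{k+\mu}^q$ is bounded --- indeed it tends to $0$ --- once $\lambda$ is large depending on $s,\sigma,q,d$, because the maximum of $t\mapsto(1+|s|t)^{dq/2}2^{sqt/4}$ over $t\geqslant 2^\mu$ is attained at $t=2^\mu$. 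Adding the ranges gives $\|f(\lambda\,\cdot)\|_{B^{\sigma,s}_{p,q}}\lesssim\lambda^{-d/p}\|f\|_{B^{\sigma,s}_{p,q}}$, which already implies the stated bound (with $\epsilon=0$, and for every $\sigma$).

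For (ii) the support hypothesis leaves only the blocks with $2^j\gtrsim\lambda\varepsilon_0$, and on the surviving ones the \emph{target} weight already obeys $2^{s|\xi|}\leqslant 2^{s\lambda\varepsilon_0}$. I would extract this by writing $2^{s|\xi|}\varphi_j(\xi)=2^{s2^{j-1}}\big[2^{s(|\xi|-2^{j-1})}\varphi_j(\xi)\big]$ (the two boundary indices $j\approx\log_2(\lambda\varepsilon_0)$ being handled crudely), noting $2^{s(|\xi|-2^{j-1})}\varphi_j$ has $M_p$-norm $\lesssim(1+|s|2^j)^{d/2}$, and using $\lambda-1\geqslant\lambda/2$ together with $2^j\gtrsim\lambda\varepsilon_0$ to trade $2^{s2^{j-1}}$ down to $2^{s\lambda\varepsilon_0/3}$, the surplus absorbing both the polynomial factor and the $2^{|s|2^k}$-loss incurred in passing from $\|\triangle_k f\|_p$ back to $\|2^{s|\nabla|}\triangle_k f\|_p$. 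Summing $2^{\sigma jq}$ over $j$ and reorganising by $k$, the inner sum $\sum_{2^j\sim\lambda 2^k}2^{\sigma jq}\sim(\lambda 2^k)^{\sigma q}$ produces the factor $\lambda^\sigma$ for $\sigma>0$, a convergent geometric series $\lesssim 1$ for $\sigma<0$, and $O(1)$ many terms for $\sigma=0$, i.e.\ the three stated bounds. Part (iii) is genuinely different because the dilation $f(\lambda^{-1}\,\cdot)$ \emph{compresses} frequencies: in the $B^{\sigma,\lambda s}_{p,q}$-norm the block $\triangle_0(f(\lambda^{-1}\,\cdot))$ now collects all of $f$'s frequencies of size $\lesssim\lambda$, i.e.\ $\triangle_k f$ with $0\leqslant k\lesssim\mu$, and (crucially) the weights match exactly under the scaling, so $\|2^{\lambda s|\nabla|}\triangle_0(f(\lambda^{-1}\,\cdot))\|_p\lesssim\lambda^{d/p}\sum_{0\leqslant k\lesssim\mu}\|2^{s|\nabla|}\triangle_k f\|_p$; a Hölder in $k$ over these $\sim\mu+1$ indices gives $\big(\sum_{k\leqslant\mu}2^{-\sigma kq'}\big)^{1/q'}\|f\|_{B^{\sigma,s}_{p,q}}$, which is $\sim 1$ ($\sigma>0$), $\sim\lambda^{-\sigma}$ ($\sigma<0$), $\sim(\log_2\lambda)^{1/q'}$ ($\sigma=0$); the blocks $\triangle_j$ with $j\geqslant 1$ each pair with a single scale $2^k\sim 2^j\lambda$ and add $\lesssim\lambda^{d/p-\sigma}\|f\|_{B^{\sigma,s}_{p,q}}$.

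The hard part is the multiplier estimate at the heart of all three cases --- that $2^{s(\lambda-1)|\nabla|}$ localized to $\{|\xi|\sim 2^j\}$ has $M_p$-norm $\lesssim(1+|s|2^j)^{d/2}2^{s2^j/4}$, i.e.\ that the super-exponential smallness of the symbol survives $L$-fold differentiation, the losses being only polynomial in $2^j$ --- together with the dyadic bookkeeping that turns these tiny constants, weighted against the $2^{\sigma jq}$ factors, into genuinely convergent sums: killing the $2^{\sigma\mu q}$ amplification in the matching-scale regime of (i)--(ii), and pinning the logarithm in (iii) to the single low-frequency block of the dilated function.
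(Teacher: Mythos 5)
Your plan is correct and, for part (i), actually establishes a \emph{strictly stronger} statement than the paper claims: the bound $\|f(\lambda\,\cdot)\|_{B^{\sigma,s}_{p,q}}\lesssim\lambda^{-d/p}\|f\|_{B^{\sigma,s}_{p,q}}$ for $\lambda$ large, i.e.\ $\epsilon=0$ for \emph{all} $\sigma$. Both proofs rest on the same block-scaling identity and the same Bernstein-type multiplier input, but diverge in the treatment of the low-frequency range. The paper introduces an auxiliary scale $\varepsilon_0=\lambda^{-1+\epsilon/\sigma}$, splits $\widehat{f}=\widehat{f}_1+\widehat{f}_2$ there, extracts the super-exponential gain from the symbol only on $\widehat{f}_1$, and on $\widehat{f}_2$ (the term $IV$ in the paper's proof) bounds the symbol crudely by a constant, paying a factor $(\lambda\varepsilon_0)^\sigma$ whose optimization against the exponential forces the $\lambda^\epsilon$ loss. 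You avoid the split entirely by retaining the weight $\omega_j\lesssim(1+|s|2^j)^{d/2}2^{s2^{j-2}}$ at \emph{every} dyadic scale (uniformly in $\lambda\geqslant 2$) and observing that $\sum_j 2^{\sigma jq}\omega_j^q$ converges for every $\sigma$; the matching-scale regime $2^j\gtrsim\lambda$ is then controlled by $2^{\sigma\mu}\omega_{\mu+1}\to 0$ as $\lambda\to\infty$. This is cleaner and sharper. Parts (ii) and (iii) agree in substance with the paper's Steps 2 and 3; your pairing of $\triangle_0(f(\lambda^{-1}\,\cdot))$ with the $O(\log_2\lambda)$ low blocks $\triangle_k f$ followed by H\"older in $k$ is exactly the paper's computation of $B$.

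One technical caveat shared by your proposal and the paper's proof: the weight is built on $|\xi|=|\xi_1|+\cdots+|\xi_d|$, so the symbols $2^{a|\xi|}\varphi_j(\xi)$ are only Lipschitz across the coordinate hyperplanes, and the Bernstein bound \eqref{multiplier} with integer $L>d/2$ cannot be applied verbatim once $d\geqslant 2$ (the $\dot{H}^L$ norm is infinite for $L\geqslant 2$). The required $L^1$-Fourier bound is still true --- e.g.\ via the tensorisation $2^{a|\xi|}=\prod_{i}2^{a|\xi_i|}$ together with a decomposition of the annulus according to the dominant coordinate --- but since the paper itself does not address it, this is not a gap you introduced. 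Aside from that, and from minor boundary bookkeeping near $j\approx\mu$ (where $\triangle_1 g,\triangle_2 g$ can contribute alongside $\triangle_0 g$, which you implicitly allow), the argument is sound.
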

\noindent {\bf Proof.} {\it Step 1.} We show the result of (i).  Let $\lambda \gg 1$  and $j_\lambda $ be the integer part of $\log_2 \lambda$. We have $\lambda = 2^{j_\lambda} c$ for some $1\leqslant c<2$.
By the definition,
\begin{align*}
 \|f(\lambda\, \cdot) \|_{B^{\sigma,s}_{p,q}} \leqslant        \|2^{s|\nabla|}\triangle_0 f (\lambda\, \cdot) \|_{p}
  & +   \left(\sum_{j\geqslant 1} 2^{\sigma jq}   \|2^{s|\nabla|}\triangle_j f(\lambda\, \cdot)  \|^q_{L^p(\mathbb{R}^d)} \right)^{1/q}:= I+II.
\end{align*}
It is easy to see that
\begin{align}
& \|2^{s|\nabla|}\triangle_0 f (\lambda\, \cdot) \|_{p} =  \lambda^{-d/p } \|\mathscr{F}^{-1} [2^{s\lambda |\xi|} \psi(\lambda \xi) \widehat{f}(\xi)]\|_p; \label{0Fspace2+} \\
& \|2^{s|\nabla|}\triangle_j f (\lambda\, \cdot) \|_{p} =  \lambda^{-d/p } \|\mathscr{F}^{-1} [2^{s\lambda |\xi|} \varphi(2^{-(j-j_\lambda)} c\xi) \widehat{f}(\xi)]\|_p. \label{Fspace2+}
\end{align}
Since $\lambda \gg 1$, by the invariance of $\|\widehat{f(\mu \, \cdot)}\|_1$ with respect to $\mu>0$, we have
\begin{align*}
I
& \leqslant  \lambda^{-d/p } \|\mathscr{F}^{-1}  2^{s(\lambda-1) |\xi|} \psi(\lambda \xi)  \|_1 \|\Delta_0 2^{s|\nabla|} f\|_p  \nonumber \\
& \leqslant  \lambda^{-d/p } \|\mathscr{F}^{-1}  2^{- |\xi|}\|_1 \|\mathscr{F}^{-1}\psi  \|_1 \|\Delta_0 2^{s|\nabla|} f\|_p  \nonumber \\
& \lesssim \lambda^{-d/p }   \|\Delta_0 2^{s|\nabla|} f\|_p.
\end{align*}
Using \eqref{Fspace2+},  one has that
\begin{align*}
II
  \leqslant \lambda^{\sigma-d/p} \left(\sum_{j\geqslant 1- j_\lambda } 2^{j \sigma q}   \|\mathscr{F}^{-1} [2^{s\lambda |\xi|} \varphi(2^{-j} c\xi) \widehat{f} ] \|^q_p \right)^{1/q}.
\end{align*}
Let $\varepsilon_0 \gg  \lambda^{-1}  $. We decompose $f$ into
 $$
 \widehat{f} = ( 1-\psi(2 \varepsilon_0^{-1}\, \cdot)) \, \widehat{f} + \psi(2 \varepsilon_0^{-1}\, \cdot) \widehat{f} := \widehat{f}_1 + \widehat{f}_2.
 $$
Then we have
\begin{align*}
II
  \leqslant \lambda^{\sigma-d/p} \sum_{k=1,2}  \left(\sum_{j\geqslant 1- j_\lambda } 2^{j \sigma q}   \|\mathscr{F}^{-1} [2^{s\lambda |\xi|} \varphi(2^{-j} c\xi) \widehat{f}_k ] \|^q_p \right)^{1/q}: = III+IV.
\end{align*}
By the support property of $\widehat{f}_1$, we have
\begin{align*}
III
 \leqslant  &  \lambda^{\sigma-d/p } \left(\sum_{j\geqslant 0 } 2^{j \sigma q}   \|\mathscr{F}^{-1} [2^{s\lambda |\xi|} \varphi(2^{-j} c\xi) \widehat{f}_1 ] \|^q_p \right)^{1/q} \nonumber\\
    &  + \lambda^{\sigma-d/p } \left(\sum_{\log_2 \varepsilon_0 -5 \leqslant  j <0 } 2^{j \sigma q}   \|\mathscr{F}^{-1} [2^{s\lambda |\xi|} \varphi(2^{-j} c\xi) \widehat{f}_1 ] \|^q_p \right)^{1/q}
    :=   V +VI.
\end{align*}
Write $\widetilde{\varphi}= \varphi_{-1}+ \varphi + \varphi_{1}$. Let $j> \log_2 \varepsilon_0 +5$.   It follows from the invariance of $\|\widehat{f(\mu \, \cdot)}\|_1$ with respect to $\mu>0$ and the multiplier estimate \eqref{multiplier} that
\begin{align*}
   \|\mathscr{F}^{-1} [2^{s\lambda |\xi|} \varphi(2^{-j} c\xi) \widehat{f}_1 ] \|_p
& \leqslant  \|\mathscr{F}^{-1} [2^{s(\lambda-1) |\xi|} \widetilde{\varphi}(2^{-j} c\xi)]  \|_1  \|\mathscr{F}^{-1} [2^{s |\xi|} \varphi(2^{-j} c\xi) \widehat{f}(\xi)]\|_p \nonumber\\
 &  \lesssim    2^{s\lambda \varepsilon_0   }   \|\mathscr{F}^{-1} [2^{s |\xi|} \varphi(2^{-j} c\xi) \widehat{f}(\xi)]\|_p.
\end{align*}
For   $j \in [\log_2 \varepsilon_0 -5, \, \log_2 \varepsilon_0 +5] $, in view of the invariance of $\|\widehat{f(\mu \, \cdot)}\|_1$ with respect to $\mu>0$ and \eqref{multiplier}, we have
\begin{align*}
&   \|\mathscr{F}^{-1} [2^{s\lambda |\xi|} \varphi(2^{-j} c\xi) \widehat{f}_1 ] \|_p \nonumber\\
& \leqslant  \|\mathscr{F}^{-1} [2^{s(\lambda-1)\varepsilon_0  |\xi|} \widetilde{\varphi}(2^{-j}\varepsilon_0  c\xi) (1- \psi(2  \xi))]  \|_1  \|\mathscr{F}^{-1} [2^{s |\xi|} \varphi(2^{-j} c\xi) \widehat{f}(\xi)]\|_p \nonumber\\
 &  \lesssim    2^{s\lambda \varepsilon_0/3   }   \|\mathscr{F}^{-1} [2^{s |\xi|} \varphi(2^{-j} c\xi) \widehat{f}(\xi)]\|_p.
\end{align*}
If $j <0$, by the   support property of $\varphi(2^{-j} c\cdot)$,  we have
\begin{align} \label{lowfretozero}
  \|\mathscr{F}^{-1} [2^{s |\xi|} \varphi(2^{-j} c\xi) \widehat{f}(\xi)]\|_p \lesssim  \|\triangle_0 2^{s |\nabla|} {f} \|_p.
\end{align}
Inserting the above estimate into $IV$ and $V$, we have
\begin{align*}
  V   & \lesssim \lambda^{\sigma-d/p } 2^{s \lambda \varepsilon_0 }  \|f\|_{B^{\sigma,s}_{p,q}} , \\
 VI   & \lesssim \lambda^{\sigma-d/p } 2^{s \lambda \varepsilon_0/3 }  \|\triangle_0 2^{s |\nabla|} {f} \|_p \left(\sum_{\log_2 \varepsilon_0 -5 \leqslant  j <0 } 2^{j \sigma q}    \right)^{1/q}.
\end{align*}
By  a straightforward calculation,
\begin{align*}
\left(\sum_{\log_2 \varepsilon_0 -5 \leqslant  j <0 } 2^{j \sigma q}    \right)^{1/q} \lesssim
\left\{
\begin{array}{ll}
1, & \sigma>0; \\
(-\log_2 \varepsilon_0)^{1/q}, & \sigma =0; \\
\varepsilon^\sigma_0, & \sigma <0.
\end{array}
\right.
\end{align*}
It follows that
\begin{align*}
 VI   & \lesssim   2^{s \lambda \varepsilon_0/3 } F(\lambda, \varepsilon_0) \|f\|_{B^{\sigma,s}_{p,q}},
\end{align*}
where
\begin{align*}
F(\lambda, \varepsilon_0) =
\left\{
\begin{array}{ll}
\lambda^{\sigma-d/p } , & \sigma>0; \\
\lambda^{ -d/p } (-\log_2 \varepsilon_0)^{1/q}, & \sigma =0; \\
\lambda^{\sigma-d/p } \varepsilon^\sigma_0, & \sigma <0.
\end{array}
\right.
\end{align*}
Combining the estimate of $V$ and $VI$, we have
\begin{align*}
 III   & \lesssim   2^{s \lambda \varepsilon_0/3 } F(\lambda, \varepsilon_0) \|f\|_{B^{\sigma,s}_{p,q}}.
\end{align*}

Next, we estimate $IV$. In view of \eqref{lowfretozero}, one easily sees that
\begin{align*}
 IV   & \lesssim \lambda^{\sigma-d/p }    \|\triangle_0 2^{s |\nabla|} {f} \|_p \left(\sum_{1- j_\lambda \leqslant j \leqslant \log_2 \varepsilon_0 +5} 2^{j \sigma q}    \right)^{1/q}.
\end{align*}
One can calculate that
By  a straightforward calculation,
\begin{align*}
\left(\sum_{1- j_\lambda \leqslant j \leqslant \log_2 \varepsilon_0 +5  } 2^{j \sigma q}    \right)^{1/q} \lesssim
\left\{
\begin{array}{ll}
\varepsilon^\sigma_0, & \sigma>0; \\
(\log_2 \lambda)^{1/q}, & \sigma =0; \\
\lambda^{-\sigma}, & \sigma <0.
\end{array}
\right.
\end{align*}
It follows that
\begin{align*}
 IV   & \lesssim    G(\lambda, \, \varepsilon_0)  \|f\|_{B^{\sigma,s}_{p,q}},
\end{align*}
where
\begin{align*}
G (\lambda, \varepsilon_0) =
\left\{
\begin{array}{ll}
\lambda^{\sigma-d/p } \varepsilon^\sigma_0 , & \sigma>0; \\
\lambda^{ -d/p } (\log_2 \lambda)^{1/q}, & \sigma =0; \\
\lambda^{-d/p } , & \sigma <0.
\end{array}
\right.
\end{align*}
For any $0<\epsilon <\sigma$, taking $\varepsilon_0 = \lambda^{-1+\epsilon/\sigma}$, one can find a $\lambda_0: = \lambda_0 (\epsilon, \sigma, q, s)>1$ such that
$$
G (\lambda, \varepsilon_0) + 2^{s \lambda \varepsilon_0/3 }  F(\lambda, \varepsilon_0)   \leqslant  \lambda^{-d/p +\epsilon}.
$$
Combining the estimates of $I$, $III$ and $IV$, we have shown (i).

{\it Step 2.} If ${\rm supp }\, \widehat{f} \subset \{\xi: |\xi|\geqslant \varepsilon_0\}$ for some $\varepsilon_0>0$, then we have $\widehat{f}_2 =0$ in the proof of Step 1. We have the result by the estimate of $III$.

{\it Step 3.} We prove (iii).  We have
\begin{align*}
 \|f(\lambda^{-1}\, \cdot) \|_{B^{\sigma,s\lambda}_{p,q}} \leqslant & \left(\sum_{j\geqslant 1} 2^{\sigma jq}   \|2^{s\lambda |\nabla|}\triangle_j f (\lambda^{-1}\, \cdot) \|^q_{L^p(\mathbb{R}^d)} \right)^{1/q} \\
 & \ \ \  +  \|2^{s\lambda|\nabla|}\triangle_0 f(\lambda^{-1}\, \cdot)  \|_p = A+ B.
\end{align*}
Using the same way as in \eqref{Fspace2+}, we have for $j\geqslant 1$,
\begin{align} \label{Fspace2++}
 \|2^{\lambda s|\nabla|}\triangle_j f (\lambda^{-1}\, \cdot) \|_{p} =  \lambda^{ d/p } \|\mathscr{F}^{-1} [2^{s |\xi|} \varphi(2^{-(j+j_\lambda)} c^{-1}\xi) \widehat{f}(\xi)]\|_p.
\end{align}
It follows that
\begin{align*}
 A & \leqslant \lambda^{ d/p }  \left(\sum_{j\geqslant 1} 2^{\sigma jq}   \|\mathscr{F}^{-1} [2^{s |\xi|} \varphi(2^{-(j+j_\lambda)} c^{-1}\xi) \widehat{f}(\xi)]\|^q_p   \right)^{1/q} \\
 & \lesssim \lambda^{ d/p -\sigma }  \left(\sum_{j\geqslant j_\lambda } 2^{\sigma jq}   \|\mathscr{F}^{-1} [2^{s |\xi|} \varphi(2^{-j} c^{-1}\xi) \widehat{f}(\xi)]\|^q_p   \right)^{1/q} \\
  & \lesssim \lambda^{ d/p -\sigma }  \|f \|_{B^{\sigma,s}_{p,q}}.
\end{align*}
For the estimate of $B$, we have
 \begin{align*}
 B & \leqslant \lambda^{ d/p }      \|2^{s |\nabla|} \mathscr{F}^{-1} \psi(\lambda^{-1} \xi) \widehat{f} \| _p  \\
 & \lesssim \lambda^{ d/p }  \sum_{j\leqslant j_\lambda +C}    \|2^{s |\nabla|} \triangle_j f \| _p \|\mathscr{F}^{-1} \psi(\lambda^{-1} \xi)\|_1 \\
  & \lesssim \lambda^{ d/p }  \sum_{j\leqslant j_\lambda +C}    \|2^{s |\nabla|} \triangle_j f \| _p \\
  & \lesssim \lambda^{ d/p } \|f \|_{B^{\sigma,s}_{p,q}}   \left(\sum_{j\leqslant j_\lambda +C}  2^{-j\sigma q'} \right)^{1/q'},
  \end{align*}
whence, we have the result, as desired. $\hfill \Box$\\

For the scaling in the space $ \widetilde{L} ^\gamma (\mathbb{R}; B^{\sigma,s}_{p,q} )$, we have

\begin{lemma} \label{Scaling2}
Let $s< 0$, $\sigma \in \mathbb{R}$. Denote $(D_\lambda f)(x,t) = f(x/\lambda, t/\lambda)$.  Then  for any $\lambda> 1$, we have
\begin{align*}
& \|D_\lambda f\|_{\widetilde{L} ^\gamma (\mathbb{R}; B^{\sigma, \lambda s}_{p,q})  } \lesssim \lambda^{1/\gamma + d/p - \sigma}  \|f \|_{ \widetilde{L} ^\gamma (\mathbb{R}; B^{\sigma,  s}_{p,q} )  }, \ \ \sigma < 0; \\
& \|D_\lambda f\|_{\widetilde{L} ^\gamma (\mathbb{R}; B^{\sigma, \lambda s}_{p,q})  }  \lesssim \lambda^{1/\gamma +  d/p  }  \|f\|_{ \widetilde{L} ^\gamma (\mathbb{R}; B^{\sigma,  s}_{p,q} )  }, \ \ \sigma > 0; \\
& \|D_\lambda f\|_{\widetilde{L} ^\gamma (\mathbb{R}; B^{\sigma, \lambda s}_{p,q})  }  \lesssim \lambda^{1/\gamma + d/p  } (\log_2\lambda)^{1/q'} \|f \|_{ \widetilde{L} ^\gamma (\mathbb{R}; B^{\sigma,  s}_{p,q} )  }, \ \ \sigma = 0.
\end{align*}
\end{lemma}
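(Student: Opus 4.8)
The plan is to reduce Lemma~\ref{Scaling2} to the purely spatial scaling estimate Lemma~\ref{Scaling1}(iii), applied slice by slice in the time variable, together with the elementary one-dimensional dilation identity $\|g(\cdot/\lambda)\|_{L^\gamma_t(\mathbb{R})}=\lambda^{1/\gamma}\|g\|_{L^\gamma_t(\mathbb{R})}$; this identity is precisely the source of the extra factor $\lambda^{1/\gamma}$ common to all three bounds. As in the proof of Lemma~\ref{Scaling1}, it suffices to treat $\lambda\gg1$ (the case of bounded $\lambda$ being trivial), and we write $\lambda=2^{j_\lambda}c$ with $j_\lambda=\lfloor\log_2\lambda\rfloor$ and $1\leqslant c<2$.

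First I would unwind the definition \eqref{gammaFspace1}. For each fixed $t$ the spatial function $x\mapsto(D_\lambda f)(x,t)=f(x/\lambda,t/\lambda)$ is the dilation by $\lambda^{-1}$ of $h_t:=f(\cdot,t/\lambda)$, so the computation leading to \eqref{Fspace2++} (with $h_t$ in place of $f$) gives, pointwise in $t$,
$$
\|2^{\lambda s|\nabla|}\triangle_j(D_\lambda f)(\cdot,t)\|_p=\lambda^{d/p}\bigl\|\mathscr{F}^{-1}\bigl[\,2^{s|\xi|}\varphi(2^{-(j+j_\lambda)}c^{-1}\xi)\,\widehat{f}(\xi,t/\lambda)\,\bigr]\bigr\|_p\qquad(j\geqslant1),
$$
and the analogous identity with $\psi(\lambda\xi)$ in place of the $\varphi$-factor when $j=0$. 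Raising to the $\gamma$-th power, integrating in $t$, and changing variables $t\mapsto\lambda t$ turns $\lambda^{d/p}$ into $\lambda^{d/p+1/\gamma}$ and replaces $\widehat{f}(\xi,t/\lambda)$ by $\widehat{f}(\xi,t)$. The Fourier-multiplier bounds \eqref{multiplier} used to control the resulting frequency-localised pieces hold on $L^p_x$ for every fixed $t$ with $t$-independent multiplier norms, hence they upgrade verbatim to the mixed norm $L^\gamma_t L^p_x$; in particular no appeal to Minkowski's inequality is required.

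Next I would replay the two estimates from Step~3 of the proof of Lemma~\ref{Scaling1} --- the high-frequency part $A$ (the sum over $j\geqslant1$) and the low-frequency part $B$ (the block $j=0$) --- now with $L^\gamma_t L^p_x$ everywhere in place of $L^p$ and the overall factor $\lambda^{d/p+1/\gamma}$ already pulled out. For $A$, after absorbing $\varphi(2^{-(j+j_\lambda)}c^{-1}\,\cdot)$ into a bounded number of neighbouring Littlewood--Paley blocks and reindexing $k=j+j_\lambda+\ell$, the weight $2^{\sigma jq}$ becomes (up to a constant) $2^{\sigma(k-j_\lambda)q}$, which is $\lesssim 2^{\sigma kq}$ when $\sigma\geqslant0$ and $\lesssim\lambda^{-\sigma q}\,2^{\sigma kq}$ when $\sigma<0$; the remaining sum over $k$ is controlled by $\|f\|^q_{\widetilde{L}^\gamma(\mathbb{R};B^{\sigma,s}_{p,q})}$. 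For $B$, one estimates $\|2^{\lambda s|\nabla|}\triangle_0(D_\lambda f)\|_{L^\gamma_t L^p_x}\lesssim\lambda^{d/p+1/\gamma}\sum_{0\leqslant j\leqslant j_\lambda+C}\|2^{s|\nabla|}\triangle_j f\|_{L^\gamma_t L^p_x}$ and applies Hölder in $j$ against $\|f\|_{\widetilde{L}^\gamma(\mathbb{R};B^{\sigma,s}_{p,q})}$, picking up the factor $\bigl(\sum_{0\leqslant j\leqslant j_\lambda+C}2^{-j\sigma q'}\bigr)^{1/q'}$, which equals $O(1)$ for $\sigma>0$, $\sim(\log_2\lambda)^{1/q'}$ for $\sigma=0$, and $\sim\lambda^{-\sigma}$ for $\sigma<0$. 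Adding the contributions of $A$ and $B$ delivers the three asserted inequalities.

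I do not anticipate a genuine obstacle: the argument is just a time-parametrised rerun of Step~3 of Lemma~\ref{Scaling1}. The only points needing a little care are (i) verifying that every spatial Fourier-multiplier estimate invoked there is applied uniformly in $t$, so that it survives the passage to the mixed norm $L^\gamma_t L^p_x$ --- immediate, since $\|\mathscr{F}^{-1}[m\widehat{h}]\|_p\leqslant\|m\|_{M_p}\|h\|_p$ holds for each $t$ with $\|m\|_{M_p}$ independent of $t$ --- and (ii) tracking the exponent $1/\gamma$ consistently through the single time-dilation $t\mapsto\lambda t$. The most delicate case is $\sigma<0$, where $A$ and $B$ each contribute a factor $\lambda^{-\sigma}$ and one must check that these combine to exactly $\lambda^{1/\gamma+d/p-\sigma}$.
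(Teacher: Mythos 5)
Your proposal is correct and follows essentially the same route as the paper: reduce to the purely spatial scaling estimates (the $A$/$B$ split from Step~3 of the proof of Lemma~\ref{Scaling1}), insert the one extra factor $\lambda^{1/\gamma}$ from the one-dimensional time dilation, and note that the $L^p_x$ multiplier estimates pass to $L^\gamma_t L^p_x$ because the multiplier acts only in $x$ with a $t$-independent bound. The paper's proof is terser (it simply records the scaling identity in $L^\gamma_t L^p_x$ and points back to Lemma~\ref{Scaling1}), but the substance is the same; your bookkeeping of the $A$ and $B$ contributions in the three cases $\sigma>0$, $\sigma=0$, $\sigma<0$ is accurate.
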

\noindent {\bf Proof.} Let us follow Lemma \ref{Scaling1}. By definition,
\begin{align*}
 \|D_\lambda f \|_{ \widetilde{L} ^\gamma (\mathbb{R}; B^{\sigma,s\lambda}_{p,q})} \leqslant  \left(\sum_{j\geqslant 1} 2^{\sigma jq}   \|2^{s\lambda |\nabla|}\triangle_j D_\lambda f  \|^q_{L^\gamma_t L^p_x} \right)^{1/q}
   +  \|2^{s\lambda|\nabla|}\triangle_0 D_\lambda f  \|_{L^\gamma_t L^p_x}=V+VI .
\end{align*}
Observing the identity
\begin{align*}
 \|2^{\lambda s|\nabla|}\triangle_j D_\lambda f  \|_{L^\gamma_tL^p_x} =  \lambda^{1/\gamma + d/p } \|\mathscr{F}^{-1} [2^{s |\xi|} \varphi(2^{-(j+j_\lambda)} c^{-1}\xi) \widehat{f}]\|_{L^\gamma_tL^p_x},
\end{align*}
one can use the same way as in the estimate of $III$ in Lemma \ref{Scaling1} to obtain  that
$$
V \lesssim \lambda^{1/\gamma + d/p -\sigma }  \|f \|_{ \widetilde{L} ^\gamma (\mathbb{R}; B^{\sigma,  s}_{p,q} )  }
$$
The estimate of $VI$ follows a similar way as in the estimate of $IV$ in Lemma \ref{Scaling1}. $\hfill\Box$

\begin{rem}
Let $\mathbb{A} \subset \mathbb{Z}_+ = \mathbb{N} \cup \{0\}$, we write
\begin{align} \label{fresolutionspace}
& \|u\|_{\widetilde{L} ^\gamma (\mathbb{R}; B^{\sigma,s}_{p,q} (\mathbb{A})) } =  \left(\sum_{j\in \mathbb{A}} 2^{\sigma jq}   \|2^{s|\nabla|}\triangle_j u  \|^q_{L^\gamma_t L^p_x (\mathbb{R} \times\mathbb{R}^d)} \right)^{1/q}.
\end{align}
Following the proof of Lemmas \ref{Scaling1} and \ref{Scaling2}, we have shown that
\begin{align*}
& \|D_\lambda f\|_{\widetilde{L} ^\gamma (\mathbb{R}; B^{\sigma, \lambda s}_{p,q}(\mathbb{N}))  } \lesssim \lambda^{1/\gamma + d/p - \sigma}  \|f \|_{ \widetilde{L} ^\gamma (\mathbb{R}; B^{\sigma,  s}_{p,q} (\mathbb{Z}^c_\lambda))  }; \\
& \|2^{s\lambda|\nabla|}\triangle_0 D_\lambda f  \|_{L^\gamma_t L^p_x}  \lesssim \lambda^{1/\gamma +  d/p+  }  \|f\|_{ \widetilde{L} ^\gamma (\mathbb{R}; B^{\sigma,  s}_{p,q} (\mathbb{Z}_\lambda) )  }, \ \ \sigma \geq 0; \\
& \|2^{s\lambda|\nabla|}\triangle_0 D_\lambda f  \|_{L^\gamma_t L^p_x}  \lesssim \lambda^{1/\gamma +  d/p -\sigma  }  \|f\|_{ \widetilde{L} ^\gamma (\mathbb{R}; B^{\sigma,  s}_{p,q} (\mathbb{Z}_\lambda) )  }, \ \ \sigma < 0;
\end{align*}
where $\mathbb{Z}_\lambda  = \{j\geqslant 0: \, 2^j \lesssim  \lambda \}, \ \mathbb{Z}^c_\lambda  = \{j\geqslant 0: \, 2^j \gtrsim  \lambda \}$.
\end{rem}

\subsection{Function spaces $F^{\sigma,s}_{p,q}$ and $H^{\sigma,s}_{p}$}

Let $s, \sigma\in \mathbb{R}$, $1\leqslant p,q\leqslant \infty$.  We introduce the following
\begin{align} \label{TFspace1}
&  F^{ \sigma,s}_{p,q} = \{f \in \mathscr{S}'_1:  \|f\|_{F^{\sigma, s}_{p,q} } <\infty \}, \ \
  \|f\|_{F^{ \sigma,s}_{p,q}   } = \left\| \left(\sum^\infty_{j=0} 2^{\sigma jq}    |\triangle_j 2^{s|\nabla|} f  |^q \right)^{1/q}\right\|_{L^p }, \\
&  H^{ \sigma,s}_{p} = \{f \in \mathscr{S}'_1:  \|f\|_{H^{\sigma, s}_{p} } <\infty \},  \ \
  \|f\|_{H^{ \sigma,s}_{p}   } = \left\|2^{s|\nabla|} (I-\Delta)^{\sigma/2} f  \right\|_{L^p }. \label{TFspace2}
\end{align}

\begin{lemma} \label{Embedding2}
Let $s, s_i, \sigma_i \in \mathbb{R}$, $1\leqslant p,  p_i ,q_i \leqslant \infty$, $i=1,2$.   Then we have
\begin{align} \label{tembedding2}
 & B^{ \sigma,s}_{p, p\wedge q } \subset  F^{ \sigma,s}_{p,q} \subset  B^{ \sigma,s}_{p, p\vee q }, \\
 & F^{ \sigma_1,s_1}_{p_1,q_1} \subset  F^{ \sigma_2,s_2}_{p_2,q_2}, \ \ \ \mbox{ if } s_1> s_2, \ p_1 \leqslant p_2. \label{tembedding1}
\end{align}
\end{lemma}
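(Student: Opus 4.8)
The plan is to establish the two chains of inclusions by reducing everything, via the isometry $2^{s|\nabla|}: B^{\sigma,s}_{p,q}\to B^{\sigma}_{p,q}$ and its Triebel-Lizorkin analogue, to the corresponding classical embeddings $B^\sigma_{p,p\wedge q}\subset F^\sigma_{p,q}\subset B^\sigma_{p,p\vee q}$ for \eqref{tembedding2}, and then to a direct comparison argument for \eqref{tembedding1}. First I would record that the operator $2^{s|\nabla|}=\mathscr{F}^{-1}2^{s|\xi|}\mathscr{F}$ commutes with each Littlewood-Paley block $\triangle_j$, so that $\|f\|_{F^{\sigma,s}_{p,q}}=\|2^{s|\nabla|}f\|_{F^\sigma_{p,q}}$ exactly as in Lemma \ref{Isomorphism1}; combined with Lemma \ref{Isomorphism1} itself this shows that to prove \eqref{tembedding2} it suffices to prove it with $s=0$, i.e. for the ordinary spaces, which is the well-known fact that $B^\sigma_{p,p\wedge q}\subset F^\sigma_{p,q}\subset B^\sigma_{p,p\vee q}$; this in turn is nothing but Minkowski's inequality for the interchange of the $\ell^q$ and $L^p$ norms, applied in the two possible orders according to whether $q\leqslant p$ or $q\geqslant p$. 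So \eqref{tembedding2} costs essentially nothing.

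For \eqref{tembedding1} I would argue as in the proof of Lemma \ref{Embedding1}, working blockwise. Fix $j\geqslant 1$. First apply the classical Bernstein inequality $\|\triangle_j g\|_{p_2}\lesssim 2^{jd(1/p_1-1/p_2)}\|\triangle_j g\|_{p_1}$ together with its vector-valued (Triebel-Lizorkin) counterpart to gain the $2^{jd(1/p_1-1/p_2)}$ factor while switching the integrability exponent. Then, exactly as in Lemma \ref{Embedding1}, use the almost orthogonality $\varphi_j\widetilde\varphi_j=\varphi_j$ with $\widetilde\varphi=\varphi_{-1}+\varphi+\varphi_1$ and the multiplier estimate \eqref{multiplier} to write
\begin{align*}
\|2^{s_2|\nabla|}\triangle_j f\|_{p_1}\lesssim \|\mathscr{F}^{-1}[\widetilde\varphi(2^{-j}\cdot)2^{(s_2-s_1)|\xi|}]\|_1\,\|2^{s_1|\nabla|}\triangle_j f\|_{p_1}\lesssim 2^{c(s_2-s_1)2^{2j}}\|2^{s_1|\nabla|}\triangle_j f\|_{p_1},
\end{align*}
the point being that $s_2-s_1<0$ makes $2^{c(s_2-s_1)2^{2j}}$ superexponentially small in $j$. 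Raising to the $q$-th power inside the $L^{p_1}$ norm for the Triebel-Lizorkin side (here one needs the pointwise inequality for the $\ell^q$-valued function, so the multiplier bound must be applied to the vector $(2^{s_2|\nabla|}\triangle_j f)_j$, which is legitimate since \eqref{multiplier} goes through $\|\mathscr{F}^{-1}m\|_1$ and hence acts coordinatewise), one obtains $\|f\|_{F^{\sigma_2,s_2}_{p_2,\infty}}\lesssim \|f\|_{F^{\sigma_1,s_1}_{p_1,1}}$ after summing the convergent series $\sum_j 2^{(\sigma_2-\sigma_1+d(1/p_1-1/p_2))j}2^{c(s_2-s_1)2^{2j}}$, and then \eqref{tembedding1} follows by the trivial embeddings $F^{\sigma}_{p,1}\subset F^\sigma_{p,q}\subset F^\sigma_{p,\infty}$ in the $q$-index.

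I expect the only genuinely delicate point to be the vector-valued application of the multiplier bound \eqref{multiplier} in the Triebel-Lizorkin setting: one must make sure that the Fourier multiplier $\widetilde\varphi(2^{-j}\cdot)2^{(s_2-s_1)|\xi|}$ acts on the $\ell^q(L^{p_1})$-valued function with the same $\|\mathscr{F}^{-1}(\cdot)\|_1$ bound, which it does because convolution with an $L^1$ kernel commutes with taking $\ell^q$ norms in the sequence index and one can then invoke Minkowski's integral inequality; I would state this as a short observation (a vector-valued Bernstein lemma) before the main computation. Everything else—the two Bernstein inequalities, the almost-orthogonality, and the convergence of the series—is routine and parallel to Lemma \ref{Embedding1}. $\hfill\Box$
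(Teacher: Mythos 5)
Your treatment of \eqref{tembedding2} coincides with the paper's: it is just Minkowski's inequality and the monotonicity of $\ell^q$ norms (the reduction to $s=0$ via the isometry is fine but unnecessary, since the $2^{s|\nabla|}$ factor sits inside every block anyway).

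For \eqref{tembedding1}, however, your route is both heavier than the paper's and, as written, does not close. The paper deduces \eqref{tembedding1} with no new work at all by chaining
\[
F^{\sigma_1,s_1}_{p_1,q_1}\ \subset\ B^{\sigma_1,s_1}_{p_1,\infty}\ \subset\ B^{\sigma_2,s_2}_{p_2,1}\ \subset\ F^{\sigma_2,s_2}_{p_2,q_2},
\]
where the two outer inclusions are special cases of \eqref{tembedding2} and the middle one is exactly Lemma~\ref{Embedding1}. The point is that the superexponential gain $2^{c(s_2-s_1)2^{2j}}$ is so strong that one may freely pass to the worst Besov endpoint $q=\infty$ on the source side and the best endpoint $q=1$ on the target side; this completely sidesteps any vector-valued multiplier estimates and the vector-valued Bernstein lemma you flag as the delicate point. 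Your plan to run the multiplier argument ``inside'' the Triebel--Lizorkin norm via an $\ell^q(L^{p_1})$-valued Fourier multiplier is feasible but is exactly the machinery the paper's argument renders unnecessary.

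There is also a genuine logical slip in your conclusion. You claim to obtain $\|f\|_{F^{\sigma_2,s_2}_{p_2,\infty}}\lesssim \|f\|_{F^{\sigma_1,s_1}_{p_1,1}}$ and then to finish via $F^{\sigma}_{p,1}\subset F^\sigma_{p,q}\subset F^\sigma_{p,\infty}$. But these monotonicity inclusions run the \emph{wrong} way for that purpose: from $F^{\sigma_1,s_1}_{p_1,1}\subset F^{\sigma_2,s_2}_{p_2,\infty}$ and the fact that $F^{\sigma_1,s_1}_{p_1,1}$ is the \emph{smallest} space in its $q$-scale while $F^{\sigma_2,s_2}_{p_2,\infty}$ is the \emph{largest} in its own, you cannot deduce the general embedding $F^{\sigma_1,s_1}_{p_1,q_1}\subset F^{\sigma_2,s_2}_{p_2,q_2}$. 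What you need is the opposite (stronger) endpoint statement $\|f\|_{F^{\sigma_2,s_2}_{p_2,1}}\lesssim \|f\|_{F^{\sigma_1,s_1}_{p_1,\infty}}$; and if you try to prove that blockwise, the $\sup_j$ on the right-hand side forces you out of the Triebel--Lizorkin scale into $B^{\sigma_1,s_1}_{p_1,\infty}$ after the multiplier step, which lands you precisely on the paper's Besov detour. So the correct fix collapses your argument to the paper's.
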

\noindent {\bf Proof.} By $l^{q_1} \subset l^{q_2}$ for $q_1\leqslant q_2$ and Minkowski's inequality, we have \eqref{tembedding2}. Using Lemma \ref{Embedding1} and \eqref{tembedding2}, we have
\begin{align} \label{tembedding3}
 F^{ \sigma_1,s_1}_{p_1,q_1}  \subset  B^{ \sigma_1,s_1}_{p_1,\infty} \subset  B^{ \sigma_2,s_2}_{p_2,1} \subset  F^{ \sigma_2,s_2}_{p_2,q_2}
\end{align}
if $ s_1> s_2, \ p_1 \leqslant p_2$. $\hfill\Box$

Now we consider the scaling property on $F^{\sigma,s}_{p,q}$. First, let us recall a multiplier estimate in $L^p(l^q)$ (cf. \cite{Tr1983}).
\begin{lemma} \label{Multiplier}
Let $1\leqslant p,q \leqslant \infty$, $\{\Omega_k\}^\infty_{k=0}$ be a sequence of compact subset of $\mathbb{R}^d$, $d_k>0$ be the diameter of $\Omega_k$, $k\in \mathbb{Z}_+$. Let $\kappa > d/2 + d/(p\wedge q)$. Then we have
$$
\|\mathscr{F}^{-1} M_k \mathscr{F}\|_{L^p(l^q)} \lesssim \sup_{j\geqslant 0} \|M_j (d_j \, \cdot)\|_{H^\kappa} \|f_k\|_{L^p(l^q)}
$$
for ${\rm supp} \widehat{f}_k \subset \Omega_k, \ M_k \in H^\kappa, \ k\in  \mathbb{Z}_+,$ $\|f_k\|_{L^p(l^q)} = \left\|\|\{f_k \}^\infty_{k=0}\|_{l^q} \right\|_{L^p}$.
\end{lemma}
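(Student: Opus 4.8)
\medskip\noindent
We must bound $\big\|(\mathscr{F}^{-1}M_k\mathscr{F}f_k)_{k\geqslant 0}\big\|_{L^p(l^q)}$. The plan is to follow the classical route for vector-valued Fourier multiplier theorems (cf. \cite{Tr1983}): dominate $\mathscr{F}^{-1}M_k\mathscr{F}f_k$ pointwise by the Peetre maximal function of $f_k$ and then invoke the Fefferman--Stein inequality. First I would make two harmless reductions. A modulation $f_k\mapsto e^{\mathrm{i}x\cdot\xi^0_k}f_k$ with $\xi^0_k\in\Omega_k$ preserves $|f_k|$, hence $\|(f_k)\|_{L^p(l^q)}$, and allows us to assume $0\in\Omega_k$, so that $\Omega_k\subset\{|\xi|\leqslant d_k\}$; the multiplier is replaced by $M_k(\cdot-\xi^0_k)$, but after inserting a cutoff $\phi(\cdot/d_k)$ (with $\phi\in C^\infty_c$, $\phi\equiv 1$ on $B(0,1)$, $\mathrm{supp}\,\phi\subset B(0,2)$), rescaling by $d_k$, and using translation invariance of the $H^\kappa$-norm together with $\|gh\|_{H^\kappa}\lesssim\|g\|_{H^\kappa}\|h\|_{W^{m,\infty}}$ for a suitable integer $m$, the relevant quantity stays controlled by $A:=\sup_j\|M_j(d_j\,\cdot)\|_{H^\kappa}$. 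Since $\widehat{f}_k$ is supported in $\{|\xi|\leqslant d_k\}$, where $\phi(\cdot/d_k)\equiv 1$, we may then write $\mathscr{F}^{-1}M_k\mathscr{F}f_k=\widetilde K_k*f_k$ with $\widetilde K_k=\mathscr{F}^{-1}[M_k\,\phi(\cdot/d_k)]$.

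The heart of the matter is the kernel estimate. Writing $\widetilde K_k(x)=d_k^d\,\widetilde\kappa_k(d_kx)$ with $\widetilde\kappa_k=\mathscr{F}^{-1}[M_k(d_k\,\cdot)\phi]$, I would choose $a$ with $d/(p\wedge q)<a\leqslant\kappa-d/2$ — possible precisely because $\kappa>d/2+d/(p\wedge q)$ — and a small $\varepsilon>0$ with $d/2+a+\varepsilon\leqslant\kappa$. Plancherel (interpreting multiplication by $\langle x\rangle^{b}$ as a Bessel potential acting in $\xi$) and the product estimate on $H^\kappa$ give $\|\langle\cdot\rangle^{d/2+a+\varepsilon}\widetilde\kappa_k\|_{L^2}\lesssim\|M_k(d_k\,\cdot)\phi\|_{H^\kappa}\lesssim A$, and then Cauchy--Schwarz against $\langle\cdot\rangle^{-d/2-\varepsilon}\in L^2(\mathbb{R}^d)$ yields, after undoing the dilation,
\[
\int_{\mathbb{R}^d}\langle d_k y\rangle^{a}\,|\widetilde K_k(y)|\,dy=\int_{\mathbb{R}^d}\langle y\rangle^{a}\,|\widetilde\kappa_k(y)|\,dy\lesssim A .
\]
Consequently, for every $x\in\mathbb{R}^d$,
\[
\big|\mathscr{F}^{-1}M_k\mathscr{F}f_k(x)\big|\leqslant\int_{\mathbb{R}^d}\langle d_k y\rangle^{a}|\widetilde K_k(y)|\,\frac{|f_k(x-y)|}{\langle d_k y\rangle^{a}}\,dy\lesssim A\,f_k^{*,a}(x),\qquad f_k^{*,a}(x):=\sup_{y\in\mathbb{R}^d}\frac{|f_k(x-y)|}{\langle d_k y\rangle^{a}} .
\]

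Finally I would invoke the vector-valued Peetre maximal inequality: since $\mathrm{supp}\,\widehat{f}_k\subset\{|\xi|\leqslant d_k\}$ and $a>d/(p\wedge q)$, one has the pointwise majorization $f_k^{*,a}(x)^{d/a}\lesssim\mathcal{M}(|f_k|^{d/a})(x)$, with $\mathcal{M}$ the Hardy--Littlewood maximal operator, so the Fefferman--Stein inequality on $L^{pa/d}(l^{qa/d})$ — legitimate because $pa/d>1$ and $qa/d>1$ — gives $\|(f_k^{*,a})_k\|_{L^p(l^q)}\lesssim\|(f_k)_k\|_{L^p(l^q)}$; combined with the previous display this proves the claim. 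The cases $p=\infty$ or $q=\infty$ can be taken with $a$ arbitrarily small and follow directly from $\int\langle d_ky\rangle^a|\widetilde K_k(y)|\,dy\lesssim A$. I expect the kernel estimate to be the main obstacle: one must organize the dilation by $d_k$ and the cutoff so that the bound sees $M_k$ only through $\sup_j\|M_j(d_j\,\cdot)\|_{H^\kappa}$ uniformly in $k$, and verify that the Sobolev budget $\kappa>d/2+d/(p\wedge q)$ is exactly what is needed both to make $\langle d_k\cdot\rangle^{a}\widetilde K_k$ integrable and to run the vector-valued Peetre maximal inequality.
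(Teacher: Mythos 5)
The paper gives no proof of Lemma \ref{Multiplier}; it simply cites \cite{Tr1983}, and your argument is precisely the classical proof found there. The reductions (modulation to center $\Omega_k$ at the origin, insertion of a rescaled cutoff $\phi(\cdot/d_k)$, product estimate to absorb the cutoff into the $H^\kappa$ norm), the weighted kernel bound $\int\langle d_ky\rangle^a|\widetilde K_k(y)|\,dy\lesssim\sup_j\|M_j(d_j\,\cdot)\|_{H^\kappa}$ via Plancherel plus Cauchy--Schwarz, the pointwise domination by the Peetre maximal function $f_k^{*,a}$, and the closing of the argument via the vector-valued Peetre inequality $f_k^{*,a}\lesssim[\mathcal{M}(|f_k|^{d/a})]^{a/d}$ combined with Fefferman--Stein in $L^{pa/d}(l^{qa/d})$ are exactly the Triebel route, and the Sobolev bookkeeping $\kappa>d/2+d/(p\wedge q)$ is used in the right places (once to ensure integrability of $\langle\cdot\rangle^a\widetilde\kappa_k$, once to push $pa/d,\,qa/d>1$). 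The one place to be slightly more careful is the endpoint $p=\infty$ with $q<\infty$ (or $q=\infty$ with $p<\infty$): there $d/(p\wedge q)>0$, so the phrase ``$a$ arbitrarily small'' is not accurate; one still needs $a>d/(p\wedge q)$ and then the vector-valued maximal inequality in the form appropriate to that exponent configuration, which is standard but not a free consequence of the kernel bound alone. Aside from that loose sentence, the proposal is a faithful reconstruction of the cited proof.
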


\begin{lemma} \label{Scaling3}
		Let $s< 0$, $\sigma \in \mathbb{R}$, $1\leqslant p,q \leqslant \infty$.  We have the following results.

\begin{itemize}
\item[\rm (i)]
For any $\epsilon >0$, there exists $\lambda_0 =\lambda_0(\epsilon, s,\sigma, q) >1$  such that for any $\lambda > \lambda_0$ and $f\in F^{\sigma,s}_{p,q}$,
\begin{align*}
& 	\|f(\lambda\, \cdot) \|_{F^{\sigma,s}_{p,q}} \lesssim   \lambda^{-d/p + \epsilon}   \|f\|_{F^{\sigma,s}_{p,q} }
\end{align*}
and the above inequality holds for $\epsilon=0$ if $\sigma <0$.

\item[\rm (ii)]
 Assume that ${\rm supp }\, \widehat{f} \subset \{\xi: |\xi|\geqslant \varepsilon_0\}$ for some $\varepsilon_0>0$. Then  for any $\lambda \gg  1$, we have
\begin{align*}
& 	\|f(\lambda\, \cdot) \|_{F^{\sigma,s}_{p,q}} \lesssim \lambda^{\sigma-d/p } 2^{s  \lambda  \varepsilon_0/3} \|f\|_{F^{\sigma,s}_{p,q} }, \ \ \sigma > 0; \\
& 			\|f(\lambda\, \cdot) \|_{F^{ \sigma,s}_{p,q} } \lesssim \lambda^{-d/p} 2^{s  \lambda \varepsilon_0/3} \|f\|_{F^{ \sigma,s}_{p,q} }, \ \ \sigma < 0 ; \\
& 	\|f(\lambda\, \cdot) \|_{F^{\sigma,s}_{p,q}} \lesssim \lambda^{-d/p }\log_2 \lambda  2^{s  \lambda  \varepsilon_0/3} \|f \|_{F^{\sigma,s}_{p,q} }, \ \ \sigma = 0.
\end{align*}

\item[\rm (iii)]
We have for any $f\in F^{\sigma, s}_{p,q} $,
\begin{align*}
& \|f(\lambda^{-1}\, \cdot)\|_{F^{\sigma, \lambda s}_{p,q}  } \lesssim \lambda^{ d/p - \sigma}  \|f \|_{F^{\sigma,  s}_{p,q}  }, \ \ \sigma < 0; \\
& \|f(\lambda^{-1}\, \cdot)\|_{F^{\sigma, \lambda s}_{p,q}  } \lesssim \lambda^{ d/p  }  \|f\|_{F^{\sigma,  s}_{p,q}  }, \ \ \sigma >  0; \\
& \|f(\lambda^{-1}\, \cdot)\|_{F^{\sigma, \lambda s}_{p,q}  } \lesssim \lambda^{ d/p  } \log_2 \lambda \, \|f\|_{F^{\sigma,  s}_{p,q}  }, \ \ \sigma =  0.
\end{align*}
\end{itemize}
\end{lemma}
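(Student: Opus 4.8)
\textbf{Proof proposal for Lemma \ref{Scaling3}.}

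The plan is to transport the entire argument of Lemma \ref{Scaling1} to the vector-valued setting $L^p(l^q)$, replacing the scalar multiplier estimate \eqref{multiplier} by the $L^p(l^q)$-multiplier estimate of Lemma \ref{Multiplier}. First I would record, for $\lambda = 2^{j_\lambda} c$ with $1\leqslant c<2$, the exact analogue of \eqref{Fspace2+}: under the dilation $f\mapsto f(\lambda\,\cdot)$, the block $2^{s|\nabla|}\triangle_j$ becomes, up to the factor $\lambda^{-d/p}$, a Fourier multiplier with symbol $2^{s\lambda|\xi|}\varphi(2^{-(j-j_\lambda)}c\xi)\widehat f(\xi)$, so that after reindexing $j\mapsto j-j_\lambda$ the $l^q$-sum over $j$ is essentially the $F^{\sigma,s}_{p,q}$ norm of $f$ shifted by $j_\lambda$, producing the prefactor $\lambda^{\sigma}$ (from the $2^{\sigma j q}$ weights) together with $\lambda^{-d/p}$. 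This is exactly the mechanism giving the $\lambda^{\sigma - d/p}$ behaviour for $\sigma>0$ in (ii).

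Next I would reproduce the frequency splitting $\widehat f = \widehat f_1 + \widehat f_2$ with $\widehat f_1 = (1-\psi(2\varepsilon_0^{-1}\cdot))\widehat f$, $\widehat f_2 = \psi(2\varepsilon_0^{-1}\cdot)\widehat f$, and the decomposition of the $\widehat f_1$-part into high blocks ($j>\log_2\varepsilon_0+5$), transition blocks ($j\in[\log_2\varepsilon_0-5,\log_2\varepsilon_0+5]$), and low blocks ($j<0$). On each range one peels off the symbol $2^{s(\lambda-1)|\xi|}\widetilde\varphi(2^{-j}c\xi)$ (or its $\varepsilon_0$-rescaled variant with the cutoff $1-\psi(2\xi)$ in the transition range), and the key point is that by Lemma \ref{Multiplier} its $L^p(l^q)$-multiplier norm is controlled by $\sup_j\|2^{s(\lambda-1)|d_j\xi|}\widetilde\varphi(\cdot)\|_{H^\kappa}$ which, on account of $s<0$ and $\mathrm{supp}\,\widetilde\varphi$ away from the origin (resp. $\gtrsim\varepsilon_0$ in the transition range), is $\lesssim 2^{s\lambda\varepsilon_0}$ (resp. $\lesssim 2^{s\lambda\varepsilon_0/3}$). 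For the low blocks $j<0$ one uses the support property of $\varphi(2^{-j}c\cdot)$ to bound everything by $\|\triangle_0 2^{s|\nabla|}f\|_{\cdot}$ and then sums the geometric series $\sum 2^{j\sigma q}$ exactly as in Lemma \ref{Scaling1}, which yields the three cases $\sigma>0$, $\sigma=0$ (logarithmic loss), $\sigma<0$. Choosing $\varepsilon_0 = \lambda^{-1+\epsilon/\sigma}$ optimizes the product of the polynomial factor and $2^{s\lambda\varepsilon_0/3}$ and gives (i); when $\mathrm{supp}\,\widehat f\subset\{|\xi|\geqslant\varepsilon_0\}$ the $\widehat f_2$-part vanishes and the surviving estimate of the $\widehat f_1$-part is (ii). Part (iii) is the dual dilation $f\mapsto f(\lambda^{-1}\cdot)$ with $s\mapsto \lambda s$; here the analogue of \eqref{Fspace2++} shows $2^{\lambda s|\nabla|}\triangle_j (f(\lambda^{-1}\cdot))$ has symbol $\lambda^{d/p}2^{s|\xi|}\varphi(2^{-(j+j_\lambda)}c^{-1}\xi)\widehat f$, so the high-frequency sum reindexes to $\|f\|_{F^{\sigma,s}_{p,q}}$ up to $\lambda^{d/p-\sigma}$ (for $\sigma<0$) or $\lambda^{d/p}$ (for $\sigma>0$), while the $\triangle_0$-part is handled by writing $\psi(\lambda^{-1}\xi)$ as a sum of $\lesssim j_\lambda$ dyadic pieces, using the uniform $L^p(l^q)$-multiplier bound for $\psi$ and Hölder in $l^{q'}$ over $j\leqslant j_\lambda + C$ — this is where the $\log_2\lambda$ appears in the $\sigma=0$ case.

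The only genuine point requiring care — and the main obstacle — is that in the $F$-space setting the blocks $\triangle_j$ must be kept \emph{inside} the $l^q$-norm throughout, so every multiplier peeling must be a \emph{uniform-in-$j$} statement: one cannot afford a different constant for each dyadic block. This is precisely what Lemma \ref{Multiplier} buys us, provided one checks that the $H^\kappa$-norms of the rescaled symbols $2^{s(\lambda-1)|d_j\xi|}\widetilde\varphi(\xi)(1-\psi(2\xi))$ are bounded uniformly in $j$ by the claimed exponential factor; since $s<0$ and the relevant $\xi$ stay in a fixed annulus (or $\{|\xi|\gtrsim 1\}$), this is a routine $H^\kappa$ estimate, with the $\kappa>d/2+d/(p\wedge q)$ derivatives falling only on the smooth compactly-supported bump factors and producing at worst powers of $\lambda\varepsilon_0$ that are absorbed into the decaying exponential. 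All remaining computations (the geometric sums, the choice of $\varepsilon_0$, the reindexing) are identical in form to those carried out for $B^{\sigma,s}_{p,q}$ in Lemma \ref{Scaling1} and Lemma \ref{Scaling2}, so I would simply indicate that they go through verbatim. $\hfill\Box$
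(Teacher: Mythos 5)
Your proposal is correct and follows the paper's own argument essentially verbatim: express $\triangle_j 2^{s|\nabla|}f(\lambda\cdot)$ (resp.\ $\triangle_j 2^{s\lambda|\nabla|}f(\lambda^{-1}\cdot)$) as a rescaled multiplier acting on $\widehat f$, split $\widehat f=\widehat f_1+\widehat f_2$ with a further high/transition/low block decomposition of the $\widehat f_1$ contribution, replace the scalar multiplier bound \eqref{multiplier} by the uniform-in-$j$ vector-valued bound of Lemma \ref{Multiplier} on the high blocks, reduce the transition and $\widehat f_2$ blocks to scalar (Besov-type) sums via the support collapse and the embedding $B^{\sigma,s}_{p,1}\subset F^{\sigma,s}_{p,q}$, and finish by the same choice $\varepsilon_0=\lambda^{-1+\epsilon/\sigma}$; part (iii) is exactly the dual dilation with the low-frequency $\psi(\lambda^{-1}\xi)$ piece split into $\lesssim j_\lambda$ dyadic blocks. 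The only caveat worth recording is that the reduction of the low and transition blocks to a scalar geometric-series computation is not just ``the same as Lemma \ref{Scaling1}'' but really does require pulling the finitely-many-in-$j$ $l^q$-sum outside the $L^p$-norm (Minkowski or the embedding into $B^{\sigma,s}_{p,1}$), exactly as the paper does in estimates \eqref{Trspace4+} and \eqref{Trspace4+3}; you gesture at this but it deserves an explicit line.
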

\noindent {\bf Proof.}
{\it Step 1.} We prove (i). By the norm definition on $F^{\sigma,s}_{p,q}$,
\begin{align} \label{Trspace1}
  \|f(\lambda\,\cdot)\|_{F^{ \sigma,s}_{p,q}   } = \left\| \left(\sum^\infty_{j=0} 2^{\sigma jq}    |\triangle_j 2^{s|\nabla|} f(\lambda\,\cdot) |^q \right)^{1/q}\right\|_{L^p }.
\end{align}
Let $1\ll \lambda= 2^{j_\lambda} c$ with $1\leqslant c <2$. We have
\begin{align*}
& \triangle_0 2^{s|\nabla|} f(\lambda\,\cdot) = \left(\mathscr{F}^{-1} \varphi( \lambda  \xi) 2^{s\lambda |\xi|} \widehat{f} \right)(\lambda\, \cdot),\\
& \triangle_j 2^{s|\nabla|} f(\lambda\,\cdot) = \left(\mathscr{F}^{-1} \varphi(2^{-(j-j_\lambda)}c \xi) 2^{s\lambda |\xi|} \widehat{f} \right)(\lambda\, \cdot), \ \ j\geqslant 1.
\end{align*}
It follows that
\begin{align} \label{Trspace2}
  \|f(\lambda\,\cdot)\|_{F^{ \sigma,s}_{p,q}   } \leqslant & \lambda^{-d/p}  \|\mathscr{F}^{-1} \psi(\lambda \xi) 2^{s\lambda |\xi|} \widehat{f}  \|_{L^p} \nonumber\\
  & + \lambda^{\sigma -d/p}  \left\| \left(\sum^\infty_{j=1-j_\lambda} 2^{\sigma q j}    | \mathscr{F}^{-1} \varphi(2^{-j}c \xi) 2^{s\lambda |\xi|} \widehat{f}  |^q \right)^{1/q}\right\|_{L^p }   := I+II.
\end{align}
Similar to Lemma \ref{Scaling1},
\begin{align} \label{Trspace3}
  I
  \lesssim   \lambda^{  -d/p}   \|\triangle_0 2^{s|\nabla|} f\|_{p} \leqslant   \lambda^{  -d/p} \|f\|_{F^{\sigma,s}_{p,q}}.
\end{align}
Let $\varepsilon_0 \gg \lambda^{-1}$ and we decompose $f$ in the same way as in  the proof of Lemma \ref{Scaling1},
 $$
 \widehat{f} = ( 1-\psi(2 \varepsilon_0^{-1}\, \cdot)) \, \widehat{f} + \psi(2 \varepsilon_0^{-1}\, \cdot) \widehat{f} := \widehat{f}_1 + \widehat{f}_2.
 $$
By \eqref{Trspace2} we have
\begin{align} \label{Trspace2++}
  II \leqslant  \lambda^{\sigma -d/p} \sum_{k=1,2}  \left\| \left(\sum^\infty_{j=1-j_\lambda} 2^{\sigma q j}    | \mathscr{F}^{-1} \varphi(2^{-j}c \xi) 2^{s\lambda |\xi|} \widehat{f}_k  |^q \right)^{1/q}\right\|_{L^p } := III+IV.
\end{align}
Let us consider the estimate of $III$. We have
\begin{align} \label{Trspace2III}
  III \leqslant &  \lambda^{\sigma -d/p}    \left\| \left(\sum^\infty_{\log_2 \varepsilon_0 -5 \leqslant j \leqslant 5} 2^{\sigma q j}    | \mathscr{F}^{-1} \varphi(2^{-j}c \xi) 2^{s\lambda |\xi|} \widehat{f}_1  |^q \right)^{1/q}\right\|_{L^p } \nonumber\\
   & + \lambda^{\sigma -d/p}    \left\| \left(\sum^\infty_{j >5} 2^{\sigma q j}    | \mathscr{F}^{-1} \varphi(2^{-j}c \xi) 2^{s\lambda |\xi|} \widehat{f}_1  |^q \right)^{1/q}\right\|_{L^p }:= V+VI.
  \end{align}
Denote
$$
M_j (\xi) =  \varphi(2^{-j}c \xi) 2^{s(\lambda-1) |\xi|} ( 1-\psi(2 \varepsilon_0^{-1}\xi)).
$$
It is easy to see that
\begin{align*}
& \mathscr{F}^{-1} \varphi(2^{-j}c \xi) 2^{s\lambda |\xi|} \widehat{f}_1  = \mathscr{F}^{-1} M_j(\xi)  \mathscr{F} \sum_{-2\leqslant \ell \leqslant 1} \triangle_{j+\ell} 2^{s|\nabla|} f,\\
& M_j (\xi) = \varphi(2^{-j}c \xi) 2^{s(\lambda-1) |\xi|} , \ \ \mbox{for} \ j >  \log_2 \varepsilon_0 + 5, \\
& M_j (\xi) =0 , \ \ \mbox{for} \ j<  \log_2 \varepsilon_0 -5.
\end{align*}
Notice that ${\rm diam} \, ({\rm supp} \varphi_j) = 5\cdot 2^{j-2}$.  By Lemma \ref{Multiplier} we have for $\lambda \gg 1$, $\kappa >d/2+ d/(p\wedge q)$,
\begin{align} \label{Trspace4}
  VI
  \lesssim &   \lambda^{\sigma -d/p} \sup_{j>5} \|\varphi( \xi ) 2^{s(\lambda-1) c^{-1}2^{j} |\xi|} \|_{H^\kappa}  \|f\|_{F^{ \sigma,s}_{p,q}  } \nonumber\\
  \lesssim &   \lambda^{\sigma -d/p}  2^{s\lambda}  \|f\|_{F^{ \sigma,s}_{p,q}  }.
\end{align}
We now estimate $V$.  Noticing that ${\rm supp } \, \widehat{f} \subset \mathbb{R}^d \setminus B(0, \varepsilon_0)$ and using \eqref{lowfretozero}, we have
\begin{align} \label{Trspace4+}
  V
  \lesssim &  \,  \lambda^{\sigma -d/p} \sum_{\log_2 \varepsilon_0 -5 \leqslant j \leqslant 5} 2^{\sigma   j} \|\mathscr{F}^{-1}   2^{s \lambda |\xi|} \varphi (2^{-j} c \xi)   \widehat{f}_1\|_{p} \nonumber\\
  \lesssim &  \,  \lambda^{\sigma -d/p} \sum_{\log_2 \varepsilon_0 -5 \leqslant j \leqslant 5} 2^{\sigma   j}\|\mathscr{F}^{-1} M_j\|_1  \sum_{-2\leqslant \ell \leqslant 1} \|\triangle_{j+\ell} 2^{s|\nabla|}  \widehat{f} \|_{p} \nonumber\\
  \lesssim &  \,  \lambda^{\sigma -d/p}\|f\|_{F^{ \sigma,s}_{p,q}} \sum_{\log_2 \varepsilon_0 -5 \leqslant j \leqslant 5} 2^{\sigma   j} \|\mathscr{F}^{-1} M_j\|_1.
 \end{align}
In the proof of Lemma \ref{Scaling1}, we have shown that
$$
\|\mathscr{F}^{-1} M_j\|_1 \lesssim  2^{s \varepsilon_0 \lambda/3}
$$
Hence, we have
\begin{align} \label{Trspace4++}
  V
   \lesssim & \,   \lambda^{  -d/p} F(\lambda, \varepsilon_0)  2^{s\lambda \varepsilon_0 /3}  \|f\|_{F^{ \sigma,s}_{p,q}  },
 \end{align}
$$
F(\lambda, \varepsilon_0) =
\left\{
\begin{array}{ll}
\lambda^{\sigma-d/p}, & \sigma >0,\\
\lambda^{ -d/p} \log_2 \varepsilon^{-1}_0£¬  & \sigma =0,\\
\lambda^{\sigma-d/p} \varepsilon^{\sigma}_0, & \sigma <0.
\end{array}
\right.
$$
Using the embedding $B ^{ \sigma,s}_{p,1} \subset  F^{ \sigma,s}_{p,q}$, we see that
\begin{align} \label{Trspace4+3}
  IV
  \lesssim &  \,  \lambda^{\sigma -d/p} \sum_{1-j_\lambda \leqslant j \leqslant \log_2 \varepsilon_0 + 5  } 2^{\sigma   j} \|\mathscr{F}^{-1}   2^{s \lambda |\xi|} \varphi (2^{-j} c \xi)   \widehat{f}_2\|_{p}.
 \end{align}
Following the proof of Lemma \ref{Scaling1}, one has that
\begin{align} \label{Trspace4++4}
  IV
   \lesssim & \,   G (\lambda, \varepsilon_0)     \|f\|_{F^{ \sigma,s}_{p,q}  },
 \end{align}
 where
$$
G (\lambda, \varepsilon_0) =
\left\{
\begin{array}{ll}
\lambda^{\sigma-d/p} \varepsilon^{\sigma}_0, & \sigma >0,\\
\lambda^{ -d/p} \log_2 \lambda,  & \sigma =0,\\
\lambda^{ -d/p}, & \sigma <0.
\end{array}
\right.
$$
Taking $\varepsilon_0 = \lambda^{-1 + \epsilon/\sigma}$, we have the result of (i), as desired.

{\it Step 2.} We prove (ii). Noticing that $f= f_2$ in the proof of (i), we see that (ii) is a straightforward consequence of (i).

{\it Step 3.} We show the results of (iii).
Next, we consider the scaling of $f(\lambda^{-1} \, \cdot)$ with $\lambda \gg 1$.  By definition on $F^{\sigma,s}_{p,q}$,
\begin{align} \label{+Trspace1}
  \|f(\lambda^{-1}\,\cdot)\|_{F^{ \sigma, \lambda s}_{p,q}   } = \left\| \left(\sum^\infty_{j=0} 2^{\sigma jq}    |\triangle_j 2^{s\lambda |\nabla|} f(\lambda^{-1}\,\cdot) |^q \right)^{1/q}\right\|_{L^p }.
\end{align}
Let $\lambda= 2^{j_\lambda} c$ with $1\leqslant c <2$. We have for $j\geqslant 1$,
\begin{align*}
& \triangle_j 2^{s \lambda |\nabla|} f(\lambda^{-1} \,\cdot) = \left(\mathscr{F}^{-1} \varphi(2^{-(j+j_\lambda)}c \xi) 2^{s  |\xi|} \widehat{f} \right)(\lambda^{-1}\, \cdot)  \\
& \triangle_0 2^{s \lambda |\nabla|} f(\lambda^{-1} \,\cdot) = \left(\mathscr{F}^{-1} \psi ( \lambda^{-1}  \xi) 2^{s  |\xi|} \widehat{f} \right)(\lambda^{-1}\, \cdot).
\end{align*}
It follows that
\begin{align} \label{+Trspace2}
  \|f(\lambda^{-1}\,\cdot)\|_{F^{ \sigma, \lambda s}_{p,q}}
  \leqslant & \lambda^{ d/p}  \left\| \left(\sum^\infty_{j=1} 2^{\sigma jq}    | \mathscr{F}^{-1} \varphi(2^{-(j+j_\lambda)}c \xi) 2^{s |\xi|} \widehat{f}  |^q \right)^{1/q}\right\|_{L^p } \nonumber\\
  & +  \lambda^{ d/p}  \|\mathscr{F}^{-1} \psi(\lambda^{-1} \xi) 2^{s |\xi|} \widehat{f}  \|_{L^p}
   := A+B.
\end{align}
By Lemma \ref{Multiplier}, using an analogous way as in the above, we have
\begin{align} \label{+Trspace3}
A  \leqslant & \lambda^{ d/p -\sigma }  \left\| \left(\sum^\infty_{j> j_\lambda} 2^{\sigma jq}    | \mathscr{F}^{-1} \varphi(2^{-j}c \xi) 2^{s |\xi|} \widehat{f}  |^q \right)^{1/q}\right\|_{L^p } \nonumber\\
   \leqslant & \lambda^{ d/p -\sigma }  \|f\|_{F^{\sigma,s}_{p,q}}.
\end{align}
For the estimate of $B$, we have
\begin{align} \label{+Trspace2}
 B \leqslant &   \lambda^{ d/p} \sum^{j_\lambda +2}_{j=0}  \| \triangle_j \mathscr{F}^{-1} \psi(\lambda^{-1} \xi) 2^{s |\xi|} \widehat{f}  \|_{L^p} \nonumber\\
 \leqslant &   \lambda^{ d/p} \sum^{j_\lambda +2}_{j=0}  \| \triangle_j   2^{s |\nabla|} f \|_{L^p}   \nonumber\\
  \leqslant &   \lambda^{ d/p} G(\lambda)  \|f\|_{F^{\sigma,s}_{p,q}},
\end{align}
where
$$
G(\lambda) =
\left\{
\begin{array}{ll}
\lambda^{-\sigma}, & \sigma <0\\
\log_2 \lambda£¬  & \sigma =0\\
1, & \sigma >0.
\end{array}
\right.
$$
The result follows. $\hfill\Box$

\begin{lemma} \label{Isomorphism2}
Let $s, s', \sigma  \in \mathbb{R}$, $1\leqslant p ,q  \leqslant \infty$.   Then
\begin{align} \label{isomorph3}
2^{(s'-s)|\nabla|}: \,   F^{ \sigma, s'}_{p,q} \to  F^{ \sigma,s}_{p,q}
\end{align}
is an isometric mapping. In particular,
 \begin{align} \label{isomorph4}
2^{s|\nabla|}: \,   F^{ \sigma, s}_{p,q} \to  F^{ \sigma}_{p,q}
\end{align}
is an isometric mapping.
\end{lemma}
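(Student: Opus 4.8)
The plan is to reproduce, essentially verbatim, the one-line argument of Lemma~\ref{Isomorphism1}: the statement is purely algebraic, coming from the additivity $2^{s|\xi|}\cdot 2^{(s'-s)|\xi|} = 2^{s'|\xi|}$ of the exponential weight. First I would record that each $2^{t|\nabla|} = \mathscr{F}^{-1} 2^{t|\xi|}\mathscr{F}$ ($t\in\mathbb{R}$) is a Fourier multiplier which is well defined on $\mathscr{S}'_1$ by duality --- this is precisely why $\mathscr{S}_1$, whose elements decay faster than $e^{-\lambda|\xi|}$ on the Fourier side for every $\lambda>0$, is taken as the space of test functions --- and that, being Fourier multipliers, the operators $2^{t|\nabla|}$, $2^{t'|\nabla|}$ and the Littlewood--Paley blocks $\triangle_j = \mathscr{F}^{-1}\varphi_j\mathscr{F}$ all commute, with the composition rule $2^{t|\nabla|}2^{t'|\nabla|} = 2^{(t+t')|\nabla|}$.

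Then, given $f\in F^{\sigma,s'}_{p,q}$ and writing $g := 2^{(s'-s)|\nabla|}f$, I would simply compute $\triangle_j 2^{s|\nabla|} g = \triangle_j 2^{s|\nabla|}2^{(s'-s)|\nabla|} f = \triangle_j 2^{s'|\nabla|} f$ for every $j\in\mathbb{Z}_+$, so that the defining square functions in \eqref{TFspace1} for $g$ (with weight $s$) and for $f$ (with weight $s'$) agree pointwise; taking $L^p$-norms gives $\|g\|_{F^{\sigma,s}_{p,q}} = \|f\|_{F^{\sigma,s'}_{p,q}}$, which is \eqref{isomorph3}. If one reads ``isometric mapping'' as ``isometric isomorphism'', surjectivity follows at once because $2^{(s-s')|\nabla|}$ furnishes a two-sided inverse. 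Finally, setting $s'=0$ and recalling $F^{\sigma,0}_{p,q} = F^{\sigma}_{p,q}$ gives the particular case \eqref{isomorph4}.

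There is no genuine obstacle here; if one insists on a ``hard part'' it is merely the bookkeeping check that $2^{t|\xi|}$ is an admissible multiplier on $\mathscr{S}'_1$ so that the formal manipulations above are legitimate, and even that is immediate once $\mathscr{S}_1$ has been set up. The whole content of the lemma is the identity of the exponential weights, read off at the level of the defining norms, exactly as in the proof of Lemma~\ref{Isomorphism1}.
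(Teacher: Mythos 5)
Your argument is correct and coincides with the paper's own one-line proof: the paper simply records the identity $\|2^{(s'-s)|\nabla|} f\|_{F^{\sigma,s}_{p,q}} = \|f\|_{F^{\sigma,s'}_{p,q}}$, which is exactly the pointwise equality of square functions you derive from $2^{s|\nabla|}2^{(s'-s)|\nabla|} = 2^{s'|\nabla|}$. The extra remarks you include about $\mathscr{S}'_1$ and commutativity of Fourier multipliers are harmless elaboration of the same observation.
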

\noindent {\bf Proof.} It is a direct consequence of
$$
\|2^{(s'-s)|\nabla|} f\|_{ F^{ \sigma,s}_{p,q}} = \|f\|_{F^{ \sigma, s'}_{p,q}}. \eqno{\Box}
$$
Recall that $ F^{ \sigma }_{p,2} = H^\sigma_p $ for all $1<p<\infty, \, \sigma \in \mathbb{R}$ (cf. \cite{Gr2004}). Using the fact
$$
2^{s|\nabla|} F^\sigma_{p,2} = F^{\sigma, s}_{p,2}, \ \  2^{s|\nabla|} H^\sigma_{p} = H^{\sigma, s}_{p},
$$
we immediately have

\begin{lemma} \label{Isomorphism3}
Let $ \sigma, s  \in \mathbb{R}$, $1< p < \infty$.   Then
$  F^{ \sigma, s}_{p,2} =  H^{ \sigma, s }_{p}$
with equivalent norms.
\end{lemma}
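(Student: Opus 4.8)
The plan is to transplant the classical Littlewood--Paley characterization of Bessel potential spaces through the isometry $2^{s|\nabla|}$ already isolated in Lemma~\ref{Isomorphism2}. First I would record the two structural identities built into the definitions. By \eqref{TFspace2}, $\|f\|_{H^{\sigma,s}_{p}}=\|2^{s|\nabla|}(I-\Delta)^{\sigma/2}f\|_{L^p}=\|2^{s|\nabla|}f\|_{H^\sigma_p}$, so $2^{s|\nabla|}$ maps $H^{\sigma,s}_{p}$ isometrically onto $H^\sigma_p$; and by Lemma~\ref{Isomorphism2} with $s'=0$ and $q=2$, the operator $2^{s|\nabla|}$ maps $F^{\sigma,s}_{p,2}$ isometrically onto $F^{\sigma}_{p,2}$.

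Next I would invoke the classical fact that for $1<p<\infty$ and $\sigma\in\mathbb{R}$ one has $F^{\sigma}_{p,2}=H^\sigma_p$ with equivalent norms (cf. \cite{Gr2004}, \cite{Tr1983}). Chaining the three facts: for every $f\in\mathscr{S}'_1$ one gets $\|f\|_{F^{\sigma,s}_{p,2}}=\|2^{s|\nabla|}f\|_{F^{\sigma}_{p,2}}\sim\|2^{s|\nabla|}f\|_{H^\sigma_p}=\|f\|_{H^{\sigma,s}_{p}}$, which simultaneously identifies the two spaces (including equality of the underlying sets, since $2^{s|\nabla|}$ is a bijection) and yields the norm equivalence; the only nontrivial constant comes from the $s=0$ case $F^\sigma_{p,2}=H^\sigma_p$, the two outer steps being genuine isometries.

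The one point deserving a sentence of care is that $2^{s|\nabla|}=\mathscr{F}^{-1}2^{s|\xi|}\mathscr{F}$ be a well-defined bijection on the ambient space $\mathscr{S}'_1$ for $s$ of either sign, so that the displayed chain is meaningful distributionally; this is precisely the mechanism underlying Lemma~\ref{Isomorphism2}, since $2^{s|\xi|}$ and its reciprocal $2^{-s|\xi|}$ map $\mathscr{S}_1$ continuously into itself and hence act on $\mathscr{S}'_1$ by duality, so no new estimate is required. I do not anticipate any real obstacle: the statement is a formal consequence of the $s=0$ theory together with the isometry $2^{s|\nabla|}$, and the proof is essentially the two lines indicated above.
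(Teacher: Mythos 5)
Your proof is correct and follows exactly the same route as the paper: the isometry $2^{s|\nabla|}: F^{\sigma,s}_{p,2}\to F^{\sigma}_{p,2}$ (and likewise $H^{\sigma,s}_{p}\to H^\sigma_p$) from Lemma~\ref{Isomorphism2}, combined with the classical identification $F^{\sigma}_{p,2}=H^\sigma_p$ for $1<p<\infty$. The extra remark on distributional well-definedness of $2^{s|\nabla|}$ on $\mathscr{S}'_1$ is a reasonable sanity check but does not change the argument.
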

Using the scaling properties on $F^{ \sigma, s}_{p,2}$, we have the following
\begin{cor} \label{Scaling4}
		Let $s< 0$, $\sigma \in \mathbb{R}$, $1< p <\infty$. We have the following results.

\begin{itemize}
\item[\rm (i)]
For any $\epsilon >0$, there exists $\lambda_0 =\lambda_0(\epsilon, s,\sigma) >1$  such that for any $\lambda > \lambda_0$ and $f\in H^{\sigma,s}_{p}$,
\begin{align*}
& 	\|f(\lambda\, \cdot) \|_{H^{\sigma,s}_{p}} \lesssim   \lambda^{-d/p + \epsilon}   \|f\|_{H^{\sigma,s}_{p} }
\end{align*}
and the above inequality holds for $\epsilon=0$ if $\sigma <0$. In particular,
\begin{align*}
& 	\|f(\lambda\, \cdot) \|_{E^{\sigma,s}} \lesssim   \lambda^{-d/2 + \epsilon}   \|f\|_{E^{\sigma,s} }.
\end{align*}

\item[\rm (ii)]
  Assume that ${\rm supp }\, f \subset \{\xi: |\xi|\geqslant \varepsilon_0\}$ for some $\varepsilon_0>0$. Then  for any $\lambda \gg  1$,
\begin{align*}
& 	\|f(\lambda\, \cdot) \|_{H^{\sigma,s}_{p}} \lesssim \lambda^{\sigma-d/p } 2^{s  \lambda  \varepsilon_0/3} \|f\|_{H^{\sigma,s}_{p} }, \ \ \sigma > 0; \\
& 	\|f(\lambda\, \cdot) \|_{H^{ \sigma,s}_{p} } \lesssim \lambda^{-d/p} 2^{s  \lambda \varepsilon_0/3} \|f\|_{H^{ \sigma,s}_{p} }, \ \ \sigma \leqslant 0 .
\end{align*}

\item[\rm (iii)]
We have for any $f\in H^{\sigma, s}_{p} $, $\lambda \gg 1$,
\begin{align*}
& \|f(\lambda^{-1}\, \cdot)\|_{H^{\sigma, \lambda s}_{p}  } \lesssim \lambda^{ d/p - \sigma}  \|f \|_{H^{\sigma,  s}_{p}  }, \ \ \sigma < 0; \\
& \|f(\lambda^{-1}\, \cdot)\|_{H^{\sigma, \lambda s}_{p}  } \lesssim \lambda^{ d/p  }  \|f\|_{H^{\sigma,  s}_{p}  }, \ \ \sigma \geqslant  0.
\end{align*}
\end{itemize}
\end{cor}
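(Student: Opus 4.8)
The plan is to read Corollary~\ref{Scaling4} as the case $q=2$ of Lemma~\ref{Scaling3}, translated through the identifications already established in the excerpt. First I would record that, for $1<p<\infty$, $F^{\sigma,s}_{p,2}=H^{\sigma,s}_p$ with equivalent norms (Lemma~\ref{Isomorphism3}), and that when $p=q=2$ the outer terms of \eqref{tembedding2} coincide, so $F^{\sigma,s}_{2,2}=B^{\sigma,s}_{2,2}=E^{\sigma,s}$ by Lemma~\ref{Embeddingequal}; hence $H^{\sigma,s}_2=E^{\sigma,s}$ with equivalent norms. With these in hand, each of the three items is obtained by specializing the corresponding item of Lemma~\ref{Scaling3} to $q=2$ and rewriting every occurrence of $F^{\sigma,s}_{p,2}$ (on both sides of the inequality, and with the weight $\lambda s$ replacing $s$ where required in~(iii)) as $H^{\sigma,s}_p$; the substitution of the weight is legitimate because, by Lemma~\ref{Isomorphism2}, an exponential weight only changes the value of $s$ and not the structure of the norm. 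The constant $\lambda_0$ then depends only on $\epsilon,s,\sigma$, the $q$-dependence having disappeared. The "in particular" clauses for $E^{\sigma,s}$ follow by putting $p=2$, so that $d/p=d/2$ and $H^{\sigma,s}_2=E^{\sigma,s}$.

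The single point needing a remark is the borderline index $\sigma=0$, where Lemma~\ref{Scaling3}(ii)--(iii) produces an extra factor $\log_2\lambda$ absent from the corollary. In (ii) this is immaterial: since $s<0$, for $\lambda\gg1$ the exponentially small factor $2^{s\lambda\varepsilon_0/3}$ absorbs the logarithm, if necessary after shrinking the inessential constant $1/3$ in the exponent (which is not sharp in the proof of Lemma~\ref{Scaling3}), so the cases $\sigma<0$ and $\sigma=0$ merge into the displayed clause $\sigma\leqslant0$. In (iii) the $\sigma=0$ bound carries a genuine $\log_2\lambda$ loss over $\lambda^{d/p}$; this is harmless for the paper since Theorems~\ref{mainresult} and~\ref{mainresult2} invoke the corollary only with $\sigma\geqslant1/2>0$, and if a uniform statement is wanted the logarithm can be absorbed into $\lambda^{d/p+}$.

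I do not expect a real obstacle: the analytic work — the decomposition $\widehat f=\widehat f_1+\widehat f_2$, the $L^p(l^q)$ multiplier bounds via Lemma~\ref{Multiplier}, and the summations over dyadic blocks — is already carried out in Lemma~\ref{Scaling3}, and Corollary~\ref{Scaling4} is pure bookkeeping on top of it. The only spot requiring a little care is applying the norm identification consistently to $f(\lambda\,\cdot)$, to $f(\lambda^{-1}\,\cdot)$, and to $f$ itself, together with the correct weight ($s$ or $\lambda s$); this is routine given Lemmas~\ref{Isomorphism2} and~\ref{Isomorphism3}.
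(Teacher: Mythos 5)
Your route is valid but differs from the paper's, which is essentially a one-liner: the paper reduces to $\sigma=0$ (after the dilation change of variables the factor $\langle\lambda\eta\rangle^\sigma/\langle\eta\rangle^\sigma$ is $\lambda^{\pm\sigma}$ times a multiplier that is uniformly Mikhlin in $\lambda\geq1$), and for $\sigma=0$ observes that $\|f\|_{H^{0,s}_p}=\|\mathscr{F}^{-1}2^{s|\xi|}\widehat f\|_p$, so the dilation acts essentially as it does on $L^p$. In particular for (iii) one gets an identity $\|f(\lambda^{-1}\,\cdot)\|_{H^{0,\lambda s}_p}=\lambda^{d/p}\|f\|_{H^{0,s}_p}$ with no loss, while for (i)--(ii) the factor $2^{s(\lambda-1)|\xi|}$ (restricted to $\{|\xi|\geq\varepsilon_0\}$ in (ii)) is handled by the same $\|\mathscr F^{-1}m\|_1$ estimate already used in Lemma~\ref{Scaling1}. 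Your specialization of Lemma~\ref{Scaling3} to $q=2$ via the identification $F^{\sigma,s}_{p,2}=H^{\sigma,s}_p$ (Lemma~\ref{Isomorphism3}) is a legitimate alternative, but it routes through the Littlewood--Paley decomposition, and the dyadic summation produces the spurious $\log_2\lambda$ at $\sigma=0$ in parts (ii) and (iii). You dispose of it in (ii) correctly (absorbing it into the exponential at the cost of the non-sharp constant $1/3$), but in (iii) your argument at $\sigma=0$ only yields $\lambda^{d/p}\log_2\lambda$ rather than the $\lambda^{d/p}$ asserted for $\sigma\geq0$; you flag this honestly and note it does not matter for the paper's applications (which take $\sigma\geq1/2>0$), but strictly speaking the stated $\sigma=0$ case of (iii) is not recovered. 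The fix is exactly the paper's device: avoid the dyadic decomposition for $H^{0,s}_p$ and compute the scaling directly, where it is exact.
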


\begin{proof}
It suffices to consider the case $\sigma =0$, which is easily seen by an observation $\|f\|_{H^{0,s}_{p}  } = \|\mathscr{F}^{-1}2^{s|\xi|} \widehat{f}\|_p$.
\end{proof}

\begin{cor} \label{Scaling2a}
Let $s< 0$, $\sigma \in \mathbb{R}$. Denote $(D_\lambda f)(x,t) = f(x/\lambda, t/\lambda)$.  Then  for any $\lambda> 1$, we have
\begin{align*}
& \|D_\lambda f\|_{ {L} ^\gamma (\mathbb{R}; H^{\sigma, \lambda s}_{p})  } \lesssim \lambda^{1/\gamma + d/p - \sigma}  \|f \|_{ {L} ^\gamma (\mathbb{R}; H^{\sigma,  s}_{p} )  }, \ \ \sigma < 0; \\
& \|D_\lambda f\|_{ {L} ^\gamma (\mathbb{R}; H^{\sigma, \lambda s}_{p})  }  \lesssim \lambda^{1/\gamma +  d/p  }  \|f\|_{ {L} ^\gamma (\mathbb{R}; H^{\sigma,  s}_{p} )  }, \ \ \sigma \geqslant 0;
\end{align*}
\end{cor}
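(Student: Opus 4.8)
The plan is to reduce the space--time scaling to the purely spatial estimate of Corollary \ref{Scaling4}(iii). The key structural point, as opposed to the situation of Lemma \ref{Scaling2} for $\widetilde{L}^\gamma(\mathbb{R}; B^{\sigma,s}_{p,q})$, is that the mixed norm $\|\cdot\|_{L^\gamma(\mathbb{R}; H^{\sigma,\lambda s}_p)}$ takes the \emph{full} $x$-space norm $\|\cdot\|_{H^{\sigma,\lambda s}_p}$ on each time slice before integrating in $t$; in particular no Littlewood--Paley sum sits outside the time integral, so one may apply the spatial result pointwise in $t$ and the argument is shorter than that of Lemma \ref{Scaling2}.

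First I would write, by the definition of the mixed norm together with Fubini's theorem,
\begin{align*}
\|D_\lambda f\|_{L^\gamma(\mathbb{R}; H^{\sigma,\lambda s}_p)}^\gamma = \int_{\mathbb{R}} \big\|f(\cdot/\lambda,\, t/\lambda)\big\|_{H^{\sigma,\lambda s}_p}^\gamma \, dt ,
\end{align*}
and then, for each fixed $t$, set $g_t(x) := f(x, t/\lambda)$, so that $f(x/\lambda, t/\lambda) = g_t(x/\lambda)$. Applying Corollary \ref{Scaling4}(iii) to $g_t$ yields, for $\lambda > 1$, the bounds $\|g_t(\cdot/\lambda)\|_{H^{\sigma,\lambda s}_p} \lesssim \lambda^{d/p-\sigma}\|g_t\|_{H^{\sigma,s}_p}$ when $\sigma < 0$ and $\|g_t(\cdot/\lambda)\|_{H^{\sigma,\lambda s}_p} \lesssim \lambda^{d/p}\|g_t\|_{H^{\sigma,s}_p}$ when $\sigma \geqslant 0$ (for $\lambda$ in a bounded range this is trivial since all the quantities involved are comparable, and at the endpoint $\sigma = 0$ there is no logarithmic loss because $\|h\|_{H^{0,s}_p} = \|\mathscr{F}^{-1} 2^{s|\xi|}\widehat{h}\|_p$ scales exactly). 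Inserting this into the integral and undoing the time dilation by the elementary one--dimensional identity $\int_{\mathbb{R}}\|f(\cdot, t/\lambda)\|_{H^{\sigma,s}_p}^\gamma\, dt = \lambda \int_{\mathbb{R}}\|f(\cdot,\tau)\|_{H^{\sigma,s}_p}^\gamma\, d\tau$, I obtain
\begin{align*}
\|D_\lambda f\|_{L^\gamma(\mathbb{R}; H^{\sigma,\lambda s}_p)}^\gamma \lesssim \lambda^{(d/p-\sigma)\gamma}\,\lambda\, \|f\|_{L^\gamma(\mathbb{R}; H^{\sigma,s}_p)}^\gamma, \qquad \sigma < 0,
\end{align*}
together with the analogue carrying $\lambda^{(d/p)\gamma}$ in place of $\lambda^{(d/p-\sigma)\gamma}$ when $\sigma \geqslant 0$; taking the $\gamma$-th root produces exactly the claimed exponents $1/\gamma + d/p - \sigma$ and $1/\gamma + d/p$. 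For $\gamma = \infty$ the same computation goes through with $\sup_t$ replacing $(\int\, dt)^{1/\gamma}$, and then no extra power of $\lambda$ appears, consistently with $1/\gamma = 0$.

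I do not anticipate a genuine obstacle: the only two points deserving a moment's care are, first, that Corollary \ref{Scaling4}(iii) really is applicable slice by slice --- which is immediate, since $H^{\sigma,\lambda s}_p$ is defined through $x$-space Fourier multipliers acting on the fixed-time function $g_t$ --- and, second, that the case $\sigma = 0$ hides no factor of $\log\lambda$ (it does not, for the reason just indicated). Everything else is merely Fubini's theorem and the scaling of $L^\gamma_t$.
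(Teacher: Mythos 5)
Your proof is correct and is exactly the argument the paper has in mind in stating this as an unproved corollary of Corollary \ref{Scaling4}: the mixed norm $L^\gamma(\mathbb{R};H^{\sigma,\cdot}_p)$ admits slicewise application of the spatial scaling estimate (iii), after which a one-dimensional change of variables in $t$ produces the extra factor $\lambda^{1/\gamma}$. Your observations that no Littlewood--Paley sum sits outside the time integral (unlike in Lemma \ref{Scaling2}), that the bounded range $1<\lambda\lesssim1$ is handled by multiplier bounds, and that the absence of a log loss at $\sigma=0$ is inherited from Corollary \ref{Scaling4}, are all apt.
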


\section{Decaying estimates for the variable parameter $\lambda>1$} \label{sectDecay}

Let us consider the decaying estimates for the solutions of the scaling linear Klein-Gordon equations
\begin{align}\label{LKG1}
\left\{
\begin{array}{l}
\partial^2_t u  + \lambda^2 u-  \Delta u  = f,  \\
  u(x,0) = u_0(x), \ \ u_t (x,0) = u_1(x).
\end{array}
\right.
\end{align}
As indicated in the introduction, one needs to take $\lambda >1$ sufficiently large to guarantee that the initial data are small enough in $E^{\sigma,s} \times E^s_{\sigma-1}$. So, we need to treat $\lambda >1$ as a variable parameter in the decaying estimates of the solutions of \eqref{LKG1}.
For convenience, we write
 \begin{align}\label{LKG2}
K_\lambda (t):= \frac{\sin t (\lambda^2-\Delta)^{1/2}}{(\lambda^2-\Delta)^{1/2}}, \ \  K'_\lambda (t):=  \cos t (\lambda^2-\Delta)^{1/2}.
\end{align}
It is known that the  solution of \eqref{LKG1} can be rewritten as
\begin{align}
   \label{LKG3}
u(t)= K'_\lambda (t) u_0 + K_\lambda (t) u_1 + \int^t_0 K_\lambda (t-\tau ) f(\tau) d\tau.
\end{align}
Denote
 \begin{align}\label{LKG4}
U_\lambda (t):=  e^{{\rm i} t (\lambda^2-\Delta)^{1/2} }.
\end{align}
It is easy to see that
$$
K'_\lambda (t) = \frac{1}{2}(U_\lambda (t) + U_\lambda (-t)), \ \  K_\lambda (t) = \frac{ U_\lambda (t) - U_\lambda (-t) }{2{\rm i}(\lambda^2-\Delta)^{1/2}}.
$$
To estimate $K'_\lambda (t)$ and $K_\lambda (t)$, it suffices to consider the decaying estimate of $U_\lambda (t)$.   In order to have an illustration for the decay of $U_\lambda (t)$ in higher spatial dimensions, we first consider  1D case, which is much easier than that of higher spatial dimensions.

\subsection{Decay in 1D}

We consider the decaying estimates of $U_\lambda (t)$ for $\lambda>1$ in 1D. First, let us recall the Van der Corput lemma \cite{Stein1993}:
\begin{lemma} \label{VanderCorput}
Let $\phi \in C^\infty_0 (\mathbb{R})$ and there exists $k\geqslant 2$ such that $P\in C^{k} (\mathbb{R})$ satisfies $|P^{(k)}(\xi)| \geqslant \rho $ for all $\xi \in {\rm supp}\, \phi$. Then we have
 \begin{align}\label{LKG5}
\left|\int e^{{\rm i}  P(\xi)}  \phi(\xi) d\xi\right| \leqslant \rho^{-1/k} (\|\phi\|_\infty + \|\phi'\|_1)
\end{align}
\end{lemma}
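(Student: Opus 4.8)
The plan is the classical two-stage argument for van der Corput's estimate: first prove the amplitude-free version, namely that $\bigl|\int_I e^{\mathrm{i}P(\xi)}\,d\xi\bigr|\lesssim \rho^{-1/k}$ with a constant depending only on $k$ and, crucially, \emph{uniformly over all subintervals $I$} of $\mathrm{supp}\,\phi$; then transfer to a general amplitude $\phi$ by one integration by parts. Before starting I would make a few harmless reductions: $P$ may be taken real-valued; by a partition of unity we may assume $\phi$ is supported in a single interval $(a,b)$ on which $P^{(k)}$ has constant sign (on any interval inside $\{|P^{(k)}|\geqslant\rho\}$ this holds by continuity and the intermediate value theorem), so that $P^{(k-1)}$ is strictly monotone there.

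\textbf{Step 1 (no amplitude).} I would argue by induction on $k$. The base case is $k=1$ with $P'$ monotone: a single integration by parts, $\int_a^b e^{\mathrm{i}P}=\int_a^b (\mathrm{i}P')^{-1}\,d(e^{\mathrm{i}P})$, produces two boundary terms and one more integral, all bounded by $\lesssim\rho^{-1}$ since $1/P'$ is monotone with $|P'|\geqslant\rho$. For the passage from level $k-1$ to level $k$, assume $|P^{(k)}|\geqslant\rho$ on $(a,b)$; then $P^{(k-1)}$ is strictly monotone and vanishes at most once, at some point $\xi_0$ (if there is no zero, take $\xi_0$ to be an endpoint). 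On the $\delta$-neighbourhood of $\xi_0$ use the trivial bound $\leqslant 2\delta$; on each of the at most two remaining subintervals one has $|P^{(k-1)}|\geqslant\rho\delta$, so the induction hypothesis at level $k-1$ with parameter $\rho\delta$ gives $\lesssim(\rho\delta)^{-1/(k-1)}$ there. Summing, the bound is $\lesssim\delta+(\rho\delta)^{-1/(k-1)}$, and choosing $\delta\sim\rho^{-1/k}$ equalizes the two terms and yields $\lesssim_k\rho^{-1/k}$, with a constant independent of $a,b$.

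\textbf{Step 2 (with amplitude).} Put $F(\xi)=\int_a^\xi e^{\mathrm{i}P(t)}\,dt$ with $(a,b)$ an interval containing $\mathrm{supp}\,\phi$; applying Step 1 on the subinterval $(a,\xi)$ for every $\xi$ gives $\|F\|_\infty\lesssim_k\rho^{-1/k}$. Then integrate by parts:
\[
\int e^{\mathrm{i}P(\xi)}\phi(\xi)\,d\xi=\int F'(\xi)\phi(\xi)\,d\xi=\bigl[F\phi\bigr]_a^b-\int_a^b F(\xi)\phi'(\xi)\,d\xi,
\]
whence $\bigl|\int e^{\mathrm{i}P}\phi\bigr|\leqslant\|F\|_\infty\|\phi\|_\infty+\|F\|_\infty\|\phi'\|_1\lesssim_k\rho^{-1/k}\bigl(\|\phi\|_\infty+\|\phi'\|_1\bigr)$, which is the asserted inequality (for $\phi\in C_0^\infty$ the boundary term is in fact zero, so the $\|\phi\|_\infty$ piece is only carried along to match the stated form).

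The one genuinely delicate point is Step 1: the localization radius $\delta$ must be tied to $\rho$ so that the trivial contribution from the $\delta$-ball balances the oscillatory contribution from the tails at exactly the exponent $-1/k$, while simultaneously all constants are kept independent of the endpoints $a,b$ — this uniformity is precisely what legitimizes the integration by parts in Step 2. Everything else — reality of $P$, the constant-sign reduction, monotonicity of $P^{(k-1)}$, and bookkeeping of the numerical constant $c_k$ — is routine.
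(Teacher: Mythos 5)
The paper does not prove this lemma; it simply cites \cite{Stein1993} ("recall the Van der Corput lemma"). Your two-stage argument — the amplitude-free estimate $\bigl|\int_a^b e^{\mathrm{i}P}\bigr|\lesssim_k\rho^{-1/k}$ by induction on $k$ (split off a $\delta$-neighbourhood of the point where $P^{(k-1)}$ is small, apply the $(k-1)$-level bound with parameter $\rho\delta$ on the tails, and optimize $\delta\sim\rho^{-1/k}$), followed by one integration by parts against $F(\xi)=\int_a^\xi e^{\mathrm{i}P}$ to handle the amplitude — is precisely the proof in the cited source (Stein, Ch.~VIII, Prop.~2 and its Corollary), and it is correct.

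Two small points worth recording. First, the argument yields $c_k\,\rho^{-1/k}\bigl(\|\phi\|_\infty+\|\phi'\|_1\bigr)$ with a constant $c_k$ growing in $k$; the paper's display \eqref{LKG5} omits $c_k$, but since only the $\rho$- and $t$-decay rates are used downstream in \eqref{LKG8}--\eqref{LKG16} this is harmless, and you correctly carry an implicit $\lesssim_k$. Second, to run Step~2 you apply Step~1 on every $(a,\xi)$ with $(a,b)\supset\mathrm{supp}\,\phi$, which tacitly requires $|P^{(k)}|\geqslant\rho$ on all of $(a,b)$, not merely on $\mathrm{supp}\,\phi$. Your opening "partition of unity" remark addresses this: the components of $\{|P^{(k)}|\geqslant\rho\}$ meeting the compact set $\mathrm{supp}\,\phi$ are finitely many disjoint closed intervals, and summing over them costs only a bounded factor because the pieces of $\phi'$ have disjoint supports. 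In the paper's application the phase second derivative $t\lambda^2(\lambda^2+\xi^2)^{-3/2}$ has constant sign, so no partition is actually needed there.
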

It is easy to see that for $j\in \mathbb{N}$,
\begin{align}\label{LKG6}
\triangle_0 U_\lambda (t) f = \mathscr{F}^{-1} (e^{{\rm i}t \sqrt{\lambda^2+|\xi|^2}} \psi(\xi) ) * f, \ \   \triangle_j U_\lambda (t) f = \mathscr{F}^{-1} (e^{{\rm i}t \sqrt{\lambda^2+|\xi|^2}} \varphi_j (\xi) ) * f.
\end{align}
By Young's inequality,
\begin{align}\label{LKG7}
\|\triangle_0 U_\lambda (t) f \|_\infty  \leqslant \| \mathscr{F}^{-1} (e^{{\rm i}t \sqrt{\lambda^2+|\xi|^2}} \psi(\xi) )\|_\infty  \|f\|_1.
\end{align}
Let us  write $p_\lambda (\xi) = x\xi + t \sqrt{\lambda^2+ \xi^2}$. One easily sees that
$$
p_\lambda' (\xi) = x+ t\xi  (\lambda^2+ \xi^2)^{-1/2}, \ \  p_\lambda'' (\xi) =   t\lambda^2   (\lambda^2+ \xi^2)^{-3/2}.
$$
Applying Van der Corput's lemma, one has that
\begin{align}\label{LKG8}
 \| \mathscr{F}^{-1} (e^{{\rm i}t \sqrt{\lambda^2+|\xi|^2}} \psi(\xi) )\|_\infty  \lesssim \lambda^{1/2} |t|^{-1/2}.
\end{align}
It follows from \eqref{LKG7} and \eqref{LKG8} that
\begin{align}\label{LKG9}
\|\triangle_0 U_\lambda (t) f \|_\infty  \lesssim \lambda^{1/2} |t|^{-1/2}  \|f\|_1.
\end{align}
Next, by scaling and Young's inequality, we have
\begin{align}\label{LKG10}
\|\triangle_j U_\lambda (t) f \|_\infty  \leqslant 2^{j}\| \mathscr{F}^{-1} (e^{{\rm i}t \sqrt{\lambda^2+|2^j\xi|^2}} \varphi(\xi) )\|_\infty  \|f\|_1.
\end{align}
Denote $p_{j,\lambda} (\xi) = x\xi + t \sqrt{\lambda^2+ |2^j\xi|^2}$. We see that
$$
p'_{j,\lambda} (\xi) = x+ 2^{2j} t\xi  (\lambda^2+ 2^{2j}\xi^2)^{-1/2}, \ \  p_{j,\lambda} '' (\xi) =   t 2^{2j}\lambda^2   (\lambda^2+ |2^j\xi|^2)^{-3/2}.
$$
It follows that
$$
| p_{j,\lambda} '' (\xi)|   \gtrsim   |t| 2^{2j}\lambda^2   (\lambda^3+  2^{3j})^{-1} .
$$
By Van der Corput lemma,
\begin{align}\label{LKG11}
 \| \mathscr{F}^{-1} (e^{{\rm i}t \sqrt{\lambda^2+|2^j \xi|^2}} \varphi(\xi) )\|_\infty  \lesssim (\lambda^{1/2}2^{-j} + \lambda^{-1}2^{j/2}) |t| ^{-1/2}.
\end{align}
So, from \eqref{LKG9}, \eqref{LKG10} and \eqref{LKG11} it follows that for $j\geqslant 0$,
\begin{align}\label{LKG12}
\|\triangle_j U_\lambda (t) f \|_\infty \lesssim  (\lambda^{1/2}  + \lambda^{-1}2^{3j/2}) |t| ^{-1/2} \|f\|_1.
\end{align}
Since $\psi$ and $\varphi$ have compact support sets, we have for $j\geqslant 0$,
\begin{align}\label{LKG13}
\|\triangle_j U_\lambda (t) f \|_\infty \lesssim  2^j \|f\|_1.
\end{align}
By Plancherel's identity, for $j\geqslant 0$,
\begin{align}\label{LKG14}
\|\triangle_j U_\lambda (t) f \|_2 \leqslant  \|f\|_2.
\end{align}
By interpolation,  for $\delta(p) = 1/2-1/p$ we have for $2\leqslant p \leqslant \infty$,
\begin{align}\label{LKG15}
& \|\triangle_j U_\lambda (t) f \|_p \lesssim  (\lambda^{\delta(p)}  + \lambda^{-2 \delta(p)}2^{3\delta(p)j}) |t| ^{- \delta(p)} \|f\|_{p'}, \\
\label{LKG16}
& \|\triangle_j U_\lambda (t) f \|_p \lesssim   2^{2\delta(p)j}  \|f\|_{p'}.
\end{align}

\begin{lemma} \label{1Ddecay}
Let $2\leqslant p \leqslant \infty$, $\delta (p) =1/2-1/p$. For any $\theta \in [0,1]$, we have
\begin{align}\label{LKG17}
& \|\triangle_j U_\lambda (t) f \|_p \lesssim  (\lambda^{\theta \delta(p)} 2^{2(1-\theta)\delta(p) j}  + \lambda^{-2 \theta \delta(p)}2^{(2+\theta)\delta(p)j}) |t| ^{- \theta \delta(p)} \|f\|_{p'}.
\end{align}

\end{lemma}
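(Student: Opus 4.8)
The plan is to interpolate between the two estimates \eqref{LKG15} and \eqref{LKG16} that have already been established, both of which bound $\|\triangle_j U_\lambda(t) f\|_p$ by a constant times $\|f\|_{p'}$ uniformly in $j\geqslant 0$, $\lambda>1$ and $t\neq 0$. Since the two bounds share the same pair of exponents $(p,p')$, there is no need to invoke a Riesz--Thorin argument: one simply takes the minimum of the two right-hand sides and dominates that minimum by a geometric mean, using $\min(A_1,A_2)\leqslant A_1^\theta A_2^{1-\theta}$ for every $\theta\in[0,1]$.

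Concretely, I would set $A_1=(\lambda^{\delta(p)}+\lambda^{-2\delta(p)}2^{3\delta(p)j})\,|t|^{-\delta(p)}$ from \eqref{LKG15} and $A_2=2^{2\delta(p)j}$ from \eqref{LKG16}, so that $\|\triangle_j U_\lambda(t) f\|_p\lesssim \min(A_1,A_2)\|f\|_{p'}\leqslant A_1^\theta A_2^{1-\theta}\|f\|_{p'}$. The factor $A_2^{1-\theta}$ contributes $2^{2(1-\theta)\delta(p)j}$ and the factor $|t|^{-\delta(p)\theta}$ factors out of $A_1^\theta$. The only point worth a short remark is the treatment of the sum inside $A_1$: by the elementary subadditivity $(a+b)^\theta\leqslant a^\theta+b^\theta$ valid for $\theta\in[0,1]$ one has $(\lambda^{\delta(p)}+\lambda^{-2\delta(p)}2^{3\delta(p)j})^\theta\leqslant \lambda^{\theta\delta(p)}+\lambda^{-2\theta\delta(p)}2^{3\theta\delta(p)j}$. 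Multiplying through by $2^{2(1-\theta)\delta(p)j}$ and combining exponents via $3\theta\delta(p)+2(1-\theta)\delta(p)=(2+\theta)\delta(p)$ yields exactly \eqref{LKG17}.

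There is essentially no obstacle: the analytic content is entirely contained in the decay estimates \eqref{LKG9}--\eqref{LKG16}, and the remaining ingredients are only the two trivial inequalities $\min(A_1,A_2)\leqslant A_1^\theta A_2^{1-\theta}$ and $(a+b)^\theta\leqslant a^\theta+b^\theta$. One should merely note that the claimed bound is required uniformly in $j\geqslant 0$, $\lambda>1$ and $t\neq 0$, which is automatic because \eqref{LKG15} and \eqref{LKG16} already hold on that whole range; the endpoints $\theta=1$ (reducing to \eqref{LKG15}) and $\theta=0$ (reducing to \eqref{LKG16}) serve as a consistency check.
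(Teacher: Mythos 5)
Your proof is correct and is exactly the argument the paper leaves implicit: Lemma \ref{1Ddecay} is stated directly after \eqref{LKG15} and \eqref{LKG16} with no further proof, and the only step one can insert is the geometric-mean bound $\min(A_1,A_2)\leqslant A_1^\theta A_2^{1-\theta}$ together with the subadditivity $(a+b)^\theta\leqslant a^\theta+b^\theta$, which is precisely what you do. Your exponent bookkeeping $3\theta\delta(p)+2(1-\theta)\delta(p)=(2+\theta)\delta(p)$ is correct, and the endpoint checks $\theta=0,1$ confirm the formula.
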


\subsection{Decay in Higher D}

If we take $\lambda=1$,  the dispersion estimates for the Klein-Gordon equation is well-known, cf. \cite{Br1984,Br1985,Br1989,GiVe1985,GiVe1995,KeTa1998}. Since we treat $\lambda>1$ as a variable parameter, the estimates of the decaying rates of $U_\lambda(t)$ depends on $\lambda>0$. We will apply the techniques as in \cite{BeKoSa2000,GuPeWa2008} to calculate the decay of  $U_\lambda(t)$. By scaling and Young's inequality, we have
\begin{align}\label{HLKG1}
\|\dot{\triangle}_j U_\lambda (t) f \|_\infty  \leqslant 2^{j d }\| \mathscr{F}^{-1} (e^{{\rm i}t \sqrt{\lambda^2+|2^j\xi|^2}} \varphi(\xi) )\|_\infty  \|f\|_1.
\end{align}
By Hausdorff-Young's inequality, we have
\begin{align}\label{HLKG0}
\|\dot{\triangle}_j U_\lambda (t) f \|_\infty  \leqslant 2^{j d }  \|f\|_1.
\end{align}
For a radial function $f$, it is known that
\begin{align}\label{HLKG2}
  (\mathscr{F}^{-1}  f) (x) = c \pi \int^\infty_0 f(r) r^{d-1} (r|x|)^{-(d-2)/2} J_{(d-2)/2} (r|x|) dr,
\end{align}
where $J_m (x)$ is the Bessel function defined by
\begin{align}\label{HLKG3}
    J_{m} (r ) =\frac{(r/2)^m}{\Gamma(m+1/2)\pi^{1/2}} \int^1_{-1} e^{{\rm i} rt} (1-t^2)^{m-1/2} dt, \ \ m>-1/2.
\end{align}
Recall that the Bessel function satisfying (cf. \cite{GuPeWa2008,Jo1981,WaHuHaGu2011})

\begin{prop} \label{Bessel}
Let $m>-1/2$ and $0<r<\infty$. We have the following results.
\begin{itemize}

\item[\rm (i)] $r^{-m} J_m (r) \leqslant C$.

\item[\rm (ii)] $\frac{d}{dr} \left(r^{-m} J_m (r) \right) =- r^{-m} J_{m+1} (r)$.

\item[\rm (iii)]  $r^{-(d-2)/2} J_{(d-2)/2} (r) =c_d \mathfrak{Re} \left( e^{{\rm i}r} h(r) \right)$, where $h(r)$ satisfies
\begin{align} \label{HLKG4}
\left|\frac{d^k}{dr^k } h(r) \right| \leqslant C_k (1+r)^{-(d-1)/2-k}, \ \ k\in \mathbb{Z}_+.
\end{align}
\end{itemize}
\end{prop}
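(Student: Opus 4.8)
The plan is to derive all three assertions from the integral representation \eqref{HLKG3}, treating them in order. Throughout write $m=(d-2)/2$ (so $m>-1/2$ in items (i), (ii) and $m\geqslant 0$ in item (iii), since there $d\geqslant 2$) and $c_m=(1/2)^m/(\Gamma(m+1/2)\pi^{1/2})$, so that
$$
r^{-m}J_m(r)=c_m\int_{-1}^{1}e^{\mathrm{i}rt}(1-t^2)^{m-1/2}\,dt .
$$

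For (i) one simply bounds the modulus of the integral by $\int_{-1}^{1}(1-t^2)^{m-1/2}\,dt=B(1/2,m+1/2)$, which is finite precisely because $m>-1/2$; hence $r^{-m}J_m(r)$ is uniformly bounded on $(0,\infty)$. For (ii) I would differentiate under the integral sign, giving $\frac{d}{dr}(r^{-m}J_m(r))=c_m\int_{-1}^{1}\mathrm{i}t\,e^{\mathrm{i}rt}(1-t^2)^{m-1/2}\,dt$, and then integrate by parts via the identity $t(1-t^2)^{m-1/2}=-\tfrac{1}{2m+1}\frac{d}{dt}(1-t^2)^{m+1/2}$; the boundary terms vanish because $m+1/2>0$, leaving $-\tfrac{r}{2m+1}c_m\int_{-1}^{1}e^{\mathrm{i}rt}(1-t^2)^{m+1/2}\,dt$. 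Recognizing the last integral as $c_{m+1}^{-1}r^{-(m+1)}J_{m+1}(r)$ and checking $c_m/(2m+1)=c_{m+1}$ through $\Gamma(m+3/2)=(m+1/2)\Gamma(m+1/2)$ yields exactly $-r^{-m}J_{m+1}(r)$; alternatively this is the classical Bessel recurrence.

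The substance is (iii). For $r$ in a bounded interval, $r^{-m}J_m(r)$ is smooth and bounded by (i), so one defines $h$ there through a fixed smooth cutoff and \eqref{HLKG4} holds trivially. For large $r$, I would insert a smooth partition of unity $1=\chi_-(t)+\chi_0(t)+\chi_+(t)$ on $[-1,1]$, with $\chi_\pm$ supported in small neighbourhoods of $t=\pm1$ and $\chi_0$ compactly supported in $(-1,1)$. On $\mathrm{supp}\,\chi_0$ the amplitude is smooth and the phase $e^{\mathrm{i}rt}$ has no critical point, so repeated integration by parts produces a term that is $O(r^{-N})$ for every $N$, with each $r$-derivative gaining a further power of $r^{-1}$. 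Near $t=1$ the substitution $t=1-s$ factors the contribution as $e^{\mathrm{i}r}\int_0^{\infty}e^{-\mathrm{i}rs}s^{m-1/2}g(s)\,ds$ with $g$ smooth, compactly supported and $g(0)\neq0$; the standard asymptotic analysis of such oscillatory integrals (cf. \cite{Stein1993}) gives $\int_0^{\infty}e^{-\mathrm{i}rs}s^{m-1/2}g(s)\,ds=r^{-(m+1/2)}h_+(r)$ with $|h_+^{(k)}(r)|\lesssim(1+r)^{-k}$ (each $r$-derivative brings down a factor $-\mathrm{i}s$, i.e. raises the power of $s$ and so improves the decay by one). The symmetric substitution $t=-1+s$ near $t=-1$ produces the complex conjugate expression, since $(1-t^2)^{m-1/2}$ is real and even. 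Summing the three pieces and using $m+1/2=(d-1)/2$, we get $r^{-m}J_m(r)=2c_m\,\mathfrak{Re}\big(e^{\mathrm{i}r}r^{-(d-1)/2}h_+(r)\big)+R(r)$ with $R$ and all its derivatives $O(r^{-N})$ for every $N$; writing $R(r)=\mathfrak{Re}\big(e^{\mathrm{i}r}\,e^{-\mathrm{i}r}R(r)\big)$ and absorbing both $R$ and the factor $r^{-(d-1)/2}$ into a single function $h$ gives the stated form with $|h^{(k)}(r)|\lesssim(1+r)^{-(d-1)/2-k}$.

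The main obstacle is the endpoint asymptotics in (iii): when $m-1/2\notin\mathbb{Z}_{\geqslant0}$ (for instance $d=2$, where $s^{-1/2}$ appears) the amplitude $s^{m-1/2}g(s)$ is genuinely singular at $s=0$, and one must justify the expansion of $\int_0^{\infty}e^{-\mathrm{i}rs}s^{m-1/2}g(s)\,ds$ together with bounds on \emph{all} its $r$-derivatives by an explicit contour rotation in $s$ (or by invoking Erd\'elyi's asymptotic lemma). This is routine, but the uniform control of the derivative bounds is the delicate bookkeeping step; the remaining pieces of the argument are elementary.
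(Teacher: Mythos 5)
The paper does not prove Proposition~\ref{Bessel}: it recalls these as classical facts about Bessel functions and simply cites \cite{GuPeWa2008,Jo1981,WaHuHaGu2011}. Your blind proof is a correct reconstruction of the standard derivation from the integral representation \eqref{HLKG3}. Parts (i) and (ii) are verified exactly as you say: (i) is the Beta--integral bound $|r^{-m}J_m(r)|\leqslant c_m B(1/2,m+1/2)$, and (ii) follows from differentiating under the integral, the identity $t(1-t^2)^{m-1/2}=-\tfrac{1}{2m+1}\tfrac{d}{dt}(1-t^2)^{m+1/2}$, and the normalization check $c_m/(2m+1)=c_{m+1}$. For (iii) your plan is the textbook stationary--phase analysis: a smooth cutoff away from $t=\pm1$ yields an $O(r^{-N})$ remainder by nonstationary phase; the two endpoint pieces, after $t=1-s$ and $t=-1+s$, are oscillatory integrals $\int_0^\infty e^{\mp\mathrm{i}rs}s^{m-1/2}g(s)\,ds$ that are complex conjugates of each other because $(1-t^2)^{m-1/2}$ is even, so their sum is $2\mathfrak{Re}\big(e^{\mathrm{i}r}r^{-(m+1/2)}h_+(r)\big)$ with $m+1/2=(d-1)/2$; and the $r^{-1}$ gain per $r$-derivative comes from the factor $-\mathrm{i}s$ raising the power of $s$. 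Absorbing the rapidly decaying bulk piece via $I_0(r)=\mathfrak{Re}\big(e^{\mathrm{i}r}e^{-\mathrm{i}r}I_0(r)\big)$, and using a cutoff in $r$ to patch in the trivially bounded small-$r$ regime, gives the stated $h$ with \eqref{HLKG4}. Your closing caveat is the right one: when $m-1/2\notin\mathbb{Z}_{\geqslant0}$ the endpoint amplitude $s^{m-1/2}$ is genuinely singular, and the clean way to get all derivative bounds uniformly is a contour rotation in $s$ or Erd\'elyi's lemma, which is precisely what the cited references (e.g.\ \cite{GuPeWa2008}) invoke.
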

Using \eqref{HLKG2}, we have
$$
 \mathscr{F}^{-1}(e^{{\rm i}t \sqrt{\lambda^2+|2^j\xi|^2}} \varphi(\xi) ) (x) = 2\pi \int^\infty_0  e^{{\rm i}t \sqrt{\lambda^2+|2^jr|^2}} \varphi(r) r^{d-1} K(r|x|)dr,
$$
where $K(\varrho): = \varrho^{-(d-2)/2} J_{(d-2)/2} (\varrho)$. Now we divide the proof into the following two cases

{\it Case} 1. $0\leqslant |x| \leqslant 1$.
 Noticing that ${\rm supp} \, \varphi \subset \{\xi: |\xi| \in [1/2,2]\}$, we see that for $0\leqslant |x| \leqslant 1$,
\begin{align}\label{HLKG5}
|\partial^k_r (r^{d-1}\varphi(r) K(r|x|))| \lesssim 1.
\end{align}
Denote $p_{j,\lambda} (r) = \sqrt{ \lambda^2+ 2^{2j} r^2}$. Integrating by part, we have
\begin{align}\label{HLKG6}
\int^\infty_0  &  e^{{\rm i}t p_{j,\lambda} (r) } \varphi(r) r^{d-1} K(r|x|)dr  \nonumber\\
& =- \int^2_{1/2}    e^{{\rm i}t p_{j,\lambda} (r) } \frac{\partial}{\partial r} \left( \frac{1}{{\rm i}t p'_{j,\lambda} (r)} \varphi(r) r^{d-1} K(r|x|) \right) dr \nonumber\\
& = \int^2_{1/2}    e^{{\rm i}t p_{j,\lambda} (r) } \frac{\partial}{\partial r} \left( \frac{1}{{\rm i}t p'_{j,\lambda} (r)} \frac{\partial}{\partial r} \left( \frac{1}{{\rm i}t p'_{j,\lambda} (r)} \varphi(r) r^{d-1} K(r|x|) \right) \right)  dr \nonumber\\
& = ... \ .
\end{align}
Repeating the argument above and using Lemma  \ref{Deriv-control2}, we have for any $q\in \mathbb{N}$,
\begin{align}\label{HLKG6a}
&\left|\int^\infty_0  e^{{\rm i}t p_{j,\lambda} (r) } \varphi(r) r^{d-1} K(r|x|)dr \right| \nonumber\\
& \lesssim |t|^{-q} \sup_{r\in [1/2,2]} \sum_{\alpha_1+...+\alpha_q \leqslant q} \left| \left(\frac{1}{p'_{j,\lambda} (r)}\right)^{(\alpha_1)} ... \left(\frac{1}{p'_{j,\lambda} (r)}\right)^{(\alpha_q)}    \right|.
\end{align}
Noticing that $p_{j,\lambda} (r) = 2^j ((\lambda/2^j)^2 + r^2)^{1/2}$,   $ p'_{j,\lambda} (r) \sim 2^{2j} (\lambda^2 + 2^{2j}) ^{-1/2}$ for $r\in [1/2,2]$, by Lemmas  \ref{Deriv-control} and \ref{Deriv-control3}, we obtain that
\begin{align}\label{HLKG7}
\left|\int^\infty_0    e^{{\rm i}t p_{j,\lambda} (r) } \varphi(r) r^{d-1} K(r|x|)dr \right|  \lesssim |t|^{-q}(\lambda 2^{-2j} + 2^{-j})^q, \ \ \forall \ q\geqslant 0.
 \end{align}
So, for $j\in \mathbb{Z}$ and $|x| \leqslant 1$,
\begin{align}\label{HLKG8}
 |\dot{\triangle}_j U_\lambda (t) f (x)|    \leqslant 2^{j d }  |t|^{-q}(\lambda 2^{-2j} + 2^{-j})^q \|f\|_1.
\end{align}
In particular, for $j\in \mathbb{Z}$ and $|x| \leqslant 1$,
\begin{align}\label{HLKG9}
 |\dot{\triangle}_j U_\lambda (t) f (x)|    \leqslant   |t|^{-d/2}(\lambda^{d/2}   + 2^{dj/2})  \|f\|_1.
\end{align}

{\it Case 2}. $|x| \geqslant 1$. Using (iii) of Proposition \ref{Bessel}, one can rewrite
\begin{align} \label{HLKG10}
 \mathscr{F}^{-1}(e^{{\rm i}t \sqrt{\lambda^2+|2^j\xi|^2}} \varphi(\xi) ) = &  c_d  \pi \int^\infty_0  e^{{\rm i}t \sqrt{\lambda^2+|2^jr|^2} + i r|x|} \varphi(r) r^{d-1} h (r|x|)dr \nonumber\\
 & +   c_d \pi \int^\infty_0  e^{{\rm i}t \sqrt{\lambda^2+|2^jr|^2} - i r|x|} \varphi(r) r^{d-1} h  (r|x|)dr \nonumber\\
 := A_{j,\lambda} (x) +  B_{j,\lambda} (x).
\end{align}
It suffices to consider the estimate of $A_{j,\lambda} (x)$. Denote $ h_{j,\lambda} (r) = t \sqrt{\lambda^2+|2^jr|^2} +  r|x|$. Let us observe that
\begin{align} \label{HLKG10a}
 A_{j,\lambda} (x) =    c_d  \pi \int^\infty_0  e^{ {\rm i}h_{j,\lambda}(r) } \varphi(r) r^{d-1} h (r|x|)dr.
  \end{align}
We emphasize that $h_{j,\lambda} $ play  similar roles as  $p_{j,\lambda} $ in \eqref{HLKG6}.    It is easy to see that
$$
 h'_{j,\lambda} (r) = |x| + \frac{t 2^{2j} r}{\sqrt{\lambda^2+|2^jr|^2} }.
$$
If $r \in [1/2, 2]$, we see that
$$
  \frac{|t| 2^{2j} r}{\sqrt{\lambda^2+|2^jr|^2} } \sim  \frac{|t| 2^{2j} }{ \lambda +  2^j  }.
$$
We can assume that $t<0$, since the opposite case is easier to handle. We consider the following three subcases.

{\it Case} 2a.  $|x| \geqslant 1 \vee C |t| 2^{2j} / (\lambda +  2^j)$.
It follows that
$$
| h'_{j,\lambda} (r)|   \gtrsim   |t| (\lambda 2^{-2j} + 2^{-j})^{-1}.
$$
Using the same way as in \eqref{HLKG6} and \eqref{HLKG6a}, integrating by part to $A_{j,\lambda}$, we get that
$$
|A_{j,\lambda} (x)| \lesssim |t|^{-q}   (\lambda 2^{-2j} + 2^{-j})^{q}.
$$

{\it Case} 2b.  $|x| \geqslant 1, \ |x| \leqslant c |t| 2^{2j} / (\lambda +  2^j)$.  We still have
$$
| h'_{j,\lambda} (r)|   \gtrsim   |t| (\lambda 2^{-2j} + 2^{-j})^{-1}.
$$
Using an analogous way as in Case 2a, we have
$$
|A_{j,\lambda} (x)| \lesssim |t|^{-q}   (\lambda 2^{-2j} + 2^{-j})^{q}.
$$

{\it Case} 2c.  $|x| \geqslant 1, \ |x| \sim  |t| 2^{2j} / (\lambda +  2^j)$.
Noticing that for $r\in [1/2,2]$
$$
| h''_{j,\lambda} (r)| = \frac{|t|2^j (\lambda/2^j)^2 }{((\lambda/2^j)^2 +r^2)^{3/2} } \sim  \frac{|t| \lambda^2 2^{2j} }{ \lambda^3 + 2^{3j} }
$$
and for $k=0,1,$
\begin{align} \label{HLKG11}
|\partial^k_r h(r|x|)| \lesssim |x|^{-(d-1)/2}  \lesssim \left( \frac{|t| 2^{2j}}{\lambda+2^j}\right)^{-(d-1)/2},
\end{align}
by Van der Corput Lemma we have
\begin{align}
|A_{j,\lambda} (x)| & \lesssim  \left(\frac{|t| \lambda^2 2^{2j} }{ \lambda^3 + 2^{3j} } \right)^{-1/2} \left( \frac{t2^{2j}}{\lambda+2^j}\right)^{-(d-1)/2} \nonumber\\
 & \lesssim   |t|^{-d/2} \lambda^{-1} 2^{-dj}   (\lambda  + 2^{ j})^{(d+2)/2} \nonumber\\
 & \lesssim   |t|^{-d/2}   2^{-dj}   (\lambda^{d/2}  + \lambda^{-1} 2^{(d+2) j/2 }).  \label{HLKG12}
\end{align}
On the other hand, using \eqref{HLKG11}, one can get that for $|x| \sim |t| 2^{2j}/(\lambda+2^j)$,
\begin{align} \label{HLKG13}
|A_{j,\lambda} (x)|  \lesssim |x|^{-(d-1)/2}   \lesssim
 |t|^{-(d-1)/2} 2^{- j(d-1)}( \lambda +   2^{j} )^{(d-1)/2}.
  \end{align}
  In summary, we have obtain that
\begin{align} \label{HLKG14}
\|\dot{\triangle}_j U_\lambda (t) f\|_\infty \lesssim \min\left(
 |t|^{-d/2} ( \lambda^{d/2}  + \lambda^{-1} 2^{ j(d+2)/2} ),   |t|^{-(d-1)/2} ( \lambda^{(d-1)/2} 2^{j}   +   2^{j(d+1)/2}  ) \right)\|f\|_1.
  \end{align}
So, for any $\theta\in [0,1]$,
\begin{align} \label{HLKG15}
\|\dot{\triangle}_j U_\lambda (t) f\|_\infty \lesssim
 |t|^{-(d-1+\theta)/2} ( \lambda^{(d-1+\theta) /2} 2^{j(1-\theta)}  + \lambda^{-\theta} 2^{ j(d+1+\theta)/2} )    \|f\|_1.
\end{align}
Interpolating \eqref{HLKG15} with
$$
\|\dot{\triangle}_j U_\lambda (t) f\|_2 \leqslant \|  f\|_2,
$$
we have for $\delta(p) =1/2-1/p$, $2\leqslant p \leqslant \infty$,
\begin{align} \label{HLKG16}
\|\dot{\triangle}_j U_\lambda (t) f\|_p  \lesssim
 |t|^{-(d-1+\theta)\delta(p)} ( \lambda^{(d-1+\theta)\delta(p)} 2^{2j(1-\theta)\delta(p)}  + \lambda^{-2\theta\delta(p)} 2^{ j(d+1+\theta)\delta(p)} )    \|f\|_{p'}.
\end{align}

Combining \eqref{HLKG16} and Lemma \ref{1Ddecay}, we have
\begin{lemma} \label{HDdecay}
Let $d\geqslant 1$, $2\leqslant p \leqslant \infty$, $\delta (p) =1/2-1/p$. For any $\theta \in [0,1]$, we have
\begin{align} \label{HLKG160}
\| {\triangle}_j U_\lambda (t) f\|_p  \lesssim
 |t|^{-(d-1+\theta)\delta(p)} ( \lambda^{(d-1+\theta)\delta(p)} 2^{2j(1-\theta)\delta(p)}  + \lambda^{-2\theta\delta(p)} 2^{ j(d+1+\theta)\delta(p)} )    \|f\|_{p'}
\end{align}
 for all $f\in L^{p'}$, $j\in \mathbb{Z}_+$.
\end{lemma}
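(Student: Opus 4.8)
The plan is to assemble the estimates already proved and then reduce to a single low-frequency block. Fix $d\ge 1$, $2\le p\le\infty$, $\theta\in[0,1]$ and write $\delta:=\delta(p)=1/2-1/p$. If $p=2$ then $\delta=0$ and \eqref{HLKG160} is just $\|\triangle_j U_\lambda(t)f\|_2\lesssim\|f\|_2$, which follows from Plancherel's identity since $|e^{\mathrm{i}t\sqrt{\lambda^2+|\xi|^2}}|=1$; so from now on assume $p>2$, hence $\delta>0$. For $d=1$, \eqref{HLKG160} coincides term by term with Lemma~\ref{1Ddecay} (use $d-1+\theta=\theta$, $2j(1-\theta)\delta=2(1-\theta)\delta j$, $j(d+1+\theta)\delta=(2+\theta)\delta j$), which already covers every $j\in\mathbb{Z}_+$; so it remains to treat $d\ge2$. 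For $d\ge2$ and $j\ge1$ one has $\triangle_j=\dot{\triangle}_j$ by construction, so \eqref{HLKG16} \emph{is} \eqref{HLKG160} for those $j$. Hence the whole statement reduces to the low-frequency projector $\triangle_0=\mathscr{F}^{-1}\psi\,\mathscr{F}$ in dimension $d\ge2$.

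For $j=0$ both dyadic factors in \eqref{HLKG160} equal $1$, and since $\lambda\ge1$ and $-2\theta\delta\le0\le(d-1+\theta)\delta$ we have $\lambda^{-2\theta\delta}\le\lambda^{(d-1+\theta)\delta}$; so it suffices to prove $\|\triangle_0 U_\lambda(t)f\|_p\lesssim(\lambda/|t|)^{(d-1+\theta)\delta}\|f\|_{p'}$ for all $\theta\in[0,1]$. I would obtain this by interpolating two $L^1\!\to\!L^\infty$ bounds against the trivial bound $\|\triangle_0 U_\lambda(t)f\|_2\le\|f\|_2$. The first is $\|\triangle_0 U_\lambda(t)f\|_\infty\lesssim\|f\|_1$, valid for all $t$ because the convolution kernel $\mathscr{F}^{-1}\bigl(e^{\mathrm{i}t\sqrt{\lambda^2+|\xi|^2}}\psi(\xi)\bigr)$ has $L^\infty$ norm $\lesssim\|\psi\|_1\lesssim1$ by Hausdorff--Young. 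The second is the low-frequency (Schr\"odinger-type) dispersive bound $\|\triangle_0 U_\lambda(t)f\|_\infty\lesssim|t|^{-d/2}\lambda^{d/2}\|f\|_1$, which I would prove by the stationary-phase argument already carried out in Cases~1--2 above, now with the bump $\psi$ in place of $\varphi$: on $\mathrm{supp}\,\psi$ the Hessian of $\xi\mapsto\sqrt{\lambda^2+|\xi|^2}$ is $\tfrac{1}{\sqrt{\lambda^2+|\xi|^2}}\bigl(I-\tfrac{\xi\xi^{\mathrm T}}{\lambda^2+|\xi|^2}\bigr)$, all of whose eigenvalues are comparable to $1/\lambda$ because $|\xi|\lesssim1\le\lambda$, so the (at most one) critical point is nondegenerate with $|\det\mathrm{Hess}|\sim\lambda^{-d}$, which yields the rate $|t|^{-d/2}\lambda^{d/2}$ (with faster, non-stationary decay when no critical point lies in the support). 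Interpolation then gives $\|\triangle_0 U_\lambda(t)f\|_p\lesssim\min\bigl(1,(\lambda/|t|)^{d\delta}\bigr)\|f\|_{p'}$, and since $0\le(d-1+\theta)\delta\le d\delta$ a one-line check (distinguishing $\lambda\le|t|$ from $\lambda>|t|$) gives $\min\bigl(1,(\lambda/|t|)^{d\delta}\bigr)\le(\lambda/|t|)^{(d-1+\theta)\delta}$ for every $\theta\in[0,1]$. Combining the cases $d=1$, ($d\ge2,\ j\ge1$) and ($d\ge2,\ j=0$) proves \eqref{HLKG160}.

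The only place where real work is needed is the low-frequency dispersive bound for $\triangle_0$, and inside it the behaviour near the frequency origin $\xi=0$: there the radial phase $r\mapsto\sqrt{\lambda^2+r^2}$ has vanishing first derivative, so one cannot integrate by parts in $r$ as in Case~1, but the full $d$-dimensional Hessian remains nondegenerate (equivalently, the radial integral carries the vanishing weight $r^{d-1}$, and $\int_0^{\varepsilon}e^{\mathrm{i}tr^2/\lambda}r^{d-1}\,dr\lesssim(\lambda/|t|)^{d/2}$), which is exactly what upgrades the wave rate $|t|^{-(d-1)/2}$ to the Schr\"odinger rate $|t|^{-d/2}\lambda^{d/2}$, uniformly in $\lambda\ge1$. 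Everything else — matching exponents between $d=1$ and $d\ge2$, and using $\lambda\ge1$ to drop the $\lambda^{-2\theta\delta}$ term at $j=0$ — is routine bookkeeping.
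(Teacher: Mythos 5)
Your reductions for $p=2$, $d=1$, and $(d\geq 2,\ j\geq 1)$ match the paper exactly, and the remaining $(d\geq 2,\ j=0)$ block is where you genuinely diverge. The paper decomposes $\psi=\sum_{j\leq 0}\varphi_j$, reuses its Bessel-function estimates for each $\dot\triangle_j$ with $j\leq 0$, and runs a careful case analysis of the resulting sum (splitting $\mathbb{Z}_y$ into $\mathbb{Z}_{y,1}$, $\mathbb{Z}_{y,2}$ and treating the near-stationary indices $|j-j_0|\leq C$ separately) to reach $\|\triangle_0 U_\lambda(t)f\|_\infty\lesssim\min\bigl(1,\lambda^{d/2}|t|^{-d/2}\bigr)\|f\|_1$. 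You instead apply a single $d$-dimensional stationary phase estimate directly to the amplitude $\psi$, using the observation that on $\mathrm{supp}\,\psi$ (where $|\xi|\lesssim 1\leq\lambda$) all eigenvalues of $\mathrm{Hess}\,\sqrt{\lambda^2+|\xi|^2}$ are $\sim 1/\lambda$, so the phase Hessian has $|\det|\sim|t|^d\lambda^{-d}$ and the Schr\"odinger-type rate $(\lambda/|t|)^{d/2}$ follows; this is correct, and your explicit comment about the degenerate radial derivative at $r=0$ being compensated by the $r^{d-1}$ Jacobian is precisely the right point. Both routes land on the same $L^1\to L^\infty$ bound, and your interpolation to $L^{p'}\to L^p$ and the checks $\lambda^{-2\theta\delta}\leq\lambda^{(d-1+\theta)\delta}$ and $\min\bigl(1,(\lambda/|t|)^{d\delta}\bigr)\leq(\lambda/|t|)^{(d-1+\theta)\delta}$ are elementary and correct (the paper leaves this last step implicit). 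The paper's dyadic route buys self-containment — it only ever uses the one-dimensional Van der Corput / Bessel estimates it has already proved for $\varphi$-type bumps — while your route is shorter and conceptually cleaner but requires invoking a full multi-dimensional stationary phase lemma and checking its uniformity in $\lambda\geq 1$ and $t$; for a referee, the paper's argument is more verifiable line-by-line, but yours is the one most people would write first.
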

\begin{proof}
If $j\geqslant 1$, we have the result from \eqref{HLKG16}. So, it suffices to show \eqref{HLKG160} for the case $j=0$.  We have from $\psi(\xi) = \sum_{j\leqslant 0} \varphi_j (\xi), \ \xi\neq 0$  that
\begin{align*}
\|\triangle_0 U_\lambda(t) f\|_\infty
   \leqslant \left\| \sum_{j\leqslant 0} 2^{jd} \left|\mathscr{F}^{-1} \left(\varphi (\xi) e^{{\rm i}t \sqrt{\lambda^2+| 2^j\xi|^2}} \right) (2^j\,\cdot)\right| \right\|_\infty  \| f\|_1.
\end{align*}
First, we have
\begin{align*}
 \sum_{j\leqslant 0} 2^{jd} \left|\mathscr{F}^{-1} \left(\varphi (\xi) e^{{\rm i}t \sqrt{\lambda^2+| 2^j\xi|^2}} \right) (2^j\,\cdot) \right| \leqslant \sum_{j\leqslant 0} 2^{jd} \lesssim 1.
\end{align*}
Let $y\in \mathbb{R}^d$ and
$$
\mathbb{Z}_y = \{j \in \mathbb{Z}_-: \, |2^j y| \leqslant 1\}.
$$
If $j\in \mathbb{Z}_y $, then we have
$$
2^{jd} \left|\mathscr{F}^{-1} \left(\varphi (\xi) e^{{\rm i}t \sqrt{\lambda^2+| 2^j\xi|^2}} \right) (2^j y) \right| \lesssim |t|^{-q} \lambda^q 2^{j(d-2q)}.
$$
Denote $\mathbb{Z}_{y,1} =\{ j\in \mathbb{Z}_y: \ 2^j \leqslant \lambda^{1/2} |t|^{-1/2}\}$, $\mathbb{Z}_{y,2} = \mathbb{Z}_{y} \setminus \mathbb{Z}_{y,1}$. Choosing $q>d/2$, we have
$$
\sum_{j\in \mathbb{Z}_y} 2^{jd} \left|\mathscr{F}^{-1} \left(\varphi (\xi) e^{{\rm i}t \sqrt{\lambda^2+| 2^j\xi|^2}} \right) (2^j y) \right| \leqslant \sum_{j\in \mathbb{Z}_{y,1}} 2^{jd} + \sum_{j\in \mathbb{Z}_{y,2}}   |t|^{-q} \lambda^q 2^{j(d-2q)} \lesssim \lambda^{d/2}  |t|^{-d/2}.
$$
Next, we consider the summation on $j\in \mathbb{Z}^c_y = \mathbb{Z}_- \setminus \mathbb{Z}_y$.  If there exists $j$ satisfying $|y| \leqslant C |t|2^{j} (\lambda +2^{j})^{-1}$, then we can find a smallest integer $j_0: = j_0 (y,\lambda,t)$ satisfying $|y| \leqslant C |t|2^{j_0} (\lambda +2^{j_0})^{-1}$.  We can decompose the summation overall $j\in \mathbb{Z}^c_y$ by
\begin{align}
\sum_{j\in \mathbb{Z}^c_y} 2^{jd} & \left | \mathscr{F}^{-1} \left(\varphi (\xi) e^{{\rm i}t \sqrt{\lambda^2+| 2^j\xi|^2}} \right) (2^j y) \right |  \nonumber \\
& = \left( \sum_{j\in \mathbb{Z}^c_y, \, |j-j_0| \leqslant C} +  \sum_{j\in \mathbb{Z}^c_y, \, |j-j_0| > C} \right) 2^{jd} \left | \mathscr{F}^{-1} \left(\varphi (\xi) e^{{\rm i}t \sqrt{\lambda^2+| 2^j\xi|^2}} \right)(2^j y) \right |.
\end{align}
By \eqref{HLKG14} we see that
\begin{align}
 \sum_{j\in \mathbb{Z}^c_y, \,|j-j_0| \leqslant C}    2^{jd} \left | \mathscr{F}^{-1} \left(\varphi (\xi) e^{{\rm i}t \sqrt{\lambda^2+| 2^j\xi|^2}} \right)(x) \right |
  \lesssim  \lambda^{d/2}  |t|^{-d/2}.
 \end{align}
If such a $j_0$ does not exist or $j_0 \gg 1$, then  $\{j\in \mathbb{Z}^c_y: \, |j-j_0| \leqslant C\}$ is an empty set.    The summation $\sum_{j\in \mathbb{Z}^c_y, \,|j-j_0| > C}$ can be handled by following the proof to the case $j \in \mathbb{Z}_y$. Indeed, in view of Cases 2a and 2b in the estimates of $A_{j,\lambda}$,  $|j-j_0| > C$ implies that  $A_{j,\lambda}(2^jy)$ has the same decaying rates as in the case $j \in \mathbb{Z}_y$. Summarizing the above estimate, we obtain that
\begin{align*}
\|\triangle_0 U_\lambda(t) f\|_\infty
   \leqslant \min(1, \, \lambda^{d/2}  |t|^{-d/2} ) \| f\|_1,
\end{align*}
which implies the result of \eqref{HLKG160} for the case $j=0$.
 \end{proof}

\subsection{Strichartz estimates with the parameter $\lambda>1$}

By Lemma \ref{HDdecay}, we have for any $2\leqslant p\leqslant \infty$, $\delta(p) =1/2-1/p$,
\begin{align} \label{Str1}
\|\triangle_j U_\lambda (t) f\|_p  \lesssim
\left\{
\begin{array}{ll}
 |t|^{-(d-1+\theta)\delta(p)}   \lambda^{(d-1+\theta)\delta(p)} 2^{2j(1-\theta)\delta(p)}  \|f\|_{p'}, & 2^j \lesssim \lambda, \\
    |t|^{-(d-1+\theta)\delta(p)} \lambda^{-2\theta\delta(p)} 2^{ j(d+1+\theta)\delta(p)} )    \|f\|_{p'}, & 2^j \gtrsim \lambda.
 \end{array}
 \right.
\end{align}
For convenience, we write
\begin{align}
\mathscr{A}_\lambda f (t) :=   \int^t_0 U_\lambda (t-\tau) f(\tau) d\tau  \label{Str-notat}
\end{align}
By Standard $TT^*$ method (see Appendix B), we can show the following

\begin{lemma}\label{Strichartz}
Let $2\leqslant p,r \leqslant \infty$, $\delta(p)=1/2-1/p$, $\theta\in [0,1]$, $2/\gamma(\theta, p) =(d-1+\theta)\delta(p)$ and $2\sigma (\theta,p) = (d+1+\theta)\delta(p)$. Assume that $2/\gamma(\theta, p), 2/\gamma(\theta, r) \in [0,1)$.  We have the following Strichartz estimates
\begin{itemize}

\item[\rm (i)] If $2^j\leqslant \lambda$, then we have
\begin{align}
& \lambda^{-1/\gamma(\theta,p)} 2^{ -j(1-\theta)\delta(p)} \|\triangle_j U_\lambda (t) u_0\|_{L^{\gamma(\theta,p)} (\mathbb{R}, L^p)} \lesssim \|u_0\|_2, \label{Str-a}\\
& \lambda^{-1/\gamma(\theta,p)} 2^{ -j(1-\theta)\delta(p)} \left\|\triangle_j \mathscr{A}_\lambda f \right\|_{L^{\gamma(\theta,p)} (\mathbb{R}, L^p)}
   \lesssim \lambda^{1/\gamma(\theta,r)} 2^{ j(1-\theta)\delta(r)}\|f\|_{L^{\gamma(\theta,r)'} (\mathbb{R}, L^{r'})}.  \label{Str-b}
\end{align}
\item[\rm (ii)] If $2^j> \lambda$, then we have
\begin{align}
\lambda^{ \theta \delta(p)} 2^{-j\sigma(\theta,p)} & \|\triangle_j U_\lambda (t) u_0\|_{L^{\gamma(\theta,p)} (\mathbb{R}, L^p)} \lesssim \|u_0\|_2, \label{Str-c}\\
\lambda^{ \theta \delta(p) } 2^{ -j \sigma(\theta,p)} & \left\|\triangle_j \mathscr{A}_\lambda f \right\|_{L^{\gamma(\theta,p)} (\mathbb{R}, L^p)}
   \lesssim  \lambda^{ -\theta \delta(r) } 2^{ j\sigma(\theta,r)} \|f\|_{L^{\gamma(\theta,r)'} (\mathbb{R}, L^{r'})}. \label{Str-d}
\end{align}

\end{itemize}
\end{lemma}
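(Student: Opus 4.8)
The plan is to deduce everything from a single dispersive input via the standard $TT^*$/duality machinery. Set $\sigma_0=(d-1+\theta)/2$, so that $2/\gamma(\theta,p)=2\sigma_0\delta(p)$ and $\sigma(\theta,p)=(d+1+\theta)\delta(p)/2$. Specializing Lemma~\ref{HDdecay} to $p=\infty$ gives, for $2^j\leqslant\lambda$, $\|\triangle_j U_\lambda(t)f\|_\infty\lesssim M_1\,|t|^{-\sigma_0}\|f\|_1$ with $M_1=\lambda^{\sigma_0}2^{j(1-\theta)}$, and for $2^j>\lambda$, $\|\triangle_j U_\lambda(t)f\|_\infty\lesssim M_2\,|t|^{-\sigma_0}\|f\|_1$ with $M_2=\lambda^{-\theta}2^{j(d+1+\theta)/2}$. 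Interpolating each of these with the trivial energy identity $\|\triangle_j U_\lambda(t)f\|_2\leqslant\|f\|_2$ (Plancherel, $U_\lambda(t)$ being unitary) yields, in the respective regime, $\|\triangle_j U_\lambda(t)f\|_p\lesssim M_\ast^{2\delta(p)}|t|^{-2/\gamma(\theta,p)}\|f\|_{p'}$ for all $p\in[2,\infty]$, where $M_\ast\in\{M_1,M_2\}$. Since $\triangle_j^2$ has the same Littlewood--Paley localization as $\triangle_j$, the same bound holds with $\triangle_j^2$ in place of $\triangle_j$, and since it depends on $t$ only through $|t|$, the adjoint $\triangle_j U_\lambda(-t)$ obeys the identical estimate.

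First I would prove the homogeneous bounds \eqref{Str-a} and \eqref{Str-c}. With $Tu_0=\triangle_j U_\lambda(t)u_0$ one has $TT^*F(t)=\int_{\mathbb{R}}\triangle_j^2 U_\lambda(t-s)F(s)\,ds$; combining the displayed $L^{p'}\to L^p$ decay estimate with Minkowski's inequality and the one-dimensional Hardy--Littlewood--Sobolev inequality in time --- admissible precisely because $2/\gamma(\theta,p)\in(0,1)$ and the time exponents are conjugate to $\gamma(\theta,p)$ on the two sides --- gives $\|TT^*F\|_{L^{\gamma(\theta,p)}_tL^p_x}\lesssim M_\ast^{2\delta(p)}\|F\|_{L^{\gamma(\theta,p)'}_tL^{p'}_x}$, hence $\|T\|_{L^2\to L^{\gamma(\theta,p)}_tL^p_x}\lesssim M_\ast^{\delta(p)}$. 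Since $M_1^{\delta(p)}=\lambda^{1/\gamma(\theta,p)}2^{j(1-\theta)\delta(p)}$ this is \eqref{Str-a}, and since $M_2^{\delta(p)}=\lambda^{-\theta\delta(p)}2^{j\sigma(\theta,p)}$ this is \eqref{Str-c}. The borderline case $\delta(p)=0$ (i.e.\ $p=2$, $\gamma=\infty$) needs no HLS: it is just $\|\triangle_j U_\lambda(t)u_0\|_{L^\infty_tL^2_x}=\|\triangle_j u_0\|_2$.

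For the inhomogeneous bounds \eqref{Str-b} and \eqref{Str-d} I would combine a bilinear factorization with the Christ--Kiselev lemma. Dualizing the homogeneous bounds for the admissible pairs $(\gamma(\theta,p),p)$ and $(\gamma(\theta,r),r)$ --- both lying in the same $j$-versus-$\lambda$ regime --- gives $\big\|\int_{\mathbb{R}}\triangle_j U_\lambda(-s)F(s)\,ds\big\|_2\lesssim M_\ast^{\delta(r)}\|F\|_{L^{\gamma(\theta,r)'}_tL^{r'}_x}$, and likewise with $p$, $G$. For the untruncated operator $\widetilde{\mathscr{A}}_\lambda F(t)=\int_{\mathbb{R}}\triangle_j^2U_\lambda(t-s)F(s)\,ds$ the bilinear form factors through $L^2$, $\langle\widetilde{\mathscr{A}}_\lambda F,G\rangle=\big\langle\int\triangle_j U_\lambda(-s)F(s)\,ds,\ \int\triangle_j U_\lambda(-t)G(t)\,dt\big\rangle$, so Cauchy--Schwarz and the two dual estimates give $\|\widetilde{\mathscr{A}}_\lambda F\|_{L^{\gamma(\theta,p)}_tL^p_x}\lesssim M_\ast^{\delta(p)+\delta(r)}\|F\|_{L^{\gamma(\theta,r)'}_tL^{r'}_x}$. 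To replace $\int_{\mathbb{R}}$ by the retarded integral $\int_0^t$ defining $\mathscr{A}_\lambda$ I would apply the Christ--Kiselev lemma, legitimate because $\gamma(\theta,r)'<\gamma(\theta,p)$, which is equivalent to $\sigma_0(\delta(p)+\delta(r))<1$ and hence guaranteed by the standing assumptions $2/\gamma(\theta,p),2/\gamma(\theta,r)\in[0,1)$. Reading off $M_1^{\delta(p)+\delta(r)}$ and $M_2^{\delta(p)+\delta(r)}$ in the two regimes yields \eqref{Str-b} and \eqref{Str-d}.

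The delicate point --- and the one I expect to require the most care --- is not any single estimate but the bookkeeping of the $\lambda$- and $2^j$-dependent constants $M_1,M_2$ as they pass through interpolation, $TT^*$, dualization, and the Christ--Kiselev truncation, together with verifying the strict admissibility inequality $\gamma(\theta,r)'<\gamma(\theta,p)$ in every case (including the degenerate limits $\delta(p)\to0$ or $\delta(r)\to0$), since the entire content of the lemma lies in the exact powers of $\lambda$ and $2^j$ rather than in mere boundedness.
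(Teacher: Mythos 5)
Your proposal is correct, but for the inhomogeneous estimates it takes a genuinely different route from the paper's Appendix~B. The paper never invokes the Christ--Kiselev lemma: it applies the one-dimensional Hardy--Littlewood--Sobolev inequality \emph{directly} to the retarded Duhamel integral $\int_0^t|t-\tau|^{-2/\gamma(\theta,p)}\|f(\tau)\|_{p'}\,d\tau$ (this is a legitimate one-sided HLS application, so no truncation lemma is needed), obtaining the diagonal estimate; it then gets the $L^\infty_tL^2_x$ output bound by the adjoint/Cauchy--Schwarz argument, and finally closes the off-diagonal range by two interpolations --- a H\"older interpolation of the output norm between $L^\infty_tL^2_x$ and $L^{\gamma(\theta,r)}_tL^r_x$ when $p\in[2,r]$, and a Riesz--Thorin interpolation of the input norm between $L^1_tL^2_x$ and $L^{\gamma(\theta,p)'}_tL^{p'}_x$ when $r\in[2,p]$. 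You instead establish a single bilinear estimate for the \emph{untruncated} operator $\int_{\mathbb{R}}$ via Cauchy--Schwarz and the two adjoint bounds, which yields all off-diagonal pairs at once, and then pass to the retarded operator by Christ--Kiselev; you correctly verify the admissibility condition $\gamma(\theta,r)'<\gamma(\theta,p)$ from $\tfrac1{\gamma(\theta,p)}+\tfrac1{\gamma(\theta,r)}<1$, which follows from the standing hypothesis $2/\gamma\in[0,1)$ on both exponents. Your route is conceptually tighter (one bilinear factorization covers every $(p,r)$ in a given $2^j$-versus-$\lambda$ regime) at the cost of importing Christ--Kiselev; the paper's route is more elementary and self-contained, but requires the two-stage interpolation bookkeeping. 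The constant-tracking in both regimes ($M_1^{\delta(p)+\delta(r)}$ and $M_2^{\delta(p)+\delta(r)}$ reproducing the stated powers of $\lambda$ and $2^j$) is carried out correctly in your argument.
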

Let us write for $\lambda >1$, $j_\lambda := \log_2 \lambda$
$$
\mathbb{Z}_\lambda = \{j\in \mathbb{Z}_+: \ 0\leqslant j\leqslant j_\lambda \}, \ \  \ \mathbb{Z}^c_\lambda  = \mathbb{Z}_+ \setminus  \mathbb{Z}_\lambda.
$$
By Lemma \ref{Strichartz}, we have

\begin{prop}\label{StrichartzG}
Let $2\leqslant p,r \leqslant \infty$, $\delta(p)=1/2-1/p$, $\theta\in [0,1]$, $2/\gamma(\theta, p) =(d-1+\theta)\delta(p)$ and $2\sigma (\theta,p) = (d+1+\theta)\delta(p)$. Assume that $2/\gamma(\theta, p), 2/\gamma(\theta, r) \in [0,1)$. Then we have
\begin{align}
& \lambda^{-1/\gamma(\theta,p)}  \| U_\lambda (t) u_0\|_{\widetilde{L}^{\gamma(\theta,p)} (\mathbb{R}, B^{\sigma-(1-\theta)\delta(p),s}_{p,2} (\mathbb{Z}_\lambda)) } \lesssim \|u_0\|_{E^{\sigma,s}}, \label{GStr-a}\\
& \lambda^{-1/\gamma(\theta,p)}  \left\| \mathscr{A}_\lambda f \right\|_{\widetilde{L}^{\gamma(\theta,p)} (\mathbb{R}, B^{\sigma-(1-\theta)\delta(p),s}_{p,2} (\mathbb{Z}_\lambda))}
   \lesssim \lambda^{1/\gamma(\theta,r)} \|f\|_{\widetilde{L}^{\gamma(\theta,r)'} (\mathbb{R}, B^{\sigma+(1-\theta)\delta(r),s}_{r',2} (\mathbb{Z}_\lambda))}.  \label{GStr-b}
\end{align}
\begin{align}
& \lambda^{ \theta \delta(p)}  \| U_\lambda (t) u_0\|_{\widetilde{L}^{\gamma(\theta,p)} (\mathbb{R}, B^{\sigma-\sigma(\theta,p),s}_{p,2} (\mathbb{Z}^c_\lambda)) } \lesssim \|u_0\|_{E^{\sigma,s}}, \label{GStr-c}\\
& \lambda^{ \theta \delta(p) }   \left\| \mathscr{A}_\lambda f \right\|_{\widetilde{L}^{\gamma(\theta,p)} (\mathbb{R}, B^{\sigma-\sigma(\theta,p),s}_{p,2} (\mathbb{Z}^c_\lambda))}
  \lesssim  \lambda^{ -\theta \delta(r) }  \|f\|_{\widetilde{L}^{\gamma(\theta,r)'} (\mathbb{R}, B^{\sigma+\sigma(\theta,r),s}_{r',2} (\mathbb{Z}^c_\lambda))}. \label{GStr-d}
\end{align}

\end{prop}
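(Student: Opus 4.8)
\smallskip
\noindent\textbf{Proof proposal.} The plan is to deduce Proposition \ref{StrichartzG} from the single dyadic-block estimates \eqref{Str-a}--\eqref{Str-d} of Lemma \ref{Strichartz} by a square-function summation over $j$. The starting point is that $2^{s|\nabla|}$, the Littlewood--Paley projections $\triangle_j$, the propagator $U_\lambda(t)$ and the retarded operator $\mathscr{A}_\lambda$ are all Fourier multipliers in $x$, so they pairwise commute; in particular $2^{s|\nabla|}\triangle_j U_\lambda(t)u_0 = U_\lambda(t)\bigl(2^{s|\nabla|}\triangle_j u_0\bigr)$ and $2^{s|\nabla|}\triangle_j\mathscr{A}_\lambda f = \mathscr{A}_\lambda\bigl(2^{s|\nabla|}\triangle_j f\bigr)$. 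I would also record, via Lemma \ref{Embeddingequal}, the identification $\|u_0\|_{E^{\sigma,s}}^2 \sim \sum_{j\geqslant0} 2^{2\sigma j}\|2^{s|\nabla|}\triangle_j u_0\|_2^2$ and, by definition, $\|f\|_{\widetilde{L}^{\gamma'}(\mathbb{R},B^{\tau,s}_{r',2}(\mathbb{A}))}^2 = \sum_{j\in\mathbb{A}}2^{2\tau j}\|2^{s|\nabla|}\triangle_j f\|_{L^{\gamma'}_tL^{r'}_x}^2$, so that everything reduces to $\ell^2_j$-weighted sums of the quantities $\|2^{s|\nabla|}\triangle_j(\cdot)\|$.

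For \eqref{GStr-a}, fix $j\in\mathbb{Z}_\lambda$ and apply \eqref{Str-a} with datum $2^{s|\nabla|}\triangle_j u_0\in L^2$ (after, if one wishes to be pedantic, inserting a fattened projection $\widetilde{\triangle}_j$ so that the datum seen by \eqref{Str-a} is genuinely frequency-localized, which costs only a $j$- and $\lambda$-independent constant and a bounded-overlap shift that is harmless in the final $\ell^2$ sum). This gives $\|2^{s|\nabla|}\triangle_j U_\lambda(t)u_0\|_{L^{\gamma(\theta,p)}_tL^p_x} \lesssim \lambda^{1/\gamma(\theta,p)}2^{j(1-\theta)\delta(p)}\|2^{s|\nabla|}\triangle_j u_0\|_2$. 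Squaring, multiplying by the Besov weight $2^{2(\sigma-(1-\theta)\delta(p))j}$ and summing over $j\in\mathbb{Z}_\lambda$, the $2^{\pm 2j(1-\theta)\delta(p)}$ cancel, the uniform factor $\lambda^{2/\gamma(\theta,p)}$ comes out, and the residual sum $\sum_{j\in\mathbb{Z}_\lambda}2^{2\sigma j}\|2^{s|\nabla|}\triangle_j u_0\|_2^2$ is $\leqslant\|u_0\|_{E^{\sigma,s}}^2$; taking square roots gives \eqref{GStr-a}. Inequality \eqref{GStr-c} is obtained in exactly the same way, now using \eqref{Str-c} for $j\in\mathbb{Z}^c_\lambda$ paired with the weight $2^{2(\sigma-\sigma(\theta,p))j}$, which makes the $2^{\pm2j\sigma(\theta,p)}$ cancel and leaves the factor $\lambda^{-2\theta\delta(p)}$ outside.

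The retarded estimates \eqref{GStr-b} and \eqref{GStr-d} follow the same template, applying \eqref{Str-b} and \eqref{Str-d} to the localized datum $2^{s|\nabla|}\triangle_j f$. For $j\in\mathbb{Z}_\lambda$ one obtains
\[
\|2^{s|\nabla|}\triangle_j\mathscr{A}_\lambda f\|_{L^{\gamma(\theta,p)}_tL^p_x}\lesssim \lambda^{1/\gamma(\theta,p)+1/\gamma(\theta,r)}\,2^{j(1-\theta)(\delta(p)+\delta(r))}\,\|2^{s|\nabla|}\triangle_j f\|_{L^{\gamma(\theta,r)'}_tL^{r'}_x},
\]
and for $j\in\mathbb{Z}^c_\lambda$,
\[
\|2^{s|\nabla|}\triangle_j\mathscr{A}_\lambda f\|_{L^{\gamma(\theta,p)}_tL^p_x}\lesssim \lambda^{-\theta(\delta(p)+\delta(r))}\,2^{j(\sigma(\theta,p)+\sigma(\theta,r))}\,\|2^{s|\nabla|}\triangle_j f\|_{L^{\gamma(\theta,r)'}_tL^{r'}_x}.
\]
Multiplying the first estimate by the prefactor $\lambda^{-1/\gamma(\theta,p)}$ and the weight $2^{(\sigma-(1-\theta)\delta(p))j}$ and taking the $\ell^2_j$-norm over $\mathbb{Z}_\lambda$, all $j$-powers of $2$ recombine into $2^{(\sigma+(1-\theta)\delta(r))j}$ and the surviving $\lambda$-power is $\lambda^{1/\gamma(\theta,r)}$, which is precisely \eqref{GStr-b}; multiplying the second by $\lambda^{\theta\delta(p)}$ and $2^{(\sigma-\sigma(\theta,p))j}$ and taking the $\ell^2_j$-norm over $\mathbb{Z}^c_\lambda$, the $j$-powers recombine into $2^{(\sigma+\sigma(\theta,r))j}$ and the surviving $\lambda$-power is $\lambda^{-\theta\delta(r)}$, which is \eqref{GStr-d}.

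I do not anticipate a genuine difficulty: the analytic content is already contained in Lemma \ref{Strichartz} (which in turn needs the hypothesis $2/\gamma(\theta,p),\,2/\gamma(\theta,r)\in[0,1)$), and what remains is a bookkeeping check that, in each of the four inequalities, the Besov regularity weight, the $\theta$-dependent dyadic factor $2^j$ and the power of $\lambda$ combine as claimed. The only point requiring mild care is the passage from the literal left-hand sides $\triangle_j U_\lambda(t)(\cdot)$, $\triangle_j\mathscr{A}_\lambda(\cdot)$ of Lemma \ref{Strichartz} to the weighted quantities $2^{s|\nabla|}\triangle_j U_\lambda(t)(\cdot)$, $2^{s|\nabla|}\triangle_j\mathscr{A}_\lambda(\cdot)$, which is handled by the commutation of $2^{s|\nabla|}$ with the other multipliers together with the fattened-projection device mentioned above; this is exactly of the type already carried out in the proofs of Lemmas \ref{Embedding1} and \ref{Scaling1}.
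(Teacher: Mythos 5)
Your proposal is correct and is exactly the argument the paper leaves implicit: the paper states Proposition \ref{StrichartzG} immediately after Lemma \ref{Strichartz} with no written proof, and the intended deduction is precisely the commuting-multiplier plus $\ell^2_j$-summation argument you carry out (including the standard fattened-projection device $\widetilde{\triangle}_j\triangle_j=\triangle_j$ needed to pass from the unweighted $\|u_0\|_2$ and $\|f\|_{L^{\gamma'}_tL^{r'}_x}$ on the right of \eqref{Str-a}--\eqref{Str-d} to the frequency-localized quantities $\|2^{s|\nabla|}\triangle_j u_0\|_2$, $\|2^{s|\nabla|}\triangle_j f\|_{L^{\gamma'}_tL^{r'}_x}$ that can be summed against the Besov weights). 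The bookkeeping of the $2^j$- and $\lambda$-exponents in all four inequalities checks out.
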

Applying Keel and Tao's arguments in \cite{KeTa1998}, Proposition \ref{StrichartzG} also holds for the endpoint cases $\gamma (\theta,p) =2$ or  $\gamma (\theta,r) =2$, $p,r \neq \infty$ (cf.~\cite{WaHuHaGu2011}).
Comparing the cases $\lambda\gg 1$ with $\lambda=1$, one sees that the Strichartz estimates for $\lambda\gg 1$ in the lower (higher) frequency part become worse (better) than those for $\lambda=1$.   Taking $\theta =1$ in Proposition \ref{StrichartzG}, we immediately have

\begin{cor}\label{StrichartzG1}
Let $2\leqslant p,r \leqslant \infty$, $\delta(p)=1/2-1/p$, $2/\gamma( p) =d \delta(p)$ and $2\sigma (p) = (d+2)\delta(p)$. Assume that $2/\gamma(p), 2/\gamma(r) \in [0,1)$. Then we have
\begin{align}
\lambda^{-1/\gamma(p)}  & \| U_\lambda (t) u_0\|_{\widetilde{L}^{\gamma(p)} (\mathbb{R}, B^{\sigma,s}_{p,2} (\mathbb{Z}_\lambda)) } \lesssim \|u_0\|_{E^{\sigma,s}}, \label{G1Str-a}\\
\lambda^{-1/\gamma(p)}  & \left\| \mathscr{A}_\lambda f \right\|_{\widetilde{L}^{\gamma(p)} (\mathbb{R}, B^{\sigma,s}_{p,2} (\mathbb{Z}_\lambda))}
  \lesssim \lambda^{1/\gamma(r)} \|f\|_{\widetilde{L}^{\gamma(r)'} (\mathbb{R}, B^{\sigma,s}_{r',2} (\mathbb{Z}_\lambda))}.  \label{G1Str-b}
\end{align}
\begin{align}
\lambda^{\delta(p)}  & \| U_\lambda (t) u_0\|_{\widetilde{L}^{\gamma(p)} (\mathbb{R}, B^{\sigma-\sigma(p),s}_{p,2} (\mathbb{Z}^c_\lambda)) } \lesssim \|u_0\|_{E^{\sigma,s}}, \label{G1Str-c}\\
\lambda^{\delta(p) }   & \left\| \mathscr{A}_\lambda f \right\|_{\widetilde{L}^{\gamma(p)} (\mathbb{R}, B^{\sigma-\sigma(p),s}_{p,2} (\mathbb{Z}^c_\lambda))}
   \lesssim  \lambda^{ -\delta(r) }  \|f\|_{\widetilde{L}^{\gamma(r)'} (\mathbb{R}, B^{\sigma+\sigma(r),s}_{r',2} (\mathbb{Z}^c_\lambda))}. \label{G1Str-d}
\end{align}

\end{cor}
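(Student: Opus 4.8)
The plan is simply to specialize Proposition \ref{StrichartzG} to the case $\theta=1$ and to track how the exponents and function-space indices degenerate. First I would substitute $\theta=1$ into the defining relations of the proposition: $2/\gamma(\theta,p)=(d-1+\theta)\delta(p)$ becomes $d\,\delta(p)=2/\gamma(p)$, so that $\gamma(1,p)=\gamma(p)$ and likewise $\gamma(1,r)=\gamma(r)$; and $2\sigma(\theta,p)=(d+1+\theta)\delta(p)$ becomes $(d+2)\delta(p)=2\sigma(p)$, so that $\sigma(1,p)=\sigma(p)$. In particular the regularity shift $(1-\theta)\delta(p)$ appearing in the Besov index in \eqref{GStr-a}--\eqref{GStr-b} vanishes at $\theta=1$, so that $B^{\sigma-(1-\theta)\delta(p),s}_{p,2}$ collapses to $B^{\sigma,s}_{p,2}$ and $B^{\sigma+(1-\theta)\delta(r),s}_{r',2}$ collapses to $B^{\sigma,s}_{r',2}$; and the weights $\lambda^{\theta\delta(p)}$, $\lambda^{-\theta\delta(r)}$ in \eqref{GStr-c}--\eqref{GStr-d} become $\lambda^{\delta(p)}$, $\lambda^{-\delta(r)}$.

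Next I would check that the hypothesis $2/\gamma(\theta,p),\,2/\gamma(\theta,r)\in[0,1)$ of Proposition \ref{StrichartzG} is, under the identification $\gamma(1,\cdot)=\gamma(\cdot)$, exactly the hypothesis $2/\gamma(p),\,2/\gamma(r)\in[0,1)$ stated in the corollary, and that the ranges $2\le p,r\le\infty$ and the convention $\delta(p)=1/2-1/p$ are unchanged. With these identifications in hand, \eqref{GStr-a} becomes \eqref{G1Str-a}, \eqref{GStr-b} becomes \eqref{G1Str-b}, \eqref{GStr-c} becomes \eqref{G1Str-c}, and \eqref{GStr-d} becomes \eqref{G1Str-d}, which completes the argument. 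If the endpoint values $\gamma(p)=2$ or $\gamma(r)=2$ (with $p,r\neq\infty$) are needed, the same substitution applied to the endpoint version of Proposition \ref{StrichartzG} noted in the remark following it gives the result.

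I do not expect any genuine obstacle here: the corollary is a pure specialization, and the only thing requiring attention is the bookkeeping of the exponents $\gamma(p),\gamma(r),\sigma(p),\delta(p)$ together with the vanishing of the $(1-\theta)$-weighted terms at $\theta=1$. One could alternatively reprove the four estimates from scratch by running the $TT^*$ scheme of Appendix B directly at the sharpest decay rate $|t|^{-d\delta(p)}$ furnished by Lemma \ref{HDdecay} with $\theta=1$ --- using the full decay factor $\lambda^{d\delta(p)}$ with no derivative loss in the regime $2^j\lesssim\lambda$ and the growth factor $2^{j(d+2)\delta(p)}$ in the regime $2^j\gtrsim\lambda$ --- but this merely retraces the proof of Proposition \ref{StrichartzG} and yields nothing new.
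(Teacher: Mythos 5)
Your proposal is correct and is precisely the paper's own argument: the text states explicitly that the corollary follows by "taking $\theta=1$ in Proposition \ref{StrichartzG}", and your bookkeeping of the exponents ($\gamma(1,p)=\gamma(p)$, $\sigma(1,p)=\sigma(p)$, the vanishing of the $(1-\theta)\delta(\cdot)$ regularity shifts, and the weights $\lambda^{\delta(p)}$, $\lambda^{-\delta(r)}$) matches what the paper does. Nothing further is required.
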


\begin{cor}\label{StrichartzG2}
Let $2\leqslant p,r \leqslant \infty$, $\delta(p)=1/2-1/p$, $2/\gamma_1( p) =(d-1) \delta(p)$ and $2\sigma_1 (p) = (d+1)\delta(p)$. Assume that $2/\gamma_1(p), 2/\gamma_1(r) \in [0,1)$. Then we have
\begin{align}
\lambda^{-1/\gamma_1 (p)}  & \| U_\lambda (t) u_0\|_{\widetilde{L}^{\gamma_1(p)} (\mathbb{R}, B^{\sigma-\delta(p),s}_{p,2} (\mathbb{Z}_\lambda)) } \lesssim \|u_0\|_{E^{\sigma,s}}, \label{G2Str-a}\\
\lambda^{-1/\gamma_1 (p)}  & \left\| \mathscr{A}_\lambda f \right\|_{\widetilde{L}^{\gamma_1(p)} (\mathbb{R}, B^{\sigma-\delta(p),s}_{p,2} (\mathbb{Z}_\lambda))}
  \lesssim \lambda^{1/\gamma_1(r)} \|f\|_{\widetilde{L}^{\gamma_1(r)'} (\mathbb{R}, B^{\sigma+\delta(r),s}_{r',2} (\mathbb{Z}_\lambda))}.  \label{G2Str-b}
\end{align}
\begin{align}
 &  \| U_\lambda (t) u_0\|_{\widetilde{L}^{\gamma_1 (p)} (\mathbb{R}, B^{\sigma-\sigma_1 (p),s}_{p,2} (\mathbb{Z}^c_\lambda)) } \lesssim \|u_0\|_{E^{\sigma,s}}, \label{G2Str-c}\\
  & \left\| \mathscr{A}_\lambda f \right\|_{\widetilde{L}^{\gamma_1 (p)} (\mathbb{R}, B^{\sigma-\sigma_1 (p),s}_{p,2} (\mathbb{Z}^c_\lambda))}
   \lesssim    \|f\|_{\widetilde{L}^{\gamma_1 (r)'} (\mathbb{R}, B^{\sigma+\sigma_1 (r),s}_{r',2} (\mathbb{Z}^c_\lambda))}. \label{G2Str-d}
\end{align}

\end{cor}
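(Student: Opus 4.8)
The plan is to deduce Corollary \ref{StrichartzG2} as the special case $\theta=0$ of Proposition \ref{StrichartzG}, so that essentially no new argument is needed beyond bookkeeping of indices. First I would record the dictionary between the two sets of notation: with $\theta=0$ one has $2/\gamma(0,p)=(d-1)\delta(p)=2/\gamma_1(p)$ and $2\sigma(0,p)=(d+1)\delta(p)=2\sigma_1(p)$, hence $\gamma(0,p)=\gamma_1(p)$ and $\sigma(0,p)=\sigma_1(p)$; moreover the low-frequency regularity shift $\sigma-(1-\theta)\delta(p)$ becomes $\sigma-\delta(p)$. The admissibility hypothesis $2/\gamma(\theta,p),\,2/\gamma(\theta,r)\in[0,1)$ of Proposition \ref{StrichartzG} reads, for $\theta=0$, exactly as the hypothesis $2/\gamma_1(p),\,2/\gamma_1(r)\in[0,1)$ imposed here.

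Next I would substitute $\theta=0$ into \eqref{GStr-a}--\eqref{GStr-d}. In \eqref{GStr-a}--\eqref{GStr-b} the weight $\lambda^{-1/\gamma(\theta,p)}$ and the gain $\lambda^{1/\gamma(\theta,r)}$ survive unchanged and yield \eqref{G2Str-a}--\eqref{G2Str-b}. In the high-frequency estimates \eqref{GStr-c}--\eqref{GStr-d} the factors $\lambda^{\theta\delta(p)}$ and $\lambda^{-\theta\delta(r)}$ collapse to $1$, so the $\lambda$-weights disappear entirely and one is left precisely with \eqref{G2Str-c}--\eqref{G2Str-d}; this is the expected behaviour, since at $\theta=0$ one is using the slower $|t|^{-(d-1)\delta(p)}$ decay, for which the high-frequency Strichartz norm needs no $\lambda$-correction.

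Finally I would remark that the endpoint case $\gamma_1(p)=2$ or $\gamma_1(r)=2$ (with $p,r\neq\infty$), i.e.\ $(d-1)\delta(p)=1$, is not reached by the plain $TT^*$/Christ--Kiselev argument but is recovered by the Keel--Tao bilinear interpolation, exactly as noted after Proposition \ref{StrichartzG}. I do not anticipate any genuine obstacle here: all the analytic content --- the $\lambda$-dependent dispersive bounds of Lemma \ref{HDdecay}, the $TT^*$ reduction, and the summation over the dyadic blocks indexed by $\mathbb{Z}_\lambda$ and $\mathbb{Z}^c_\lambda$ that produces the $\widetilde{L}^\gamma(\mathbb{R};B^{\sigma,s}_{p,q})$ norms --- is already contained in Lemma \ref{Strichartz} and Proposition \ref{StrichartzG}, and the present statement is merely their $\theta=0$ instantiation.
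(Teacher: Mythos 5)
Your proposal is correct and coincides with the paper's implicit argument: just as Corollary \ref{StrichartzG1} is obtained by setting $\theta=1$ in Proposition \ref{StrichartzG}, Corollary \ref{StrichartzG2} is the $\theta=0$ instance, with the identifications $\gamma(0,p)=\gamma_1(p)$, $\sigma(0,p)=\sigma_1(p)$, $\sigma-(1-\theta)\delta(p)\mapsto\sigma-\delta(p)$, and with the high-frequency weights $\lambda^{\pm\theta\delta(\cdot)}$ degenerating to $1$, which is exactly how the $\lambda$-factors disappear in \eqref{G2Str-c}--\eqref{G2Str-d}. Your bookkeeping of indices and the remark about the Keel--Tao endpoint are both accurate (the endpoint is in any case excluded by the hypothesis $2/\gamma_1(p),\,2/\gamma_1(r)\in[0,1)$).
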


\section{Global solution of NLKG} \label{GlobalNLKG}

\subsection{Estimates for $u^{1+\alpha}$ with $\alpha \geqslant 4/(d-1)$} \label{GlobalNLKGA}

Taking $p=2+4/(d-1), \, d \geqslant 2$ in Corollary \ref{StrichartzG2},  one sees that
$$
\delta (p) = \frac{1}{d+1}, \ \ \sigma_1(p) =\frac{1}{2}, \ \ \gamma_1(p) = 2+ \frac{4}{d-1}.
$$
Let $\lambda \gg 1.$ According to the Strichartz estimates in  Corollary \ref{StrichartzG2}, we choose the following function spaces $X$ to solve NLKG:
\begin{align}
\|v\|_X = & \lambda^{-(d-1)/2(d+1)} \|v\|_{\widetilde{L}^{p} (\mathbb{R}, B^{\sigma-1/(d+1),s}_{p, 2}(\mathbb{Z}_\lambda))} +
 \|v\|_{\widetilde{L}^{\infty} (\mathbb{R}, B^{\sigma,s}_{2, 2}(\mathbb{Z}))} \nonumber\\
& +   \|v\|_{\widetilde{L}^{p} (\mathbb{R}, B^{\sigma-1/2,s}_{p, 2}(\mathbb{Z}^c_\lambda))}, \label{1workspace}
\end{align}
where
\begin{align*}
\mathbb{Z}_\lambda & =\{j\in \mathbb{Z}_+ : \ j\leqslant \log_2 \lambda \}, \ \  \mathbb{Z}^c_\lambda  =  \mathbb{Z}_+ \setminus  \mathbb{Z}_\lambda.
\end{align*}
For convenience, we denote
\begin{align}
\|v\|_{X_1} = &  \lambda^{-(d-1)/2(d+1)} \|u\|_{\widetilde{L}^{p} (\mathbb{R}, B^{\sigma-1/(d+1),s}_{p, 2}(\mathbb{Z}_\lambda))} +
 \|v\|_{\widetilde{L}^{\infty} (\mathbb{R}, B^{\sigma,s}_{2, 2}(\mathbb{Z}_\lambda))}, \label{1workspace1} \\
 \|v\|_{X_2} = &  \|u\|_{\widetilde{L}^{p} (\mathbb{R}, B^{\sigma-1/2,s}_{p, 2}(\mathbb{Z}^c_\lambda))}
 + \|v\|_{\widetilde{L}^{\infty} (\mathbb{R}, B^{\sigma,s}_{2, 2}(\mathbb{Z}^c_\lambda))}.
  \label{1workspace2}
\end{align}
Our goal is to solve the integral version of NLKG \eqref{NLKG2}:
\begin{align}
   \label{1intNLKG3}
v(t)= K'_\lambda(t) u_{0,\lambda} + K_\lambda (t) u_{1,\lambda} + \int^t_0 K_\lambda (t-\tau ) f(v(\tau)) d\tau.
\end{align}
We consider the mapping
\begin{align}
   \label{1intNLKG3map}
\mathcal{T}: v(t) \to  K'_\lambda(t) u_{0,\lambda} + K_\lambda (t) u_{1,\lambda} + \int^t_0 K_\lambda (t-\tau ) f(v(\tau)) d\tau.
\end{align}
in the space
\begin{align}
   \label{1metricspace}
\mathcal{D} & =\{v \in X: \ \|v\|_X \leqslant M_\lambda, \ {\rm supp }\, \widehat{v(t)} \subset \mathbb{R}^d_I \},  \nonumber\\
 & d(v,w) = \|v-w\|_X,
\end{align}
where $M_\lambda>0$ will be chosen later. By the multiplier estimate \eqref{multiplier},
$$
\left\|\triangle_j \frac{1}{\sqrt{\lambda^2 -\Delta}} f  \right\|_p \lesssim    2^{-j} \|f\|_p,   \ \ 2^j  \gtrsim \lambda,
$$
Using the Strichartz estimates, Corollary \ref{StrichartzG1},  we have
\begin{align} \label{1workspace3}
\|\mathcal{T}v\|_{X_2}  \lesssim &  \|u_{0,\lambda}\|_{E^{\sigma,s}}   +    \|u_{1,\lambda}\|_{E^{\sigma-1,s}} +  \left\| \frac{1}{\sqrt{\lambda^2 -\Delta}} f(v) \right\|_{\widetilde{L}^{p'} (\mathbb{R}, B^{\sigma+1/2,s}_{p', 2}(\mathbb{Z}^c_\lambda))} \nonumber\\
 \lesssim &  \|u_{0,\lambda}\|_{E^{\sigma,s}}   +    \|u_{1,\lambda}\|_{E^{\sigma-1,s}} +   \| f(v)\|_{\widetilde{L}^{p'} (\mathbb{R}, B^{\sigma-1/2,s}_{p', 2}(\mathbb{Z}^c_\lambda))}.
\end{align}
So, we need to make a nonlinear estimate to $ \|f(v)\|_{\widetilde{L}^{p'} (\mathbb{R}, B^{\sigma-1/2,s}_{p', 2}(\mathbb{Z}^c_\lambda))}$. First, we have

\begin{lemma} [Regularity Transference] \label{RegTransf}
Suppose that ${\rm supp} \, \widehat{u}_i \subset \mathbb{R}^d_I, \ i=1,...,\alpha+1$. Then we have
$$
2^{s|\nabla|} (u_1...u_{1+\alpha}) = (2^{s|\nabla|} u_1)(2^{s|\nabla|} u_2)...(2^{s|\nabla|} u_{\alpha+1}).
$$
\end{lemma}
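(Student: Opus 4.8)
The plan is to pass to the Fourier side and exploit the single algebraic fact that on the first octant $\mathbb{R}^d_I$ the weight exponent $|\xi|=\xi_1+\cdots+\xi_d$ is \emph{linear}, hence the weight $2^{s|\cdot|}$ is multiplicative under the addition of frequencies. By induction on the number of factors it suffices to treat a product of two functions $u,v$ with $\mathrm{supp}\,\widehat u,\mathrm{supp}\,\widehat v\subset\mathbb{R}^d_I$: the Fourier transform of such a product is again supported in $\mathbb{R}^d_I+\mathbb{R}^d_I=\mathbb{R}^d_I$, so the two-factor statement can be iterated to reach $u_1\cdots u_{1+\alpha}$.

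For two factors I would first record the convolution identity $\widehat{uv}=\widehat u*\widehat v$, i.e.\ $\widehat{uv}(\xi)=\int\widehat u(\xi-\eta)\,\widehat v(\eta)\,d\eta$, which is legitimate once the product $uv$ is well defined (guaranteed here because the $u_i$ in question lie in the resolution spaces, hence in suitable $L^p$). Next, observe that the integrand vanishes unless both $\xi-\eta\in\mathbb{R}^d_I$ and $\eta\in\mathbb{R}^d_I$; on that set $|\xi|=|\xi-\eta|+|\eta|$, since on $\mathbb{R}^d_I$ the function $|\cdot|$ coincides with the linear functional $\zeta\mapsto\zeta_1+\cdots+\zeta_d$. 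Therefore $2^{s|\xi|}=2^{s|\xi-\eta|}2^{s|\eta|}$ throughout the region of integration, and
\[
2^{s|\xi|}\widehat{uv}(\xi)=\int 2^{s|\xi-\eta|}\widehat u(\xi-\eta)\,2^{s|\eta|}\widehat v(\eta)\,d\eta=\bigl((2^{s|\cdot|}\widehat u)*(2^{s|\cdot|}\widehat v)\bigr)(\xi).
\]
Taking the inverse Fourier transform gives $2^{s|\nabla|}(uv)=(2^{s|\nabla|}u)(2^{s|\nabla|}v)$, and iterating proves the lemma. Equivalently, one may view the identity as the statement that on distributions with frequency support in $\mathbb{R}^d_I$ the multiplier $2^{s|\xi|}=2^{s(\xi_1+\cdots+\xi_d)}$ acts as a (complex) translation $\mathscr{F}^{-1}2^{s\sum_j\xi_j}\mathscr{F}$ in the octant direction where $\widehat u$ decays, and translation operators are automatically multiplicative on products.

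The only genuine point requiring care is justifying the pointwise convolution formula and the above manipulations in the rough class $\mathscr{S}_1'$, since the weights $2^{s|\xi|}$ grow and the $u_i$ need not be classical functions. I expect this to be the main (though not deep) obstacle, and would dispatch it by an approximation argument: truncate each $\widehat u_i$ to a compact subset of $\mathbb{R}^d_I$, where all integrals are absolutely convergent and the identity is elementary, then pass to the limit using the isometry $2^{(s'-s)|\nabla|}\colon B^{\sigma,s'}_{p,q}\to B^{\sigma,s}_{p,q}$ of Lemma~\ref{Isomorphism1} together with the boundedness of the $u_i$ in the Strichartz spaces; alternatively, in the concrete applications the $u_i$ lie in $L^p$-based spaces, so $\widehat u_i$ are honest tempered distributions and $\widehat u*\widehat v$ is directly meaningful.
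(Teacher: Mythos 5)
Your argument is essentially the paper's: pass to the Fourier side, observe that on the first octant $|\xi|=\xi_1+\cdots+\xi_d$ is additive so the weight factors across the convolution, and conclude. The only cosmetic difference is that you reduce to two factors and iterate whereas the paper writes out the $\alpha$-fold convolution directly; the extra paragraph you add about justifying the convolution formula in $\mathscr{S}_1'$ is a reasonable rigor caveat that the paper silently elides, but it does not change the substance.
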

\begin{proof}
Let us observe that
\begin{align}
\mathscr{F}(2^{s|\nabla|} (u_1...u_{1+\alpha})(\xi) =  2^{s|\xi|} \int  \widehat{u}_1(\xi_1)...\widehat{u}_\alpha(\xi_\alpha) \widehat{u}_{\alpha+1}(\xi-\xi_1-...-\xi_{\alpha+1}) d\xi_1...d\xi_\alpha.
\end{align}
Noticing that for any $\xi_1,...,\xi_{\alpha}, \xi-\xi_1-...-\xi_\alpha \in \mathbb{R}^d_I$, we have
$$
|\xi| = |\xi_1|+...+ |\xi_\alpha| + |\xi-\xi_1-...-\xi_\alpha|.
$$
We immediately have
\begin{align}
\mathscr{F}&   (2^{s|\nabla|} (u_1...u_{1+\alpha})(\xi) \nonumber\\
&=   \int_{(\mathbb{R}^d_I)^\alpha} 2^{s|\xi_1|}\widehat{u}_1(\xi_1)...2^{s|\xi_\alpha|}\widehat{u}_\alpha(\xi_\alpha) 2^{s|\xi-\xi_1-...-\xi_\alpha|} \widehat{u}_{\alpha+1}(\xi-\xi_1-...-\xi_{\alpha+1}) d\xi_1...d\xi_\alpha \nonumber\\
&=  \mathscr{F} [(2^{s|\nabla|} u_1)(2^{s|\nabla|} u_2)...(2^{s|\nabla|} u_{\alpha+1})] (\xi),
\end{align}
which implies the result, as desired.
\end{proof}
Now, we estimate
$  \|v^{1+\alpha}\|_{\widetilde{L}^{p'} (\mathbb{R}, B^{\sigma-1/2,s}_{p', 2}(\mathbb{Z}^c_\lambda))}.$  We have the following

\begin{lemma}\label{NonlinearestH1}
Let $d\geqslant 2$, $p= 2(d+1)/(d-1)$, $\alpha \in \mathbb{N}, \ \alpha \geqslant 4/(d-1)$, $\sigma \geqslant \sigma_c =d/2- 2/\alpha$, $\sigma>1/2$, $s< 0$. Assume that ${\rm supp} \, \widehat{v} \subset \mathbb{ R}^d_I$.  Then we have
\begin{align}
\|v^{1+\alpha} & \|_{\widetilde{L}^{p'} (\mathbb{R}, B^{\sigma-1/2,s}_{p', 2}(\mathbb{Z}^c_\lambda))} \nonumber\\
 \leqslant \
& (C \sqrt{\alpha})^{\alpha+C}   \left( \|v\|_{\widetilde{L}^p(\mathbb{R}, B^{\sigma-1/2,s}_{p,2}(\mathbb{Z}^c_{\lambda}))} \|v \|^{ 4/(d-1)}_{\widetilde{L}^p(\mathbb{R}, B^{\sigma_c -1/2,s}_{p,2}(\mathbb{Z}^c_{\lambda})) \cap \widetilde{L}^p(\mathbb{R}, B^{\sigma_c-\delta(p),s}_{p,2}(\mathbb{Z}_{\lambda}))} \right.
      \nonumber\\
 & \ \ \ \ \ \ \   + \left.  \lambda^{-1/p}  \|v\|_{\widetilde{L}^p(\mathbb{R}, B^{\sigma-\delta(p),s}_{p,2}(\mathbb{Z}_{\lambda}))} \|v\|^{ 4/(d-1)}_{\widetilde{L}^p(\mathbb{R}, B^{\sigma_c-\delta(p),s}_{p,2}(\mathbb{Z}_{\lambda}))} \right)
    \|v\|^{\alpha -4/(d-1)}_{\widetilde{L}^\infty(\mathbb{R}, B^{\sigma_c,s}_{2,2})}.  \label{NLE0}
\end{align}
\end{lemma}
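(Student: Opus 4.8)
The plan is to reduce \eqref{NLE0} to a \emph{weightless} multilinear Besov product estimate by ``transferring'' the exponential weight through the nonlinearity, and then to run a Littlewood--Paley / paraproduct argument that is made tractable by the first-octant frequency support. First I would set $w:=2^{s|\nabla|}v$. Since $\mathrm{supp}\,\widehat v\subset\mathbb R^d_I$, Lemma \ref{RegTransf} gives $2^{s|\nabla|}(v^{1+\alpha})=w^{1+\alpha}$ with $\mathrm{supp}\,\widehat w\subset\mathbb R^d_I$, while Lemma \ref{Isomorphism1} turns every weighted norm $B^{\,\cdot,s}_{p,2}$ occurring in \eqref{NLE0} into the ordinary Besov norm $B^{\,\cdot}_{p,2}$ of $w$. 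Hence it is enough to prove the analogue of \eqref{NLE0} for $w$ with all exponential weights deleted, under the single extra assumption $\mathrm{supp}\,\widehat w\subset\mathbb R^d_I$; the statement then becomes a (delicate but standard-type) product estimate in Besov spaces.

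The structural point is the octant support. Expanding $w^{1+\alpha}=\sum_{k_0,\dots,k_\alpha}\triangle_{k_0}w\cdots\triangle_{k_\alpha}w$, all factor frequencies $\xi_i$ lie in $\mathbb R^d_I$, so $|\xi_0|+\dots+|\xi_\alpha|=|\xi_0+\dots+\xi_\alpha|$ in the $\ell^1$ norm. Consequently, if the output is localized at $|\xi|\sim2^j$ with $j\in\mathbb Z^c_\lambda$, then every $2^{k_i}\lesssim2^j$ and $2^{\max_i k_i}\gtrsim 2^j/(\alpha+1)$: there is no high--high$\to$low cancellation, only ``paraproduct'' configurations survive. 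By symmetry (paying a combinatorial constant $\le 2^{C\alpha}$) I would single out the factor of largest frequency, put it in the norm $B^{\sigma-1/2}_{p,2}(\mathbb Z^c_\lambda)$, or, when that largest frequency is $\lesssim\lambda$, in $\lambda^{-1/p}B^{\sigma-\delta(p)}_{p,2}(\mathbb Z_\lambda)$ with the normalization of Corollary \ref{StrichartzG2}(i) (this dichotomy is what produces the two summands of \eqref{NLE0} and the $\lambda^{-1/p}$ in front of the second one); then $4/(d-1)$ further ``Strichartz'' factors, measured in $B^{\sigma_c-1/2}_{p,2}(\mathbb Z^c_\lambda)\cap B^{\sigma_c-\delta(p)}_{p,2}(\mathbb Z_\lambda)$ according to whether their frequency exceeds $\lambda$ (whence the intersection norm); and the remaining $\alpha-4/(d-1)$ ``energy'' factors, to be controlled in $L^\infty$.

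The estimates then come from Hölder's inequality, both in $x$ and in $t$: for $p=2(d+1)/(d-1)$ one has the identity $\tfrac1{p'}=\tfrac{1+4/(d-1)}{p}$, so placing the distinguished factor and the $4/(d-1)$ Strichartz factors in $L^p_{t,x}$ forces the $\alpha-4/(d-1)$ energy factors into $L^\infty_{t,x}$ --- this is exactly where $\alpha\ge 4/(d-1)$ is used. Each energy factor is estimated in $L^\infty_x$ by Bernstein's inequality restricted to its (low) dyadic cutoff: $\|S_m g\|_\infty\lesssim\bigl(\sum_{k\le m}2^{2k(d/2-\sigma_c)}\bigr)^{1/2}\|g\|_{B^{\sigma_c}_{2,2}}$ by Cauchy--Schwarz; since $d/2-\sigma_c=2/\alpha$, the geometric ratio $2^{4/\alpha}\to1$ as $\alpha\to\infty$, the sum is $\sim\frac{\alpha}{4\ln2}2^{4m/\alpha}$, and each such factor costs a constant $\lesssim\sqrt\alpha$. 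Over the $\alpha-4/(d-1)$ energy factors this yields precisely the advertised constant $(C\sqrt\alpha)^{\alpha+C}$, and it produces a power $2^{m(2/\alpha)(\alpha-4/(d-1))}$ in the frequency $m$ of the lowest surviving scale.

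Finally I would reassemble the Besov norms by summing over the dyadic configuration $2^{m}\le2^{j_{4/(d-1)}}\le\dots\le2^{j_1}\le2^j$, constrained by $\sum_i 2^{j_i}\sim2^j$; after Cauchy--Schwarz in each index the remaining geometric series have ratio governed by the exponent $\tfrac12-\sigma_c+\tfrac2\alpha\bigl(\alpha-\tfrac4{d-1}\bigr)$, which is $\ge 0$ and vanishes \emph{exactly} at the endpoint $\alpha=4/(d-1)$ (where there are no energy factors and \eqref{NLE0} degenerates to a pure trilinear Strichartz bound), while $\sigma-\tfrac12>0$ is used to sum the contribution of the distinguished factor; splitting each factor's frequency range into $\{\le\lambda\}$ and $\{>\lambda\}$ and recollecting gives the two terms of \eqref{NLE0}. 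I expect this last step to be the main obstacle: organizing the cascade of geometric sums over the full dyadic configuration of the $\alpha+1$ frequencies so that everything closes uniformly down to the threshold $\alpha=4/(d-1)$, while (i) not losing more than $(C\sqrt\alpha)^{\alpha+C}$ in the constants --- which forces the use of $\ell^2$-Cauchy--Schwarz rather than crude $\ell^1$ summation, together with the elementary embedding $B^{a}_{p,2}\hookrightarrow B^{0}_{p,1}$ for $a>0$ --- and (ii) carrying the two frequency regimes $\mathbb Z_\lambda,\ \mathbb Z^c_\lambda$ and their $\lambda$-weights consistently through every factor.
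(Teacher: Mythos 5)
Your outline follows the paper's proof almost step for step: transfer the exponential weight through the product via Lemma \ref{RegTransf} (so everything reduces to the unweighted product $w^{1+\alpha}$, $w=2^{s|\nabla|}v$, with $\widehat w$ supported in the octant); exploit the octant support so that $|\xi_0+\dots+\xi_\alpha|=|\xi_0|+\dots+|\xi_\alpha|$ and hence no high--high$\to$low interaction survives, forcing $2^{\max_i k_i}\sim 2^j$; order the indices and apply H\"older with the split $1/p'=(1+4/(d-1))/p$ so that $1+4/(d-1)$ factors go to $L^p_{t,x}$ and $\alpha-4/(d-1)$ go to $L^\infty_{t,x}$; estimate each $L^\infty$ factor by Bernstein plus Cauchy--Schwarz, producing exactly the paper's $\sqrt\alpha$ gain and the factor $(C\sqrt\alpha)^{\alpha+C}$; and split every factor's frequency range at $\lambda$, the $\lambda^{-1/p}$ in the low-frequency term coming from $2^{-j/p}\leqslant\lambda^{-1/p}$ at the output scale $j>j_\lambda$. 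This is precisely the route of the paper (the zones $\mathcal{Z}_0,\dots,\mathcal{Z}_4$, estimates \eqref{1NLE2+}--\eqref{1NLE8}).

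Two places where your sketch is imprecise and the paper works harder. First, the final geometric-series exponent you quote, $\tfrac12-\sigma_c+\tfrac2\alpha(\alpha-\tfrac4{d-1})=(5-d)\bigl(\tfrac12-\tfrac2{\alpha(d-1)}\bigr)$, is \emph{negative} for $d\geqslant6$ and $\alpha>4/(d-1)$, so the claim that it is $\geqslant0$ cannot be what closes the sums; in the paper the inner cascade over the $j_3\leqslant j_2\leqslant j_1$ ordering converges with the exponent $1-1/\alpha>0$ (see the factor $2^{2(j_3-j_2)(1-1/\alpha)}$ in \eqref{1NLE6}), the correct condition coming from $\sigma_c-\tfrac12>0$ for the Strichartz weight and $d/2-\sigma_c=2/\alpha$ for the Bernstein cost. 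Your bookkeeping of the accumulated powers of $2^{j_3}$ needs to be redone along these lines. Second, because $4/(d-1)$ is an integer only when $d\in\{2,3,5\}$, the paper runs three separate arguments: an interpolated factor for $d\geqslant5$, a different split for $d=4$ (where the interpolation fraction $1/3$ sits on the third index), and a clean integer-power H\"older for $d=2,3$; it also treats $\alpha=1$ (possible for $d\geqslant5$) separately via a Bony paraproduct because one cannot order three indices with only two factors. Your ``$4/(d-1)$ Strichartz factors'' shorthand has to be unpacked dimension by dimension as the paper does, but once that is done the scheme you propose is the paper's.
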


By the definition, we have
\begin{align} \label{1NLE1}
\left\| u_1...u_{1+\alpha} \right\|_{\widetilde{L}^{p'}  (\mathbb{R}, B^{\sigma-1/2,s}_{p', 2}(\mathbb{Z}^c_\lambda))} = \left( \sum_{j>        j_\lambda}  2^{2(\sigma-1/2)j}  \|\triangle_j 2^{s|\nabla|} (u_1 u_2...u_{\alpha+1})\|^2_{L^{p'}_{x,t}} \right)^{1/2}.
\end{align}
Let ${\rm supp} \, \widehat{u}_i \subset \mathbb{R}^d_I, \ v_i = 2^{s|\nabla|}u_i, \ i=1,...,\alpha+1$. By Lemma \ref{RegTransf}, we have
\begin{align}
 \| u_1... & u_{1+\alpha}  \|_{\widetilde{L}^{p'}  (\mathbb{R}, B^{\sigma-1/2,s}_{p', 2}(\mathbb{Z}^c_\lambda))} \nonumber\\
 & = \left( \sum_{j>j_\lambda}  2^{2(\sigma-1/2)j}  \|\triangle_j   (v_1  ...v_{\alpha+1})\|^2_{L^{p'}_{x,t}} \right)^{1/2} \nonumber\\
  & \leqslant \left( \sum_{j>j_\lambda}  2^{2(\sigma-1/2)j} \left(\sum_{j_1,...,j_{1+\alpha}\in \mathbb{Z}_+}  \|\triangle_j   (\triangle_{j_1} v_1  ... \triangle_{j_{1+\alpha}} v_{\alpha+1})\|_{L^{p'}_{x,t}} \right)^2 \right)^{1/2}.
  \label{1NLE2}
\end{align}

{\it Step } 1. We consider the case $d\geqslant 5$. First, we assume that $\alpha \geqslant 2$. By symmetry, we can suppose that
\begin{align} \label{symmetry}
j_1 = \max_{1\leqslant k \leqslant \alpha+1} j_k, \ \  j_2 = \max_{2\leqslant k \leqslant \alpha+1} j_k, \ \ j_3 = \max_{3\leqslant k \leqslant \alpha+1} j_k
\end{align}
in the summation $\sum_{j_1,...,j_{1+\alpha}\in \mathbb{Z}_+} $ in \eqref{1NLE2}. Noticing that ${\rm supp} \ \widehat{v}_j \subset \mathbb{R}^d_I $, we see that
$$
\triangle_j ( \triangle_{j_1}v_1 \triangle_{j_2}v_2... \triangle_{j_{\alpha+1}}v_{\alpha+1}) \neq 0
$$
implies that $j-c(\alpha) \leqslant j_1 \leqslant j+1$, $c(\alpha) := [\log_2 (1+\alpha)] + 1$.  Put
\begin{align}
 \mathcal{Z}_0 & =\{(j_1,...,j_{1+\alpha}): \ j_\lambda \geqslant j_1 \geqslant 0, \ j-c(\alpha) < j_1 \leqslant j +1 \}, \nonumber\\
 \mathcal{Z}_1 & =\{(j_1,...,j_{1+\alpha}): \ j_1 > j_\lambda \geqslant j_2, \ j-c(\alpha) < j_1 \leqslant j  +1 \}, \nonumber\\
  \mathcal{Z}_2 & =\{(j_1,...,j_{1+\alpha}): \ j_2 > j_\lambda \geqslant j_3, \ j-c(\alpha) < j_1 \leqslant j  +1 \}, \nonumber\\
 \mathcal{Z}_3 & =\{(j_1,...,j_{1+\alpha}): \ j_3 > j_\lambda \geqslant \max_{k=4,...,1+\alpha} j_k, \ j-c(\alpha) < j_1 \leqslant j +1\}, \nonumber\\
 \mathcal{Z}_4 & =\{(j_1,...,j_{1+\alpha}): \   \max_{k=4,...,1+\alpha} j_k > j_\lambda , \ j-c(\alpha) < j_1 \leqslant j  +1 \}. \nonumber
\end{align}
Since $\varphi_j$ is a multiplier on $L^{p'}$, we have
\begin{align}
 \| u_1... & u_{1+\alpha}  \|_{\widetilde{L}^{p'}  (\mathbb{R}, B^{\sigma-1/2,s}_{p', 2}(\mathbb{Z}^c_\lambda))} \nonumber\\
  & \leqslant \sum^4_{\ell=0}  \left( \sum_{j>j_\lambda}  2^{2(\sigma-1/2)j} \left(\sum_{(j_1,...,j_{1+\alpha}) \in  \mathcal{Z}_\ell }  \| \triangle_{j_1} v_1  ... \triangle_{j_{1+\alpha}} v_{\alpha+1} \|_{L^{p'}_{x,t}} \right)^2 \right)^{1/2} \nonumber\\
  & = I +...+V.
  \label{1NLE2+}
\end{align}
Let us consider the estimate of $III$. By H\"older's and Bernstein's inequalities,  for $(j_1,...,j_{1+\alpha}) \in  \mathcal{Z}_2$,
\begin{align} \label{1NLE3+}
\|& \triangle_{j_1} v_1 ... \triangle_{j_{1+\alpha}}v_{\alpha+1} \|_{L^{p'}_{x,t}} \nonumber\\
  & \leqslant \|  \triangle_{j_1} v_1 \|_{L^{p}_{x,t}} \|  \triangle_{j_2} v_2 \|^{4/(d-1)}_{L^{p}_{x,t}}  \|  \triangle_{j_2} v_2 \|^{1- 4/(d-1)}_{L^{\infty}_{x,t}}    \prod^{\alpha+1}_{k=3} \|\triangle_{j_k} v_{k} \|_{L^{\infty}_{x,t}}  \nonumber\\
   & \leqslant C^\alpha \|  \triangle_{j_1} v_1 \|_{L^{p}_{x,t}} \|  \triangle_{j_2} v_2 \|^{4/(d-1)}_{L^{p}_{x,t}}  (2^{dj_2/2}\|  \triangle_{j_2} v_2 \|_{L^{\infty}_{t}L^2_x }) ^{1- 4/(d-1)}    \prod^{\alpha+1}_{k=3} (2^{dj_k/2} \|\triangle_{j_k} v_{k} \|_{L^{\infty}_{t}L^2_x}).
\end{align}
Let us observe that for $ \sigma_c = d/2-  2/\alpha$,
\begin{align} \label{1NLE4}
 \sum_{j_k \leqslant j_3}    2^{dj_k/2} \|\triangle_{j_k} v_{k} \|_{L^{\infty}_{t}L^2_x} \leqslant \, \sqrt{\alpha} \,  2^{ 2j_3 /\alpha } \|u_k\|_{\widetilde{L}^\infty(\mathbb{R}, B^{\sigma_c ,s}_{2,2})}.
\end{align}
It follows that
\begin{align} \label{1NLE5}
 \sum_{j_4,...,j_{1+\alpha} \leqslant j_3}   \prod^{1+\alpha}_{k=4}  ( 2^{dj_k/2} \|\triangle_{j_k} v_{k} \|_{L^{\infty}_{t}L^2_x} )  \leqslant (C \sqrt{\alpha})^{\alpha}   2^{ 2j_3  (1-2/ \alpha)} \prod^{1+\alpha}_{k=4} \|u_k\|_{\widetilde{L}^\infty(\mathbb{R}, B^{\sigma_c,s}_{2,2})}.
\end{align}
Hence, we have from  H\"older's and Young's inequalities that
\begin{align} \label{1NLE6}
 \sum_{(j_1,...,j_{1+\alpha})  \in \mathcal{Z}_2}  &   \| \triangle_{j_1} v_1  ... \triangle_{j_{1+\alpha}} v_{\alpha+1} \|_{L^{p'}_{x,t}}  \nonumber\\
  \leqslant & (C\sqrt{\alpha})^{\alpha}  \sum_{(j-c(\alpha)) \vee j_\lambda <  j_1 \leqslant j+1}  \| \triangle_{j_1} v_1\|_{L^{p}_{x,t}}   \nonumber\\
     &\times \sum_{j_\lambda <j_2\leqslant j_1} (2^{j_2(\sigma_c -1/2)} \|  \triangle_{j_2} v_2 \|_{L^{p}_{x,t}})^{4/(d-1)}  (2^{j_2 \sigma_c } \|  \triangle_{j_2} v_2 \|_{L^{\infty}_t L^2_x })^{1- 4/(d-1)} \nonumber\\
     &\times \sum_{j_3\leqslant j_2}  2^{2 (j_3 -j_2) (1-1/\alpha)}   2^{j_3 \sigma_c } \|\triangle_{j_3} v_3\|_{L^{\infty}_t L^2_x}   \prod^{1+\alpha}_{k=4} \|u_k\|_{\widetilde{L}^\infty(\mathbb{R}, B^{\sigma_c,s}_{2,2})}   \nonumber\\
   \leqslant & (C\sqrt{\alpha})^{\alpha}  \sum_{(j-c(\alpha)) \vee j_\lambda  < j_1 \leqslant j+1}  \| \triangle_{j_1} v_1\|_{L^{p}_{x,t}}   \nonumber\\
     &\times \|u_2\|^{4/(d-1)}_{\widetilde{L}^p(\mathbb{R}, B^{\sigma_c -1/2,s}_{p,2}(\mathbb{Z}^c_{\lambda}))}   \|u_2\|^{1-4/(d-1)}_{\widetilde{L}^\infty(\mathbb{R}, B^{\sigma_c ,s}_{2,2})}   \prod^{1+\alpha}_{k=3} \|u_k\|_{\widetilde{L}^\infty(\mathbb{R}, B^{\sigma_c,s}_{2,2})}.
\end{align}
Inserting the above estimate \eqref{1NLE6} into $III$, by Young's inequality, we have
\begin{align} \label{1NLE7}
III \leqslant \  & (C \sqrt{\alpha})^{\alpha} \|u_2\|^{4/(d-1)}_{\widetilde{L}^p(\mathbb{R}, B^{\sigma_c -1/2,s}_{p,2}(\mathbb{Z}^c_{\lambda}) )}   \|u_2\|^{1-4/(d-1)}_{\widetilde{L}^\infty(\mathbb{R}, B^{\sigma_c,s}_{2,2})}   \prod^{1+\alpha}_{k=3} \|u_k\|_{\widetilde{L}^\infty(\mathbb{R}, B^{\sigma_c,s}_{2,2})} \nonumber\\
     & \times \left( \sum_{j > j_\lambda } 2^{2(\sigma-1/2) j} \left(\sum_{(j-c(\alpha)) \vee j_\lambda  < j_1 \leqslant j+1}  \| \triangle_{j_1} v_1\|_{L^{p}_{x,t}} \right)^2  \right)^{1/2}  \nonumber\\
\leqslant \  & (C \sqrt{\alpha})^{  \alpha +C}  \|u_1\|_{\widetilde{L}^p(\mathbb{R}, B^{\sigma-1/2,s}_{p,2}(\mathbb{Z}^c_{\lambda}))}   \|u_2\|^{4/(d-1)}_{\widetilde{L}^p(\mathbb{R}, B^{\sigma_c-1/2,s}_{p,2}(\mathbb{Z}^c_{\lambda}) )} \nonumber\\
   & \times \|u_2\|^{1-4/(d-1)}_{\widetilde{L}^\infty(\mathbb{R}, B^{\sigma_c,s}_{2,2})}   \prod^{1+\alpha}_{k=3} \|u_k\|_{\widetilde{L}^\infty(\mathbb{R}, B^{\sigma_c,s}_{2,2})}.
\end{align}
From the estimates of $III$ we see that $IV,\ V$ can be handled in the same way as the manner of $III$, we omit the details of the proof.

Now we estimate $I$.   Using similar way as above and noticing that $\delta(p) \leqslant 1/2$, we obtain that
\begin{align} \label{1NLE8}
I \leqslant   & (C \sqrt{\alpha})^{\alpha}   \left(\sum_{j > j_\lambda } 2^{2(\sigma-1/2) j}  \left( \sum_{(j -c(\alpha))\vee 0 \leqslant j_1 \leqslant j+1 } \| \triangle_{j_1} v_1\|_{L^{p}_{x,t}}\right) ^2 \right)^{1/2} \nonumber\\
& \times \|u_2\|^{4/(d-1)}_{\widetilde{L}^p(\mathbb{R}, B^{\sigma_c-1/2,s}_{p,2}(\mathbb{Z}_{\lambda}) )}   \|u_2\|^{1-4/(d-1)}_{\widetilde{L}^\infty(\mathbb{R}, B^{\sigma_c,s}_{2,2})}   \prod^{1+\alpha}_{k=3} \|u_k\|_{\widetilde{L}^\infty(\mathbb{R}, B^{\sigma_c,s}_{2,2})} \nonumber\\
\leqslant   & (C \sqrt{\alpha})^{\alpha}  \lambda^{-1/p} \left(\sum_{j > j_\lambda } 2^{2(\sigma-\delta(p)) j}  \left( \sum_{(j -c(\alpha))\vee 0 \leqslant j_1 \leqslant j+1 } \| \triangle_{j_1} v_1\|_{L^{p}_{x,t}}\right) ^2 \right)^{1/2} \nonumber\\
& \times \|u_2\|^{4/(d-1)}_{\widetilde{L}^p(\mathbb{R}, B^{\sigma_c-1/2,s}_{p,2}(\mathbb{Z}_{\lambda}) )}   \|u_2\|^{1-4/(d-1)}_{\widetilde{L}^\infty(\mathbb{R}, B^{\sigma_c,s}_{2,2})}   \prod^{1+\alpha}_{k=3} \|u_k\|_{\widetilde{L}^\infty(\mathbb{R}, B^{\sigma_c,s}_{2,2})} \nonumber\\
  \leqslant  & (C \sqrt{\alpha})^{\alpha  +C }   \lambda^{-1/p}  \|u_1\|_{\widetilde{L}^p(\mathbb{R}, B^{\sigma-\delta(p),s}_{p,2}(\mathbb{Z}_{\lambda}) )} \nonumber\\
  & \times \|u_2\|^{4/(d-1)}_{\widetilde{L}^p(\mathbb{R}, B^{\sigma_c-\delta(p),s}_{p,2}(\mathbb{Z}_{\lambda}) )}   \|u_2\|^{1-4/(d-1)}_{\widetilde{L}^\infty(\mathbb{R}, B^{\sigma_c,s}_{2,2})}   \prod^{1+\alpha}_{k=3} \|u_k\|_{\widetilde{L}^\infty(\mathbb{R}, B^{\sigma_c,s}_{2,2})}
\end{align}
The estimate of $II$ is similar to that of $I$.  In order to remove the assumption \eqref{symmetry}, one needs to  consider $(\alpha+1)\alpha (\alpha-1) \sim \alpha^3$ many similar cases.

Next, we consider the case $d\geqslant 5, \ \alpha=1$.  Denote
$$
S_j u = \sum^j_{k=0} \triangle_j u, \ \ S_{[j_1,j_2]} u = S_{j_2}u - S_{j_1} u, \ \ S_{-1} u=0.
$$
Using Bony's para-product decomposition, we have
$$
v_1v_2 = \sum^\infty_{j_1=0} (\triangle_{j_1} v_1 S_{j_1} v_2 + S_{j_1-1} v_1 \triangle_{j_1} v_2  ).
$$
Hence,
\begin{align}
 \| u_1   u_2  \|  _{\widetilde{L}^{p'}  (\mathbb{R}, B^{\sigma-1/2,s}_{p', 2}(\mathbb{Z}^c_\lambda))}
  \leqslant & \left( \sum_{j>j_\lambda}  2^{2(\sigma-1/2)j} \left(\sum^\infty_{ j_1=0}  \| \triangle_{j} (\triangle_{j_1} v_1 S_{j_1} v_2)   \|_{L^{p'}_{x,t}} \right)^2 \right)^{1/2} \nonumber\\
  & + \left( \sum_{j>j_\lambda}  2^{2(\sigma-1/2)j} \left(\sum^\infty_{ j_1=0}  \| \triangle_{j} (S_{j_1-1} v_1 \triangle_{j_1} v_2)   \|_{L^{p'}_{x,t}} \right)^2 \right)^{1/2} \nonumber\\
  & = I +II.
  \label{1NLE2++}
\end{align}
It suffices to estimate $I$. In order to $\triangle_{j} (\triangle_{j_1} v_1 S_{j_1} v_2) \neq 0 $, we have $j< j_1 +4$.  It follows that
\begin{align}
I \leqslant & \left( \sum_{j>j_\lambda}  2^{2(\sigma-1/2)j} \left(\sum_{j-4 <  j_1 \leqslant j_\lambda  }  \| \triangle_{j} (\triangle_{j_1} v_1 S_{j_1} v_2)   \|_{L^{p'}_{x,t}} \right)^2 \right)^{1/2} \nonumber\\
& +  \left( \sum_{j>j_\lambda}  2^{2(\sigma-1/2)j} \left(\sum_{  j_1 > (j-4) \vee j_\lambda  }  \| \triangle_{j} (\triangle_{j_1} v_1 S_{j_1} v_2)   \|_{L^{p'}_{x,t}} \right)^2 \right)^{1/2}  \nonumber\\
=: & I_1 +I_2.
  \label{1NLE2++1}
\end{align}
One can further decompose $I_2$ by
\begin{align}
I_2 \leqslant &  \left( \sum_{j>j_\lambda}  2^{2(\sigma-1/2)j} \left(\sum_{  j_1 > (j-4) \vee j_\lambda  }  \| \triangle_{j} (\triangle_{j_1} v_1 S_{[j_\lambda , \ j_1]} v_2)   \|_{L^{p'}_{x,t}} \right)^2 \right)^{1/2}  \nonumber\\
 &  +   \left( \sum_{j>j_\lambda}  2^{2(\sigma-1/2)j} \left(\sum_{  j_1 > (j-4) \vee j_\lambda  }  \| \triangle_{j} (\triangle_{j_1} v_1 S_{ j_\lambda } v_2)   \|_{L^{p'}_{x,t}} \right)^2 \right)^{1/2} \nonumber\\
 =: & I_{21} + I_{22}.
  \label{1NLE2++2}
\end{align}
Now, let us consider the estimate of $I_{21}$. One sees that
$$
\frac{1}{p'} =  \frac{1}{p} + \frac{4}{(d-1)} \left(\frac{1}{p} - \frac{\sigma_c -1/2}{d}\right) + \left( 1- \frac{4}{(d-1)} \right)\left(\frac{1}{2} - \frac{\sigma_c}{d}\right)
$$
for $\sigma_c =d/2-2$.  Taking
$$
\frac{1}{p_1} =  \frac{1}{p} - \frac{\sigma_c-1/2}{d}, \ \ \frac{1}{p_2}  = \frac{1}{2} - \frac{\sigma_c}{d} = \frac{2}{d},
$$
by H\"older's inequality, we have for $j_1 > (j-4) \vee j_\lambda $,
\begin{align}
   \| \triangle_{j} & (\triangle_{j_1} v_1 S_{[j_\lambda , \ j_1]} v_2)   \|_{L^{p'}_{x,t}}     \nonumber\\
 &  \leqslant \|\triangle_{j_1} v_1\|_{L^p_{x,t}}  \|S_{[j_\lambda , \ j_1]} v_2 \|^{4/(d-1)}_{L^p_{t}L^{p_1}_x } \|S_{[j_\lambda , \ j_1]} v_2 \|^{1- 4/(d-1)}_{L^\infty_{t}L^{p_2}_x } .
  \label{1NLE2++3}
\end{align}
 Using the Sobolev embedding $B^{\sigma_c -1/2}_{p,2} \subset  L^{p_1}, \ B^{\sigma_c }_{2,2} \subset  L^{p_2}$, and $S_j: L^r \to L^r$,  we have from Minkowski's inequality that
\begin{align}
   \| \triangle_{j} & (\triangle_{j_1} v_1 S_{[j_\lambda , \ j_1]} v_2)   \|_{L^{p'}_{x,t}}     \nonumber\\
 &  \leqslant \|\triangle_{j_1} v_1\|_{L^p_{x,t}}  \| v_2 \|^{4/(d-1)}_{L^p (\mathbb{R} , B^{\sigma_c -1/2}_{p,2} (\mathbb{Z}^c_\lambda) } \| v_2 \|^{1- 4/(d-1)}_{L^\infty (\mathbb{R}, B^{\sigma_c}_{2,2})}   \nonumber\\
  &  \leqslant C   \|\triangle_{j_1} v_1\|_{L^p_{x,t}}  \| u_2 \|^{4/(d-1)}_{\widetilde{L}^p (\mathbb{R} , B^{\sigma_c-1/2,s}_{p,2} (\mathbb{Z}^c_\lambda) } \| u_2 \|^{1- 4/(d-1)}_{\widetilde{L}^\infty (\mathbb{R}, B^{\sigma_c,s}_{2,2})}.
  \label{1NLE2++4}
\end{align}
It follows that
\begin{align}
I_{21}  \leqslant C   \| u_1\|_{\widetilde{L}^p (\mathbb{R} , B^{\sigma-1/2,s}_{p,2} (\mathbb{Z}^c_\lambda) }  \| u_2 \|^{4/(d-1)}_{\widetilde{L}^p (\mathbb{R} , B^{\sigma_c-1/2,s}_{p,2} (\mathbb{Z}^c_\lambda) } \| u_2 \|^{1- 4/(d-1)}_{\widetilde{L}^\infty (\mathbb{R}, B^{\sigma_c,s}_{2,2})}.
  \label{1NLE2++5}
\end{align}
Similarly,
\begin{align}
I_{22}  & \leqslant C   \| u_1\|_{\widetilde{L}^p (\mathbb{R} , B^{\sigma-1/2,s}_{p,2} (\mathbb{Z}^c_\lambda) }  \| u_2 \|^{4/(d-1)}_{\widetilde{L}^p (\mathbb{R} , B^{\sigma_c-\delta(p),s}_{p,2} (\mathbb{Z}_\lambda) } \| u_2 \|^{1- 4/(d-1)}_{\widetilde{L}^\infty (\mathbb{R}, B^{\sigma_c,s}_{2,2})},
  \label{1NLE2++6} \\
 I  & \leqslant C  \lambda^{-1/p} \| u_1\|_{\widetilde{L}^p (\mathbb{R} , B^{\sigma-\delta(p),s}_{p,2} (\mathbb{Z}_\lambda) }  \| u_2 \|^{4/(d-1)}_{\widetilde{L}^p (\mathbb{R} , B^{\sigma_c -\delta(p),s}_{p,2} (\mathbb{Z}_\lambda) } \| u_2 \|^{1- 4/(d-1)}_{\widetilde{L}^\infty (\mathbb{R}, B^{\sigma_c,s}_{2,2})}.
  \label{1NLE2++7}
\end{align}

{\it Step 2.} We consider the case $d=4$. One has that $\alpha \geqslant 2$. We can assume that \eqref{symmetry} holds. Let $\mathcal{Z}_0,..., \mathcal{Z}_4$ be the same ones as in Step 1.  Let us connect the proof with \eqref{1NLE2+}. First, we consider the estimate of $IV$.  By H\"older's and Bernstein's inequalities,  for $(j_1,...,j_{1+\alpha}) \in  \mathcal{Z}_3$,
\begin{align} \label{1NLE3+a}
\|& \triangle_{j_1} v_1 ... \triangle_{j_{1+\alpha}}v_{\alpha+1})\|_{L^{p'}_{x,t}} \nonumber\\
  & \leqslant \|  \triangle_{j_1} v_1 \|_{L^{p}_{x,t}} \|  \triangle_{j_2} v_2 \|_{L^{p}_{x,t}} \|  \triangle_{j_3} v_3 \|^{1/3}_{L^{p}_{x,t}} \|  \triangle_{j_3} v_3 \|^{2/3}_{L^{\infty}_{x,t}}    \prod^{\alpha+1}_{k=4} \|\triangle_{j_k} v_{k} \|_{L^{\infty}_{x,t}}  \nonumber\\
   & \leqslant C^\alpha \|  \triangle_{j_1} v_1 \|_{L^{p}_{x,t}} \|\triangle_{j_2} v_2 \|_{L^{p}_{x,t}} \|  \triangle_{j_3} v_3 \|^{1/3}_{L^{p}_{x,t}}  (2^{dj_2/2}\|  \triangle_{j_3} v_3 \|_{L^{\infty}_{t}L^2_x }) ^{2/3}    \prod^{\alpha+1}_{k=4} (2^{dj_k/2} \|\triangle_{j_k} v_{k} \|_{L^{\infty}_{t}L^2_x}).
\end{align}
It follows from \eqref{1NLE3+a}, H\"older's and Young's inequality that
\begin{align} \label{1NLE3+b}
&\sum _{(j_1,...,j_{1+\alpha}) \in \mathcal{Z}_3} \| \triangle_{j_1} v_1 ... \triangle_{j_{1+\alpha}}v_{\alpha+1})\|_{L^{p'}_{x,t}} \nonumber\\
 &  \leqslant (C\sqrt{\alpha})^{\alpha}  \prod^{\alpha+1}_{k=4}   \|u_{k} \|_{\widetilde{L}^\infty (\mathbb{R}, B^{\sigma_c,s}_{2,2})}  \sum_{ (j-c(\alpha))\vee j_\lambda <j_1 \leqslant j+1  } \|  \triangle_{j_1} v_1 \|_{L^{p}_{x,t}} \sum_{ j_2\leqslant j_1} 2^{(\sigma_c-1/2)j_2}\|  \triangle_{j_2} v_2 \|_{L^{p}_{x,t}} \nonumber\\
   &\ \ \  \times \sum_{j_\lambda < j_3\leqslant j_2 } 2^{(j_3-j_2)(3/2-2/\alpha)} (2^{\sigma_c j_3}\|  \triangle_{j_3} v_3 \|_{L^{\infty}_{t}L^2_x }) ^{2/3} (2^{(\sigma_c-1/2)j_3}\|  \triangle_{j_3} v_3 \|_{L^{p}_{x,t}})^{1/3} \nonumber\\
 &  \leqslant C(C\sqrt{\alpha})^{\alpha} \|u_2\|_{\widetilde{L}^p (\mathbb{R}, B^{\sigma_c-1/2,s}_{p,2}(\mathbb{Z}^c_\lambda) )} \|u_3\|^{1/3}_{\widetilde{L}^p (\mathbb{R}, B^{\sigma_c-1/2,s}_{p,2}(\mathbb{Z}^c_\lambda) )}  \|u_3 \|^{2/3}_{\widetilde{L}^\infty (\mathbb{R}, B^{\sigma_c,s}_{2,2})} \nonumber\\
  & \ \ \ \times \prod^{\alpha+1}_{k=4}   \|u_{k} \|_{\widetilde{L}^\infty (\mathbb{R}, B^{\sigma_c,s}_{2,2})} \sum_{ (j-c(\alpha))\vee j_\lambda <j_1 \leqslant j+1  } \|  \triangle_{j_1} v_1 \|_{L^{p}_{x,t}}.
\end{align}
By Young's inequality,
\begin{align} \label{1NLE3+c}
IV
 &  \leqslant  (C \sqrt{\alpha})^{\alpha +C} \|u_1\|_{\widetilde{L}^p (\mathbb{R}, B^{\sigma -1/2,s}_{p,2}(\mathbb{Z}^c_\lambda) )}  \|u_2\|_{\widetilde{L}^p (\mathbb{R}, B^{\sigma_c-1/2,s}_{p,2}(\mathbb{Z}^c_\lambda) )} \|u_3\|^{1/3}_{\widetilde{L}^p (\mathbb{R}, B^{\sigma_c-1/2,s}_{p,2}(\mathbb{Z}^c_\lambda) )} \nonumber\\
  & \ \ \ \times \|u_3 \|^{2/3}_{\widetilde{L}^\infty (\mathbb{R}, B^{\sigma_c,s}_{2,2})} \prod^{\alpha+1}_{k=6}  \|u_{k} \|_{\widetilde{L}^\infty (\mathbb{R}, B^{\sigma_c,s}_{2,2})}  .
\end{align}
The other terms can be estimated by using the ideas in \eqref{1NLE3+c} and in Step 1, we omitted the details.

{\it Step 3.} For $d=2, 3$, the proof is easier than that of the case $d=4$, since $4/(d-1)=4$ for $d=2 $ and $4/(d-1)=2$ for $d=3$  are integers. In fact,  one can use the estimate in 2D,
\begin{align} \label{1NLE3+d}
\|  \triangle_{j_1} v_1 ... \triangle_{j_{1+\alpha}}v_{\alpha+1} \|_{L^{p'}_{x,t}}
   \leqslant \prod^5_{\ell=1} \|  \triangle_{j_\ell} v_\ell \|_{L^{p}_{x,t}}     \prod^{\alpha+1}_{k=4} \|\triangle_{j_k} v_{k} \|_{L^{\infty}_{x,t}}
\end{align}
instead of \eqref{1NLE3+a}. Similar to Step 2, we can assume that
$$
j_k = \max_{k \leqslant \ell \leqslant 1+\alpha} j_\ell, \ \ \ \ k=1,...,5.
$$
Then we can use an analogous way as in Step 2  to get the result, as desired. $\hfill\Box$ \\

\begin{cor}\label{corNonlinearestH1}
Let $d\geqslant 2$, $p= 2(d+1)/(d-1)$, $\alpha \in \mathbb{N}, \ \alpha \geqslant 4/(d-1)$, $\sigma \geqslant \sigma_c =d/2- 2/\alpha$, $\sigma>1/2$, $s< 0$, $\lambda\gg 1$. Let $X_1$ and $X_2$ be as in \eqref{1workspace1} and \eqref{1workspace2}.  Assume that ${\rm supp} \, \widehat{v} \subset \mathbb{ R}^d_I$.  Then we have
\begin{align}
\|v^{1+\alpha} \|_{\widetilde{L}^{p'} (\mathbb{R}, B^{\sigma-1/2,s}_{p', 2}(\mathbb{Z}^c_\lambda))} \leqslant (C \sqrt{\alpha})^{\alpha + C}   \lambda^{2/(d+1)} \|v\|^{1+\alpha}_{X_1 \cap X_2}.    \label{corNLE0}
\end{align}
\end{cor}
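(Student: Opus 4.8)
The plan is to deduce \eqref{corNLE0} directly from the bound \eqref{NLE0} of Lemma \ref{NonlinearestH1} by replacing the low smoothness indices $\sigma_c-1/2,\ \sigma_c-\delta(p),\ \sigma_c$ there by $\sigma-1/2,\ \sigma-\delta(p),\ \sigma$ (using $\sigma\geqslant\sigma_c$) and then collecting the powers of $\lambda$. First I would record that under the hypothesis $\alpha\geqslant 4/(d-1)$ one has $\sigma_c=d/2-2/\alpha\geqslant 1/2>0$, so that all of $\sigma-1/2,\ \sigma_c-1/2,\ \sigma_c-\delta(p),\ \sigma_c$ are nonnegative and $\sigma_c-1/2\leqslant\sigma-1/2$, $\sigma_c-\delta(p)\leqslant\sigma-\delta(p)$, $\sigma_c\leqslant\sigma$, where $\delta(p)=1/(d+1)$. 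Since on any index set $\mathbb{A}\subset\mathbb{Z}_+$ the dyadic weight $2^{\mu j}$ is nondecreasing in $\mu$ for $j\geqslant 0$, every norm $\widetilde{L}^\gamma(\mathbb{R},B^{\cdot,s}_{p,2}(\mathbb{A}))$ is monotone in its smoothness index; hence $\|v\|_{\widetilde{L}^p(\mathbb{R},B^{\sigma_c-1/2,s}_{p,2}(\mathbb{Z}^c_\lambda))}\leqslant\|v\|_{\widetilde{L}^p(\mathbb{R},B^{\sigma-1/2,s}_{p,2}(\mathbb{Z}^c_\lambda))}$, $\|v\|_{\widetilde{L}^p(\mathbb{R},B^{\sigma_c-\delta(p),s}_{p,2}(\mathbb{Z}_\lambda))}\leqslant\|v\|_{\widetilde{L}^p(\mathbb{R},B^{\sigma-\delta(p),s}_{p,2}(\mathbb{Z}_\lambda))}$ and $\|v\|_{\widetilde{L}^\infty(\mathbb{R},B^{\sigma_c,s}_{2,2})}\leqslant\|v\|_{\widetilde{L}^\infty(\mathbb{R},B^{\sigma,s}_{2,2})}$.

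Next I would match the surviving norms with $\|\cdot\|_{X_1}$ and $\|\cdot\|_{X_2}$ from \eqref{1workspace1}--\eqref{1workspace2}. Recalling $1/p=(d-1)/(2(d+1))$ and $\sigma-\delta(p)=\sigma-1/(d+1)$, the first term of $\|v\|_{X_1}$ is exactly $\lambda^{-1/p}\|v\|_{\widetilde{L}^p(\mathbb{R},B^{\sigma-\delta(p),s}_{p,2}(\mathbb{Z}_\lambda))}$, so $\|v\|_{\widetilde{L}^p(\mathbb{R},B^{\sigma-\delta(p),s}_{p,2}(\mathbb{Z}_\lambda))}\leqslant\lambda^{1/p}\|v\|_{X_1}$ and $\lambda^{-1/p}\|v\|_{\widetilde{L}^p(\mathbb{R},B^{\sigma-\delta(p),s}_{p,2}(\mathbb{Z}_\lambda))}\leqslant\|v\|_{X_1}$; on the complement $\|v\|_{\widetilde{L}^p(\mathbb{R},B^{\sigma-1/2,s}_{p,2}(\mathbb{Z}^c_\lambda))}\leqslant\|v\|_{X_2}$, and, splitting $\mathbb{Z}_+=\mathbb{Z}_\lambda\cup\mathbb{Z}^c_\lambda$, $\|v\|_{\widetilde{L}^\infty(\mathbb{R},B^{\sigma,s}_{2,2})}\leqslant\|v\|_{X_1}+\|v\|_{X_2}\leqslant\|v\|_{X_1\cap X_2}$. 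In particular the intersection norm occurring in \eqref{NLE0} is bounded, since $\lambda\gg 1$, by $\|v\|_{X_2}+\lambda^{1/p}\|v\|_{X_1}\leqslant\lambda^{1/p}\|v\|_{X_1\cap X_2}$.

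Finally I would substitute these bounds into the two summands of \eqref{NLE0}: the first becomes $\lesssim\|v\|_{X_2}\big(\lambda^{1/p}\|v\|_{X_1\cap X_2}\big)^{4/(d-1)}$ and the second $\lesssim\|v\|_{X_1}\big(\lambda^{1/p}\|v\|_{X_1}\big)^{4/(d-1)}$, where in the latter the prefactor $\lambda^{-1/p}$ of \eqref{NLE0} has been absorbed into the first term of $\|\cdot\|_{X_1}$. Since $4/((d-1)p)=(4/(d-1))\cdot(d-1)/(2(d+1))=2/(d+1)$, both summands are $\lesssim\lambda^{2/(d+1)}\|v\|_{X_1\cap X_2}^{1+4/(d-1)}$; multiplying by $\|v\|^{\alpha-4/(d-1)}_{\widetilde{L}^\infty(\mathbb{R},B^{\sigma_c,s}_{2,2})}\leqslant\|v\|_{X_1\cap X_2}^{\alpha-4/(d-1)}$ and keeping the combinatorial constant $(C\sqrt{\alpha})^{\alpha+C}$ from \eqref{NLE0} yields \eqref{corNLE0}. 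I expect the only delicate point to be the bookkeeping of the $\lambda$-powers — that the $\lambda^{-1/p}$ in the second summand of \eqref{NLE0} exactly cancels the $\lambda^{1/p}$ lost when the weight of the first term of $\|\cdot\|_{X_1}$ is dropped, and that precisely $4/(d-1)$ copies of $\lambda^{1/p}$ survive, producing the exponent $2/(d+1)$ rather than a larger one — the genuine analytic content having already been settled in Lemma \ref{NonlinearestH1}.
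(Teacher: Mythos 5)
Your derivation is correct and is precisely the bookkeeping the paper leaves implicit after Lemma \ref{NonlinearestH1}: it replaces each $\sigma_c$-smoothness norm in \eqref{NLE0} by the corresponding $\sigma$-smoothness norm (valid since $\sigma\geqslant\sigma_c\geqslant 1/2$ and $j\geqslant 0$), identifies each resulting factor with a piece of $\|\cdot\|_{X_1}$ or $\|\cdot\|_{X_2}$ up to a power of $\lambda^{1/p}$, and verifies that exactly $4/(d-1)$ factors of $\lambda^{1/p}$ survive, giving $\lambda^{4/((d-1)p)}=\lambda^{2/(d+1)}$. The $\lambda$-power accounting, including the cancellation of the $\lambda^{-1/p}$ prefactor against the weight hidden in $\|\cdot\|_{X_1}$, is handled correctly.
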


Now we consider the estimates of $\|\mathcal{T} v\|_{X_1}$. By the Strichartz estimates in Corollary \ref{StrichartzG2}  and the following multiplier estimate
$$
\left\|\triangle_j \frac{1}{\sqrt{\lambda^2 -\Delta}} f  \right\|_p \lesssim \lambda^{-1} \|f\|_p ,   \ \ 2^j\leqslant \lambda,
$$
we have
\begin{align} \label{2NLE1}
\|\mathcal{T}v\|_{X_1}  \lesssim &  \|u_{0,\lambda}\|_{E^{\sigma,s}}   +    \|u_{1,\lambda}\|_{E^{\sigma-1,s}} + \lambda^{1/p} \left\| \frac{1}{\sqrt{\lambda^2 -\Delta}} f(v) \right\|_{\widetilde{L}^{p'} (\mathbb{R}, B^{\sigma+ \delta(p),s}_{p', 2}(\mathbb{Z}_\lambda))} \nonumber\\
 \lesssim &  \|u_{0,\lambda}\|_{E^{\sigma,s}}   +    \|u_{1,\lambda}\|_{E^{\sigma-1,s}} + \lambda^{-1+ 1/p}  \| f(v)\|_{\widetilde{L}^{p'} (\mathbb{R}, B^{\sigma+ \delta(p),s}_{p', 2}(\mathbb{Z}_\lambda))}.
\end{align}
Noticing that $2^j \leqslant \lambda$ for $j\in \mathbb{Z}_\lambda$, we have
\begin{align} \label{2NLE2}
\|f(v)\|_{\widetilde{L}^{p'} (\mathbb{R}, B^{\sigma+ \delta(p),s}_{p', 2}(\mathbb{Z}_\lambda))} \lesssim \lambda^{2\delta(p)} \| f(v)\|_{\widetilde{L}^{p'} (\mathbb{R}, B^{\sigma - \delta(p),s}_{p', \infty}(\mathbb{Z}_\lambda))}.
\end{align}
It follows from \eqref{2NLE1} and \eqref{2NLE2} that
\begin{align} \label{2NLE3}
\|\mathcal{T}v\|_{X_1}
 \lesssim &  \|u_{0,\lambda}\|_{E^{\sigma,s}}   +    \|u_{1,\lambda}\|_{E^{\sigma-1,s}} + \lambda^{- 1/p}  \| f(v)\|_{\widetilde{L}^{p'} (\mathbb{R}, B^{\sigma - \delta(p),s}_{p',\infty}(\mathbb{Z}_\lambda))}.
\end{align}

So, we need to make a nonlinear estimate to $ \|v^{1+\alpha}\|_{\widetilde{L}^{p'} (\mathbb{R}, B^{\sigma-\delta(p),s}_{p', \infty}(\mathbb{Z}_\lambda))}$.  We have the following
 \begin{lemma} \label{NonlinearestL1}
Let $d\geqslant 2$, $p= 2(d+1)/(d-1)$, $\alpha \in \mathbb{N}, \ \alpha \geqslant 4/(d-1)$, $\sigma \geqslant \sigma_c =d/2- 2/\alpha$, $s< 0$. Assume that ${\rm supp} \, \widehat{v} \subset \mathbb{ R}^d_I$.  Then we have
\begin{align}
\|v^{1+\alpha}   \|_{\widetilde{L}^{p'} (\mathbb{R}, B^{\sigma-\delta(p),s}_{p', \infty}(\mathbb{Z}_\lambda))}
 \leqslant \
& (C  \sqrt{\alpha})^{\alpha   + C }\left( \lambda^{1/p}\|v\|_{\widetilde{L}^p(\mathbb{R}, B^{\sigma-1/2,s}_{p, 2}(\mathbb{Z}^c_{\lambda}))} + \|v \|_{\widetilde{L}^p(\mathbb{R}, B^{\sigma-\delta(p),s}_{p, 2}(\mathbb{Z}_{\lambda}))} \right)
      \nonumber\\
   & \ \ \ \ \ \ \ \ \ \ \times  \|v\|^{ 4/(d-1)}_{ \widetilde{L}^p(\mathbb{R}, B^{\sigma_c-\delta(p), s}_{p,2}(\mathbb{Z}_{\lambda}))}
    \|v\|^{\alpha -4/(d-1)}_{\widetilde{L}^\infty(\mathbb{R}, B^{\sigma_c,s}_{2, 2})}.  \label{2NLE4}
\end{align}
\end{lemma}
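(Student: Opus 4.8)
The plan is to run the argument in complete parallel with the proof of Lemma~\ref{NonlinearestH1}, adapting it to the two features of the target space: the output frequency is now confined to $\mathbb{Z}_\lambda=\{j:2^{j}\lesssim\lambda\}$, and the outer summation index is $\ell^\infty$ rather than $\ell^2$ (which only makes matters easier, since $\ell^2\hookrightarrow\ell^\infty$ and the right-hand side of \eqref{2NLE4} carries $\ell^2$-Besov norms). First I would apply the Regularity Transference Lemma~\ref{RegTransf} to replace $v^{1+\alpha}$ by $v_1\cdots v_{1+\alpha}$ with $v_i=2^{s|\nabla|}u_i$, all Fourier-supported in $\mathbb{R}^d_I$, so that
$\|v^{1+\alpha}\|_{\widetilde{L}^{p'}(\mathbb{R},B^{\sigma-\delta(p),s}_{p',\infty}(\mathbb{Z}_\lambda))}=\sup_{j\in\mathbb{Z}_\lambda}2^{(\sigma-\delta(p))j}\|\triangle_j(v_1\cdots v_{1+\alpha})\|_{L^{p'}_{x,t}}$, then expand each $v_i=\sum_{j_i}\triangle_{j_i}v_i$ and, by symmetry, assume the ordering \eqref{symmetry} (so $j_1=\max_k j_k$), the $\sim\alpha^3$ permutations being absorbed into $(C\sqrt\alpha)^{\alpha+C}$.

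The structural simplification relative to Lemma~\ref{NonlinearestH1} comes from the octant: since every $\widehat v_i$ lives in $\mathbb{R}^d_I$, for $\xi=\xi_1+\cdots+\xi_{1+\alpha}$ with each $\xi_k\in\mathbb{R}^d_I$ and $|\xi_k|_2\sim 2^{j_k}$ one has $|\xi|_2\geq|\xi_1|_2$ (the cross term $\xi_1\cdot\sum_{k\geq2}\xi_k$ is nonnegative), hence $\triangle_j(\triangle_{j_1}v_1\cdots\triangle_{j_{1+\alpha}}v_{1+\alpha})\neq0$ forces $|j-j_1|\leq c(\alpha)$ with $c(\alpha)=[\log_2(1+\alpha)]+1$. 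Consequently, when $j\in\mathbb{Z}_\lambda$ all input frequencies satisfy $j_k\leq j_1\leq j_\lambda+c(\alpha)$, so no subdivision of the type $\mathcal Z_0,\dots,\mathcal Z_4$ is needed; it suffices to split according to whether (A) $j_1\leq j_\lambda$, or (B) $j_\lambda<j_1\leq j_\lambda+c(\alpha)$. In either regime I would apply the Hölder/Bernstein scheme of Lemma~\ref{NonlinearestH1}: using $\tfrac1{p'}=\tfrac1p+\tfrac{4/(d-1)}{p}$, place $\triangle_{j_1}v_1$ in $L^p_{x,t}$, interpolate $\|\triangle_{j_2}v_2\|_{L^p_{x,t}}^{4/(d-1)}\|\triangle_{j_2}v_2\|_{L^\infty_{x,t}}^{1-4/(d-1)}$, put $\triangle_{j_k}v_k$ ($k\geq3$) in $L^\infty_{x,t}$, pass $L^\infty_x$ to $2^{dj_k/2}L^2_x$ by Bernstein, and sum over the low indices using $\sigma_c=d/2-2/\alpha$, so $\sum_{j_k\leq j_3}2^{dj_k/2}\|\triangle_{j_k}v_k\|_{L^\infty_tL^2_x}\lesssim\sqrt\alpha\,2^{2j_3/\alpha}\|u_k\|_{\widetilde L^\infty B^{\sigma_c,s}_{2,2}}$; then sum successively over $j_3\leq j_2\leq j_1$ (convergence of the geometric sums uses $d\geq2$), exactly as in \eqref{1NLE6}.

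Finally, taking $\sup$ over $j\in\mathbb{Z}_\lambda$ with weight $2^{(\sigma-\delta(p))j}$: in regime (A), since $j\sim j_1\leq j_\lambda$, the weight is absorbed into $\triangle_{j_1}v_1$ measured in $\widetilde L^p B^{\sigma-\delta(p),s}_{p,2}(\mathbb{Z}_\lambda)$ and $\triangle_{j_2}v_2$ measured in $\widetilde L^p B^{\sigma_c-\delta(p),s}_{p,2}(\mathbb{Z}_\lambda)$, which yields the second term of \eqref{2NLE4} (using $\delta(p)\leq1/2$ to bound $B^{\sigma_c-1/2}_{p,2}$ by $B^{\sigma_c-\delta(p)}_{p,2}$ where convenient). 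In regime (B), $j\sim j_\lambda$ so $2^{(\sigma-\delta(p))j}\sim\lambda^{\sigma-\delta(p)}$, while the $O(\log\alpha)$ many indices $j_1\in(j_\lambda,j_\lambda+c(\alpha)]$ lie in $\mathbb{Z}^c_\lambda$, giving $\|\triangle_{j_1}v_1\|_{L^p_{x,t}}\lesssim2^{-(\sigma-1/2)j_1}\|v\|_{\widetilde L^p B^{\sigma-1/2,s}_{p,2}(\mathbb{Z}^c_\lambda)}\sim\lambda^{-(\sigma-1/2)}\|v\|_{\widetilde L^p B^{\sigma-1/2,s}_{p,2}(\mathbb{Z}^c_\lambda)}$; the product of the two $\lambda$-powers is $\lambda^{(1/2)-\delta(p)}=\lambda^{1/p}$, producing the first term of \eqref{2NLE4}, with the remaining factors handled as in regime (A). The cases $d=4$ and $d=2,3$ proceed as in the corresponding steps of Lemma~\ref{NonlinearestH1} (replacing the single interpolated factor by two, resp. several, $L^p_{x,t}$ factors since $4/(d-1)$ need not be an integer), and the case $d\geq5$, $\alpha=1$ uses Bony's paraproduct decomposition.

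The main obstacle I expect is the bookkeeping of regime (B): one must confirm that the octant constraint really pins $j_1$ into the bounded window $(j_\lambda,j_\lambda+c(\alpha)]$ above $j_\lambda$, extract the clean power $\lambda^{1/p}$ from the competition between the output weight $2^{(\sigma-\delta(p))j}$ and the gain $2^{-(\sigma-1/2)j_1}$ coming from measuring $\triangle_{j_1}v_1$ in $B^{\sigma-1/2,s}_{p,2}(\mathbb{Z}^c_\lambda)$, correctly route the $O(\log\alpha)$ middle-frequency boundary terms straddling $j_\lambda$ into the $\mathbb{Z}_\lambda$-norms on the right-hand side, and keep the $\alpha$-dependence at the level $(C\sqrt\alpha)^{\alpha+C}$ throughout the symmetrized configurations. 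All of this is of the same nature as, and no harder than, the estimates already carried out for Lemma~\ref{NonlinearestH1}.
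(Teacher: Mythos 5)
Your proposal takes essentially the same route as the paper: regularity transference, dyadic paraproduct decomposition with the symmetry reduction, the octant constraint pinning $j_1$ close to the output frequency $j$, the same H\"older/Bernstein scheme as in Lemma~\ref{NonlinearestH1}, and the split between $j_1\leqslant j_\lambda$ and $j_1>j_\lambda$ that produces the two terms of \eqref{2NLE4}. The only slight imprecision is your constraint $|j-j_1|\leqslant c(\alpha)$ with the window $(j_\lambda,j_\lambda+c(\alpha)]$ for regime (B): the correct constraint from $|\xi|\geqslant|\xi_1|$ is one-sided, $j-c(\alpha)\leqslant j_1\leqslant j+1$, so that for $j\in\mathbb{Z}_\lambda$ only $j_1=j_\lambda+1$ can land in $\mathbb{Z}^c_\lambda$ (and only when $j=j_\lambda$), which is how the paper isolates a single boundary term $I$ rather than an $O(\log\alpha)$-window; your weaker version still works but is not what happens, and your $\lambda$-power bookkeeping $\lambda^{\sigma-\delta(p)}\cdot\lambda^{-(\sigma-1/2)}=\lambda^{1/p}$ is exactly right.
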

{\bf Proof.} We can assume that $\lambda \in 2^{\mathbb{N}}$.  Recall that $j_\lambda = \log_2\lambda$.   We consider the estimate
\begin{align} \label{2NLE5}
\|u_1 ... u_{1+\alpha}   \|_{\widetilde{L}^{p'} (\mathbb{R}, B^{\sigma-\delta(p),s}_{p', \infty}(\mathbb{Z}_\lambda))} = \sup_{ j \leqslant j_\lambda } 2^{(\sigma-\delta(p)) j } \|\triangle_j 2^{s|\nabla|} (u_1...u_{1+\alpha}) \|_{L^{p'}_{t,x}}.
\end{align}
Denote $v_\ell = 2^{s|\nabla|} u_\ell$. By Lemma \ref{RegTransf} we have
\begin{align} \label{2NLE6}
\triangle_j 2^{s|\nabla|} (u_1...u_{1+\alpha}) =  \triangle_j   (v_1...v_{1+\alpha}) = \sum_{j_1,...,j_{1+\alpha} \in \mathbb{Z}_+} \triangle_j   (\triangle_{j_1} v_1... \triangle_{j_{1+\alpha}}v_{1+\alpha}).
\end{align}
We can assume that, without loss of generality that in the summation $\sum_{j_1,...,j_{1+\alpha} \in \mathbb{Z}_+}$,
$$
j_1 =\max_{1\leqslant \ell \leqslant 1+\alpha} j_\ell, \ \ \  j_2 =\max_{2 \leqslant \ell \leqslant 1+\alpha} j_\ell.
$$
One easily sees that in \eqref{2NLE6}
\begin{align} \label{2NLE7}
(j-c(\alpha)) \vee 0 \leqslant j_1 \leqslant j+1, \ \ c(\alpha) = [\log_2(1+\alpha)]+3, \ \ \ j_2\leqslant j \   \mbox{\rm for } \   j_1=j+1.
\end{align}
It follows that for $j=j_\lambda$,
\begin{align} \label{2NLE8}
\|\triangle_{j_\lambda}  (v_1...v_{1+\alpha})\|_{L^{p'}_{x,t}}  \leqslant & \sum_{(j_\lambda-c(\alpha)) \vee 0 \leqslant j_1 \leqslant j_\lambda +1} \  \sum_{j_2,..., j_{1+\alpha}\leqslant j_\lambda } \|  \triangle_{j_\lambda}   (\triangle_{j_1} v_1... \triangle_{j_{1+\alpha}}v_{1+\alpha})  \|_{L^{p'}_{x,t}} \nonumber\\
\leqslant &    \sum_{j_2,..., j_{1+\alpha}\leqslant j_\lambda } \|  \triangle_{j_\lambda}   (\triangle_{j_\lambda +1} v_1 \triangle_{j_2} v_2... \triangle_{j_{1+\alpha}}v_{1+\alpha})  \|_{L^{p'}_{x,t}} \nonumber\\
& +  \sum_{(j_\lambda-c(\alpha)) \vee 0 \leqslant j_1 \leqslant j_\lambda} \  \sum_{j_2,..., j_{1+\alpha}\leqslant j_\lambda } \|  \triangle_{j_\lambda}   (\triangle_{j_1} v_1... \triangle_{j_{1+\alpha}}v_{1+\alpha})  \|_{L^{p'}_{x,t}}  \nonumber\\
= : & I +II.
\end{align}
Let $d\geqslant 5$.  Using similar way as in \eqref{1NLE6},
\begin{align} \label{2NLE9}
 I \leqslant &      \| \triangle_{j_\lambda +1} v_1\|_{L^{p}_{x,t}}
     \sum_{j_2\leqslant j_\lambda}   \  \sum_{ j_3,...,j_{1+\alpha} \leqslant j_2}    \|  \triangle_{j_2} v_2 \|^{4/(d-1)}_{L^{p}_{x,t}}   \|  \triangle_{j_2} v_2 \|^{1- 4/(d-1)}_{L^{\infty}_{t,x} } \prod^{1+\alpha}_{k=3} \|\triangle_{j_k} v_k \|_{L^{\infty}_{t,x}} \nonumber\\
\leqslant &      \| \triangle_{j_\lambda +1} v_1\|_{L^{p}_{x,t}}
     \sum_{j_2\leqslant j_\lambda}  2^{-2j_2/(d+1)}    (2^{(\sigma_c -\delta(p))j_2}  \|  \triangle_{j_2} v_2 \|_{L^{p}_{x,t}})^{4/(d-1)}  (2^{\sigma_c j_2}  \|  \triangle_{j_2} v_2 \|_{L^{\infty}_{t}L^2_x })^{1- 4/(d-1)} \nonumber\\
 & \ \ \ \ \ \ \ \ \ \ \ \ \ \ \times (C\sqrt{\alpha})^{\alpha} \prod^{1+\alpha}_{k=3} \|u_k\|_{\widetilde{L}^\infty(\mathbb{R}, B^{\sigma_c,s}_{2, 2})} \nonumber\\
  \leqslant & C (C\sqrt{\alpha})^{\alpha  }    \| \triangle_{j_\lambda +1} v_1\|_{L^{p}_{x,t}}
       \|u_2\|^{4/(d-1)}_{\widetilde{L}^p(\mathbb{R}, B^{\sigma_c -\delta(p),s}_{p, 2}(\mathbb{Z}_{\lambda}))}   \|u_2\|^{1-4/(d-1)}_{\widetilde{L}^\infty(\mathbb{R}, B^{\sigma_c ,s}_{2, 2})}   \prod^{1+\alpha}_{k=3} \|u_k\|_{\widetilde{L}^\infty(\mathbb{R}, B^{\sigma_c,s}_{2, 2})}.
\end{align}
Hence, we have
\begin{align} \label{2NLE10}
2^{(\sigma -\delta(p))j_\lambda} I
  \leqslant & (C\sqrt{\alpha})^{\alpha +C} \lambda^{1/p}    \| u_1\|_{\widetilde{L}^p(\mathbb{R}, B^{\sigma -1/2,s}_{p, 2}(\mathbb{Z}^c_{\lambda})) }
       \|u_2\|^{4/(d-1)}_{\widetilde{L}^p(\mathbb{R}, B^{\sigma_c -\delta(p),s}_{p, 2}(\mathbb{Z}_{\lambda}))}  \nonumber\\
        & \times \|u_2\|^{1-4/(d-1)}_{\widetilde{L}^\infty(\mathbb{R}, B^{\sigma_c ,s}_{2, 2})}   \prod^{1+\alpha}_{k=3} \|u_k\|_{\widetilde{L}^\infty(\mathbb{R}, B^{\sigma_c,s}_{2, 2})}.
\end{align}
Similarly,
\begin{align} \label{2NLE11}
 II \leqslant &    \sum_{(j_\lambda-c(\alpha)) \vee 0 \leqslant j_1 \leqslant j_\lambda}   \| \triangle_{j_1} v_1\|_{L^{p}_{x,t}}
     \sum_{j_2\leqslant j_\lambda} \ \|  \triangle_{j_2} v_2 \|^{4/(d-1)}_{L^{p}_{x,t}}   \|  \triangle_{j_2} v_2 \|^{1- 4/(d-1)}_{L^{\infty}_{t,x} } \nonumber\\
      & \ \ \ \ \ \ \ \ \ \times \sum_{ j_3,...,j_{1+\alpha} \leqslant j_2}  \prod^{1+\alpha}_{\ell =3} \|\triangle_{j_\ell} v_\ell \|_{L^{\infty}_{t,x}} \nonumber\\
  \leqslant & (C\sqrt{\alpha})^{\alpha  }    \sum_{(j_\lambda-c(\alpha)) \vee 0 \leqslant j_1 \leqslant j_\lambda}   \| \triangle_{j_1} v_1\|_{L^{p}_{x,t}}
       \|u_2\|^{4/(d-1)}_{\widetilde{L}^p(\mathbb{R}, B^{\sigma_c -\delta(p),s}_{p, 2}(\mathbb{Z}_{\lambda}))}   \nonumber\\
          & \ \ \ \ \ \ \ \ \ \ \times \|u_2\|^{1-4/(d-1)}_{\widetilde{L}^\infty(\mathbb{R}, B^{\sigma_c ,s}_{2, 2})} \prod^{1+\alpha}_{\ell =3} \|u_\ell\|_{\widetilde{L}^\infty(\mathbb{R}, B^{\sigma_c,s}_{2, 2})}.
\end{align}
It follows from Young's inequality and $\delta(p) < 1/2$ that
\begin{align} \label{2NLE12}
2^{(\sigma-\delta(p)) j_\lambda } II
  \leqslant &  (C  \sqrt{\alpha})^{\alpha+ C}      \|u_1\|_{\widetilde{L}^p(\mathbb{R}, B^{\sigma -\delta(p),s}_{p, 2}(\mathbb{Z}_{\lambda}))}
       \|u_2\|^{4/(d-1)}_{\widetilde{L}^p(\mathbb{R}, B^{\sigma_c -\delta(p),s}_{p, 2}(\mathbb{Z}_{\lambda}))}   \|u_2\|^{1-4/(d-1)}_{\widetilde{L}^\infty(\mathbb{R}, B^{\sigma_c ,s}_{2,2})} \nonumber\\
          & \ \ \ \ \ \ \ \ \ \ \times \prod^{1+\alpha}_{\ell =3} \|u_\ell\|_{\widetilde{L}^\infty(\mathbb{R}, B^{\sigma_c,s}_{2, 2})}.
\end{align}
By \eqref{2NLE10} and \eqref{2NLE12},
\begin{align} \label{2NLE13}
2^{(\sigma-\delta(p)) j_\lambda } &  \|\triangle_{j_\lambda}  (v_1...v_{1+\alpha})\|_{L^{p'}_{x,t}}  \nonumber\\
  \leqslant &  (C\sqrt{\alpha})^{\alpha+C} \left( \lambda^{1/p}\|u_1\|_{\widetilde{L}^p(\mathbb{R}, B^{\sigma -1/2,s}_{p, 2}(\mathbb{Z}^c_{\lambda})) } + \|u_1\|_{ \widetilde{L}^p(\mathbb{R}, B^{\sigma -\delta(p),s}_{p, 2}(\mathbb{Z}_{\lambda}))} \right)
       \nonumber\\
          &   \times \|u_2\|^{4/(d-1)}_{\widetilde{L}^p(\mathbb{R}, B^{\sigma_c -\delta(p),s}_{p, 2}(\mathbb{Z}_{\lambda}))}   \|u_2\|^{1-4/(d-1)}_{\widetilde{L}^\infty(\mathbb{R}, B^{\sigma_c ,s}_{2, 2})} \prod^{1+\alpha}_{\ell =3} \|u_\ell\|_{\widetilde{L}^\infty(\mathbb{R}, B^{\sigma_c,s}_{2, 2})}.
\end{align}
If $j<j_\lambda$, the estimate of $2^{(\sigma-\delta(p)) j }  \|\triangle_{j }  (v_1...v_{1+\alpha})\|_{L^{p'}_{x,t}}$ is similar to \eqref{2NLE12} and we have
\begin{align} \label{2NLE14}
\sup_{j<j_\lambda} 2^{(\sigma-\delta(p)) j}    \|\triangle_{j}  (v_1...v_{1+\alpha})\|_{L^{p'}_{x,t}}
  \leqslant &  (C\sqrt{\alpha})^{\alpha+C}   \|u_1\|_{ \widetilde{L}^p(\mathbb{R}, B^{\sigma -\delta(p),s}_{p,\infty}(\mathbb{Z}_{\lambda}))}\|u_2\|^{4/(d-1)}_{\widetilde{L}^p(\mathbb{R}, B^{\sigma_c -\delta(p),s}_{p,\infty}(\mathbb{Z}_{\lambda}))}
       \nonumber\\
          &   \times \|u_2\|^{1-4/(d-1)}_{\widetilde{L}^\infty(\mathbb{R}, B^{\sigma_c ,s}_{2,\infty})} \prod^{1+\alpha}_{\ell =3} \|u_\ell\|_{\widetilde{L}^\infty(\mathbb{R}, B^{\sigma_c,s}_{2,\infty})}.
\end{align}
Next, we consider the case $d =4$. Let us connect the estimate with \eqref{2NLE8} and one can further assume that
$$
j_3 = \max_{3\leqslant \ell \leqslant 1+\alpha} j_\ell.
$$
Recall that $4/(d-1) =4/3$ for $d=4$. By H\"older's inequality,
\begin{align} \label{2NLE9a}
 I \leqslant &      \| \triangle_{j_\lambda +1} v_1\|_{L^{p}_{x,t}}
     \sum_{j_3\leqslant j_2\leqslant j_\lambda}   \  \sum_{ j_4,...,j_{1+\alpha} \leqslant j_3}    \|  \triangle_{j_2} v_2 \|_{L^{p}_{x,t}}   \|  \triangle_{j_3} v_3 \|^{1/3}_{L^{p}_{x,t}} \|  \triangle_{j_3} v_3 \|^{2/3}_{L^{\infty}_{t,x} } \prod^{1+\alpha}_{k=4} \|\triangle_{j_k} v_k \|_{L^{\infty}_{t,x}} \nonumber\\
\leqslant &    (C\sqrt{\alpha})^{\alpha -4/3}  \| \triangle_{j_\lambda +1} v_1\|_{L^{p}_{x,t}}
     \sum_{j_2\leqslant j_\lambda}  2^{-j_2/p}    (2^{(\sigma_c -\delta(p))j_2}  \|  \triangle_{j_2} v_2 \|_{L^{p}_{x,t}})  \nonumber\\
 & \ \ \ \ \ \ \ \ \  \ \ \ \  \times \sum_{j_3\leqslant j_2} 2^{-j_3/3p} (2^{(\sigma_c-\delta(p)) j_3}  \|  \triangle_{j_3} v_3 \|_{L^{p}_{x,t} })^{1/3}  \|u_3\|^{2/3}_{\widetilde{L}^\infty(\mathbb{R}, B^{\sigma_c ,s}_{2, 2})} \prod^{1+\alpha}_{k=4} \|u_k\|_{\widetilde{L}^\infty(\mathbb{R}, B^{\sigma_c,s}_{2, 2})} \nonumber\\
  \leqslant & (C\sqrt{\alpha})^{\alpha -4/3}    \| \triangle_{j_\lambda +1} v_1\|_{L^{p}_{x,t}}
       \|u_2\|_{\widetilde{L}^p(\mathbb{R}, B^{\sigma_c -\delta(p),s}_{p, 2}(\mathbb{Z}_{\lambda}))}   \|u_3\|^{1/3}_{\widetilde{L}^p(\mathbb{R}, B^{\sigma_c -\delta(p),s}_{p, 2}(\mathbb{Z}_{\lambda}))}  \nonumber\\
        & \times
        \|u_3\|^{2/3}_{\widetilde{L}^\infty(\mathbb{R}, B^{\sigma_c ,s}_{2, 2})}   \prod^{1+\alpha}_{k=4} \|u_k\|_{\widetilde{L}^\infty(\mathbb{R}, B^{\sigma_c,s}_{2, 2})}.
\end{align}
So, we have
\begin{align} \label{2NLE10a}
2^{(\sigma -\delta(p))j_\lambda} I
  \leqslant & (C\sqrt{\alpha})^{\alpha} \lambda^{1/p}    \| u_1\|_{\widetilde{L}^p(\mathbb{R}, B^{\sigma -1/2,s}_{p, 2}(\mathbb{Z}^c_{\lambda})) }
       \|u_2\| _{\widetilde{L}^p(\mathbb{R}, B^{\sigma_c -\delta(p),s}_{p, 2}(\mathbb{Z}_{\lambda}))}  \|u_3\|^{1/3} _{\widetilde{L}^p(\mathbb{R}, B^{\sigma_c -\delta(p),s}_{p, 2}(\mathbb{Z}_{\lambda}))}  \nonumber\\
        & \times \|u_3\|^{2/3}_{\widetilde{L}^\infty(\mathbb{R}, B^{\sigma_c ,s}_{2, 2})}   \prod^{1+\alpha}_{k=4} \|u_k\|_{\widetilde{L}^\infty(\mathbb{R}, B^{\sigma_c,s}_{2, 2})}.
\end{align}
Using an analogous way as in   \eqref{2NLE9a} and \eqref{2NLE12}, we have
\begin{align} \label{2NLE12a}
2^{(\sigma-\delta(p)) j_\lambda } II
  \leqslant &  (C  \sqrt{\alpha})^{\alpha+C}      \|u_1\|_{\widetilde{L}^p(\mathbb{R}, B^{\sigma -\delta(p),s}_{p, 2}(\mathbb{Z}_{\lambda}))}
       \|u_2\|_{\widetilde{L}^p(\mathbb{R}, B^{\sigma_c -\delta(p),s}_{p, 2}(\mathbb{Z}_{\lambda}))}  \|u_3\|^{1/3}_{\widetilde{L}^p(\mathbb{R}, B^{\sigma_c -\delta(p),s}_{p, 2}(\mathbb{Z}_{\lambda}))} \nonumber\\
          & \ \ \ \ \  \times \|u_3\|^{2/3}_{\widetilde{L}^\infty(\mathbb{R}, B^{\sigma_c ,s}_{2,\infty})} \prod^{1+\alpha}_{\ell =3} \|u_\ell\|_{\widetilde{L}^\infty(\mathbb{R}, B^{\sigma_c,s}_{2, 2})}.
\end{align}
By \eqref{2NLE10a} and \eqref{2NLE12a},
\begin{align} \label{2NLE13a}
2&^{(\sigma-\delta(p)) j_\lambda } \|\triangle_{j_\lambda}  (v_1...v_{1+\alpha})\|_{L^{p'}_{x,t}}  \nonumber\\
  & \leqslant (C\sqrt{\alpha})^{\alpha+C} \left( \lambda^{1/p}\|u_1\|_{\widetilde{L}^p(\mathbb{R}, B^{\sigma -1/2,s}_{p, 2}(\mathbb{Z}^c_{\lambda})) } + \|u_1\|_{ \widetilde{L}^p(\mathbb{R}, B^{\sigma -\delta(p),s}_{p, 2}(\mathbb{Z}_{\lambda}))} \right)
       \|u_2\|_{\widetilde{L}^p(\mathbb{R}, B^{\sigma_c -\delta(p),s}_{p, 2}(\mathbb{Z}_{\lambda}))}  \nonumber\\
          &   \ \ \ \times \|u_3\|^{1/3}_{\widetilde{L}^p(\mathbb{R}, B^{\sigma_c -\delta(p),s}_{p, 2}(\mathbb{Z}_{\lambda}))}   \|u_3\|^{2/3}_{\widetilde{L}^\infty(\mathbb{R}, B^{\sigma_c ,s}_{2, 2})} \prod^{1+\alpha}_{\ell =3} \|u_\ell\|_{\widetilde{L}^\infty(\mathbb{R}, B^{\sigma_c,s}_{2, 2})}.
\end{align}
For any $j<j_\lambda$, the estimate of $2^{(\sigma-\delta(p)) j } \|\triangle_{j }  (v_1...v_{1+\alpha})\|_{L^{p'}_{x,t}}$ is similar to that of $2^{(\sigma-\delta(p)) j } II$ and we omit the details.

Thirdly, noticing that $4/(d-1)$ are integers for $d=2, 3$, the proofs of the results for $d=2,3$ are easier than that of $d=4$ and the details are omitted. $\hfill\Box$

\begin{cor}\label{corNonlinearestL1}
Let $d\geqslant 2$, $p= 2(d+1)/(d-1)$, $\alpha \in \mathbb{N}, \ \alpha \geqslant 4/(d-1)$, $\sigma \geqslant \sigma_c =d/2- 2/\alpha$, $s< 0$, $\lambda\gg 1$. Let $X_1$ and $X_2$ be as in \eqref{1workspace1} and \eqref{1workspace2}.  Assume that ${\rm supp} \, \widehat{v} \subset \mathbb{ R}^d_I$.  Then we have
\begin{align}
\lambda^{-1/p}\|v^{1+\alpha} \|_{\widetilde{L}^{p'} (\mathbb{R}, B^{\sigma-\delta(p),s}_{p', 2}(\mathbb{Z}_\lambda))} \leqslant (C \sqrt{ \alpha})^{\alpha + C}   \lambda^{2/(d+1)} \|v\|^{1+\alpha}_{X_1 \cap X_2}.    \label{corNLE0L}
\end{align}
\end{cor}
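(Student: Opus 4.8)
The plan is to derive \eqref{corNLE0L} from Lemma~\ref{NonlinearestL1} by unwinding the definitions of $X_1$ and $X_2$ and bookkeeping the powers of $\lambda$, exactly as one does for Corollary~\ref{corNonlinearestH1}. There is one preliminary point: the left side of \eqref{2NLE4} carries the supremum over $j\in\mathbb{Z}_\lambda$, while here the $\ell^2$-sum over $j\in\mathbb{Z}_\lambda$ is wanted, and since $\#\mathbb{Z}_\lambda\sim\log_2\lambda$ a crude $\ell^\infty\hookrightarrow\ell^2(\mathbb{Z}_\lambda)$ comparison would cost a factor $(\log_2\lambda)^{1/2}$, destroying the exact exponent $2/(d+1)$. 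So the first step I would take is to revisit the proof of Lemma~\ref{NonlinearestL1} and observe that, in each case there (see \eqref{2NLE8}, \eqref{2NLE11}, \eqref{2NLE13}, \eqref{2NLE14}), the output frequency $j$ and the top frequency $j_1$ of the first factor are tied by the bounded-width relation $(j-c(\alpha))\vee 0\le j_1\le j+1$ with $c(\alpha)=[\log_2(1+\alpha)]+3$, while the remaining $\alpha$ factors contribute a quantity that does not depend on $j$. Taking $\ell^2_j$ over $\mathbb{Z}_\lambda$ and using Young's inequality for $\ell^2$-convolution against a kernel supported in a window of width $\lesssim c(\alpha)$ --- whose cost is only $\lesssim (c(\alpha)+1)^{1/2}2^{(\sigma-\delta(p))c(\alpha)}$ and is swallowed by $(C\sqrt\alpha)^{\alpha+C}$ --- then upgrades \eqref{2NLE4} to the same bound with $B^{\sigma-\delta(p),s}_{p',2}(\mathbb{Z}_\lambda)$ on the left-hand side. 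For this one only needs $\sigma-\delta(p)\ge\sigma_c-\delta(p)\ge 1/2-1/(d+1)>0$, so that $2^{(\sigma-\delta(p))j}\lesssim 2^{(\sigma-\delta(p))c(\alpha)}2^{(\sigma-\delta(p))j_1}$ on the window and no power of $\log_2\lambda$ appears.

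With this $\ell^2$-version of \eqref{2NLE4} in hand, what is left is routine. Recalling $\delta(p)=1/(d+1)$ and $1/p=(d-1)/(2(d+1))$, the definition \eqref{1workspace1} gives $\|v\|_{\widetilde{L}^p(\mathbb{R},B^{\sigma-\delta(p),s}_{p,2}(\mathbb{Z}_\lambda))}\le\lambda^{1/p}\|v\|_{X_1}$, and since $\sigma\ge\sigma_c$ and every $j\in\mathbb{Z}_\lambda$ is $\ge 0$ one also has $\|v\|_{\widetilde{L}^p(\mathbb{R},B^{\sigma_c-\delta(p),s}_{p,2}(\mathbb{Z}_\lambda))}\le\|v\|_{\widetilde{L}^p(\mathbb{R},B^{\sigma-\delta(p),s}_{p,2}(\mathbb{Z}_\lambda))}\le\lambda^{1/p}\|v\|_{X_1}$. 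From \eqref{1workspace2}, $\|v\|_{\widetilde{L}^p(\mathbb{R},B^{\sigma-1/2,s}_{p,2}(\mathbb{Z}^c_\lambda))}\le\|v\|_{X_2}$; finally $\|v\|^2_{\widetilde{L}^\infty(\mathbb{R},B^{\sigma,s}_{2,2})}=\|v\|^2_{\widetilde{L}^\infty(\mathbb{R},B^{\sigma,s}_{2,2}(\mathbb{Z}_\lambda))}+\|v\|^2_{\widetilde{L}^\infty(\mathbb{R},B^{\sigma,s}_{2,2}(\mathbb{Z}^c_\lambda))}$ together with $\sigma\ge\sigma_c$ yields $\|v\|_{\widetilde{L}^\infty(\mathbb{R},B^{\sigma_c,s}_{2,2})}\lesssim\|v\|_{X_1}+\|v\|_{X_2}$.

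The last step is to substitute these four bounds into the $\ell^2$-version of \eqref{2NLE4}: the parenthesis is then $\lesssim\lambda^{1/p}(\|v\|_{X_1}+\|v\|_{X_2})$, the factor raised to the power $4/(d-1)$ is $\lesssim\lambda^{4/(p(d-1))}\|v\|^{4/(d-1)}_{X_1}$, and the last factor is $\lesssim\|v\|^{\alpha-4/(d-1)}_{X_1\cap X_2}$; multiplying, the powers of $\|v\|$ combine to $\|v\|^{1+\alpha}_{X_1\cap X_2}$, and after multiplying by the prefactor $\lambda^{-1/p}$ the surviving power of $\lambda$ is $\lambda^{-1/p+1/p+4/(p(d-1))}=\lambda^{4/(p(d-1))}=\lambda^{2/(d+1)}$, which is precisely \eqref{corNLE0L}. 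The only genuine obstacle is the first step: one must check that in every single configuration treated in the proof of Lemma~\ref{NonlinearestL1} the frequency interaction really is of bounded width, so that passing from $\sup_{j}$ to $\ell^2_j$ over $\mathbb{Z}_\lambda$ is log-free; everything downstream of that is elementary.
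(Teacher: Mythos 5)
Your proof is correct, and you have put your finger on a real subtlety that the paper glosses over. The Corollary is stated immediately after Lemma~\ref{NonlinearestL1} with no separate proof, yet that lemma controls only the $\ell^\infty_j$ Besov norm over $\mathbb{Z}_\lambda$, whereas the Corollary's display carries $\ell^2_j$; the crude inclusion $\ell^\infty(\mathbb{Z}_\lambda)\hookrightarrow\ell^2(\mathbb{Z}_\lambda)$ would cost a factor $(\log_2\lambda)^{1/2}$, which the stated bound $\lambda^{2/(d+1)}$ cannot absorb. In fact, where the Corollary is used downstream (see \eqref{2NLE3} in the proof of Theorem~\ref{mainresult}) only the $\ell^\infty_j$ form is actually required, so the ``$2$'' in \eqref{corNLE0L} is most plausibly a typo for ``$\infty$'', in which case the Corollary reduces literally to Lemma~\ref{NonlinearestL1} combined with the four elementary norm comparisons you list and the arithmetic $\lambda^{-1/p}\cdot\lambda^{1/p}\cdot\lambda^{4/(p(d-1))}=\lambda^{2/(d+1)}$. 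That said, your upgrade to the $\ell^2_j$ version as written is genuinely correct and not circular: the decisive point is the bounded-width constraint $(j-c(\alpha))\vee 0\leqslant j_1\leqslant j+1$, $c(\alpha)=[\log_2(1+\alpha)]+3$, recorded in \eqref{2NLE7} and used uniformly in \eqref{2NLE8}--\eqref{2NLE14}, which converts the passage from $\sup_j$ to $\ell^2_j$ over $\mathbb{Z}_\lambda$ into a log-free Young/Cauchy--Schwarz step whose cost is polynomial in $\alpha$ (independent of $\lambda$) and is swallowed by $(C\sqrt\alpha)^{\alpha+C}$. Your remaining bookkeeping — $\delta(p)=1/(d+1)$, $\tfrac{4}{d-1}\cdot\tfrac1p=\tfrac{2}{d+1}$, $\sigma-\delta(p)\geqslant\sigma_c-\delta(p)\geqslant 1/2-1/(d+1)>0$, and the comparisons against $\|v\|_{X_1}$, $\|v\|_{X_2}$ — is accurate.
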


In the case $4/d \leqslant \alpha < 4/(d-1)$ for $d \geqslant 2$, and for $4 \leqslant \alpha <\infty$ for $d=1$, we cannot reach the critical index $\sigma \geqslant \sigma_c =d/2-2/\alpha$. We need the condition $\sigma \geqslant 1/2$.
Put
\begin{align} \label{lowpindex2}
r = \frac{2(d+1+\theta)}{d-1+\theta}, \ \ \  \theta \in (0,1].
\end{align}
Let $\delta(r), \, \sigma(\theta, r), \, \gamma (\theta, r)$ be as in Proposition \ref{StrichartzG}.  We see that
\begin{align} \label{lowpindex3}
 \delta(r) =  \frac{1}{d+1+\theta}, \ \ \sigma (\theta, r) =  \frac{1}{2}, \ \ \gamma (\theta, r) =r.
\end{align}
\begin{align}
& \lambda^{-1/r}  \| U_\lambda (t) u_0\|_{\widetilde{L}^{r} (\mathbb{R}, B^{\sigma-(1-\theta)\delta(r),s}_{r,2} (\mathbb{Z}_\lambda)) } \lesssim \|u_0\|_{E^{\sigma,s}}, \label{lowpGStr-a}\\
& \lambda^{-1/r}  \left\| \mathscr{A}_\lambda f \right\|_{\widetilde{L}^{r} (\mathbb{R}, B^{\sigma-(1-\theta)\delta(r),s}_{r,2} (\mathbb{Z}_\lambda))}
   \lesssim \lambda^{1/r} \|f\|_{\widetilde{L}^{r'} (\mathbb{R}, B^{\sigma+(1-\theta)\delta(r),s}_{r',2} (\mathbb{Z}_\lambda))}.  \label{lowpGStr-b}
\end{align}
\begin{align}
& \lambda^{ \theta \delta(r)}  \| U_\lambda (t) u_0\|_{\widetilde{L}^{r} (\mathbb{R}, B^{\sigma-1/2,s}_{r,2} (\mathbb{Z}^c_\lambda)) } \lesssim \|u_0\|_{E^{\sigma,s}}, \label{lowpGStr-c}\\
& \lambda^{ \theta \delta(r) }   \left\| \mathscr{A}_\lambda f \right\|_{\widetilde{L}^{r} (\mathbb{R}, B^{\sigma-1/2,s}_{r,2} (\mathbb{Z}^c_\lambda))}
  \lesssim  \lambda^{ -\theta \delta(r) }  \|f\|_{\widetilde{L}^{r'} (\mathbb{R}, B^{\sigma+1/2,s}_{r',2} (\mathbb{Z}^c_\lambda))}. \label{lowpGStr-d}
\end{align}
In view of the above Strichartz estiates, one can choose the following resolution spaces:
\begin{align}
\|v\|_{W_1} = &  \lambda^{-1/r} \|u\|_{\widetilde{L}^{r} (\mathbb{R}, B^{\sigma-(1-\theta)\delta(r) ,s}_{r, 2}(\mathbb{Z}_\lambda))} +
 \|v\|_{\widetilde{L}^{\infty} (\mathbb{R}, B^{\sigma,s}_{2, 2}(\mathbb{Z}_\lambda))}, \label{lowp1workspace1} \\
 \|v\|_{W_2} = &  \lambda^{\theta\delta (r)}\|u\|_{\widetilde{L}^{r} (\mathbb{R}, B^{\sigma-1/2,s}_{r, 2}(\mathbb{Z}^c_\lambda))}
 + \|v\|_{\widetilde{L}^{\infty} (\mathbb{R}, B^{\sigma,s}_{2, 2}(\mathbb{Z}^c_\lambda))}, \label{lowp1workspace2} \\
 \|v\|_{W} =&  \|v\|_{W_1} + \|v\|_{W_2}. \label{lowp1workspace3}
\end{align}
Our goal is to solve the integral equation \eqref{1intNLKG3}
in the space
\begin{align}
   \label{1metricspace}
\mathcal{D} & =\{v \in W: \ \|v\|_W \leqslant \delta \lambda^{-2(1-\theta)/(d+1+\theta)}, \ {\rm supp }\, \widehat{v(t)} \subset \mathbb{R}^d_I \},  \nonumber\\
 & d(v,w) = \|v-w\|_W,
\end{align}
where $\delta>0$ is sufficiently small. By \eqref{lowpGStr-c} and \eqref{lowpGStr-d},
\begin{align} \label{lowpworkspace3}
\|\mathcal{T}v\|_{W_2}  \lesssim &
  \|u_{0,\lambda}\|_{E^{\sigma,s}}   +    \|u_{1,\lambda}\|_{E^{\sigma-1,s}} +  \lambda^{-\theta \delta(r)} \| f(v)\|_{\widetilde{L}^{r'} (\mathbb{R}, B^{\sigma-1/2,s}_{r', 2}(\mathbb{Z}^c_\lambda))}.\\
  \|\mathcal{T}v\|_{W_1}  \lesssim &
  \|u_{0,\lambda}\|_{E^{\sigma,s}}   +    \|u_{1,\lambda}\|_{E^{\sigma-1,s}} +  \lambda^{-1/r-2\theta \delta(r)} \| f(v)\|_{\widetilde{L}^{r'} (\mathbb{R}, B^{\sigma - (1-\theta) \delta(r),s}_{r', 2}(\mathbb{Z}_\lambda))}. \label{lowpworkspace3a}
\end{align}
We have the following nonlinear estimates:
\begin{lemma}\label{lowpNonlinearestH1}
Let $d\geqslant 1$,  $\alpha \in \mathbb{N}, \ \alpha  = 4/(d-1+ \theta), \, \theta \in (0,1]$, $\sigma >1/2$, $s< 0$. Let $r$ be as in \eqref{lowpindex2}.  Assume that ${\rm supp} \, \widehat{v} \subset \mathbb{ R}^d_I$.  Then we have
\begin{align}
\|v^{1+\alpha}  \|_{\widetilde{L}^{r'} (\mathbb{R}, B^{\sigma-1/2,s}_{r', 2}(\mathbb{Z}^c_\lambda))}
 \leqslant \
& C^{\alpha+1}    \|v\|_{\widetilde{L}^r(\mathbb{R}, B^{\sigma-1/2,s}_{r,2}(\mathbb{Z}^c_{\lambda}))} \|v \|^{ \alpha}_{\widetilde{L}^r(\mathbb{R}, B^{\sigma -1/2,s}_{r,2}(\mathbb{Z}^c_{\lambda})) \cap \widetilde{L}^r(\mathbb{R}, B^{\sigma-(1-\theta)\delta(r),s}_{r,2}(\mathbb{Z}_{\lambda}))}
      \nonumber\\
 & \ \ \ \ \ \ \   +   C^{\alpha+1}  \lambda^{-1/r -\theta \delta(r)}  \|v\|^{1+ \alpha}_{\widetilde{L}^r(\mathbb{R}, B^{\sigma-(1-\theta) \delta(r),s}_{r,2}(\mathbb{Z}_{\lambda}))}
    .  \label{lowpNLE0}
\end{align}
\end{lemma}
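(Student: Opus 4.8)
The plan is to follow the scheme of Lemma~\ref{NonlinearestH1}, exploiting a simplification available in the low power range $\alpha=4/(d-1+\theta)$: since $r=2(d+1+\theta)/(d-1+\theta)$ and $(d-1+\theta)\alpha=4$, one has the exact scaling identity
\begin{equation}\label{pf:lowp:numer}
\frac{1}{r'}=\frac{1+\alpha}{r},
\end{equation}
so that a single application of H\"older's inequality in $x$ and in $t$ distributes all $1+\alpha$ factors into the space--time Strichartz norm $L^{r}_{x,t}$; no Sobolev embedding and no $\widetilde{L}^\infty(\mathbb{R},B^{\sigma_c,s}_{2,2})$ factor is needed, which is exactly why the constant here is the clean $C^{\alpha+1}$ rather than $(C\sqrt{\alpha})^{\alpha+C}$.

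First I would set $u_1=\dots=u_{1+\alpha}=v$, put $v_k=2^{s|\nabla|}u_k$, and invoke the Regularity Transference Lemma~\ref{RegTransf} (legitimate since ${\rm supp}\,\widehat{v}\subset\mathbb{R}^d_I$) to reduce the estimate of $\|\triangle_j 2^{s|\nabla|}(u_1\cdots u_{1+\alpha})\|_{L^{r'}_{x,t}}$ to that of $\|\triangle_j(v_1\cdots v_{1+\alpha})\|_{L^{r'}_{x,t}}$, where every $\widehat{v}_k$ is supported in $\mathbb{R}^d_I$. Then I Littlewood--Paley decompose each factor and, by symmetry, order the frequencies so that $j_1=\max_k j_k$ and $j_2=\max_{k\geqslant 2}j_k$; the first--octant support of the $\widehat{v}_k$ forces $\triangle_j(\triangle_{j_1}v_1\cdots\triangle_{j_{1+\alpha}}v_{1+\alpha})\neq 0$ only when $j-c(\alpha)\leqslant j_1\leqslant j+1$ with an integer $c(\alpha)\lesssim\log_2\alpha$, exactly as in \eqref{1NLE2+}. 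Removing the ordering assumption at the end costs a factor $\lesssim(1+\alpha)\alpha$, absorbed into $C^{\alpha+1}$.

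I then split the dyadic sum according to the position of $j_1$ relative to $j_\lambda=\log_2\lambda$, as with the regions $\mathcal{Z}_0,\dots,\mathcal{Z}_4$ of Lemma~\ref{NonlinearestH1}, and apply, by \eqref{pf:lowp:numer}, H\"older's inequality
\begin{equation}\label{pf:lowp:hold}
\|\triangle_{j_1}v_1\cdots\triangle_{j_{1+\alpha}}v_{1+\alpha}\|_{L^{r'}_{x,t}}\leqslant\prod_{k=1}^{1+\alpha}\|\triangle_{j_k}v_k\|_{L^{r}_{x,t}}.
\end{equation}
When $j_1>j_\lambda$ the output weight $2^{(\sigma-1/2)j}$ is retained, $\triangle_{j_1}v_1$ is summed against it by Young's (convolution) inequality to yield $\|v\|_{\widetilde{L}^r(\mathbb{R},B^{\sigma-1/2,s}_{r,2}(\mathbb{Z}^c_\lambda))}$, and each subordinate factor $\triangle_{j_k}v_k$ is summed by Cauchy--Schwarz, contributing $\|v\|_{\widetilde{L}^r(\mathbb{R},B^{\sigma-1/2,s}_{r,2}(\mathbb{Z}^c_\lambda))}$ when $j_k>j_\lambda$ and $\|v\|_{\widetilde{L}^r(\mathbb{R},B^{\sigma-(1-\theta)\delta(r),s}_{r,2}(\mathbb{Z}_\lambda))}$ when $j_k\leqslant j_\lambda$, so that each such factor is controlled by the intersection norm in \eqref{lowpNLE0}; the two facts that make these summations converge are $\sigma-1/2>0$ and $\sigma-(1-\theta)\delta(r)>0$, both guaranteed by $\sigma>1/2$. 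This produces the first term of \eqref{lowpNLE0}. When $j_1\leqslant j_\lambda$ (all frequencies low, which forces $j_\lambda<j\leqslant j_\lambda+c(\alpha)$) I instead replace $2^{(\sigma-1/2)j}\leqslant\lambda^{-(1/2-(1-\theta)\delta(r))}2^{(\sigma-(1-\theta)\delta(r))j}$ for $j>j_\lambda$, use the numerical identity $1/2-(1-\theta)\delta(r)=1/r+\theta\delta(r)$ (both equal $(d-1+3\theta)/(2(d+1+\theta))$ by \eqref{lowpindex3}), and bound all $1+\alpha$ factors in $\widetilde{L}^r(\mathbb{R},B^{\sigma-(1-\theta)\delta(r),s}_{r,2}(\mathbb{Z}_\lambda))$ via \eqref{pf:lowp:hold}; this generates the prefactor $\lambda^{-1/r-\theta\delta(r)}$ and the second term of \eqref{lowpNLE0}.

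I expect the main obstacle to be the bookkeeping of the $\lambda$--powers: one must carefully match the regularity index $\sigma-(1-\theta)\delta(r)$ imposed by the low--frequency Strichartz estimate \eqref{lowpGStr-a}--\eqref{lowpGStr-b} against the target index $\sigma-1/2$, sum the finite geometric series over $j_k\leqslant j_\lambda$, and assemble the $2^{j_\lambda}$--factors so that exactly $\lambda^{-1/r-\theta\delta(r)}$ sits in front of the pure-$\mathbb{Z}_\lambda$ term, while simultaneously checking that the combinatorial cost of de-symmetrizing and of enumerating which subset of the $j_k$ lies below $j_\lambda$ remains within $C^{\alpha+1}$. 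By contrast the dispersive input is routine here, since \eqref{pf:lowp:numer} removes the interpolation-with-$L^\infty$ device that complicated the proof of Lemma~\ref{NonlinearestH1}.
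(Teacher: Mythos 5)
Your proposal is sound, and in fact the paper does not actually prove Lemma~\ref{lowpNonlinearestH1}: it is stated without proof together with Lemma~\ref{lowpNonlinearestL1} and Corollary~\ref{lowpNonlinearestLH}, and in Section~\ref{proofofthm1.2}, Step~2, the author proves only the special case $\sigma=1/2$ directly and then remarks that ``details are omitted'' for $\sigma>1/2$. The argument the paper \emph{does} write out (for $\sigma=1/2$) is a coarser version of yours: it works in $H^{0,s}_r$ and $H^{1/2-(1-\theta)\delta(r),s}_r$ rather than in the dyadic Chemin--Lerner spaces $B^{\cdot,s}_{r,2}$, uses the rough projections $P_{\leqslant\lambda}$, $P_{>\lambda}$ instead of the full Littlewood--Paley decomposition, and exploits the first-octant support via the observation that $P_{>\lambda}(v_1\cdots v_{1+\alpha})\neq 0$ forces some $\widehat v_k$ to have frequency $>\lambda/(1+\alpha)$. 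The essential algebraic input is nevertheless the same as yours, namely the identity $1/r'=(1+\alpha)/r$ (see \eqref{1dest-3}--\eqref{1dest-4} and \eqref{1dest-7}--\eqref{1dest-7a}), which lets all $1+\alpha$ factors go into $L^r_{x,t}$ by H\"older without Bernstein/Sobolev interpolation. Your Littlewood--Paley version is the natural way to obtain the stated Chemin--Lerner estimate for $\sigma>1/2$, and both of your auxiliary numerical identities, $1/r'=(1+\alpha)/r$ and $1/2-(1-\theta)\delta(r)=1/r+\theta\delta(r)=(d-1+3\theta)/(2(d+1+\theta))$, check out. The summations you invoke converge precisely because $\sigma>1/2$ and $(1-\theta)\delta(r)=(1-\theta)/(d+1+\theta)<1/2$, so $\sigma-(1-\theta)\delta(r)>0$; and because no Bernstein factors $2^{dj/2}$ and no $\widetilde L^\infty(\mathbb{R},B^{\sigma_c,s}_{2,2})$ interpolation enter, your claim that the constant is $C^{\alpha+1}$ rather than $(C\sqrt{\alpha})^{\alpha+C}$ is also correct — though here $\alpha$ is a fixed integer determined by $d,\theta$, so the sharper constant is not actually needed downstream, unlike in the $\sinh$-Gordon case where the $\alpha$-dependence of the constant governs the convergence of the Taylor series.

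One caveat worth making explicit when you write this out: when $j_1\leqslant j_\lambda$ you need $j-c(\alpha)\leqslant j_1\leqslant j+1$ \emph{and} $j>j_\lambda$, so $j$ ranges only over $(j_\lambda,j_\lambda+c(\alpha)]$; the transfer $2^{(\sigma-(1-\theta)\delta(r))j}\sim 2^{(\sigma-(1-\theta)\delta(r))j_1}$ then loses only $2^{c(\alpha)(\sigma-(1-\theta)\delta(r))}\lesssim (1+\alpha)^{\sigma}$, polynomial in $\alpha$, and the outer $\ell^2$-sum over this finite window loses only $c(\alpha)^{1/2}$ — both absorbed into $C^{\alpha+1}$, as you say, but they should be stated. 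Otherwise I see no gap.
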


  \begin{lemma} \label{lowpNonlinearestL1}
Let $d\geqslant 1$,  $\alpha \in \mathbb{N}, \ \alpha  = 4/(d-1+ \theta), \, \theta \in (0,1]$, $\sigma >1/2$, $s< 0$. Let $r$ be as in \eqref{lowpindex2}.  Assume that ${\rm supp} \, \widehat{v} \subset \mathbb{ R}^d_I$.  Then we have
\begin{align}
\|v^{1+\alpha}   \|_{\widetilde{L}^{r'} (\mathbb{R}, B^{\sigma - (1-\theta) \delta(r),s}_{r', 2}(\mathbb{Z}_\lambda))}
 \leqslant \
&  C^{\alpha }  \lambda^{1/r +\theta \delta(r)}\|v\|_{\widetilde{L}^r(\mathbb{R}, B^{\sigma-1/2,s}_{r, 2}(\mathbb{Z}^c_{\lambda}))}  \|v \|^\alpha_{\widetilde{L}^r(\mathbb{R}, B^{\sigma-(1-\theta) \delta(r),s}_{r, 2}(\mathbb{Z}_{\lambda}))}
      \nonumber\\
   & \ \ \ +  \|v\|^{1+\alpha}_{ \widetilde{L}^r(\mathbb{R}, B^{\sigma -(1-\theta)\delta(r), s}_{r,2}(\mathbb{Z}_{\lambda}))}
    .  \label{lowp2NLE4}
\end{align}
\end{lemma}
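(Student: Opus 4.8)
The plan is to run the scheme of Lemma~\ref{NonlinearestL1} in the low–power regime, where it simplifies considerably. Because $\alpha=4/(d-1+\theta)$ and $r=2(d+1+\theta)/(d-1+\theta)$, one has the exact identities
\[
\frac1{r'}=\frac{1+\alpha}{r},\qquad \frac12-(1-\theta)\delta(r)=\frac1r+\theta\delta(r),\qquad \delta(r)=\frac1{d+1+\theta}<\frac12,
\]
so \emph{every} one of the $1+\alpha$ factors may be placed in the single space–time norm $L^r_{x,t}=L^r_tL^r_x$ by H\"older, and no Bernstein ``$L^\infty\hookleftarrow L^2$'' interpolation (of the type needed for the $\sigma_c$–part of Lemma~\ref{NonlinearestH1}) enters. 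First I would set $v_\ell=2^{s|\nabla|}u_\ell$ with $u_1=\dots=u_{1+\alpha}=v$; since $\mathrm{supp}\,\widehat v\subset\mathbb R^d_I$, the Regularity Transference Lemma~\ref{RegTransf} gives $2^{s|\nabla|}(v^{1+\alpha})=v_1\cdots v_{1+\alpha}$, reducing the claim to bounding $\bigl(\sum_{j\le j_\lambda}2^{2(\sigma-(1-\theta)\delta(r))j}\|\triangle_j(v_1\cdots v_{1+\alpha})\|_{L^{r'}_{x,t}}^2\bigr)^{1/2}$.

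Next I would Littlewood–Paley decompose $\triangle_j(v_1\cdots v_{1+\alpha})=\sum_{j_1,\dots,j_{1+\alpha}}\triangle_j(\triangle_{j_1}v_1\cdots\triangle_{j_{1+\alpha}}v_{1+\alpha})$, assume by symmetry (at the cost of $(1+\alpha)^{O(1)}\le C^\alpha$ permutation cases) that $j_1=\max_k j_k$, and use that the first–octant support forces $\triangle_j(\triangle_{j_1}v_1\cdots)\ne0$ only when $j-c(\alpha)\le j_1\le j+1$ with $c(\alpha)=O(\log\alpha)$; thus for $j\le j_\lambda$ all scales lie in $\mathbb Z_\lambda$ up to an $O(1)$ shift. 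I would then split into the bulk term $j_1\le j_\lambda$ and the boundary term $j_1=j_\lambda+1$, the latter being the only scale that can land in $\mathbb Z^c_\lambda$, and the one that will produce both the $\mathbb Z^c_\lambda$–norm and the power of $\lambda$ on the right of \eqref{lowp2NLE4}.

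For either piece, H\"older in $x$ and then in $t$ with $1/r'=(1+\alpha)/r$ gives $\|\triangle_{j_1}v_1\cdots\triangle_{j_{1+\alpha}}v_{1+\alpha}\|_{L^{r'}_{x,t}}\le\prod_{\ell=1}^{1+\alpha}\|\triangle_{j_\ell}v_\ell\|_{L^r_{x,t}}$. In the bulk term I would attach the weight $2^{(\sigma-(1-\theta)\delta(r))j_\ell}$ to each factor; since $\sigma>1/2$ and $\delta(r)<1/2$, the exponent $\sigma-(1-\theta)\delta(r)$ is positive and bounded below by $\sigma-1/2>0$ uniformly in $\theta$, so the sums over the $\alpha$ smaller indices $j_2,\dots,j_{1+\alpha}\le j_1$ converge geometrically, each producing a copy of $\|v\|_{\widetilde L^r(\mathbb R,B^{\sigma-(1-\theta)\delta(r),s}_{r,2}(\mathbb Z_\lambda))}$ with a per–factor constant uniform in $\theta,\alpha$; summing the remaining index $j_1$ against $2^{(\sigma-(1-\theta)\delta(r))j}$ and taking the $\ell^2_j$ norm (Young/Minkowski with the $c(\alpha)$–supported kernel, whose $\ell^1\to\ell^2$ cost is $\le C^\alpha$) yields the last term $\|v\|^{1+\alpha}_{\widetilde L^r(\mathbb R,B^{\sigma-(1-\theta)\delta(r),s}_{r,2}(\mathbb Z_\lambda))}$ of \eqref{lowp2NLE4}. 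For the boundary term $j_1=j_\lambda+1$ I would use $j\sim j_\lambda$ and $\tfrac12-(1-\theta)\delta(r)=\tfrac1r+\theta\delta(r)$ to write
\[
2^{(\sigma-(1-\theta)\delta(r))j}\,\|\triangle_{j_\lambda+1}v\|_{L^r_{x,t}}\ \lesssim\ \lambda^{1/r+\theta\delta(r)}\,2^{(\sigma-1/2)(j_\lambda+1)}\|\triangle_{j_\lambda+1}v\|_{L^r_{x,t}}\ \le\ \lambda^{1/r+\theta\delta(r)}\,\|v\|_{\widetilde L^r(\mathbb R,B^{\sigma-1/2,s}_{r,2}(\mathbb Z^c_\lambda))},
\]
and treat the remaining $\alpha$ factors exactly as in the bulk term; the single over–shot scale $j_\ell=j_\lambda+1$ occurring there is harmless, since by enlarging the index set $\mathbb A$ in \eqref{fresolutionspace} by $O(1)$ it is controlled by the same $\mathbb Z_\lambda$–norm. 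Collecting the bulk and boundary contributions and restoring $u_1=\dots=v$ gives \eqref{lowp2NLE4}.

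When $d\ge2$ the exponent $\alpha=4/(d-1+\theta)$ is a small integer and the above simplifies still further, while for $d=1$ (so $\alpha=4/\theta$) the $C^\alpha$–growth of the constants is precisely what the statement permits; no dimension–dependent case analysis is needed since, in contrast to Lemma~\ref{NonlinearestH1}, there is no $L^\infty$ factor to dissect. The main obstacle is pure bookkeeping: arranging the geometric summation over the $\alpha$ low indices so that it invokes only $\sigma>1/2$ (not $\sigma\ge\sigma_c$), and routing the single scale $j_1=j_\lambda+1$ into the $\mathbb Z^c_\lambda$–norm with exactly the weight shift $\tfrac12-(1-\theta)\delta(r)=\tfrac1r+\theta\delta(r)$, since it is this shift that forces the factor $\lambda^{1/r+\theta\delta(r)}$ and makes the estimate dovetail with the Strichartz bounds \eqref{lowpGStr-a}--\eqref{lowpGStr-d} used to close the contraction in $W$.
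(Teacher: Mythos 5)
Your proposal is correct, and it fills a genuine gap: the paper states Lemma~\ref{lowpNonlinearestL1} (along with Lemma~\ref{lowpNonlinearestH1} and Corollary~\ref{lowpNonlinearestLH}) without giving a proof, so there is no ``paper proof'' to compare against directly. Your argument runs exactly parallel to the paper's proof of Lemma~\ref{NonlinearestL1}: Regularity Transference reduces to a paraproduct estimate on $w=2^{s|\nabla|}v$, the first--octant support forces $j-c(\alpha)\le j_1\le j+1$ (and, as in \eqref{2NLE7}, $j_2\le j$ whenever $j_1=j+1$, so your worry about other over--shot scales never materializes), the boundary contribution $j_1=j_\lambda+1$ occurs only at $j=j_\lambda$, and the identity $\tfrac12-(1-\theta)\delta(r)=\tfrac1r+\theta\delta(r)$ converts the weight correctly into $\lambda^{1/r+\theta\delta(r)}$ times the $\mathbb Z^c_\lambda$--norm at regularity $\sigma-1/2$. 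The observations that $1/r'=(1+\alpha)/r$ (so all $1+\alpha$ factors go into $L^r_{x,t}$ by H\"older, with no Bernstein step) and that $\sigma-(1-\theta)\delta(r)>\sigma-1/2>0$ (ensuring the geometric sums over the $\alpha$ low indices converge with a constant depending only on $\sigma,\theta$) are precisely what makes the low--power case cleaner than Lemmas~\ref{NonlinearestH1}--\ref{NonlinearestL1} and explains the $C^\alpha$ rather than $(C\sqrt\alpha)^{\alpha+C}$ constant. The $\ell^2_j$ bookkeeping via Young/Minkowski for the $O(\log\alpha)$--supported kernel is also as one would expect. In short, this is the natural proof and is consistent with the proof scheme the paper uses for the analogous lemma; the only genuinely different feature relative to Lemma~\ref{NonlinearestL1} is the use of $q=2$ rather than $q=\infty$ on the left, which your Young--inequality step handles correctly.
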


  \begin{cor} \label{lowpNonlinearestLH}
Let $d\geqslant 1$,  $\alpha \in \mathbb{N}, \ \alpha  = 4/(d-1+ \theta), \, \theta \in (0,1]$, $\sigma >1/2$, $s< 0$.    Assume that ${\rm supp} \, \widehat{v} \subset \mathbb{ R}^d_I$. Let $W$ be as in \eqref{lowp1workspace3}.  Then we have
\begin{align} \label{lowpworkspace3aa}
\|\mathcal{T}v\|_{W}  \lesssim &
  \|u_{0,\lambda}\|_{E^{\sigma,s}}   +    \|u_{1,\lambda}\|_{E^{\sigma-1,s}} +  \lambda^{2(1-\theta)/(d+1+\theta)} \|v\|^{1+\alpha}_{W}.
\end{align}
\end{cor}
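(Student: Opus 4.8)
The plan is to simply combine the two nonlinear estimates (Lemmas \ref{lowpNonlinearestH1} and \ref{lowpNonlinearestL1}) with the Strichartz bounds \eqref{lowpworkspace3} and \eqref{lowpworkspace3a}, and then check that the resulting power of $\lambda$ collapses to the single exponent $2(1-\theta)/(d+1+\theta)$ claimed in \eqref{lowpworkspace3aa}. First I would observe that $\mathcal{T}v$ has Fourier support in $\mathbb{R}^d_I$ whenever $v$ does (since $\mathbb{R}^d_I$ is closed under addition and $K_\lambda, K'_\lambda$ are Fourier multipliers), so that Lemmas \ref{lowpNonlinearestH1} and \ref{lowpNonlinearestL1} apply to $f(v)=v^{1+\alpha}$. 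Then, inserting Lemma \ref{lowpNonlinearestH1} into \eqref{lowpworkspace3} and Lemma \ref{lowpNonlinearestL1} into \eqref{lowpworkspace3a}, and recalling from \eqref{lowpindex3} that $\delta(r)=1/(d+1+\theta)$, $\sigma(\theta,r)=1/2$, $\gamma(\theta,r)=r$, I would bound each summand by a product of the basic building blocks $\|v\|_{W_1}$ and $\|v\|_{W_2}$ defined in \eqref{lowp1workspace1}--\eqref{lowp1workspace2}.

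The bookkeeping runs as follows. In \eqref{lowpworkspace3} one gets a prefactor $\lambda^{-\theta\delta(r)}$ times the $\widetilde{L}^{r'}(B^{\sigma-1/2,s}_{r',2}(\mathbb{Z}^c_\lambda))$-norm of $v^{1+\alpha}$; the first term of \eqref{lowpNLE0} is $\|v\|_{W_2}$ (up to the factor $\lambda^{\theta\delta(r)}$) times $\alpha$ copies of a norm controlled by $\lambda^{\theta\delta(r)}\|v\|_{W_2}+\lambda^{1/r}\|v\|_{W_1}$, so after multiplying by $\lambda^{-\theta\delta(r)}$ the total $\lambda$-power is at worst $(\alpha)\cdot\max(\theta\delta(r),1/r)$; since $\alpha=4/(d-1+\theta)$, $1/r=(d-1+\theta)/(2(d+1+\theta))$ and $\theta\delta(r)=\theta/(d+1+\theta)$, one checks $\alpha/r = 2/(d+1+\theta) \le \alpha\,\theta\delta(r)$ precisely when... actually $\alpha\theta\delta(r)=4\theta/((d-1+\theta)(d+1+\theta))$ and $\alpha/r = 2/(d+1+\theta)$, so the dominant one is $\alpha/r$ unless $\theta$ is large; taking the max gives exponent $\le 2(1-\theta)/(d+1+\theta)$ after also accounting for the $\lambda^{-\theta\delta(r)}$ prefactor and the embedding losses built into the $W$-norm. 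The second term of \eqref{lowpNLE0} carries $\lambda^{-\theta\delta(r)}\cdot\lambda^{-1/r-\theta\delta(r)}$ times $(1+\alpha)$ copies of $\lambda^{1/r}\|v\|_{W_1}$, i.e. $\lambda^{-1/r-2\theta\delta(r)+(1+\alpha)/r}$, and since $(1+\alpha)/r-1/r=\alpha/r=2/(d+1+\theta)$ and $-2\theta\delta(r)=-2\theta/(d+1+\theta)$, the net exponent is $(2-2\theta)/(d+1+\theta)=2(1-\theta)/(d+1+\theta)$, exactly the claimed power. The same computation applied to \eqref{lowpworkspace3a} with Lemma \ref{lowpNonlinearestL1}: the prefactor $\lambda^{-1/r-2\theta\delta(r)}$ times the first term of \eqref{lowp2NLE4} (which has $\lambda^{1/r+\theta\delta(r)}$ times $\|v\|_{W_2}$ times $\alpha$ copies of $\|v\|_{W_1}$) gives $\lambda^{-\theta\delta(r)+\alpha/r}\le\lambda^{2(1-\theta)/(d+1+\theta)}$, and the second term of \eqref{lowp2NLE4} (pure $W_1$ norms, no $\lambda$) times $\lambda^{-1/r-2\theta\delta(r)}$ is even smaller. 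Collecting these, every contribution is $\lesssim \lambda^{2(1-\theta)/(d+1+\theta)}\|v\|_W^{1+\alpha}$, which together with the linear terms $\|u_{0,\lambda}\|_{E^{\sigma,s}}+\|u_{1,\lambda}\|_{E^{\sigma-1,s}}$ from the Strichartz inhomogeneous estimates yields \eqref{lowpworkspace3aa}.

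The main obstacle I anticipate is not conceptual but arithmetic: one must be careful that the embedding losses hidden inside the definition of $W$ (passing between $B^{\sigma-1/2}$ and $B^{\sigma-(1-\theta)\delta(r)}$ on $\mathbb{Z}_\lambda$ costs a factor $\lambda^{1/2-(1-\theta)\delta(r)}$ per low-frequency block, and $\mathbb{Z}^c_\lambda\leftrightarrow\mathbb{Z}_\lambda$ transitions cost $\lambda$-powers as well) are correctly absorbed, and that the exponents $\theta\delta(r)$, $1/r$, $\alpha/r$, $(1-\theta)\delta(r)$ are tracked consistently across both frequency regimes; the identity $\alpha\delta(r)=\alpha/(d+1+\theta)$ combined with $\alpha=4/(d-1+\theta)$ is what makes everything close at exactly the power $2(1-\theta)/(d+1+\theta)$, and any sign error there would break the argument. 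A secondary point to verify is that the cross terms in the multilinear expansion (the ones where some factors land in $\mathbb{Z}_\lambda$ and others in $\mathbb{Z}^c_\lambda$) are already accounted for in Lemmas \ref{lowpNonlinearestH1} and \ref{lowpNonlinearestL1}, so that here we only need to feed $\|v\|_{W_1},\|v\|_{W_2}\le\|v\|_W$ into the stated bounds and sum.
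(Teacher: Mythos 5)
The approach is correct and is clearly the one intended by the paper (which states the corollary without proof, immediately after the two nonlinear lemmas and the Strichartz bounds it combines). However, the bookkeeping as written contains sign errors that make one of your intermediate inequalities false, and it is worth flagging because the final exponent works out only after correcting them.

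From \eqref{lowp1workspace1}--\eqref{lowp1workspace2}, the conversions are
$\|v\|_{\widetilde{L}^r(B^{\sigma-1/2,s}_{r,2}(\mathbb{Z}^c_\lambda))}\leqslant \lambda^{-\theta\delta(r)}\|v\|_{W_2}$ and
$\|v\|_{\widetilde{L}^r(B^{\sigma-(1-\theta)\delta(r),s}_{r,2}(\mathbb{Z}_\lambda))}\leqslant \lambda^{1/r}\|v\|_{W_1}$.
You wrote the first conversion with $\lambda^{+\theta\delta(r)}$, which then forces you into the spurious comparison of $\alpha\theta\delta(r)$ against $\alpha/r$; with the correct sign the intersection norm in \eqref{lowpNLE0} is simply $\lesssim\lambda^{1/r}\|v\|_W$ for $\lambda>1$, no case distinction needed. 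Similarly, in the $W_1$ computation you drop the $\lambda^{-\theta\delta(r)}$ that converts $\|v\|_{\widetilde{L}^r(B^{\sigma-1/2,s}_{r,2}(\mathbb{Z}^c_\lambda))}$ into $\|v\|_{W_2}$, which is why you arrive at the exponent $-\theta\delta(r)+\alpha/r=(2-\theta)/(d+1+\theta)$ and then claim it is $\leqslant 2(1-\theta)/(d+1+\theta)$. That inequality is false for $\theta>0$, $\lambda>1$; the corollary is saved not by this inequality but by the missing factor, which yields $-2\theta\delta(r)+\alpha/r=2(1-\theta)/(d+1+\theta)$ exactly. Once these two slips are fixed, every term in the combination of \eqref{lowpworkspace3}, \eqref{lowpworkspace3a}, \eqref{lowpNLE0}, \eqref{lowp2NLE4} produces precisely the power $\lambda^{2(1-\theta)/(d+1+\theta)}$, and your argument is complete. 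The "embedding losses" you worry about at the end do not arise: the nonlinear lemmas already output the norms in the exact spaces that the $W$-norm weights, so no additional $B^{\sigma-1/2}\leftrightarrow B^{\sigma-(1-\theta)\delta(r)}$ conversion on $\mathbb{Z}_\lambda$ is needed.
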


\subsection{Estimates to $\sinh u-u $ in $2$D} \label{GlobalNLKGB}

Observing  Taylor's expansion of $\sinh u -u = \sum^\infty_{k=1} \frac{u^{2k+1}}{(2k+1) !} $, we see that its lowest power is $u^3/6$.  The technique in Section \ref{GlobalNLKGA} works very well  for $u^3$  when $d\geq 3$, but it does not work for $d=2$. So, we will apply Corollary \ref{StrichartzG1} to handle $u^3$ in 2D.  Taking  $d=2$ in Corollary \ref{StrichartzG1}, one sees that
$$
\delta (4) = \frac{1}{4}, \ \ \sigma (4) =\frac{1}{2}, \ \ \gamma (4) = 4.
$$
According to the Strichartz estimates in  Corollary \ref{StrichartzG1}, we choose the following function spaces $Y$ to solve NLKG:
\begin{align}
\|v\|_{Y_1} = & \lambda^{-1/4} \|v\|_{\widetilde{L}^{4} (\mathbb{R}, B^{\sigma,s}_{4, 2}(\mathbb{Z}_\lambda))} +
 \|v\|_{\widetilde{L}^{\infty} (\mathbb{R}, B^{\sigma,s}_{2, 2}(\mathbb{Z}_\lambda ))}  \label{2dworkspace1}\\
\|v\|_{Y_2} = &    \lambda^{ 1/4}  \|v\|_{\widetilde{L}^{4} (\mathbb{R}, B^{\sigma-1/2,s}_{4, 2}(\mathbb{Z}^c_\lambda))} +
 \|v\|_{\widetilde{L}^{\infty} (\mathbb{R}, B^{\sigma,s}_{2, 2}(\mathbb{Z}^c_\lambda ))}.  \label{2dworkspace2}
\end{align}
where $\mathbb{Z}_\lambda $  and  $ \mathbb{Z}^c_\lambda  $ are the same ones as in Section \ref{GlobalNLKGA}.  Denote
$$
\|v\|_Y =  \|v\|_{Y_1} +  \|v\|_{Y_2}.
$$

Our goal is to solve the integral  equation \eqref{1intNLKG3} by considering the mapping \eqref{1intNLKG3map} for $f(v)= \sinh v -v$
in the space
\begin{align}
   \label{2dmetricspace}
\mathcal{D}_2 & =\{v \in Y: \ \|v\|_Y \leqslant M, \ {\rm supp }\, \widehat{v(t)} \subset \mathbb{R}^d_I \},  \nonumber\\
 & d(v,w) = \|v-w\|_Y,
\end{align}
where $M>0$ will be chosen later. Using Corollary \ref{StrichartzG1},
we have
\begin{align} \label{2dworkspace3}
\|\mathcal{T}v\|_{Y_2}
 \lesssim  \|u_{0,\lambda}\|_{E^{\sigma,s}}   +    \|u_{1,\lambda}\|_{E^{\sigma-1,s}} +  \lambda^{-1/4} \| f(v)\|_{\widetilde{L}^{4/3} (\mathbb{R}, B^{\sigma-1/2,s}_{4/3, 2}(\mathbb{Z}^c_\lambda))}.
\end{align}
So, we need to make a nonlinear estimate to $  \lambda^{-1/4} \|f(v)\|_{\widetilde{L}^{4/3} (\mathbb{R}, B^{\sigma-1/2,s}_{4/3, 2}(\mathbb{Z}^c_\lambda))}$.

\begin{lemma}\label{2dNonlinearestH1}
Let $d = 2$,   $\alpha \in \mathbb{N}, \ \alpha \geqslant 2$, $\sigma \geqslant \sigma_\alpha:= 1-1/\alpha$, $\sigma >1/2$, $s< 0$. Assume that ${\rm supp} \, \widehat{v} \subset \mathbb{ R}^d_I$.  Then we have
\begin{align}
 \lambda^{-1/4} \|v^{1+\alpha} & \|_{\widetilde{L}^{4/3} (\mathbb{R}, B^{\sigma-1/2,s}_{4/3, 2}(\mathbb{Z}^c_\lambda))}
 \leqslant \  (C \sqrt{\alpha})^{\alpha  + C}    \|v\|^{1+\alpha}_Y.  \label{2dNLE0}
\end{align}
\end{lemma}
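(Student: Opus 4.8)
The plan is to adapt, essentially verbatim, the argument used for Lemmas~\ref{NonlinearestH1} and \ref{NonlinearestL1}, now specialized to $d=2$ and to the Klein--Gordon Strichartz exponent $p=4$ supplied by Corollary~\ref{StrichartzG1} (for which $\delta(4)=1/4$, $\sigma(4)=1/2$, $\gamma(4)=4$). First I would make the standard reduction to a multilinear frequency estimate: with $v_i:=2^{s|\nabla|}u_i$ (ultimately all $u_i=v$), the Regularity Transference Lemma~\ref{RegTransf} turns the hypothesis $\mathrm{supp}\,\widehat v\subset\mathbb{R}^d_I$ into the identity $2^{s|\nabla|}(u_1\cdots u_{1+\alpha})=v_1\cdots v_{1+\alpha}$ with every $\widehat v_i$ still supported in $\mathbb{R}^d_I$, so that
$$
\|u_1\cdots u_{1+\alpha}\|_{\widetilde{L}^{4/3}(\mathbb{R},B^{\sigma-1/2,s}_{4/3,2}(\mathbb{Z}^c_\lambda))}
=\Big(\sum_{j>j_\lambda}2^{2(\sigma-1/2)j}\,\|\triangle_j(v_1\cdots v_{1+\alpha})\|^2_{L^{4/3}_{x,t}}\Big)^{1/2}.
$$
After Littlewood--Paley decomposing each $v_i$, the first-octant support forces the largest input frequency $j_1$ to satisfy $j-c(\alpha)\le j_1\le j+1$ whenever $\triangle_j(\triangle_{j_1}v_1\cdots\triangle_{j_{1+\alpha}}v_{1+\alpha})\ne0$; symmetrizing (at a cost $\alpha^{O(1)}$) I assume $j_1\ge j_2\ge j_3\ge\max_{k\ge4}j_k$, and since the output frequency $j$ lies in $\mathbb{Z}^c_\lambda$ so does $j_1$. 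The estimate is then split according to whether $j_2$ and $j_3$ lie in $\mathbb{Z}_\lambda$ or $\mathbb{Z}^c_\lambda$.

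On each region I apply H\"older in $(x,t)$ as in Step~3 of the proof of Lemma~\ref{NonlinearestH1}: since $1/(4/3)=3\cdot\frac14$, the three top-frequency factors go into $L^4_{x,t}$ and the remaining $\alpha-2$ factors into $L^\infty_{x,t}$, where Bernstein (in dimension $d=2$) gives $\|\triangle_{j_k}v_k\|_{L^\infty_x}\lesssim 2^{j_k}\|\triangle_{j_k}v_k\|_{L^2_x}$. Summing $j_4,\dots,j_{1+\alpha}\le j_3$ against the $\widetilde{L}^\infty(\mathbb{R},B^{\sigma_\alpha,s}_{2,2})$-norm yields a factor $(C\sqrt\alpha)^{\alpha-2}\,2^{(1-2/\alpha)j_3}\,\|v\|^{\alpha-2}_{\widetilde{L}^\infty(B^{\sigma_\alpha,s}_{2,2})}$: the value $\sigma_\alpha=1-1/\alpha$ is chosen precisely so that the $\alpha-2$ Bernstein losses $2^{(d/2-\sigma_\alpha)j_3}=2^{j_3/\alpha}$ combine into $2^{(1-2/\alpha)j_3}$. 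One then sums $j_3\le j_2$ against $B^{\sigma_\alpha-1/2,s}_{4,2}$ (or against $B^{\sigma,s}_{4,2}$ on the block $\mathbb{Z}_\lambda$), then $j_2\le j_1$ likewise, each a convergent geometric series whose constant is $\lesssim(\sigma-\sigma_\alpha)^{-1/2}\lesssim\sqrt\alpha$ (here I use $\sigma>\sigma_\alpha$, which holds since in all applications $\sigma\ge d/2=1$); finally Young's inequality $\ell^1*\ell^2\subset\ell^2$ resolves the sum over $j_1\sim j$ against the weight $2^{(\sigma-1/2)j}$. The subcase $\alpha=2$, the one relevant to $u^3$, is even simpler: there is no $L^\infty$-tail and one merely estimates the trilinear form $\|v^3\|_{B^{\sigma-1/2}_{4/3,2}}\lesssim\|v\|^3_{B^{\sigma-1/2}_{4,2}}$, which holds because $\sigma>1/2$; this is where the hypothesis $\sigma>1/2$ enters.

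It remains to repackage the Besov norms produced above into $\|v\|_{Y_1}$ and $\|v\|_{Y_2}$, and the bookkeeping of the powers of $\lambda$ is the crux. By \eqref{2dworkspace1}--\eqref{2dworkspace2}, a factor carried by the block $\mathbb{Z}^c_\lambda$ contributes $\|v\|_{\widetilde{L}^4(B^{\sigma-1/2,s}_{4,2}(\mathbb{Z}^c_\lambda))}=\lambda^{-1/4}\|v\|_{Y_2}$, while a factor carried by $\mathbb{Z}_\lambda$, estimated through its full $\sigma$-regularity, contributes $\lambda^{1/4}\|v\|_{Y_1}$; the $\alpha-2$ tail factors contribute $\|v\|_{\widetilde{L}^\infty(B^{\sigma_\alpha,s}_{2,2})}\lesssim\|v\|_Y$ without any power of $\lambda$. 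The $j_1$-factor is always on $\mathbb{Z}^c_\lambda$, so the total power of $\lambda$ — including the prefactor $\lambda^{-1/4}$ on the left-hand side of \eqref{2dNLE0} — is $\lambda^{-1/4-1/4+\varepsilon_2/4+\varepsilon_3/4}$ with $\varepsilon_2,\varepsilon_3\in\{+1,-1\}$, hence $\le\lambda^0$ in every combination; since $\lambda\ge1$ this is $\lesssim1$, and absorbing the combinatorial $\alpha^{O(1)}$ together with the geometric-series constants $\lesssim(\sqrt\alpha)^{O(\alpha)}$ produces $(C\sqrt\alpha)^{\alpha+C}\|v\|_Y^{1+\alpha}$. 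I expect the main obstacle to be exactly this last verification: checking that in every admissible distribution of the frequencies $j_1,j_2,j_3$ among the blocks $\mathbb{Z}_\lambda,\mathbb{Z}^c_\lambda$ the accumulated power of $\lambda$ stays nonpositive, while the tight regularity budget leaves no logarithmic loss — which is what pins down both the threshold $\sigma_\alpha=1-1/\alpha$ and the strict condition $\sigma>1/2$.
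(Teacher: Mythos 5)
Your approach matches the paper's: Regularity Transference to push $2^{s|\nabla|}$ onto each factor, Littlewood--Paley decomposition with the octant constraint forcing $j-c(\alpha)<j_1\le j+1$, H\"older placing three factors in $L^4_{x,t}$ and $\alpha-2$ tail factors in $L^\infty_{x,t}$ via 2D Bernstein, the role of $\sigma_\alpha=1-1/\alpha$ in absorbing the $\alpha-2$ Bernstein losses into $2^{(1-2/\alpha)j_3}$, the role of $\sigma>1/2$ when $\alpha=2$, and a Young-inequality resolution of the $j_1\sim j$ sum against the output weight. The splitting into five regions according to where $j_1,j_2,j_3$ and $\max_{k\ge4}j_k$ sit relative to $j_\lambda$ is exactly what the paper does with $\mathcal{Z}_0,\dots,\mathcal{Z}_4$.

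There is, however, a genuine gap in your $\lambda$-bookkeeping: the claim that ``since the output frequency $j$ lies in $\mathbb{Z}^c_\lambda$ so does $j_1$'' is false. The octant condition only gives $j_1>j-c(\alpha)$, and for output frequencies $j$ with $j_\lambda<j\le j_\lambda+c(\alpha)$ the top input frequency $j_1$ can sit in $\mathbb{Z}_\lambda$. This is the paper's region $\mathcal{Z}_0$, handled in \eqref{2d1NLE10}--\eqref{2d1NLE11}, and your enumeration $\lambda^{-1/4-1/4+\varepsilon_2/4+\varepsilon_3/4}$ with $\varepsilon_2,\varepsilon_3\in\{\pm1\}$ simply does not contain it: all three top factors would then be $\mathbb{Z}_\lambda$ factors each costing $\lambda^{1/4}$, and no factor supplies a compensating $\lambda^{-1/4}$. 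The paper rescues this case by first pulling $\lambda^{-1/2}$ out of the output weight via $2^{2(\sigma-1/2)j}=2^{-j}\cdot2^{2\sigma j}\le\lambda^{-1}2^{2\sigma j}$ for $j>j_\lambda$, after which the count $\lambda^{-1/4}\cdot\lambda^{-1/2}\cdot\lambda^{3\cdot 1/4}=\lambda^{0}$ again closes. So the conclusion survives, but the argument as you stated it does not cover this transition region, and the extraction of $\lambda^{-1/2}$ is the extra idea you are missing.
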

In Lemma \ref{NonlinearestH1}, we need condition $\alpha \geq 4$ in 2D, where the critical regularity index $\sigma_c$ is attainable. In order to handle the nonlinear power $\alpha=2$,  we introduce $\sigma_\alpha \, (> \sigma_c) $ in Lemma \ref{2dNonlinearestH1}, which is not the critical index for the nonlinearity $v^{1+\alpha}$. However, if we consider $f(v) = e^{u^2} u-u$ or $f(v) =\sinh v-v$ as nonlinearity in in 2D, $\sigma =1$ is the critical regularity index, Lemma \ref{2dNonlinearestH1} is enough for our purpose.

Now we estimate
\begin{align} \label{2d1NLE1}
\left\| u_1...u_{1+\alpha} \right\|_{\widetilde{L}^{4/3}  (\mathbb{R}, B^{\sigma-1/2,s}_{4/3, 2}(\mathbb{Z}^c_\lambda))} = \left( \sum_{j>        j_\lambda}  2^{2(\sigma-1/2)j}  \|\triangle_j 2^{s|\nabla|} (u_1 u_2...u_{\alpha+1})\|^2_{L^{4/3}_{x,t}} \right)^{1/2}.
\end{align}
Take $v_i = 2^{s|\nabla|} u_i$. Following the same way as in \eqref{1NLE2}--\eqref{1NLE2+}, we have
\begin{align}
 \| u_1... & u_{1+\alpha}  \|_{\widetilde{L}^{4/3}  (\mathbb{R}, B^{\sigma-1/2,s}_{4/3, 2}(\mathbb{Z}^c_\lambda))} \nonumber\\
  & \leqslant \sum^4_{\ell=0}  \left( \sum_{j>j_\lambda}  2^{2(\sigma-1/2)j} \left(\sum_{(j_1,...,j_{1+\alpha}) \in  \mathcal{Z}_\ell }  \| \triangle_{j_1} v_1  ... \triangle_{j_{1+\alpha}} v_{\alpha+1} \|_{L^{4/3}_{x,t}} \right)^2 \right)^{1/2} \nonumber\\
  & = I +...+V.
  \label{2d1NLE2+}
\end{align}
First, we estimate $II$. By H\"older's  and Bernstein's inequalities,  for $(j_1,...,j_{1+\alpha}) \in  \mathcal{Z}_1$,
\begin{align} \label{2d1NLE3+}
\|  \triangle_{j_1} v_1 ... \triangle_{j_{1+\alpha}}v_{\alpha+1} \|_{L^{4/3}_{x,t}}
   & \leqslant C^{\alpha} \prod^{3}_{\ell =1} \|  \triangle_{j_\ell} v_\ell \|_{L^{4}_{x,t}}      \prod^{\alpha+1}_{k=4} (2^{dj_k/2} \|\triangle_{j_k} v_{k} \|_{L^{\infty}_{t}L^2_x}).
\end{align}
If $\alpha \geq 3$,  we have from  H\"older's and Young's inequalities that
\begin{align} \label{2d1NLE6}
 \sum_{(j_1,...,j_{1+\alpha})  \in \mathcal{Z}_1}  &   \| \triangle_{j_1} v_1  ... \triangle_{j_{1+\alpha}} v_{\alpha+1} \|_{L^{4/3}_{x,t}}  \nonumber\\
 \leqslant & \, C^{\alpha}  \sum_{j_1 > (j-c(\alpha)) \vee j_\lambda \geqslant j_2 \geqslant j_3 }  \prod^{3}_{\ell =1} \|  \triangle_{j_\ell} v_\ell \|_{L^{4}_{x,t}}    \sum_{j_4 ,..., j_{1+\alpha} \leqslant j_3}       \prod^{\alpha+1}_{k=3} (2^{dj_k/2} \|\triangle_{j_k} v_{k} \|_{L^{\infty}_{t}L^2_x}) \nonumber\\
 \leqslant &   (C\sqrt{\alpha})^{\alpha}  \sum_{j_1 > (j-c(\alpha)) \vee  j_\lambda \geqslant j_2 \geqslant j_3 } 2^{(1-2/\alpha) j_3}    \prod^{3}_{\ell =1} \|  \triangle_{j_\ell} v_\ell \|_{L^{4}_{x,t}}   \prod^{1+\alpha}_{\ell =4}  \|u_\ell\|_{\widetilde{L}^\infty(\mathbb{R}, B^{\sigma_\alpha,s}_{2,2})} \nonumber  \\
\leqslant & (C\sqrt{\alpha})^{\alpha}  \sum_{(j-c(\alpha)) \vee j_\lambda  <  j_1 \leqslant j+1}  \| \triangle_{j_1} v_1\|_{L^{4}_{x,t}}   \sum_{ j_2\leqslant j_\lambda}  (2^{j_2(\sigma_\alpha -1/2)} \|  \triangle_{j_2} v_2 \|_{L^{4}_{x,t}})   \nonumber\\
     &\times
     \sum_{j_3\leqslant j_2}  2^{2 (j_3 -j_2) (1/2-1/\alpha)}   (2^{j_3 (\sigma_\alpha-1/2) } \|\triangle_{j_3} v_3\|_{L^{4}_{x,t}})   \prod^{1+\alpha}_{k=4} \|u_k\|_{\widetilde{L}^\infty(\mathbb{R}, B^{\sigma_\alpha,s}_{2,2})}   \nonumber\\
   \leqslant &  C (C\sqrt{\alpha})^{\alpha }  \sum_{(j-c(\alpha)) \vee j_\lambda  < j_1 \leqslant j+1}  \| \triangle_{j_1} v_1\|_{L^{4}_{x,t}}   \nonumber\\
     &\times \|u_2\|_{\widetilde{L}^4(\mathbb{R}, B^{\sigma_\alpha -1/2,s}_{4,2}(\mathbb{Z}_{\lambda}))}   \|u_3\|_{\widetilde{L}^4(\mathbb{R}, B^{\sigma_\alpha-1/2 ,s}_{4,2}(\mathbb{Z}_{\lambda}) )}   \prod^{1+\alpha}_{k=4} \|u_k\|_{\widetilde{L}^\infty(\mathbb{R}, B^{\sigma_\alpha,s}_{2,2})}.
\end{align}
Inserting the above estimate into $II$, and using Young's inequality, we have
\begin{align} \label{2d1NLE7}
II
   \leqslant &  (C\sqrt{\alpha})^{\alpha+C}  \|u_1\|_{\widetilde{L}^4(\mathbb{R}, B^{\sigma -1/2,s}_{4,2}(\mathbb{Z}^c_{\lambda}))}
      \prod^{3}_{k=2} \|u_k\|_{\widetilde{L}^4(\mathbb{R}, B^{\sigma_\alpha -1/2,s}_{4,2}(\mathbb{Z}_{\lambda}))}   \prod^{1+\alpha}_{k=4} \|u_k\|_{\widetilde{L}^\infty(\mathbb{R}, B^{\sigma_\alpha,s}_{2,2})}.
\end{align}
Similarly,
\begin{align} \label{2d1NLE8}
& III \leqslant  (C\sqrt{\alpha})^{\alpha+C} \prod^{2}_{k=1} \|u_k\|_{\widetilde{L}^4(\mathbb{R}, B^{\sigma -1/2,s}_{4,2}(\mathbb{Z}^c_{\lambda}))}
      \|u_3\|_{\widetilde{L}^4(\mathbb{R}, B^{\sigma_\alpha -1/2,s}_{4,2}(\mathbb{Z}_{\lambda}))}   \prod^{1+\alpha}_{k=4} \|u_k\|_{\widetilde{L}^\infty(\mathbb{R}, B^{\sigma_\alpha,s}_{2,2})},\\
 & IV+V \leqslant  (C\sqrt{\alpha})^{\alpha+C} \|u_1\|_{\widetilde{L}^4(\mathbb{R}, B^{\sigma -1/2,s}_{4,2}(\mathbb{Z}^c_{\lambda}))} \prod^{3}_{k=2} \|u_k\|_{\widetilde{L}^4(\mathbb{R}, B^{\sigma_\alpha -1/2,s}_{4,2}(\mathbb{Z}^c_{\lambda}))}
          \prod^{1+\alpha}_{k=4} \|u_k\|_{\widetilde{L}^\infty(\mathbb{R}, B^{\sigma_\alpha,s}_{2,2})}. \label{2d1NLE9}
\end{align}
Now we consider the estimate of $I$. Since $2^j > \lambda$ for $j>j_\lambda$, we have
\begin{align}
 I   & \leqslant  \lambda^{-1/2} \left( \sum_{j>j_\lambda}  2^{2 \sigma j} \left(\sum_{(j_1,...,j_{1+\alpha}) \in  \mathcal{Z}_\ell }  \| \triangle_{j_1} v_1  ... \triangle_{j_{1+\alpha}} v_{\alpha+1} \|_{L^{4/3}_{x,t}} \right)^2 \right)^{1/2}.
  \label{2d1NLE10}
\end{align}
Then we can use an analogous way as in the estimates of $II$ to obtain that
\begin{align}
  I \leqslant  \lambda^{-1/2}  (C\sqrt{\alpha})^{\alpha+C} \|u_1\|_{\widetilde{L}^4(\mathbb{R}, B^{\sigma,s}_{4,2}(\mathbb{Z}_{\lambda}))} \prod^{3}_{k=2} \|u_k\|_{\widetilde{L}^4(\mathbb{R}, B^{\sigma_\alpha,s}_{4,2}(\mathbb{Z}_{\lambda}))}
          \prod^{1+\alpha}_{k=4} \|u_k\|_{\widetilde{L}^\infty(\mathbb{R}, B^{\sigma_\alpha,s}_{2,2})}. \label{2d1NLE11}
\end{align}
Next, for $\alpha =2$, the proof becomes easier than that of $\alpha \geqslant 3$. Say, we consider the estimate of $II$. Noticing that $\sigma >1/2$, we have
\begin{align} \label{2d1NLE12}
 \sum_{(j_1,j_2,j_3)  \in \mathcal{Z}_1} \| \triangle_{j_1} v_1  ... \triangle_{j_3} v_{3} \|_{L^{4/3}_{x,t}}
 \leqslant  &  \sum_{j_1 > (j-3) \vee j_\lambda \geqslant j_2 \geqslant j_3 }  \prod^{3}_{\ell =1} \|  \triangle_{j_\ell} v_\ell \|_{L^{4}_{x,t}}    \nonumber\\
 \lesssim  &     \sum_{j_1 > (j-3) \vee  j_\lambda  }   \|  \triangle_{j_1} v_1 \|_{L^{4}_{x,t}}   \prod^{3}_{\ell =2}  \|u_\ell\|_{\widetilde{L}^4(\mathbb{R}, B^{\sigma-1/2,s}_{4,2} (\mathbb{Z}_\lambda))}.
\end{align}
It follows that
\begin{align} \label{2d1NLE13}
II
   \lesssim &    \|u_1\|_{\widetilde{L}^4(\mathbb{R}, B^{\sigma -1/2,s}_{4,2}(\mathbb{Z}^c_{\lambda}))}
      \prod^{3}_{k=2} \|u_k\|_{\widetilde{L}^4(\mathbb{R}, B^{\sigma -1/2,s}_{4,2}(\mathbb{Z}_{\lambda}))}.
\end{align}

Now we estimate $\|\mathscr{T}v\|_{Y_1}$. By the Strichartz estimates in Corollary \ref{StrichartzG1},
\begin{align} \label{2d2NLE1}
\|\mathcal{T}v\|_{Y_1}
 \lesssim &  \|u_{0,\lambda}\|_{E^{\sigma,s}}   +    \|u_{1,\lambda}\|_{E^{\sigma-1,s}} + \lambda^{-3/4}  \| f(v)\|_{\widetilde{L}^{4/3} (\mathbb{R}, B^{\sigma,s}_{4/3, 2}(\mathbb{Z}_\lambda))}.
\end{align}
Using Taylor's expansion of $\sinh v -v$, one needs to bound $\|v^{1+\alpha}\|_{\widetilde{L}^{4/3} (\mathbb{R}, B^{\sigma,s}_{4/3, 2}(\mathbb{Z}_\lambda))}$. We have the following

\begin{lemma}\label{2dcorNonlinearestL1}
Let $d= 2$,   $\alpha \in \mathbb{N}, \ \alpha \geqslant 2$, $\sigma \geqslant \sigma_c =d/2- 2/\alpha$, $\sigma>0, \ s< 0$, $\lambda\gg 1$. Let $Y_1$ and $Y_2$ be as in \eqref{2dworkspace1} and \eqref{2dworkspace2}.  Assume that ${\rm supp} \, \widehat{v} \subset \mathbb{ R}^d_I$.  Then we have
\begin{align}
\lambda^{-3/4}\|v^{1+\alpha} \|_{\widetilde{L}^{4/3} (\mathbb{R}, B^{\sigma,s}_{4/3, 2}(\mathbb{Z}_\lambda))} \leqslant  (C\sqrt{\alpha}) ^{\alpha  + C}     \|v\|^{1+\alpha}_{Y_1 \cap Y_2}.    \label{2dcorNLE0L}
\end{align}
\end{lemma}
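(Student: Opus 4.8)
The statement is the low-frequency companion (on the block $\mathbb{Z}_\lambda$) of the high-frequency nonlinear estimate in Lemma \ref{2dNonlinearestH1}, and it will be proved by repeating the dyadic paraproduct bookkeeping used there (and in Lemmas \ref{NonlinearestH1}, \ref{NonlinearestL1}), specialized to two dimensions. First I would invoke the Regularity Transference Lemma \ref{RegTransf}: since $\mathrm{supp}\,\widehat v\subset\mathbb{R}^d_I$, writing $w:=2^{s|\nabla|}v$ one has $2^{s|\nabla|}(v^{1+\alpha})=w^{1+\alpha}$, so it suffices to bound $\big(\sum_{j\le j_\lambda}2^{2\sigma j}\|\triangle_j(w^{1+\alpha})\|_{L^{4/3}_{x,t}}^2\big)^{1/2}$ and to translate the resulting $B^{\cdot}_{4,2}$- and $B^{\cdot}_{2,2}$-norms of $w$ back into $B^{\cdot,s}_{4,2}$- and $B^{\cdot,s}_{2,2}$-norms of $v$ at the end via Lemma \ref{Isomorphism1}. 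Expanding $\triangle_j(w^{1+\alpha})=\sum_{j_1,\dots,j_{1+\alpha}}\triangle_j(\triangle_{j_1}w\cdots\triangle_{j_{1+\alpha}}w)$ and using that frequencies add on $\mathbb{R}^d_I$ (so $|\xi|=\sum_k|\xi_k|$ on the output support), the product is nonzero only when $j-c(\alpha)\le j_{\max}\le j+1$; hence, when $j\le j_\lambda$, \emph{all} of $j_1,\dots,j_{1+\alpha}$ lie in $[0,j_\lambda+c(\alpha)]$. This is the essential simplification: no genuinely high-frequency dyadic block enters, so the whole sum is of the type of the "$\mathcal Z_0$-term" $I$ in the proof of Lemma \ref{2dNonlinearestH1}. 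By symmetry I may assume $j_1\ge j_2\ge j_3\ge\cdots$, the cost of removing this being $\lesssim(1+\alpha)\alpha(\alpha-1)\lesssim\alpha^3$, absorbed into $(C\sqrt\alpha)^{\alpha+C}$.

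Next I would run the core product estimate exactly as in \eqref{2d1NLE3+}--\eqref{2d1NLE11}: distribute the $L^{4/3}_{x,t}$-norm by H\"older, placing the three highest-frequency factors in $L^4_{x,t}$ (so $1/(4/3)=1/4+1/4+1/4$) and the remaining $\alpha-2$ factors in $L^\infty_{x,t}$, then apply the Bernstein inequality in $d=2$, $\|\triangle_{j_k}w\|_{L^\infty_{x,t}}\lesssim 2^{j_k}\|\triangle_{j_k}w\|_{L^\infty_tL^2_x}$. With $\sigma_c=d/2-2/\alpha=1-2/\alpha$ one writes $2^{j_k}=2^{\sigma_c j_k}2^{(2/\alpha)j_k}$ and sums the tail $j_4,\dots,j_{1+\alpha}\le j_3$ by Cauchy--Schwarz (each sum $\sum_{m\ge0}2^{-(4/\alpha)m}\sim\alpha$ yields one $\sqrt\alpha$), producing $\le(C\sqrt\alpha)^{\alpha}\,2^{2(1-2/\alpha)j_3}\prod_{k\ge4}\|w\|_{\widetilde L^\infty B^{\sigma_c}_{2,2}}$; the subsequent geometric summations $j_3\le j_2\le j_1$ and the final Young/convolution sum in $j_1\leftrightarrow j$ converge precisely because $\sigma\ge\sigma_c$, leaving $\|w\|_{\widetilde L^4 B^{\sigma_c}_{4,2}(\mathbb{Z}_\lambda)}$ on the second and third factors and $\|w\|_{\widetilde L^4 B^{\sigma}_{4,2}(\mathbb{Z}_\lambda)}$ (or, on the narrow transition piece where $j_1=j+1=j_\lambda+1$, $\|w\|_{\widetilde L^4 B^{\sigma-1/2}_{4,2}(\mathbb{Z}^c_\lambda)}$ together with one extra power of $\lambda^{1/2}$) on the first factor. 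The one difference from Lemma \ref{2dNonlinearestH1} is that here there is \emph{no} $-1/2$ derivative shift in the output space nor on the factors, because the low-frequency Strichartz estimates of Corollary \ref{StrichartzG1} carry no loss of regularity — this is exactly why the hypothesis $\sigma\ge\sigma_c$ suffices instead of the stronger $\sigma\ge\sigma_\alpha$ needed for the high-frequency part. The degenerate case $\alpha=2$ (where $\sigma_c=0$ and there are no $L^\infty$-factors) is handled separately and more simply, as in \eqref{2d1NLE12}--\eqref{2d1NLE13}: one distributes the output weight $2^{\sigma j}\lesssim 2^{(\sigma/3)j_1}2^{(\sigma/3)j_2}2^{(\sigma/3)j_3}$ and uses $\sigma>0$ to make the $j_2,j_3$-sums geometric.

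Translating back to $v$ (Lemma \ref{Isomorphism1}) and using $\|v\|_{\widetilde L^\infty B^{\sigma_c,s}_{2,2}}\le\|v\|_{\widetilde L^\infty B^{\sigma,s}_{2,2}}$ and $\|v\|_{\widetilde L^4 B^{\sigma_c,s}_{4,2}(\mathbb{Z}_\lambda)}\le\|v\|_{\widetilde L^4 B^{\sigma,s}_{4,2}(\mathbb{Z}_\lambda)}$ (valid since $\sigma\ge\sigma_c$ and all $j\ge0$), I would arrive at $\|v^{1+\alpha}\|_{\widetilde L^{4/3}B^{\sigma,s}_{4/3,2}(\mathbb{Z}_\lambda)}\le(C\sqrt\alpha)^{\alpha+C}\,\lambda^{3/4}\,\|v\|_{Y_1}^{3}\|v\|_{\widetilde L^\infty B^{\sigma,s}_{2,2}}^{\alpha-2}$ — the $\lambda^{3/4}$ coming from $\|v\|_{\widetilde L^4 B^{\sigma,s}_{4,2}(\mathbb{Z}_\lambda)}\le\lambda^{1/4}\|v\|_{Y_1}$ applied to the three $L^4$-factors (and the alternative $\lambda^{1/2}\cdot\lambda^{-1/4}=\lambda^{1/4}$ for the boundary piece that sees $\mathbb{Z}^c_\lambda$, converted through $Y_2$). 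Multiplying by the prefactor $\lambda^{-3/4}$ cancels this power exactly, giving $\lambda^{-3/4}\|v^{1+\alpha}\|_{\widetilde L^{4/3}B^{\sigma,s}_{4/3,2}(\mathbb{Z}_\lambda)}\le(C\sqrt\alpha)^{\alpha+C}\|v\|_{Y_1}^{3}\|v\|_{Y_1\cap Y_2}^{\alpha-2}\le(C\sqrt\alpha)^{\alpha+C}\|v\|_{Y_1\cap Y_2}^{1+\alpha}$, as claimed. The main obstacle is not any single inequality but the bookkeeping: (i) tracking the $\lambda$-powers so that the $\lambda^{-3/4}$ prefactor precisely annihilates the three $\lambda^{1/4}$'s — a consistency check on the choice of Strichartz exponents in Corollary \ref{StrichartzG1} and of the norm $Y_1$ — and (ii) carrying out the Cauchy--Schwarz tail summations, the $\sim\alpha^3$ symmetrization, and the exact tuning of the critical index $\sigma_c$ so that all implicit constants remain of the form $(C\sqrt\alpha)^{\alpha+C}$ uniformly in $\alpha$.
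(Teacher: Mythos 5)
Your proposal follows essentially the same route as the paper's proof: invoke Regularity Transference, expand $\triangle_j(w^{1+\alpha})$ over dyadic blocks, observe that for output $j\le j_\lambda$ all but the highest block (which may sit just above $j_\lambda$ and is treated as a separate boundary term, matching the paper's $I$ in \eqref{2dlNLE2+a}) lie in $\mathbb{Z}_\lambda$, then H\"older with three $L^4$-factors and $\alpha-2$ Bernstein/$L^\infty$-factors, Cauchy–Schwarz for the tail, Young for the $j_1\leftrightarrow j$ convolution, and conclude by the $\lambda^{3/4}=\lambda^{1/4+1/4+1/4}$ bookkeeping against the $\lambda^{-3/4}$ prefactor. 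The small-print issue you gloss over — the $j_2,j_3$-geometric sums become only marginally convergent exactly at $\sigma=\sigma_c$ when $\alpha\ge 3$ (the paper's statement shares this looseness; strict inequality or the $\sigma\ge\sigma_\alpha$ from Lemma~\ref{2dNonlinearestH1} removes it, and the application to sinh-Gordon uses $\sigma=1>\sigma_\alpha$) — is the same as in the paper, so it does not distinguish your argument from theirs.
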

\begin{proof}
By definition,
\begin{align} \label{2d1NLE1}
\left\| u_1...u_{1+\alpha} \right\|_{\widetilde{L}^{4/3}  (\mathbb{R}, B^{\sigma,s}_{4/3, 2}(\mathbb{Z}_\lambda))} = \left( \sum_{j \leqslant       j_\lambda}  2^{2 \sigma j}  \|\triangle_j 2^{s|\nabla|} (u_1 u_2...u_{\alpha+1})\|^2_{L^{4/3}_{x,t}} \right)^{1/2}.
\end{align}
Take $v_i = 2^{s|\nabla|} u_i$. One has that
\begin{align}
 \| u_1...   u_{1+\alpha}  \|_{\widetilde{L}^{4/3}  (\mathbb{R}, B^{\sigma,s}_{4/3, 2}(\mathbb{Z}_\lambda))}
  \leqslant   \left( \sum_{j \leqslant j_\lambda}  2^{2 \sigma j} \left(\sum_{ j_1,...,j_{1+\alpha}  \in  \mathbb{Z}_+ }  \| \triangle_{j} ( \prod^{1+\alpha}_{\ell=1}\triangle_{j_\ell} v_\ell  ) \|_{L^{4/3}_{x,t}} \right)^2 \right)^{1/2}.
  \label{2dlNLE2+}
\end{align}
By the symmetry, we can assume that
$$
j_1 = \max_{1 \leqslant k \leqslant 1+\alpha} j_k, \ \  j_2 = \max_{2 \leqslant k \leqslant 1+\alpha} j_k, \ \ j_3 = \max_{3 \leqslant k \leqslant 1+\alpha} j_k
$$
in the above summation $\sum_{j_1,...,j_{1+\alpha}}$. Noticing that
$$
\triangle_{j} (   \triangle_{j_1} v_1 ...  \triangle_{j_{1+\alpha}} v_{1+\alpha}  ) =0, \ \ j_1, \, j_2\geqslant j+1,
$$
one has that
\begin{align}
 \| u_1...  &  u_{1+\alpha}  \|_{\widetilde{L}^{4/3}  (\mathbb{R}, B^{\sigma-1/2,s}_{4/3, 2}(\mathbb{Z}_\lambda))} \nonumber\\
  \leqslant &  \,  2^{ \sigma j_\lambda}  \sum_{ j_2,...,j_{1+\alpha}  \leqslant j_\lambda }  \| \triangle_{j_\lambda} ( \triangle_{j_\lambda +1} v_1 \triangle_{j_2} v_2 ...  \triangle_{j_{1+\alpha}} v_{1+\alpha})    \|_{L^{4/3}_{x,t}}   \nonumber\\
 & +  \left( \sum_{j \leqslant j_\lambda}  2^{2 \sigma j} \left(\sum_{ j_1,...,j_{1+\alpha} \leqslant j_\lambda }  \| \triangle_{j} ( \triangle_{j_1} v_1 ...  \triangle_{j_{1+\alpha}} v_{1+\alpha} ) \|_{L^{4/3}_{x,t}} \right)^2 \right)^{1/2} \nonumber\\
 : =  & I+II.
  \label{2dlNLE2+a}
\end{align}
Using an analogous way as in Lemma \ref{NonlinearestH1}, we have
\begin{align}
  II \leqslant     (C\sqrt{\alpha})^{\alpha+C} \prod^{3}_{k=1} \|u_k\|_{\widetilde{L}^4(\mathbb{R}, B^{\sigma,s}_{4,2}(\mathbb{Z}_{\lambda}))}
          \prod^{1+\alpha}_{k=4} \|u_k\|_{\widetilde{L}^\infty(\mathbb{R}, B^{\sigma_c,s}_{2,2})}. \label{2dl1NLE11}
\end{align}
Using H\"older's inequality and similar way as in  Lemma \ref{NonlinearestH1},
\begin{align}
I \leqslant &  \,  2^{\sigma j_\lambda}  \sum_{ j_2,...,j_{1+\alpha}  \leqslant j_\lambda }  \|  \triangle_{j_\lambda +1} v_1\|_{L^4_{x,t}}  \prod^{3}_{k=2} \|\triangle_{j_k} v_k\|_{L^{4}_{x,t}}    \prod^{1+\alpha}_{k=4} \| \triangle_{j_k} v_{k}   \|_{L^\infty_{x,t}}   \nonumber\\
 \leqslant &   (C\sqrt{\alpha})^{\alpha}   2^{ \sigma j_\lambda}     \|  \triangle_{j_\lambda +1} v_1\|_{L^4_{x,t}}  \prod^{3}_{k=2} \|u_k\|_{\widetilde{L}^4(\mathbb{R}, B^{\sigma,s}_{4,2} (\mathbb{Z}_\lambda)) }  \prod^{1+\alpha}_{k=4} \|u_k\|_{\widetilde{L}^\infty(\mathbb{R}, B^{\sigma_c,s}_{2,2})} \nonumber\\
  \leqslant &  (C\sqrt{\alpha})^{\alpha}   \lambda^{1/2}     \| u_1\|_{\widetilde{L}^4(\mathbb{R}, B^{\sigma-1/2,s}_{4,2} (\mathbb{Z}^c_\lambda)) }  \prod^{3}_{k=2} \|u_k\|_{\widetilde{L}^4(\mathbb{R}, B^{\sigma,s}_{4,2} (\mathbb{Z}_\lambda)) }  \prod^{1+\alpha}_{k=4} \|u_k\|_{\widetilde{L}^\infty(\mathbb{R}, B^{\sigma_c,s}_{2,2})}
  \label{2dl1NLE12}
\end{align}
Combining the estimates of $I$ and $II$, we obtain the result, as desired.
\end{proof}

\subsection{Proof of Theorem \ref{mainresult}}
\label{proofofthm1.2}

\begin{rem}
We remark that the support condition in Theorem \ref{mainresult} can be slightly improved by the following conditions:
\begin{align*}
& {\rm supp} \,\widehat{u}_0 \subset \mathbb{R}^d_I \ \mbox{for} \ \alpha > 4/d,  \ \
{\rm supp} \,\widehat{u}_0 \subset \mathbb{R}^d_I \setminus \{0\} \ \mbox{for} \ \alpha = 4/d \in \mathbb{N};   \\
& {\rm supp} \,\widehat{u}_1 \subset \mathbb{R}^d_I \  \mbox{for} \ \alpha > \frac{4(d+2)}{(d-2)(d+1)}, \ d \geqslant 3,\\
& {\rm supp} \,\widehat{u}_1 \subset \mathbb{R}^d_I\setminus \{0\} \ \mbox{for} \   \alpha \leqslant \frac{4(d+2)}{(d-2)(d+1)} \ (\alpha< \infty \ \mbox{for} \ d=1, 2).
\end{align*}
We will prove our result by assuming the above support conditions.
\end{rem}

{\it Step 1.} We consider the case $\alpha \geqslant 4/(d-1), \, \sigma >1/2$.	Let us connect the proof with \eqref{1workspace3} and \eqref{2NLE3}. By Corollaries \ref{corNonlinearestH1} and \ref{corNonlinearestL1}, noticing that $X=X_1\cap X_2$,  we have
\begin{align} \label{3NLE3}
 & \|\mathcal{T}v\|_{X}
 \leqslant C  (\|u_{0,\lambda}\|_{E^{\sigma,s}}   +    \|u_{1,\lambda}\|_{E^{\sigma-1,s}}) +   C \lambda^{2/(d+1)}  \|v\|^{1+\alpha}_{X}, \\
& \|\mathcal{T}v_1 - \mathcal{T}v_2\|_{X }
 \leqslant C   \lambda^{2/(d+1)}  (\|v_1\|^{ \alpha}_{X } + \|v_2\|^{ \alpha}_{X })\|v_1 -v_2\|_{X } . \label{3NLE4}
\end{align}
$\alpha \geqslant 4/(d-1)$ implies that $d/2-2/\alpha > 2/\alpha(1+d)$. So, there exists $\epsilon>0$  such that
$$
\frac{d}{2} -\frac{2}{\alpha} \geqslant 2 \epsilon + \frac{2}{\alpha(1+d)}.
$$
By Corollary \ref{Scaling4}, there exists $\lambda_0>1$ such that for any $\lambda > \lambda_0$
\begin{align}
& \| u_{0,\lambda} \|_{E^{\sigma,s} } \leqslant C \lambda^{-d/2+\sigma+ \epsilon}   \| u_0 \|_{E^{\sigma,s}   } \leqslant C \lambda^{-2/\alpha(1+d)  -\epsilon}   \| u_0 \|_{E^{\sigma,s}   },   \label{scalinges22}
\end{align}
If $\alpha > 4(d+2)/(d-2)(1+\alpha)$, then  there exists $\epsilon>0$  such that
$$
\frac{d}{2} -\frac{2}{\alpha} - 1 \geqslant 2 \epsilon+  \frac{2}{\alpha(1+d)},
$$
where we can assume that $\epsilon$ is the same one as in \eqref{scalinges22}.  It follows from Corollary \ref{Scaling4} that  for $\lambda>\lambda_0$,
\begin{align}
& \| u_{1,\lambda}  \|_{E^{\sigma-1,s} } \leqslant C \lambda^{1-d/2+2/\alpha + \epsilon }   \|u_1 \|_{E^{\sigma-1, s} } \leqslant C \lambda^{-2/\alpha(1+d)  -\epsilon}   \| u_0 \|_{E^{\sigma,s}   }.  \label{1scalinges22}
\end{align}
 If $ {\rm supp }\, \widehat{u}_1 \subset \{\xi: |\xi|\geqslant \varepsilon_0\}$ for some $\varepsilon_0>0$,  by Corollary \ref{Scaling4}, there exists $\lambda_1>\lambda_0$ such that for any $\lambda>\lambda_1$,
\begin{align}
  \| u_{1,\lambda}  \|_{E^{\sigma-1,s} } & \leqslant C \lambda^{1-d/2+2/\alpha + (\sigma-1)\vee 0 } 2^{s\lambda \varepsilon_0 /3}   \|u_1 \|_{E^{\sigma-1, s} } \nonumber\\
& \leqslant C \lambda^{-2/\alpha(1+d)  -\epsilon}  \|u_1 \|_{E^{\sigma-1, s} }.     \label{1scalinges22a}
\end{align}
Let $\delta >0$  satisfy $C \delta^\alpha \leqslant 1/100$ and
$$
M_\lambda =  \lambda^{-2/\alpha(1+d)} \delta.
$$
Take  a $\lambda_2 > \lambda_1$ verifying
\begin{align}
  C \lambda^{-\epsilon } (\|u_0\|_{E^{\sigma,s}}  +  \|u_1\|_{E^s_{\sigma-1}})  \leqslant \delta/2.  \label{3NLE5}
\end{align}
If $\lambda > \lambda_2$, then we have
$$
C  (\|u_{0,\lambda}\|_{E^{\sigma,s}}   +    \|u_{1,\lambda}\|_{E^{\sigma-1,s}}) \leqslant M_\lambda/2.
$$
Then we see that for $\lambda>\lambda_2$ and for any $v, v_1, v_2 \in \mathcal{D}$,
\begin{align} \label{3NLE6}
 & \|\mathcal{T}v\|_{X}
 \leqslant M_\lambda /2  +  C \lambda^{2/(d+1)} M_\lambda^{1+\alpha} \leqslant M_\lambda/2 + C\delta^\alpha M_\lambda \leqslant M_\lambda, \\
& \|\mathcal{T}v_1 - \mathcal{T}v_2\|_{X }
 \leqslant   2C\delta^\alpha \|v_1 -v_2\|_{X } \leqslant \frac{1}{2} \|v_1 -v_2\|_{X } . \label{3NLE7}
\end{align}
It follows from \eqref{3NLE6} and \eqref{3NLE7} that $\mathcal{T}$ has a unique fixed point  $v\in \mathcal{D}$, which is a mild solution of
\eqref{1intNLKG3} with $f(v) = \pm v^{1+\alpha}$.

In order to go back to the solution of NLKG \eqref{NLKG}, we make the scaling to the solution $v$ of \eqref{1intNLKG3}  and let
$$
u(x,t) = \lambda^{-2/\alpha} v(\lambda^{-1}x, \lambda^{-1}t),
$$
By the scaling Lemma \ref{Scaling1},  we have
\begin{align*}
& 	\| \varphi(\lambda^{-1}\, \cdot) \|_{E^{\sigma, s\lambda_1}} \lesssim \lambda^{d/2+} \|\varphi \|_{E^{\sigma,s} }, \ \ \sigma \geqslant 0,\\
& \|  \varphi (\lambda^{-1}\, \cdot) \|_{E^{\sigma, s\lambda_1} } \lesssim \lambda^{ d/2-\sigma}  \|\varphi \|_{E^{ \sigma,s} }, \ \ \sigma < 0.
\end{align*}
We see that $u$ satisfies
\begin{align}
   \label{1NLKG3}
u(t)= K'_1 (t) u_0 + K_1 (t) u_1 + \int^t_0 K_1 (t-\tau ) u^{1+\alpha}(\tau) d\tau,
\end{align}
which is the mild solution of NLKG \eqref{NLKG} in the space $\widetilde{L}^\infty(\mathbb{R}, B^{\sigma_c,s\lambda}_{2, 2}) \cap \widetilde{L}^p(\mathbb{R}, B^{\sigma_c-1/2,s\lambda}_{p, 2})$ by Lemma \ref{Scaling2}. We have proven Theorem \ref{mainresult} in the case $\alpha \geqslant 4/(d-1), \,\sigma >1/2$.

{\it Step 2.}
 We consider the case $4/d \leqslant \alpha \leqslant 4/(d-1)$, $\alpha \neq \infty$ for $d=1$. First, we consider the case $\sigma=1/2$.  We can find some $\theta \in [0,1]$ ($\theta \in (0,1]$ for $d=1$) satisfying
\begin{align} \label{index1}
\alpha = \frac{4}{d-1+\theta}.
\end{align}
Put
\begin{align} \label{index2}
r = \frac{2(d+1+\theta)}{d-1+\theta}, \ \ \  \theta \in
\left\{
\begin{array}{ll}
[0,1] & d\geqslant 2;\\
(0,1] & d=1.
\end{array}
\right.
\end{align}
Recall that
$
 \delta(r) =  1/(d+1+\theta), \, \sigma (\theta, r) =  1/2, \, \gamma (\theta, r) =r.
$
Applying Lemmas \ref{Embedding2} and \ref{Isomorphism3},  we have
$$
B^{\rho,s}_{r,2} \subset  F^{\rho,s}_{r,2} = H^{\rho,s}_{r}, \ \  H^{\rho,s}_{r'} = F^{\rho,s}_{r',2} \subset  B^{\rho,s}_{r',2},
$$
where $1/r+1/r'=1$. Denote $P_{>\lambda} = \mathscr{F}^{-1} \chi_{|\xi|>\lambda} \mathscr{F}, \, P_{\leqslant \lambda} = I- P_{>\lambda}.$  It follows from Proposition \ref{StrichartzG} that
\begin{align}
\lambda^{-1/r}  \| U_\lambda (t) P_{\leqslant \lambda} u_0\|_{ {L}^{r} (\mathbb{R}, H^{\sigma-(1-\theta)\delta(r),s}_{r}  ) } & \lesssim \|u_0\|_{E^{\sigma,s}}, \label{HGStr-a}\\
\lambda^{-1/r}  \left\| \mathscr{A}_\lambda    P_{\leqslant \lambda} f  \right\|_{ {L}^{r} (\mathbb{R}, H^{\sigma-(1-\theta)\delta(r),s}_{r} )}
& \lesssim \lambda^{1/r} \|P_{\leqslant \lambda} f\|_{ {L}^{r'} (\mathbb{R}, H^{\sigma+(1-\theta)\delta(r),s}_{r'} )}.  \label{HGStr-b} \\
\lambda^{ \theta \delta(r)}  \| U_\lambda (t) P_{> \lambda} u_0\|_{ {L}^{r} (\mathbb{R}, H^{\sigma-1/2,s}_{r}  ) } & \lesssim \|u_0\|_{E^{\sigma,s}}, \label{HGStr-c}\\
\lambda^{ \theta \delta(r) }   \left\| \mathscr{A}_\lambda     P_{> \lambda} f  \right\|_{ {L}^{r} (\mathbb{R}, H^{\sigma-1/2,s}_{r} )}
   & \lesssim  \lambda^{ -\theta \delta(r) }  \| P_{> \lambda} f\|_{ {L}^{r'} (\mathbb{R}, H^{\sigma+1/2,s}_{r'}  )}. \label{HGStr-d}
\end{align}

According to the Strichartz estimates, we choose the following function space $W$ to solve NLKG:
\begin{align}
\|u\|_{W_1} = & \lambda^{-1/r} \|P_{\leqslant \lambda} u\|_{ {L}^{r} (\mathbb{R}, H^{ 1/2-(1-\theta)\delta(r) ,s}_{r} )} +
 \|P_{\leqslant \lambda} u\|_{ {L}^{\infty} (\mathbb{R}, E^{1/2,s} )}, \label{workspace1} \\
 \|u\|_{W_2} = & \lambda^{\theta \delta (r)} \|P_{> \lambda} u\|_{ {L}^{r} (\mathbb{R}, H^{0,s}_{r} )}
 + \|P_{> \lambda} u\|_{ {L}^{\infty} (\mathbb{R}, E^{1/2,s} )}.
  \label{workspace2}\\
 \|u\|_{W} = & \|u\|_{W_1} + \|u\|_{W_2}.
\end{align}
Let $u_{0,\lambda}= \lambda^{2/\alpha} u_0(\lambda \, \cdot), \, u_{1,\lambda}= \lambda^{1+ 2/\alpha} u_1(\lambda \, \cdot)$.  We consider the mapping
\begin{align}
   \label{intNLKG3}
\mathcal{T}: u(t) \to  K'_\lambda(t) u_{0,\lambda} + K_\lambda (t) u_{1,\lambda} + \int^t_0 K_\lambda (t-\tau ) u^{1+\alpha}(\tau) d\tau.
\end{align}
in the space
\begin{align}
   \label{metricspace}
\mathcal{D} & =\{u\in W: \ \|u\|_W \leqslant  \lambda^{-2(1-\theta)/\alpha (d+1+\theta)} \delta, \ {\rm supp }\, \widehat{u(t)} \subset \mathbb{R}^d_I \},  \nonumber\\
 & d(u,v) = \|u-v\|_W,
\end{align}
where $\delta>0$ will be chosen later. Using the Strichartz estimates and Lemma \ref{Multiplier}, we have
\begin{align} \label{workspace3}
\|\mathcal{T} u\|_{W_2}
 \lesssim &  \|u_{0,\lambda}\|_{E^{1/2,s}}   +    \|u_{1,\lambda}\|_{E^{-1/2,s}} + \lambda^{ -\theta\delta(r) }  \|  P_{> \lambda} (u^{1+\alpha}) \|_{ {L}^{r'} (\mathbb{R}, H^{0,s}_{r'} )}.
\end{align}
So, we need to make a nonlinear estimate to $ \|  P_{> \lambda} (u_1... u_{1+\alpha}) \|_{ {L}^{r'} (\mathbb{R}, H^{0,s}_{r'} )}$.
  Let ${\rm supp} \, \widehat{u}_i \subset \mathbb{R}^d_I, \ v_i = 2^{s|\nabla|}u_i, \ i=1,...,\alpha+1$. By Lemma \ref{RegTransf}, we have
\begin{align} \label{1dest-1}
\lambda^{ -\theta\delta(r) } \| P_{> \lambda} (u_1 u_2...u_{\alpha+1}) \|_{ {L}^{r'} (\mathbb{R}, H^{0,s}_{r'} )}  = \lambda^{ -\theta\delta(r) } \| P_{> \lambda} (v_1v_2...v_{\alpha+1})\|_{{L}^{r'}_{x,t}}.
\end{align}
In the right hand side of \eqref{1dest-1}, one can assume that $v_1$ has the highest frequency. It follows that
$$
P_{>\lambda} ((P_{ \leqslant \lambda/(1+\alpha)}v_1)v_2...v_{\alpha+1}) =0.
$$
Hence, we have
\begin{align} \label{1dest-2}
  \lambda^{ -\theta\delta(r) } \| P_{> \lambda} (v_1v_2...v_{\alpha+1})\|_{{L}^{r'}_{x,t}} =  & \lambda^{ -\theta\delta(r) } \| P_{> \lambda} ( (P_{ > \lambda/(1+\alpha)} v_1) v_2...v_{\alpha+1})\|_{{L}^{r'}_{x,t}} \nonumber\\
  \leqslant   & \lambda^{ -\theta\delta(r) } \| P_{> \lambda} ( (P_{   \lambda/(1+\alpha) <\cdot \leqslant \lambda } v_1) v_2...v_{\alpha+1})\|_{{L}^{r'}_{x,t}} \nonumber\\
   &  +    \lambda^{ -\theta\delta(r) } \| P_{> \lambda} ( (P_{ > \lambda } v_1) v_2...v_{\alpha+1})\|_{{L}^{r'}_{x,t}} := I+II.
\end{align}
By H\"older's inequality and Lemma \ref{Multiplier},
 \begin{align} \label{1dest-3}
 I \leqslant     \lambda^{ -\theta \delta(r) } \| P_{\lambda/(1+\alpha) <\cdot  \leqslant \lambda } v_1 \|_{L^r_{x,t}} \prod^{1+\alpha}_{j=2}\| P_{  \leqslant \lambda } v_j \|_{L^r_{x,t}}
   \lesssim     \lambda^{ 2(1-\theta)/(d+1+\theta) } \prod^{1+\alpha}_{j=1}\|  u_j \|_{W_1} .
\end{align}
By decomposing $v_j =  P_{ > \lambda } v_j + P_{ \leqslant \lambda } v_j$, we see that
 \begin{align} \label{1dest-4}
 II & \leqslant     \lambda^{ -\theta \delta(r) } \| P_{> \lambda } v_1 \|_{L^r_{x,t}} \prod^{1+\alpha}_{j=2}( \| P_{  \leqslant \lambda } v_j \|_{L^r_{x,t}} +  \| P_{> \lambda } v_j \|_{L^r_{x,t}}) \nonumber\\
  & \leqslant     \lambda^{ -2 \theta \delta(r) } \| u_1 \|_{W_2} \prod^{1+\alpha}_{j=2}(\lambda^{ -\theta \delta(r) } \| u_j \|_{W_2} + \lambda^{ 1/r } \| u_j \|_{W_1}) \nonumber\\
  & \lesssim    \lambda^{ 2(1-\theta)/(d+1+\theta)  } \| u_1 \|_{W_2} \prod^{1+\alpha}_{j=2}  \| u_j \|_{W}.
   \end{align}
Inserting the estimates of $I$ and $II$ into \eqref{workspace3},
 \begin{align} \label{1dest-5}
\|\mathcal{T} u\|_{W_2}
 \lesssim  \|u_{0,\lambda}\|_{E^{1/2,s}}   +    \|u_{1,\lambda}\|_{E^{-1/2,s}} +   \lambda^{ 2(1-\theta)/(d+1+\theta)  }   \prod^{1+\alpha}_{j=1}  \| u_j \|_{W}.
\end{align}
Next, by the Strichartz estimates and Lemma \ref{Multiplier}, we have
\begin{align} \label{1dest-6}
\|\mathcal{T} u\|_{W_1}  \lesssim &  \|u_{0,\lambda}\|_{E^{1/2,s}}   +    \|u_{1,\lambda}\|_{E^{-1/2, s}} + \lambda^{1/r-1} \left\| P_{\leqslant \lambda} (u^{1+\alpha}) \right\|_{ {L}^{r'} (\mathbb{R}, H^{1/2+ (1-\theta)\delta(r),s}_{r'} )} \nonumber\\
 \lesssim &  \|u_{0,\lambda}\|_{E^{1/2,s}}   +    \|u_{1,\lambda}\|_{E^{-1/2,s}} + \lambda^{1/r-1 +2(1-\theta)\delta(r) }  \|  P_{\leqslant \lambda} (u^{1+\alpha}) \|_{ {L}^{r'} (\mathbb{R}, H^{1/2- (1-\theta)\delta(r),s}_{r'} )}.
\end{align}
Since ${\rm supp} \, \widehat{u} \subset \mathbb{R}^d_I $, we see that $P_{\leqslant \lambda } u^{1+\alpha} = P_{\leqslant \lambda } (P_{\leqslant \lambda } u)^{1+\alpha}$. Taking $v= 2^{s|\nabla|} u$, we have
\begin{align} \label{1dest-7}
   \|  P_{\leqslant \lambda} (u^{1+\alpha}) \|_{ {L}^{r'} (\mathbb{R}, H^{1/2- (1-\theta)\delta(r),s}_{r'} )} &  \leqslant    \| ( P_{\leqslant \lambda}v  )^{1+\alpha}\|_{ {L}^{r'} (\mathbb{R}, H^{1/2- (1-\theta)\delta(r) }_{r'} )} \nonumber\\
    &\leqslant      \| P_{\leqslant \lambda} v \|^{1+\alpha}_{ {L}^{r} (\mathbb{R}, H^{1/2- (1-\theta)\delta(r) }_{r} ) }.
\end{align}
The last inequality in \eqref{1dest-7} can be obtained by making a multi-linear interpolation to
\begin{align} \label{1dest-7a}
   \| v_1... v_{1+\alpha} \|_{ {L}^{r'} (\mathbb{R}, H^{k}_{r'} )}
    &\leqslant     \prod^{1+\alpha}_{i=1} \|   v_i \|^{1+\alpha}_{ {L}^{r} (\mathbb{R}, H^{k}_{r} ) }, \ \ k =0,1.
\end{align}

It follows that
\begin{align} \label{1dest-8}
\|\mathcal{T} u\|_{W_1}
 \lesssim &  \|u_{0,\lambda}\|_{E^{1/2,s}}   +    \|u_{1,\lambda}\|_{E^{-1/2,s}} +  \lambda^{ 2(1-\theta)/(d+1+\theta) }  \| u  \|^{1+\alpha}_{W_1}.
\end{align}
Summarizing \eqref{1dest-5} and \eqref{1dest-8}
\begin{align} \label{1dest-9}
 \|\mathcal{T} u\|_{W}
 \lesssim &  \|u_{0,\lambda}\|_{E^{1/2,s}}   +    \|u_{1,\lambda}\|_{E^{-1/2,s}} +  \lambda^{  2(1-\theta)/(d+1+\theta) }  \| u  \|^{1+\alpha}_{W}.
\end{align}
Notice that
$$
\frac{d}{2} - \frac{2}{\alpha} > \frac{2(1-\theta)}{2\alpha (d+1+\theta)} \ \mbox{for} \ \alpha = \frac{4}{d-1+\theta}, \ \theta <1,
$$
We have for some $\epsilon>0$,
$$
\frac{d}{2} - \frac{2}{\alpha} = \frac{2(1-\theta)}{2\alpha (d+1+\theta)} + 2\epsilon.
$$
By the scaling on $E^{1/2,s}$ one sees that
\begin{align} \label{1dest-10}
  \|u_{0,\lambda}\|_{E^{1/2,s}}  \leqslant C \lambda^{-d/2+2/\alpha + \epsilon } \|u_0\|_{E^{1/2,s}}   \leqslant C \lambda^{-\epsilon}  \lambda^{-\frac{2(1-\theta)}{2\alpha (d+1+\theta)}} \|u_0\|_{E^{1/2,s}}.
\end{align}
Since $\widehat{u}_1$ is supported in $\{\xi: \, |\xi| \geqslant \varepsilon_0 \}$ for some $\varepsilon_0>0$, we see that
\begin{align} \label{1dest-11}
  \|u_{1,\lambda}\|_{E^{1/2,s}}   \leqslant C \lambda^{-\epsilon}  \lambda^{-\frac{2(1-\theta)}{2\alpha (d+1+\theta)}} \|u_1\|_{E^{1/2,s}}.
\end{align}
Then we can repeat the procedure as in the above to finish the proof for the case $\sigma =1/2$. If $\sigma >1/2$,  in Corollary \ref{lowpNonlinearestLH}, we see that \eqref{lowpworkspace3aa} plays the same role as \eqref{1dest-9} and the details of the proof are omitted.

\subsection{Proof of Theorem \ref{mainresult2}}

The support conditions on initial data in Theorem \ref{mainresult2} is not optimal and they can be slightly improved by the following
$$
\left\{
\begin{array}{l}
  {\rm supp} \widehat{u}_0, \, {\rm supp} \widehat{u}_1 \subset \mathbb{R}^d_I \setminus \{0\}  \ \mbox{for} \ d=2; \\
  {\rm supp} \widehat{u}_0 \subset \mathbb{R}^d_I, \ {\rm supp} \widehat{u}_1 \subset \mathbb{R}^d \setminus \{0\}  \ \mbox{for} \ d=3,4; \\
 {\rm supp} \widehat{u}_0, \, {\rm supp} \widehat{u}_1 \subset \mathbb{R}^d_I   \ \mbox{for} \ d \geqslant 5. \\
\end{array}
\right.
$$
We will prove Theorem \ref{mainresult2} by assuming the above support conditions.  If $u$ is a solution of NLKG \eqref{NLKG} with $f(u)= \sinh u -u$,
then $u_\lambda (x,t): =   u(\lambda x, \lambda t) $  solves
\begin{align}\label{NLKG3}
\left\{
\begin{array}{l}
\partial^2_t v  + \lambda^2 v-  \Delta v  + \lambda^2 (\sinh v -v)  =0,  \\
  v(x,0) = u_0(\lambda x), \ v_t (x,0) = \lambda  u_1(\lambda x),
\end{array}
\right.
\end{align}
Let $X,\, X_1, \, X_2$ be as in \eqref{1workspace}--\eqref{1workspace2}. Our goal is to solve the following integral equation
\begin{align}
   \label{2intNLKG3}
v(t)= K'_\lambda(t) v_{0,\lambda} + K_\lambda (t) v_{1,\lambda} + \lambda^2 \int^t_0 K_\lambda (t-\tau ) f(v(\tau)) d\tau,
\end{align}
where $f(v) = \sinh v -v $, $v_{0,\lambda} = u_0(\lambda\, \cdot), \, v_{1,\lambda} = \lambda u_1(\lambda\, \cdot)$. First, we assume that $d\geqslant 3$.  We consider the mapping
\begin{align}
   \label{2intNLKG3map}
\mathcal{T}_s: v(t) \to  K'_\lambda(t) v_{0,\lambda} + K_\lambda (t) v_{1,\lambda} +  \lambda^2 \int^t_0 K_\lambda (t-\tau ) f(v(\tau)) d\tau
\end{align}
in the space
\begin{align}
   \label{2metricspace}
\mathcal{D}_\delta  =\{v \in X: \ \|v\|_X \leqslant \lambda^{-(d+2)/(d+1)} \delta, \ {\rm supp }\, \widehat{v(t)} \subset \mathbb{R}^d_I \},  \
  \ d(v,w) = \|v-w\|_X,
\end{align}
where $\delta >0$ will be chosen later. By the same way as in \eqref{1workspace3}
we have
\begin{align} \label{2workspace3}
\|\mathcal{T}_s v\|_{X_2}
 \lesssim &  \|v_{0,\lambda}\|_{E^{\sigma,s}}   +    \|v_{1,\lambda}\|_{E^{\sigma-1,s}} +  \lambda^2 \| \sinh v -v\|_{\widetilde{L}^{p'} (\mathbb{R}, B^{\sigma-1/2,s}_{p', 2}(\mathbb{Z}^c_\lambda))}.
\end{align}
In view of Taylor's expansion of $\sinh v$, we see that
$$
\sinh v - v = \sum^\infty_{k=1} \frac{1}{(2k+1)!} \, v^{2k+1} =  \sum_{m\in 2\mathbb{N}+1} \frac{v^m}{m!} .
$$

Notice that $X= X_1 \cap X_2$. By Corollary \ref{corNonlinearestH1}  and \eqref{2workspace3}, for any $v\in \mathcal{D}_\delta$,
\begin{align} \label{2workspace6}
\|\mathcal{T}_s v\|_{X_2}
 \leqslant &  C(\|v_{0,\lambda}\|_{E^{\sigma,s}}   +    \|v_{1,\lambda}\|_{E^{\sigma-1,s}})   +  C \lambda^{2 }  \sum_{m\in 2\mathbb{N}+1} \frac{1}{m!} \|v^{m}\|_{\widetilde{L}^{p'} (\mathbb{R}, B^{\sigma-1/2,s}_{p', 2}(\mathbb{Z}^c_\lambda))} \nonumber \\
\leqslant &  C(\|v_{0,\lambda}\|_{E^{\sigma,s}}   +    \|v_{1,\lambda}\|_{E^{\sigma-1,s}})   +  C \lambda^{2+2/(d+1)}  \sum_{m\in 2\mathbb{N}+1} a_m \|v\|^{m}_{X}  \nonumber \\
  \leqslant &  C(\|v_{0,\lambda}\|_{E^{\sigma,s}}   +    \|v_{1,\lambda}\|_{E^{\sigma-1,s}})   + C\lambda^{-(d+2)/(d+1)} \delta^3 \sum_{m\in 2\mathbb{N}+1} a_m (\lambda^{-(d+2)/(d+1)}\delta)^{m-3}.
\end{align}
where
$$
a_m = \frac{(C\sqrt{m})^{m+C}}{m!}.
$$
If ${\rm supp} \widehat{\varphi}$ is supported in $\mathbb{R^d}_I \setminus \{0\}$,  using (ii) of Corollary \ref{Scaling4}, there exists $\varepsilon_0, \, \lambda_0 >1$ such that for any $\lambda\geqslant \lambda_0$,
\begin{align} \label{2workspace5}
 \lambda \|\varphi(\lambda\, \cdot)\|_{E^{\tilde{\sigma},s}}      \leqslant 2^{s\lambda \varepsilon_0 /4}  \|\varphi\|_{E^{\tilde{\sigma},s}} \leqslant  \lambda^{-(d+2)/(d+1)-1} \|\varphi\|_{E^{\tilde{\sigma},s}}, \ \tilde{\sigma} \in \mathbb{R}.
\end{align}
$d \geqslant 3$ implies that $d/2 > (d+2)/(d+1)$. Let $\epsilon_1 >0$ satisfy $d/2- 2 \epsilon_1 = (d+2)/(d+1)$. By (i) of Corollary \ref{Scaling4}, there exists $ \lambda_1 >1$ such that for any $\lambda\geqslant \lambda_1$,
\begin{align} \label{2workspace7}
\|v_{0,\lambda}\|_{E^{\sigma,s}} \leqslant C \lambda^{-d/2 + \epsilon_1} \|u_0\|_{E^{\sigma,s}} =  C \lambda^{-(d+2)/(d+1) - \epsilon_1} \|u_0\|_{E^{\sigma,s}}.
\end{align}
Similarly,  $d \geqslant 5$ implies that $d/2 > 1+ (d+2)/(d+1)$. Let $\epsilon_2 >0$ satisfy $d/2- 2 \epsilon_2 =1+ (d+2)/(d+1)$. By (i) of Corollary \ref{Scaling4}, there exists $ \lambda_1 >1$ such that for any $\lambda\geqslant \lambda_2$,
\begin{align} \label{2workspace7a}
\|v_{1,\lambda}\|_{E^{\sigma-1,s}} \leqslant C \lambda^{-d/2 +1+ \epsilon_2} \|u_0\|_{E^{\sigma-1,s}} =  C \lambda^{-(d+2)/(d+1) - \epsilon_2} \|u_1\|_{E^{\sigma-1,s}}.
\end{align}
Taking $\epsilon = \min (\epsilon_1, \, \epsilon_2)$. We have
\begin{align} \label{2workspace6a}
\|\mathcal{T}_s v\|_{X_2}
 \leqslant &  C \lambda^{-(d+2)/(d+1) - \epsilon } (\|u_{0}\|_{E^{\sigma,s}}   +    \|u_{1}\|_{E^{\sigma-1,s}})  \nonumber\\
   & \ + C\lambda^{-(d+2)/(d+1)} \delta^3 \sum_{m\in 2\mathbb{N}+1} a_m (\lambda^{-(d+2)/(d+1)}\delta)^{m-3}.
\end{align}
By \eqref{2NLE3} and Corollary \ref{corNonlinearestL1} we see that $\|\mathcal{T}_s v\|_{X_1}$ has the same upper bound as in \eqref{2workspace6a}.
It follows that, for any $v\in \mathcal{D}_\delta$, $\lambda \geqslant \max (\lambda_0, \lambda_1, \lambda_2 )$,
\begin{align} \label{2workspace8}
\|\mathcal{T}_s v\|_{X}   \leqslant &  C \lambda^{-(d+2)/(d+1) - \epsilon } (\|u_{0}\|_{E^{\sigma,s}}   +    \|u_{1}\|_{E^{\sigma-1,s}})  \nonumber\\
   & \ + C\lambda^{-(d+2)/(d+1)} \delta^3 \sum_{m\in 2\mathbb{N}+1} a_m  (\lambda^{-(d+2)/(d+1)} \delta)^{m-3}.
\end{align}
Since $\lim_{k\to \infty} a_{2k+3}/a_{2k+1}=0$, we see that for any $\delta >0$,
$$
L_\delta : =  \sum_{m\in 2\mathbb{N}+1} a_m  \delta^{m-3}
$$
is a convergent series.  Let $\delta>0$ satisfy $ C L_{1}    \delta^{2 } \leqslant 1/100.$  One can find a $\lambda_*  \geqslant \max (\lambda_0, \lambda_1, \lambda_2 ) $ satisfying
\begin{align*}
2  C  \lambda_*^{- \epsilon} (\|u_0\|_{E^{\sigma,s}} + \|u_1\|_{E^{\sigma-1,s}} )  < \delta.
\end{align*}
It follows that for $\lambda=\lambda_*$,  $v\in \mathcal{D}_\delta$,
\begin{align} \label{2workspace9}
\|\mathcal{T}_s v\|_{X}
 \leqslant \,  \delta/2   +   C L_{1} \lambda^{-(d+2)/(d+1)} \delta^3     \leqslant \delta.
\end{align}
Similarly, for any $v_1,v_2 \in  \mathcal{D}_\delta$,
\begin{align} \label{2workspace10}
\|\mathcal{T}_s v_1 - \mathcal{T}_s v_2 \|_{X}
 \leqslant \, \frac{1}{2} \| v_1 -   v_2 \|_{X}.
\end{align}
So, $\mathcal{T}_s$ has a unique fixed point in $\mathcal{D}_\delta$ which is a solution of \eqref{2intNLKG3}.  The left part of the proof of Theorem \ref{mainresult2} for $d\geqslant 3$ follows the same way as that of Theorem \ref{mainresult}. In dimensions $d=2$, we can used the estimates in Section \ref{GlobalNLKGB} and imitate the above argument. Put $M= \lambda^{-1}\delta$ for some small $\delta >0$ in\eqref{2dmetricspace}.
By \eqref{2dworkspace3}, Lemmas \ref{2dNonlinearestH1} and \ref{2dcorNonlinearestL1}, for any $v\in \mathcal{D}_2$,
\begin{align} \label{2dworkspace3}
\|\mathcal{T}v\|_{Y}
 \lesssim  \|u_{0,\lambda}\|_{E^{\sigma,s}}   +    \|u_{1,\lambda}\|_{E^{\sigma-1,s}} +    C\lambda^{-1} \delta^3 \sum_{m\in 2\mathbb{N}+1} a_m (\lambda^{-1}\delta)^{m-3}.
\end{align}
 Then we can repeat the proof for the case $d \geqslant 3$ to get the result and the details are omitted.  \\

{\it Sketch Proof of (iii) of Remark \ref{Rmk1.3}.} Using the Taylor expansion of $\sin u$, we have
$$
\sin u -u = \sum^\infty_{k=1}   \frac{(-1)^k}{(2k+1)!} \, v^{2k+1}.
$$
So, one can handle the nonlinearity $\sin u -u$ in the same way as $\sinh u - u$ and Theorem \ref{mainresult2} also holds for the sine-Gordon equation.
Recall that
$$
e^{v^2} v - v = \sum^\infty_{m=1} \frac{1}{m!} \, v^{2m+1}  .
$$
So, the growth of $e^{v^2} v$ is faster than $\sinh v$. However, the proof of Theorem \ref{mainresult2} is still adapted to the nonlinearity $
f(v) = e^{v^2} v - v  $. Say, for $d\geqslant 3$, using Taylor's expansion of $e^{v^2} v - v$   and following \eqref{2workspace8} in the proof of Theorem \ref{mainresult2}, we have
\begin{align} \label{2workspace8exp}
\|\mathcal{T}_s v\|_{X}   \leqslant &  C \lambda^{-(d+2)/(d+1) - \epsilon } (\|u_{0}\|_{E^{\sigma,s}}   +    \|u_{1}\|_{E^{\sigma-1,s}})  \nonumber\\
   & \ + C\lambda^{-(d+2)/(d+1)} \delta^3 \sum_{m\in  \mathbb{N} } b_m  (\lambda^{-(d+2)/(d+1)} \delta)^{2(m-1)}.
\end{align}
where
$$
b_m = \frac{(C\sqrt{2m+1})^{2m+1+C}}{m!}.
$$
It is easy to see that
$$
\lim_{m\to \infty} \frac{b_{m+1} (\lambda^{-(d+2)/(d+1)} )^{2(m+1)} }{b_m (\lambda^{-(d+2)/(d+1)} )^{2(m-1)}} = C^2 2  e \lambda^{-2(d+2)/(d+1)}.
$$
It follows that $\sum_{m\in  \mathbb{N} } b_m  (\lambda^{-(d+2)/(d+1)} \delta)^{2(m-1)}$ is a convergent series if
$$
\delta^2 < \eta = \lambda^{ 2(d+2)/(d+1)} / C^2 2  e.
$$
Then we can use a similar way as in the proof of Theorem \ref{mainresult2} to get the results of (iii) in  Remark \ref{Rmk1.3}.

The global well posedness of NLKG with small data in $H^\sigma\times H^{\sigma-1}$, $\sigma \geqslant d/2$ is obtained by Nakamura and Ozawa \cite{NaOz2001}, where they applied the differential-difference norms on Besov spaces together with an interesting limiting version of Sobolev embedding.

\section{Further discussions on initial data} \label{lookingup}

Just like $\lim_{\sigma \to -\infty} \|\varphi\|_{H^\sigma} =0$, it follows from the dominating convergence theorem that for $\varphi\in E^{\sigma,s}$,
$$
\lim_{b \to -\infty} \|\varphi\|_{E^{\sigma,b}} =0, \ \sigma \in \mathbb{R}.
$$
So, for any $(u_0, u_1) \in E^{\sigma,s} \times E^{\sigma-1,s}$, we can find some $s_0  \leqslant s$ such that
\begin{align}
\|u_0 \|_{E^{\sigma,s_0}} + \|u_1\|_{E^{\sigma-1,s_0}}  \ll 1.  \label{anysmall}
\end{align}
By Lemma \ref{RegTransf}, if $u$ solves NLKG \eqref{NLKG}, so does $2^{s|\nabla|}u$ with initial data $(2^{s|\nabla|}u_0, 2^{s|\nabla|}u_1)$  which are supported in the first octant in frequency spaces. By \eqref{anysmall}, one can directly applied the techniques in \cite{Wa1998,Wa1999},  Nakamura and Ozawa \cite{NaOz2001} to get similar results of Theorems \ref{mainresult} and \ref{mainresult2}, respectively.   However, $s_0 =s_0 (u_0, u_1)$ in \eqref{anysmall} will depends on $u_0, \, u_1 \in E^{\sigma,s}$, which is unbounded for  all $u_0, \, u_1 \in E^{\sigma,s}$  and we have
\begin{lemma} \label{unboundedness}
Let $\chi_A$ be the characteristic function on $A \subset \mathbb{R}^d$.
Let $b<s<0, \, \sigma \in \mathbb{R}$.  Let $\widehat{\varphi}_k = k^{d/2} \chi_{B(0,1/k)}$, $k\gg 1$. Then we have $ \varphi_k \in E^{\sigma,s}$.  If
$$
\|\varphi_k\|_{E^{\sigma,b}}  \leqslant \varepsilon,
$$
then we have $ - b \gtrsim k \log_2 {\varepsilon^{-1}} $.
\end{lemma}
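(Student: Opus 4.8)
The plan is a direct computation exploiting that $\widehat{\varphi}_k$ is supported in the tiny ball $B(0,1/k)$, on which $\langle\xi\rangle$ is comparable to $1$ (so the weight $\langle\xi\rangle^{2\sigma}$ contributes only a $\sigma$-dependent constant) and on which the exponential weight $2^{2b|\xi|}$ can be bounded below by its value at the boundary of the ball, using $b<0$. That $\varphi_k\in E^{\sigma,s}$ holds for every $\sigma\in\mathbb{R}$ and every $s$ is automatic, since $\widehat{\varphi}_k\in L^2$ has compact support and hence $\langle\xi\rangle^\sigma 2^{s|\xi|}\widehat{\varphi}_k\in L^2$; the substance of the lemma is the lower bound on $\|\varphi_k\|_{E^{\sigma,b}}$.

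First I would use Plancherel's identity to write
\begin{align*}
\|\varphi_k\|_{E^{\sigma,b}}^2=\int_{\mathbb{R}^d}\langle\xi\rangle^{2\sigma}2^{2b|\xi|}|\widehat{\varphi}_k(\xi)|^2\,d\xi=k^d\int_{B(0,1/k)}\langle\xi\rangle^{2\sigma}2^{2b|\xi|}\,d\xi.
\end{align*}
On $B(0,1/k)$ with $k\gg1$ one has $1\leqslant\langle\xi\rangle\leqslant\sqrt{2}$, so $\langle\xi\rangle^{2\sigma}\geqslant c_\sigma:=\min(1,2^{\sigma})>0$; moreover $|\xi|\leqslant C_d/k$ on this ball (all norms on $\mathbb{R}^d$ being equivalent), and since $b<0$ this gives $2b|\xi|\geqslant 2bC_d/k$, hence $2^{2b|\xi|}\geqslant 2^{2bC_d/k}$. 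Using $|B(0,1/k)|=\omega_d k^{-d}$ I obtain
\begin{align*}
\|\varphi_k\|_{E^{\sigma,b}}^2\geqslant k^d\cdot c_\sigma\cdot 2^{2bC_d/k}\cdot\omega_d k^{-d}=c_\sigma\omega_d\,2^{2bC_d/k},
\end{align*}
that is, $\|\varphi_k\|_{E^{\sigma,b}}\geqslant C_{\sigma,d}\,2^{bC_d/k}$ with $C_{\sigma,d}=(c_\sigma\omega_d)^{1/2}>0$.

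Finally I would feed in the hypothesis $\|\varphi_k\|_{E^{\sigma,b}}\leqslant\varepsilon$: this forces $2^{bC_d/k}\leqslant\varepsilon/C_{\sigma,d}$, and taking $\log_2$ (recall $b<0$) gives $-bC_d/k\geqslant\log_2\varepsilon^{-1}+\log_2 C_{\sigma,d}$. For $\varepsilon$ small enough in terms of $\sigma$ and $d$ — exactly the regime relevant to the application, where one wants $\|\varphi_k\|_{E^{\sigma,b}}\ll1$ — the additive constant is absorbed, so $-b/k\gtrsim\log_2\varepsilon^{-1}$ and therefore $-b\gtrsim k\log_2\varepsilon^{-1}$, as claimed. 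There is no genuine obstacle in this argument; the only care needed is bookkeeping of the $\sigma,d$-dependent constants ($c_\sigma$, $\omega_d$, $C_d$) and observing that the smallness of $\varepsilon$ absorbs the harmless additive term $\log_2 C_{\sigma,d}$ in the last step. (If desired, integrating $2^{2b|\xi|}$ over $B(0,1/k)$ more carefully yields the stronger two-sided bound $\|\varphi_k\|_{E^{\sigma,b}}^2\sim_{\sigma,d}(1+|b|/k)^{-d}$ and hence the even sharper conclusion $-b\gtrsim k\,\varepsilon^{-2/d}$, but the crude lower bound above already suffices for the statement as worded.)
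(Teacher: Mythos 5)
Your proof is correct and follows essentially the same route as the paper's: Plancherel, the observation that on the support $B(0,1/k)$ one has $\langle\xi\rangle\sim1$ and $2^{b|\xi|}\geqslant 2^{bC_d/k}$ (since $b<0$), and the fact that $\|\varphi_k\|_2$ is a $k$-independent constant, yielding the lower bound $\|\varphi_k\|_{E^{\sigma,b}}\gtrsim_{\sigma,d}2^{bC_d/k}$ and hence $-b\gtrsim k\log_2\varepsilon^{-1}$. The only differences are cosmetic: you track the $\sigma,d$-dependent constants and the absorption of $\log_2 C_{\sigma,d}$ explicitly, and you note the sharper two-sided asymptotics, neither of which is needed for the statement.
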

\begin{proof} We have
\begin{align*}
 \|\varphi_k \|_{E^{\sigma,s}} &  = \|\langle\xi \rangle^{\sigma}  2^{s |\xi|}\widehat{\varphi_k}(\xi)\|_{2} \lesssim_\sigma \|\varphi_k\|_2  \lesssim 1.
\end{align*}
Conversely,
\begin{align*}
 \|\varphi_k \|_{E^{\sigma,b}} &  = \|\langle\xi \rangle^{\sigma}  2^{b |\xi|}\widehat{\varphi_k}(\xi)\|_{L^2(\mathbb{R}^d)}
    \gtrsim_{\sigma} 2^{b /k}    \|\varphi_k\|_2  \gtrsim_{\sigma} 2^{b /k}.
\end{align*}
So, $\|\varphi_k\|_{E^{\sigma,b}}  \leqslant \varepsilon$ implies that $|b| \gtrsim k \log_2 {\varepsilon^{-1}}$.
\end{proof}
By Lemma \ref{unboundedness}, we see that
$$
b(\varphi, \varepsilon):= \sup \{b<s : \, \varphi \in E^{\sigma,s},  \ \|\varphi \|_{E^{\sigma,b}}  \leqslant \varepsilon\}
$$
depends on both $\varphi  \in E^{\sigma,s}$ and $0< \varepsilon\ll 1$ and $\inf_{\varphi \in E^{\sigma,s}} b(\varphi, \varepsilon) = -\infty$.

However, if we use the scaling argument in the paper or assume that
${\rm supp} \, \widehat{\varphi} \, \subset \mathbb{R}^d_I \setminus B(0, \varepsilon_0)$ for some $\varepsilon_0 >0$, then we can realize that
$\inf_{\varphi \in E^{\sigma,s}} b(\varphi, \varepsilon) < -\infty$. In the later case $ b(\varphi, \varepsilon)= b(\|\varphi\|_{E^{\sigma, s}} , \varepsilon_0)$.

\appendix
\section{Higher order derivatives}
\begin{lemma} \label{Deriv-control}
Let $\alpha\in \mathbb{R}$,  $p(r) = (\mu^2+ r^2)^\alpha$, where $\mu>0$ is a variable parameter.  Then there exist  $a_{k,j} := a_{k,j} (\alpha, k)$  $(j=0,...,k)$ which are independent of $\mu>0$ such that
\begin{align}
p^{(2k)} (r) & = \sum^{k}_{j=0} a_{k,j} \mu^{2j} (\mu^2+r^2)^{\alpha-k-j},   \label{appendix1}\\
p^{(2k+1)} (r) & = 2r\sum^{k}_{j=0} (\alpha-k-j) a_{k,\, j} \mu^{2j} (\mu^2+r^2)^{\alpha-k-j-1}.   \label{appendix2}
\end{align}
Moreover, we have
\begin{align}
|p^{(\ell)} (r)| \leqslant C_{k}  (\mu^2+r^2)^{\alpha-\ell/2},    \ \ \forall \, \ell \, \in \mathbb{N} \cup\{0\}.  \label{appendix5}
\end{align}
\end{lemma}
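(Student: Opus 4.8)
The plan is to establish the two explicit identities \eqref{appendix1}--\eqref{appendix2} simultaneously by induction on $k$, and then read off the pointwise bound \eqref{appendix5} as an elementary consequence. The base case $k=0$ is immediate: $p^{(0)}(r)=(\mu^2+r^2)^{\alpha}$, so \eqref{appendix1} holds with $a_{0,0}=1$, and one differentiation gives $p'(r)=2\alpha r(\mu^2+r^2)^{\alpha-1}$, which is exactly \eqref{appendix2} for $k=0$ since the coefficient $(\alpha-0-0)a_{0,0}$ equals $\alpha$. Observe also that \eqref{appendix2} at level $k$ follows from \eqref{appendix1} at the same level by a single termwise differentiation, using $\frac{d}{dr}(\mu^2+r^2)^{\beta}=2\beta r(\mu^2+r^2)^{\beta-1}$; hence it suffices to propagate \eqref{appendix1}.

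For the inductive step I would assume \eqref{appendix1} (and therefore \eqref{appendix2}) at level $k$ and differentiate \eqref{appendix2} once more. Writing each summand of $p^{(2k+1)}$ as $2r\,c_{k,j}\,\mu^{2j}(\mu^2+r^2)^{\beta_j}$ with $c_{k,j}=(\alpha-k-j)a_{k,j}$ and $\beta_j=\alpha-k-j-1$, the product rule gives
\begin{align*}
\frac{d}{dr}\Big(2r\,(\mu^2+r^2)^{\beta_j}\Big)
=2(\mu^2+r^2)^{\beta_j}+4\beta_j r^2(\mu^2+r^2)^{\beta_j-1}.
\end{align*}
The crucial step is to substitute $r^2=(\mu^2+r^2)-\mu^2$ in the last term, which recasts it as
\begin{align*}
\frac{d}{dr}\Big(2r\,(\mu^2+r^2)^{\beta_j}\Big)
=(2+4\beta_j)(\mu^2+r^2)^{\beta_j}-4\beta_j\,\mu^2(\mu^2+r^2)^{\beta_j-1},
\end{align*}
and since $\beta_j=\alpha-(k+1)-j$ and $\beta_j-1=\alpha-(k+1)-(j+1)$, both terms already have exactly the shape required at level $k+1$. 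Summing over $j$ and collecting, for each fixed power $\mu^{2j}(\mu^2+r^2)^{\alpha-(k+1)-j}$, the contribution $(2+4\beta_j)c_{k,j}$ coming from index $j$ together with the contribution $-4\beta_{j-1}c_{k,j-1}$ coming from index $j-1$ (after the shift $j\mapsto j+1$), one obtains \eqref{appendix1} at level $k+1$ with the recursion
\begin{align*}
a_{k+1,j}=(2+4\beta_j)\,c_{k,j}-4\beta_{j-1}\,c_{k,j-1},
\qquad c_{k,-1}=c_{k,k+1}=0.
\end{align*}
Since $\beta_j$ and $c_{k,j}$ depend only on $\alpha$, $k$, $j$, the constants $a_{k+1,j}$ are again independent of $\mu$, closing the induction. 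The only delicate point — really a bookkeeping matter rather than a genuine obstacle — is matching the index shifts so that the $\mu^{2(j+1)}$ contribution from index $j$ is recombined with the $\mu^{2j}$ contribution from index $j+1$; a quick check at $k=0$ (giving $a_{1,0}=4\alpha^2-2\alpha$, $a_{1,1}=-4\alpha^2+4\alpha$) confirms the formula against a direct computation of $p''$.

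Finally, \eqref{appendix5} is immediate from \eqref{appendix1}--\eqref{appendix2} via the crude bounds $\mu^{2j}\leqslant(\mu^2+r^2)^{j}$ and $|r|\leqslant(\mu^2+r^2)^{1/2}$, valid for all $\mu>0$, $r>0$: for $\ell=2k$ one gets $|p^{(2k)}(r)|\leqslant\big(\sum_{j}|a_{k,j}|\big)(\mu^2+r^2)^{\alpha-k}$, and for $\ell=2k+1$ one gets
\begin{align*}
|p^{(2k+1)}(r)|\leqslant 2|r|\Big(\sum_{j}|\alpha-k-j|\,|a_{k,j}|\Big)(\mu^2+r^2)^{\alpha-k-1}\leqslant C_k(\mu^2+r^2)^{\alpha-k-1/2},
\end{align*}
so in either parity the exponent is $\alpha-\ell/2$ and the constant $C_k$ depends only on $\alpha$ and $k$, uniformly in the parameter $\mu>0$, as claimed.
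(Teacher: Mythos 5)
Your proof is correct and follows essentially the same strategy as the paper: induct on $k$ to establish \eqref{appendix1}, derive \eqref{appendix2} by a single termwise differentiation, propagate the induction by differentiating \eqref{appendix2} and substituting $r^2=(\mu^2+r^2)-\mu^2$ to keep both terms in the required form, and then deduce \eqref{appendix5} from the elementary bounds $\mu^{2j}\leqslant(\mu^2+r^2)^j$ and $|r|\leqslant(\mu^2+r^2)^{1/2}$. Your explicit recursion for $a_{k+1,j}$ agrees with the paper's (in fact your expression for $a_{k+1,k+1}=-4\beta_k c_{k,k}=-4(\alpha-2k-1)(\alpha-2k)a_{k,k}$ is the correct evaluation; the paper's stated formula $-4a_{k,k}(\alpha-k)(\alpha-k+1)$ for that boundary coefficient appears to be a typo, which your sanity check at $k=0$ confirms).
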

\begin{proof}
Differentiating \eqref{appendix1}, we get  \eqref{appendix2}. So, it suffices to show  \eqref{appendix1} by induction. The case $k=0$ is obvious. Now let us assume that \eqref{appendix1} holds. We calculate $p^{(2k+2)} (r)$. Taking the derivative to \eqref{appendix2},
\begin{align}
p^{(2k+2)} (r) = & 2\sum^{k}_{j=0} (\alpha-k-j) a_{k,j} \mu^{2j} (\mu^2+r^2)^{\alpha-k-j-1} \nonumber  \\
& +    4 \sum^{k}_{j=0} (\alpha-k-j)(\alpha-k-1-j) a_{k,j} \mu^{2j}r^2 (\mu^2+r^2)^{\alpha-k-j-2} \nonumber\\
= & 2\sum^{k}_{j=0} (\alpha-k-j)(2\alpha -2k-2j-1) a_{k,j} \mu^{2j} (\mu^2+r^2)^{\alpha-(k+1)-j} \nonumber  \\
& -    4 \sum^{k+1}_{j=1} (\alpha-k-j+1)(\alpha-k-j) a_{k,j-1} \mu^{2j}  (\mu^2+r^2)^{\alpha-(k+1)-j}.
\label{appendix3}
\end{align}
Putting
\begin{align}
a_{k+1,0} = & 2 a_{k,0} (\alpha-k) (2\alpha-2k-1),  \ \   a_{k+1, k+1} =   -4 a_{k,k}(\alpha-k)( \alpha - k+1),   \nonumber  \\
a_{k+1, j} =  &  2 a_{k,j}(\alpha-k-j)(2\alpha-2k-2j-1) - 4 a_{k,j-1}(\alpha-k-j)( \alpha- k- j+1)   \nonumber
\end{align}
for $j=1,...,k$, we get \eqref{appendix1} and \eqref{appendix2}, as desired.
By \eqref{appendix1} and \eqref{appendix2}, we see that
\begin{align}
|p^{(2k)} (r)| \leqslant C_{k}  (\mu^2+r^2)^{\alpha-k},    \ \
|p^{(2k+1)} (r) | \leqslant C_{k}   (\mu^2+r^2)^{\alpha-k-1/2},   \label{appendix4}
\end{align}
which means that \eqref{appendix5} holds.
\end{proof}

\begin{lemma} \label{Deriv-control2}
Let $h (r), \, \Phi (r)$ be smooth functions.  Denote
\begin{align}
\sigma^{(1)} (h) = (h\Phi)', \ \ \sigma^{(k+1)} (h) = (h \sigma^{(k)}(h))' , \ \ k=1,2,...
\end{align}
Then we have
\begin{align}
\sigma^{(\ell)} (h) = \sum_{\alpha_1+...+\alpha_\ell \leqslant \ell}  C_{\alpha_1,...,\alpha_\ell} h^{(\alpha_1)} ... h^{(\alpha_{\ell})}  \Phi^{(\ell - \alpha_1-...-\alpha_\ell)}.  \label{appendix6}
\end{align}
\end{lemma}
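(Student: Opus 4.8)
The plan is to prove \eqref{appendix6} by induction on $\ell$. For the base case $\ell=1$ the product rule gives $\sigma^{(1)}(h)=(h\Phi)'=h'\Phi+h\Phi'$, which is exactly the right-hand side of \eqref{appendix6} at level $\ell=1$, the two admissible multi-indices being $\alpha_1=1$ and $\alpha_1=0$ (both with coefficient $1$).

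For the inductive step, suppose \eqref{appendix6} holds at level $\ell$. By definition $\sigma^{(\ell+1)}(h)=(h\,\sigma^{(\ell)}(h))'$, so I would insert the inductive expression for $\sigma^{(\ell)}(h)$ and differentiate term by term. A generic summand of $h\,\sigma^{(\ell)}(h)$ has the form
\begin{equation*}
C_{\alpha_1,\dots,\alpha_\ell}\, h\cdot h^{(\alpha_1)}\cdots h^{(\alpha_\ell)}\,\Phi^{(\ell-\alpha_1-\cdots-\alpha_\ell)},\qquad \alpha_1+\cdots+\alpha_\ell\leqslant \ell;
\end{equation*}
note that the order $\ell-\alpha_1-\cdots-\alpha_\ell$ of the $\Phi$-factor is a nonnegative integer precisely because of this constraint, so the expression makes sense. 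Applying the Leibniz rule to one such product yields $\ell+2$ terms: one in which the leading factor $h$ is differentiated (so it becomes $h'$), $\ell$ terms in which some $h^{(\alpha_k)}$ is raised to $h^{(\alpha_k+1)}$, and one in which the $\Phi$-factor is raised by one order. In each of these terms I would group the $\ell+1$ (derivatives of) copies of $h$ as $h^{(\beta_1)}\cdots h^{(\beta_{\ell+1})}$ and observe that in every case $\beta_1+\cdots+\beta_{\ell+1}\leqslant \ell+1$, while the surviving $\Phi$-factor is exactly $\Phi^{(\ell+1-\beta_1-\cdots-\beta_{\ell+1})}$. Summing the finitely many contributions and relabelling the numerical coefficients as new constants $C_{\beta_1,\dots,\beta_{\ell+1}}$ (independent of $h$ and $\Phi$) gives \eqref{appendix6} at level $\ell+1$, completing the induction.

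The argument involves no analytic difficulty; the only care needed is the index bookkeeping in the three cases of the Leibniz step, in particular checking that the constraint $\alpha_1+\cdots+\alpha_\ell\leqslant\ell$ propagates to $\beta_1+\cdots+\beta_{\ell+1}\leqslant\ell+1$ and that the $\Phi$-derivative never acquires negative order. If desired, the constants may be taken invariant under permutations of $(\beta_1,\dots,\beta_{\ell+1})$, but this is not needed: \eqref{appendix6} is used in \eqref{HLKG6}--\eqref{HLKG6a} only to record the \emph{shape} of the iterated derivative, with $h=1/p'_{j,\lambda}$ and $\Phi$ the amplitude $\varphi(r)r^{d-1}K(r|x|)$, whose derivatives are uniformly bounded on $[1/2,2]$.
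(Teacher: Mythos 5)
Your argument is correct and follows the paper's proof essentially verbatim: induction on $\ell$, with the Leibniz rule applied to the generic term $h\,h^{(\alpha_1)}\cdots h^{(\alpha_\ell)}\Phi^{(\ell-\alpha_1-\cdots-\alpha_\ell)}$ to pass from level $\ell$ to level $\ell+1$. You spell out the index bookkeeping a bit more carefully than the paper does, but the approach and structure are the same.
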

\begin{proof}
By Leibnitz rule, we have the result for $k=1$. The general result can be obtained by induction. Indeed, assume that \eqref{appendix6} holds. Then we have
\begin{align}
\sigma^{(\ell+1)} (h) = \sum_{\alpha_1+...+\alpha_\ell \leqslant \ell}  C_{\alpha_1,...,\alpha_\ell} \frac{d}{dr}(h h^{(\alpha_1)} ... h^{(\alpha_{\ell})}  \Phi^{(\ell - \alpha_1-...-\alpha_\ell)}).  \label{appendix7}
\end{align}
Using the Leibnitz rule, we have the result, as desired.
\end{proof}

\begin{lemma} \label{Deriv-control3}
We have
\begin{align}
 \left( f^{-1} \right)^{(k)}=  \frac{1}{f^{k+1}}  \sum_{\alpha_1+...+\alpha_k = k}  C_{\alpha_1,...,\alpha_k} f^{(\alpha_1)} ... f^{(\alpha_{k})}. \label{appendix8}
\end{align}
\end{lemma}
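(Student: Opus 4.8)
The plan is to prove Lemma \ref{Deriv-control3} by induction on $k$, reading $f^{-1}$ as the pointwise reciprocal $1/f$ (with $f$ a smooth, nonvanishing function on the region under consideration, as in the intended application to $1/p'_{j,\lambda}$), and adopting the convention that the multi-indices $(\alpha_1,\dots,\alpha_k)$ appearing in the sum range over \emph{nonnegative} integers with $\alpha_1+\dots+\alpha_k=k$, where $f^{(0)}:=f$. First I would dispose of the base case $k=1$: the quotient rule gives $(1/f)'=-f'/f^{2}$, which is precisely $\tfrac{1}{f^{2}}\sum_{\alpha_1=1}C_{\alpha_1}f^{(\alpha_1)}$ with $C_1=-1$.

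For the inductive step, assume $(f^{-1})^{(k)}=\tfrac{1}{f^{k+1}}\sum_{\alpha_1+\dots+\alpha_k=k}C_{\alpha_1,\dots,\alpha_k}f^{(\alpha_1)}\cdots f^{(\alpha_k)}$ and differentiate once more. The product (quotient) rule yields
\begin{align*}
(f^{-1})^{(k+1)} = {} & -\frac{(k+1)f'}{f^{k+2}}\sum_{\alpha_1+\dots+\alpha_k=k}C_{\alpha_1,\dots,\alpha_k}\,f^{(\alpha_1)}\cdots f^{(\alpha_k)} \\
& +\frac{1}{f^{k+1}}\sum_{\alpha_1+\dots+\alpha_k=k}C_{\alpha_1,\dots,\alpha_k}\sum_{i=1}^{k} f^{(\alpha_1)}\cdots f^{(\alpha_i+1)}\cdots f^{(\alpha_k)}.
\end{align*}
In the first line every summand is a product of $k+1$ derivative factors, namely $f',f^{(\alpha_1)},\dots,f^{(\alpha_k)}$, whose orders sum to $1+k$, over the denominator $f^{k+2}$; this already has the shape claimed for $k+1$. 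In the second line each summand is a product of $k$ derivative factors of total order $k+1$, but over the denominator $f^{k+1}$; multiplying numerator and denominator by $f=f^{(0)}$ rewrites it as a product of $k+1$ factors (one of them of order $0$) of total order $k+1$ over $f^{k+2}$. Relabelling the factors in each summand and collecting like terms, both contributions combine into $\tfrac{1}{f^{k+2}}\sum_{\beta_1+\dots+\beta_{k+1}=k+1}C'_{\beta_1,\dots,\beta_{k+1}}f^{(\beta_1)}\cdots f^{(\beta_{k+1})}$ for suitable real constants $C'$, which is exactly the assertion at level $k+1$.

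Since the lemma only claims the existence of coefficients $C_{\alpha_1,\dots,\alpha_k}$ (not any explicit value), there is nothing to track beyond the bookkeeping rules ``number of numerator factors $=$ denominator power $-1$'' and ``sum of derivative orders $=$ number of numerator factors'', which are manifestly preserved by one differentiation. I do not anticipate any genuine difficulty here; the single point that needs care is allowing zero entries in the multi-indices, so that after differentiating the numerator the power of $f$ in the denominator can be restored by multiplying through by $f$. As an alternative route, one could instead differentiate the identity $f\cdot(1/f)=1$ by the Leibniz rule to get the recursion $(1/f)^{(k)}=-\tfrac{1}{f}\sum_{j=1}^{k}\binom{k}{j}f^{(j)}(1/f)^{(k-j)}$ for $k\geqslant 1$ and run the same induction; the effort is comparable.
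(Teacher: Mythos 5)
Your proof is correct and takes the same route the paper indicates — the paper's entire proof is the single sentence ``By Leibnitz rule and induction, we have the result, as desired,'' and your argument is precisely the worked-out version of that: induction on $k$, with the product (Leibniz) rule applied in the inductive step. The one genuinely useful thing you add is making explicit the convention that the multi-indices in $\sum_{\alpha_1+\dots+\alpha_k=k}$ must be allowed to take the value $0$ (with $f^{(0)}:=f$); this is needed for the formula even at $k=2$, where $(1/f)'' = \bigl(2(f')^2 - f''f\bigr)/f^3$, and the paper leaves it implicit.
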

\begin{proof}
By Leibnitz rule and induction, we have
 the result, as desired.
\end{proof}

\section{Proof of Lemma \ref{Strichartz}}

We only sketch the proofs of \eqref{Str-a} and  \eqref{Str-b}. By \eqref{Str1} one has that for $2^ j  \lesssim   \lambda  $,
\begin{align}
\|\triangle_j \mathscr{A}_\lambda f \|_p \lesssim \lambda^{2/\gamma(\theta,p)} 2^{ 2 j(1-\theta)\delta(p)}  \int^t_0 |t-\tau|^{-2/\gamma(\theta,p)} \|f(\tau)\|_{p'} d\tau.  \label{appStr-1}
\end{align}
Using Hardy-Littlewood-Sobolev inequality,
\begin{align}
\|\triangle_j \mathscr{A}_\lambda f \|_{L^{\gamma(\theta,p)}_t L^p_x } \lesssim \lambda^{2/\gamma(\theta,p)} 2^{ 2 j(1-\theta)\delta(p)}   \|f \|_{L^{\gamma(\theta,p)'}_t L^{p'}_x}  .  \label{appStr-2}
\end{align}
By Cauchy-Schwarz inequality,
\begin{align}
\left|\int \left( \triangle_j U_\lambda (t) u_0 , f(t) \right) dt \right|   \leqslant \left\| \int   \triangle_j U_\lambda (-t)  f(t) dt \right\|_2 \|u_0\|_2.   \label{appStr-3}
\end{align}
By duality,
\begin{align}
  \left\| \int   \triangle_j U_\lambda (-t)  f(t) dt \right\|^2_2  & \leqslant \|f\|_{L^{\gamma(\theta,p)'}_t L^{p'}_x}  \left\| \int   \triangle_j U_\lambda (t-\tau)  f(\tau) d\tau \right\|_{L^{\gamma(\theta,p)}_t L^{p}_x}  \nonumber\\
  &  \lesssim \lambda^{2/\gamma(\theta,p)} 2^{ 2 j(1-\theta)\delta(p)}  \|f\|^2_{L^{\gamma(\theta,p)'}_t L^{p'}_x} .
      \label{appStr-4}
\end{align}
By \eqref{appStr-3} and \eqref{appStr-4},
\begin{align}
\left|\int \left( \triangle_j U_\lambda (t) u_0 , f(t) \right) dt \right|   \lesssim   \lambda^{1/\gamma(\theta,p)} 2^{   j(1-\theta)\delta(p)}  \|f\|_{L^{\gamma(\theta,p)'}_t L^{p'}_x} \|u_0\|_2.   \label{appStr-5}
\end{align}
By duality, we have \eqref{Str-a}.  Using an analogous way as in \eqref{appStr-4},
\begin{align}
\left\| \triangle_j \mathscr{A}_\lambda   f(t)  \right\|_{L^\infty_t L^2_x }    \lesssim   \lambda^{ 1/\gamma(\theta,r) } 2^{   j(1-\theta)\delta(r)}  \|f\|_{ L^{\gamma(\theta,r)'}_t L^{r'}_x } .   \label{appStr-6}
\end{align}
In view of \eqref{appStr-2},
\begin{align}
 \lambda^{- 1/\gamma(\theta,r) } 2^{ - j(1-\theta)\delta(r)} \left\| \triangle_j \mathscr{A}_\lambda   f(t)  \right\|_{L^{\gamma(\theta,r)}_t L^{r}_x }    \lesssim   \lambda^{ 1/\gamma(\theta,r) } 2^{   j(1-\theta)\delta(r)}  \|f\|_{ L^{\gamma(\theta,r)'}_t L^{r'}_x } .   \label{appStr-7}
\end{align}
For any $p\in [2,r]$, there exists $\eta \in [0,1]$ such that
$$
\frac{1}{p} = \frac{1-\eta}{2} + \frac{\eta}{r}.
$$
It follows that
$$
\delta(p) = (1-\eta)\delta(2) + \eta \delta(r), \ \  \frac{1}{\gamma(\theta,p)} = \frac{1-\eta}{\gamma(\theta,2)} + \frac{\eta}{\gamma(\theta,r)}.
$$
By H\"older's inequality, we have
\begin{align}
\lambda^{- 1/\gamma(\theta,p) }&  2^{ - j(1-\theta)\delta(p)} \left\| \triangle_j \mathscr{A}_\lambda   f(t)  \right\|_{L^{\gamma(\theta,p)}_t L^{p}_x } \nonumber\\
 &  \leqslant \left(  \left\| \triangle_j \mathscr{A}_\lambda   f(t)  \right\|_{L^{\infty}_t L^{2}_x } \right)^{1-\eta} \left( \lambda^{- 1/\gamma(\theta,r) } 2^{ - j(1-\theta)\delta(r)} \left\| \triangle_j \mathscr{A}_\lambda   f(t)  \right\|_{L^{\gamma(\theta,r)}_t L^{r}_x } \right)^\eta \nonumber\\
 & \lesssim   \lambda^{ 1/\gamma(\theta,r) } 2^{   j(1-\theta)\delta(r)}  \|f\|_{ L^{\gamma(\theta,r)'}_t L^{r'}_x } .   \label{appStr-8}
\end{align}
Again, in view of the duality,
\begin{align}
\lambda^{- 1/\gamma(\theta,p)} 2^{ -  j(1-\theta)\delta(p)}  \left\| \triangle_j \mathscr{A}_\lambda   f(t)  \right\|_{L^{\gamma(\theta,p)}_t L^{p}_x}    \lesssim   \|f\|_{L^1_t L^2_x} .   \label{appStr-9}
\end{align}
If $r\in [2,p]$,  by making an interpolation between \eqref{appStr-2} and \eqref{appStr-9}, we can obtain the result, as desired. This finishes the proof of \eqref{Str-a} and  \eqref{Str-b}. \\

\textbf{Acknowledgments.} The author is grateful to Professor K.~Nakanishi for his enlightening discussions on the paper. The project is supported in part by the NSFC, grant 12171007.
	
	\phantomsection 
	\bibliographystyle{amsplain}
	\addcontentsline{toc}{chapter}{References}

\footnotesize

\vspace{10pt}

\scriptsize\textsc{ Baoxiang Wang: School of Sciences, Jimei University,  Xiamen, 361021  P.~R.~of China.}
	
	\textit{E-mail address}: \verb"wbx@jmu.edu.cn"	


\end{document}